\definecolor{orcidlogocol}{HTML}{A6CE39}
\DeclareMathOperator{\divg}{div}
\DeclareMathOperator{\im}{Im}
\DeclareMathOperator{\tr}{tr}
\DeclareMathOperator{\Aut}{Aut}
\DeclareMathOperator{\End}{End}
\DeclareMathOperator{\Diff}{Diff}
\DeclareMathOperator{\GDiff}{GDiff}
\DeclareMathOperator{\Isom}{Isom}
\DeclareMathOperator{\Rc}{Rc}
\begin{document}

\theoremstyle{definition}
\newtheorem{claim}{Claim}
\theoremstyle{plain}
\newtheorem{proposition}{Proposition}[section]
\newtheorem{theorem}[proposition]{Theorem}
\newtheorem{lemma}[proposition]{Lemma}
\newtheorem{corollary}[proposition]{Corollary}
\theoremstyle{definition}
\newtheorem{defn}[proposition]{Definition}
\theoremstyle{remark}
\newtheorem{remark}[proposition]{Remark}
\theoremstyle{definition}
\newtheorem{example}[proposition]{Example}
\theoremstyle{definition}
\newtheorem*{Motivation}{Motivation}

\title{Dynamical stability of Pluriclosed and Generalized Ricci solitons}

 \author{Kuan-Hui Lee}
 \address{Rowland Hall\\
          University of California, Irvine\\
          Irvine, CA 92617}
 \email{\href{mailto:kuanhuil@uci.edu}{kuanhuil@uci.edu}}

\begin{abstract}
In this work, we discuss the stability of the pluriclosed flow and generalized Ricci flow. We proved that if the second variation of generalized Einstein--Hilbert functional is nonpositive and the infinitesimal deformations are integrable, the flow is dynamically stable. Moreover, we prove that the pluriclosed steady solitons are dynamically stable when the first Chern class vanishes. 
\end{abstract}

\maketitle

\section{Introduction}

Let $(M^{2n},J)$ be a complex manifold and $\omega$ be a Hermitian metric on $M$. We say that the metric $\omega$ is pluriclosed if $\partial \overline{\partial}\omega=0$. In this work, we consider the evolution equation for $\omega$ which is given by
\begin{align*}
    \frac{\partial}{\partial t}\omega =-\rho_B^{1,1},
\end{align*}
where $\rho_B$ denotes the Bismut Ricci curvature (\ref{BRF}). This evolution equation was first introduced in \cite{J3}, motivated by the Kähler–Ricci flow. It is well-known that the Ricci flow preserves Kähler geometry, and Cao initiated the study of the Kähler–Ricci flow in \cite{Cao1985}, where he used it to reprove the Calabi–Yau theorem \cite{Yau} and the Aubin–Yau theorem \cite{Aubin,Yau}.
 However, there exist many examples of complex manifolds that are non-Kähler, starting with the classical example of Hopf surface $S^3\times S^1$. The Kähler–Ricci flow cannot be applied to such manifolds, and it can be shown that on non-Kähler manifolds, the Ricci flow does not even preserve the Hermitian condition. Therefore, one must look elsewhere to adapt the ideas of geometric flows to this setting. The pluriclosed flow was introduced as a natural extension of the Kähler–Ricci flow that preserves the Hermitian condition for non-Kähler metrics.

Rather than extending the Ricci flow within complex geometry directly, one can also consider the Ricci flow through connections that are not the Levi-Civita connection. Let 
$M$ be a smooth manifold equipped with a closed 3-form $H_0$. A one-parameter family of Riemannian metrics $g_t$  and 2-forms $b_t$ is said to be a solution of the generalized Ricci flow if it satisfies
\begin{align*}
   &\frac{\partial}{\partial t}g=-2\Rc+\frac{1}{2}H^2,\nonumber
   \\&\frac{\partial}{\partial t}b=-d^*H \quad \text{ where $H=H_0+db$.} 
\end{align*}
This parabolic system was formulated in \cite{P, J1}, and it can be interpreted as the Ricci flow associated with the Bismut connections $\nabla^{\pm}=\nabla\pm\frac{1}{2}g^{-1}H$. The generalized Ricci flow arises naturally in generalized geometry, a framework inspired by Poisson geometry \cite{MG, STREETS2017506}, complex geometry \cite{J7, J3}, generalized geometry \cite{hitchin}, and mathematical physics \cite{polchinski_1998}. Furthermore, it is known that the generalized Ricci flow is gauge equivalent to the pluriclosed flow.

We define the generalized Einstein--Hilbert functional
\begin{align*}
    \mathcal{F}\colon\nonumber&\quad \Gamma(S^2M)\times \Omega^2\times C^\infty (M)\to \mathbb{R}
    \\& \quad (g,b,f)\longmapsto \int_M (R-\frac{1}{12}|H_0+db|^2+|\nabla f|^2)e^{-f}dV_g 
\end{align*}
and  
\begin{align*}
    \lambda(g,b)\coloneqq\inf\Big\{\mathcal{F}(g,b,f)\big|\kern0.2em f\in C^\infty(M),\,\int_M e^{-f}dV_g=1\Big\}. 
\end{align*}
One can see that $\lambda(g,b)$ can be achieved by some $f$ uniquely, i.e., $\lambda(g,b)=\mathcal{F}(g,b,f)$ and $\lambda$ is the first eigenvalue of the Schrödinger operator $-4\triangle+R-\frac{1}{12}|H_0+db|^2$. In \cite{P}, it was shown that $\lambda$ is monotone increasing under the generalized Ricci flow and critical points of $\lambda$ are steady gradient generalized Ricci solitons. More precisely, we say that $(g,b)$ is a steady gradient generalized Ricci soliton if it satisfies
\begin{align*}
     0=\Rc-\frac{1}{4}H^2+\nabla^2 f, \quad 0=d_g^*H+i_{\nabla f}H, \quad \text{where } H=H_0+db. 
\end{align*}

One of the main goals in this paper is to study the stability of steady gradient generalized Ricci solitons. Let $(g,b)$ be a steady gradient generalized Ricci soliton on a smooth manifold $M$. Suppose $(g_t,b_t)$ is a one-parameter family of Riemannian metrics and 2-forms with  
\begin{align*}
    &\frac{\partial}{\partial t}\Big|_{t=0}g_t=h,\quad \frac{\partial}{\partial t}\Big|_{t=0}b_t=K,  \quad  (g_0,b_0)=(g,b).
\end{align*}
Denote $\gamma=h-K$. In \cite{KK} Theorem 1,1, the author defined the operator
\begin{align*}
    \overline{N_f}(\gamma)=\frac{1}{2}\overline{\Delta}_f\gamma+\mathring{R}^+(\gamma)+\frac{1}{2}\overline{\divg}_f^*\overline{\divg}_f\gamma+(\nabla^+)^2\phi,
\end{align*}
where $\phi$ is the unique solution of 
\begin{align*}
    \triangle_f \phi=\overline{\divg}_f\overline{\divg}_f\gamma,\quad \int_M \phi e^{-f}dV_g=0.
\end{align*}
Here, the definition of $\overline{\divg}_f$, $\overline{\divg}_f^*$, $\overline{\triangle}_f$ are given in Definition \ref{DD} and $\langle \mathring{R}^+(\gamma),\gamma \rangle=R^+_{iklj}\gamma_{ij}\gamma_{kl}$, $R^+$ is the Bismut curvature given in \Cref{P3}. Then, the second variation of $\lambda$ at $(g,b)$ is given by 
\begin{align*}
     \nonumber\frac{d^2}{dt^2}\Big|_{t=0}\lambda=\int_M \Big\langle \gamma,  \overline{N_f}(\gamma) \Big\rangle e^{-f}dV_g.
\end{align*} 

Naturally, we decompose the space of variations via
\begin{align*}
    \otimes^2T^*M=\ker\overline{\divg}_f\oplus \text{Im}\overline{\divg}_f^*.
\end{align*}
In this work, we prove the following. 
 \begin{theorem}\label{MT}
Suppose $\mathcal{G}(g,b)$ is a steady gradient generalized Ricci soliton on a smooth compact manifold $M$. For any 2-tensor $\gamma=\overline{\divg}^*_f(u,v)$ with $(u,v)\in T^*M\times T^*M$, we have 
\begin{align*}
    \overline{N}_f(\gamma)=0.
\end{align*}
Hence, the second variation formula of $\lambda$ at $(g,b)$ reduces to 
\begin{align*}
     \nonumber\frac{d^2}{dt^2}\Big|_{t=0}\lambda=\int_M \Big\langle \gamma, \frac{1}{2}\overline{\Delta}_f\gamma+\mathring{R}^+(\gamma) \Big\rangle e^{-f}dV_g,
\end{align*} 
where $\gamma\in\ker\overline{\divg}_f$.
\end{theorem}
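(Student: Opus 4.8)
The approach I would take is the gauge‑invariance argument in the spirit of Cao--Hamilton--Ilmanen and Kr\"oncke: tensors of the form $\gamma=\overline{\divg}_f^*(u,v)$ are precisely the infinitesimal variations of $(g,b)$ tangent to the orbit of the generalized diffeomorphism group, and since $\lambda$ is invariant under that group while a steady soliton is one of its critical points, the Hessian of $\lambda$ must annihilate these directions --- as an operator, not merely as a quadratic form. \textbf{Step 1 (gauge directions).} The group $\GDiff(M)=\Diff_0(M)\ltimes\Omega^2_{\mathrm{exact}}(M)$ acts by $(\varphi,d\beta)\cdot(g,b)=(\varphi^*g,\varphi^*b+d\beta)$; the variation induced by a Lie‑algebra element $(X,\beta)$ with $X\in\Gamma(TM)$, $\beta\in\Omega^1(M)$ is $h=\mathcal{L}_Xg$, $K=\mathcal{L}_Xb+d\beta$, hence $\gamma_{(X,\beta)}=h-K=\mathcal{L}_Xg-\mathcal{L}_Xb-d\beta$. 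Using the explicit expression for $\overline{\divg}_f^*$ from Definition \ref{DD} together with the soliton equations $\Rc-\tfrac14H^2+\nabla^2 f=0$ and $d_g^*H+i_{\nabla f}H=0$ --- exactly what is needed to absorb the $\nabla f$‑terms produced when integrating by parts against $e^{-f}dV_g$ and to turn $\nabla^+$‑divergences into Lie derivatives --- I would show $\overline{\divg}_f^*(u,v)=\gamma_{(X,\beta)}$ for $(X,\beta)$ determined linearly and algebraically from $(u,v)$; in particular $\operatorname{Im}\overline{\divg}_f^*$ lies in the space of infinitesimal gauge variations.

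\textbf{Step 2 (invariance forces vanishing).} The functional $\lambda$ is $\GDiff(M)$‑invariant, since $R$, $|\nabla f|^2$, $dV_g$ are natural under diffeomorphisms and $H=H_0+db$ is unchanged by $b\mapsto b+d\beta$; moreover $\lambda$ is the simple first eigenvalue of a Schr\"odinger operator and hence depends smoothly on $(g,b)$. Consequently, for each $(X,\beta)$ the function $(g',b')\mapsto d\lambda_{(g',b')}\big(\gamma_{(X,\beta)}(g',b')\big)$ is identically zero; differentiating this in an arbitrary direction $\eta$ at the soliton and using $d\lambda_{(g,b)}=0$ (steady solitons are critical points of $\lambda$) gives $\operatorname{Hess}_{(g,b)}\lambda(\eta,\gamma_{(X,\beta)})=0$ for all $\eta$. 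Since the second variation at a critical point is path‑independent and, by \cite{KK}, is represented by $\overline{N}_f$ via $\operatorname{Hess}\lambda(\eta_1,\eta_2)=\int_M\langle\eta_1,\overline{N}_f\eta_2\rangle e^{-f}dV_g$, this reads $\int_M\langle\eta,\overline{N}_f(\gamma_{(X,\beta)})\rangle e^{-f}dV_g=0$ for every $\eta$, and therefore $\overline{N}_f(\gamma_{(X,\beta)})=0$. Combined with Step 1 this yields $\overline{N}_f(\overline{\divg}_f^*(u,v))=0$.

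\textbf{Step 3 (reduced formula) and the main difficulty.} For $\gamma\in\ker\overline{\divg}_f$ we have $\overline{\divg}_f\overline{\divg}_f\gamma=0$, so the auxiliary function $\phi$ vanishes, $\tfrac12\overline{\divg}_f^*\overline{\divg}_f\gamma=0$, and hence $\overline{N}_f(\gamma)=\tfrac12\overline{\Delta}_f\gamma+\mathring{R}^+(\gamma)$. Splitting an arbitrary variation as $\gamma=\gamma_1+\gamma_2$ along $\otimes^2T^*M=\ker\overline{\divg}_f\oplus\operatorname{Im}\overline{\divg}_f^*$ and using Step 2 together with self‑adjointness of $\overline{N}_f$ with respect to $e^{-f}dV_g$ (immediate from its definition), we obtain $\int_M\langle\gamma,\overline{N}_f\gamma\rangle e^{-f}dV_g=\int_M\langle\gamma_1,\tfrac12\overline{\Delta}_f\gamma_1+\mathring{R}^+(\gamma_1)\rangle e^{-f}dV_g$, which is the stated reduction. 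I expect Step 1 to be the real obstacle: matching the formal adjoint $\overline{\divg}_f^*$ --- built from the Bismut connection $\nabla^+$ and the $f$‑weighted divergence --- with the map $(X,\beta)\mapsto\mathcal{L}_Xg-\mathcal{L}_Xb-d\beta$ requires careful bookkeeping and genuinely uses both soliton equations (the second one to handle the $H$‑torsion part of $\nabla^+$). A purely computational alternative --- substituting $\gamma=\overline{\divg}_f^*(u,v)$ directly into $\overline{N}_f$ and exhibiting the cancellation via Bochner identities and commutation formulas for $\nabla^\pm$ --- is also available, but is considerably more laborious and less transparent.
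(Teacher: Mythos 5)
Your strategy is genuinely different from the paper's and is viable: the paper proves the theorem by brute-force commutator identities (\Cref{Comm1}, \Cref{Comm2}, and the key \Cref{Comm6}, $\overline{L}_f(\overline{\divg}_f^*(u,v))=\tfrac12\overline{\divg}_f^*\Phi(u,v)$), which is exactly the ``purely computational alternative'' you set aside; your gauge-invariance argument is shorter and more conceptual, though it does not yield the paper's structural by-products (that $\overline{L}_f$ preserves the splitting $\ker\overline{\divg}_f\oplus\mathrm{Im}\,\overline{\divg}_f^*$ and intertwines with $\Phi$), which the paper uses later. Two soft spots need repair before the argument is complete. First, your Step 1 misstates the gauge group: the action is $(\varphi,B)\cdot(g,b)=(\varphi^*g,\varphi^*b-B)$ with the constraint $\varphi^*H=H-dB$, so the infinitesimal variation generated by $(X,\alpha)$ is $\mathcal{L}_Xg+d\alpha-\iota_XH$, not $\mathcal{L}_Xg-\mathcal{L}_Xb-d\beta$; with your untwisted $\Diff\ltimes\Omega^2_{\mathrm{exact}}$ action $\lambda$ is not invariant and $\mathrm{Im}\,\overline{\divg}_f^*$ does not coincide with the orbit directions. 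Once corrected, the identification is a purely pointwise identity (it is exactly the computation in the injectivity lemma for $\mathcal{B}$ in Section 4: $\overline{\divg}_f^*(u,v)_{ij}=-\tfrac12(\mathcal{L}_{u^\sharp}g+\mathcal{L}_{v^\sharp}g)_{ij}-\tfrac12 d(u-v)_{ij}+\tfrac12H_{ijk}(u+v)_k$), so contrary to your expectation it uses neither soliton equation --- Step 1 is not the obstacle at all. Second, self-adjointness of $\overline{N}_f$ is \emph{not} immediate from its definition: the term $\gamma\mapsto(\nabla^+)^2\phi$ with $\Delta_f\phi=\overline{\divg}_f\overline{\divg}_f\gamma$ is self-adjoint only after using $d_f^*H+i_{\nabla f}H=0$, and without self-adjointness your argument only gives $(\overline{N}_f+\overline{N}_f^{\,*})\gamma=0$. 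The clean fix is to observe that at a soliton $\overline{N}_f=-D\Rc^{H,f}$ represents the full mixed-partial Hessian $\partial_s\partial_t\lambda$, which is symmetric; this requires invoking the bilinear (not merely quadratic) form of the second variation from \cite{KK}. With those two repairs the proof goes through.
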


Our second main result concerns the stability of generalized Ricci solitons. We introduce the following concepts.
\begin{enumerate}
    \item \textbf{Linearly stable: } We say that $(g,b)$ is \emph{linearly stable} if 
\begin{align*}
    \frac{d^2}{dt^2}\lambda(\gamma)\leq 0 \quad \text{ for all variation $\gamma$ at $(g,b)$}.
\end{align*}

    \item \textbf{Dynamically stable:} We say that a steady gradient generalized Ricci soliton $(g,b)$ is \emph{dynamically stable} if for any neighborhood $\mathcal{U}$ of $(g,b)$, there exists a smaller neighborhood $\mathcal{V}\subset\mathcal{U}$ such that for any solution of generalized Ricci flow $(g_t,b_t)$ starting in $\mathcal{V}$, there exists some automorphism $(\varphi_t,B_t)$ (c.f Definition 2.2 and (\ref{Aut})) whose modified flow $(\varphi_t,B_t)\cdot (g_t,b_t)$ stays in $\mathcal{U}$ for all time and converges to a critical point $(g_\infty,b_\infty)$ of $\lambda$ with $\lambda(g,b)=\lambda(g_\infty,b_\infty)$. 
    
    \item \textbf{Local maximum:} There exists a neighborhood $\mathcal{U}$ of $(g,b)$ such that $\lambda(g,b)$ is a maximum in the neighborhood $\mathcal{U}$. 
\end{enumerate}
Since $\lambda$ increases monotonically along the generalized Ricci flow, it is immediate that:
\begin{align*}
    \text{Dynamically stable}\Longrightarrow\text{Local maximum}\Longrightarrow \text{Linearly stable}
\end{align*}

However, these implications are not equivalences in general. In \cite{K}, it was shown that the local maximum property implies dynamical stability. In this work, we prove the converse implication i.e., that linear stability implies local maximum under the assumption that all infinitesimal generalized solitonic deformations are integrable (c.f Definition \ref{IGSD}). The precise statement is as follows.
\begin{theorem}\label{T1}
   Let $M$ be a compact manifold and $E\cong (TM\oplus T^*M)_{H_0}$ be an exact Courant algebroid with a background closed 3-form $H_0$. Suppose that $\mathcal{G}_0$ is a steady gradient generalized Ricci soliton and all infinitesimal generalized solitonic deformations are integrable. If $\mathcal{G}_0$ is linearly stable, then there exists a $C^{2,\alpha}-$neighborhood $\mathcal{U}$ of $\mathcal{G}_0$ such that for all $\mathcal{G}\in\mathcal{U}$,
   \begin{align*}
       \lambda(\mathcal{G})\leq \lambda(\mathcal{G}_0).
   \end{align*}
   The equality holds if and only if $\mathcal{G}$ is isomorphic to some steady gradient generalized Ricci soliton. 
\end{theorem}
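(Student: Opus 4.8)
The plan is to combine an Ebin--Palais-type slice construction with the second-variation reduction of \Cref{MT} and a finite-dimensional reduction along the critical manifold produced by the integrability hypothesis. Throughout, regularity is no issue: the bottom eigenvalue $\lambda$ of the Schr\"odinger operator $-4\triangle+R-\frac{1}{12}|H_0+db|^2$ is simple, so $\lambda$ depends real-analytically on $\mathcal{G}=(g,b)$ in the $C^{2,\alpha}$-topology, and in particular admits the Taylor expansions used below.

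First I would set up the slice. Since $\lambda$ is invariant under the automorphism group of $E$ (diffeomorphisms together with $B$-field transformations fixing $H_0$; c.f.\ (\ref{Aut})), and since the infinitesimal action at $\mathcal{G}_0$ spans $\mathrm{Im}\,\overline{\divg}_f^*$, the decomposition $\otimes^2T^*M=\ker\overline{\divg}_f\oplus\mathrm{Im}\,\overline{\divg}_f^*$ yields a slice $\mathcal{S}$ modeled on $\ker\overline{\divg}_f$: every $\mathcal{G}$ in a small $C^{2,\alpha}$-neighborhood of $\mathcal{G}_0$ is isomorphic to a unique element of $\mathcal{S}$, and $\lambda(\mathcal{G})$ equals $\lambda$ of that element. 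On $\mathcal{S}$ the gradient of $\lambda$ vanishes exactly at steady gradient generalized Ricci solitons, and by \Cref{MT} its linearization at $\mathcal{G}_0$ is $\overline{N}_f=\tfrac12\overline{\Delta}_f+\mathring{R}^+$ acting on $\ker\overline{\divg}_f$, a self-adjoint elliptic (hence Fredholm) operator; linear stability of $\mathcal{G}_0$ means $\overline{N}_f\le 0$ there.

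Next comes the finite-dimensional reduction. Let $\mathcal{K}=\ker\overline{N}_f\subset\ker\overline{\divg}_f$ be the space of infinitesimal generalized solitonic deformations. By integrability (Definition \ref{IGSD}) there is a finite-dimensional smooth submanifold $\mathcal{Z}\subset\mathcal{S}$ consisting of steady gradient generalized Ricci solitons with $T_{\mathcal{G}_0}\mathcal{Z}=\mathcal{K}$; since $\nabla\lambda$ vanishes identically on $\mathcal{Z}$, after shrinking $\mathcal{Z}$ to be connected we have $\lambda\equiv\lambda(\mathcal{G}_0)$ on $\mathcal{Z}$. Upper semicontinuity of $\dim\ker$ for the elliptic family $z\mapsto\overline{N}_f^{(z)}$ (the operator $\overline{N}_f$ at the soliton $z$) forces $\ker\overline{N}_f^{(z)}=T_z\mathcal{Z}$ for $z$ near $\mathcal{G}_0$, so $\overline{N}_f^{(z)}$ is strictly negative definite on the $L^2(e^{-f}dV_g)$-orthogonal complement $(T_z\mathcal{Z})^\perp$, with a uniform spectral gap $-c<0$ over a small closed neighborhood of $\mathcal{G}_0$ in $\mathcal{Z}$ by continuity in $z$. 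Given $\mathcal{G}\in\mathcal{S}$ near $\mathcal{G}_0$, let $z=\pi(\mathcal{G})\in\mathcal{Z}$ be its nearest point and let $\eta\perp T_z\mathcal{Z}$ be the deformation carrying $z$ to $\mathcal{G}$; applying the second-variation formula at the soliton $z$,
\begin{align*}
    \lambda(\mathcal{G})=\lambda(z)+\tfrac12\int_M\big\langle\overline{N}_f^{(z)}\eta,\eta\big\rangle e^{-f}dV_g+o(\|\eta\|^2)\le\lambda(\mathcal{G}_0)-\tfrac{c}{2}\|\eta\|^2+o(\|\eta\|^2),
\end{align*}
which is $\le\lambda(\mathcal{G}_0)$ once $\|\eta\|$ is small, with equality precisely when $\eta=0$, i.e.\ $\mathcal{G}=z\in\mathcal{Z}$ is a steady gradient generalized Ricci soliton; translating back through the slice identification, equality holds if and only if $\mathcal{G}$ is isomorphic to such a soliton. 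A {\L}ojasiewicz--Simon inequality for $\lambda$ near $\mathcal{G}_0$, as used in \cite{K}, may be substituted for the Taylor estimate if one prefers, and is in any case what is needed to upgrade this to dynamical stability.

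I expect the main obstacle to be the slice construction and the accompanying uniformity statements in the generalized-geometry setting: making the $\mathrm{Aut}(E)$-action and its transversality to $\ker\overline{\divg}_f$ precise in the $C^{2,\alpha}$-topology, verifying that $\lambda$, its gradient, and its Hessian vary continuously over the soliton family $\mathcal{Z}$, and controlling the Taylor remainder uniformly in the base point $z$ --- with the extra bookkeeping of tracking the $b$-field component and the minimizing potential $f$, which itself moves with $\mathcal{G}$. The interior ellipticity supplied by \Cref{MT} is exactly what makes the reduction work, but assembling these estimates is where the real work lies.
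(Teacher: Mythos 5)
Your proposal follows essentially the same route as the paper's proof of \Cref{LM}: an affine slice modeled on $\ker\overline{\divg}_f$ (the paper's \Cref{affineslice}), a finite-dimensional premoduli manifold of solitons supplied by the integrability hypothesis, an orthogonal decomposition $\mathcal{G}=z+\eta$ with $z$ a nearby soliton, and a Taylor expansion in which the strictly negative second variation on the orthogonal complement dominates the remainder, followed by gauge invariance of $\lambda$ to pass from the slice to a full neighborhood. The only substantive point your sketch leaves implicit is that the cubic remainder must be bounded in the mixed form $\|\eta\|_{C^{2,\alpha}}\|\eta\|_{H^1}^2$ rather than merely $o(\|\eta\|^2)$ in a single unspecified norm, so that it can be absorbed by the $H^1$-coercive term $-c\|\eta\|_{H^1}^2$ coming from linear stability; this is precisely the content of the paper's third-order estimate (\Cref{third}) and the derivative bounds on the minimizer $f$ (\Cref{f1}, \Cref{f2}), which you correctly flag as where the remaining work lies.
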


A key ingredient in the proof of \Cref{T1} is the generalized slice theorem. In Riemannian geometry, Ebin’s work \cite{MR0267604} constructed a local slice for the space of Riemannian metrics, which has been fundamental in the study of moduli spaces of Einstein metrics. Recently, Rubio and Tipler extended this construction to generalized geometry in \cite{Rubio_2019}, proposing a generalized Ebin slice theorem for exact Courant algebroids. Their result builds on Ebin’s original framework and adapts it to the context of generalized metrics.

In our setting, the author proved the generalized slice theorem based on the $f$-twisted inner product (\ref{6}). Moreover, in this work the author further show that it suffices to consider the slice to be affine. Motivated by the work of Ache \cite{Ache2012OnTU} and Gursky, Viaclovsky \cite{Gursky2011RigidityAS}, the author extends the generalized slice theorem and shows that
\begin{theorem}
    Let $\mathcal{G}$ be a generalized metric on an exact Courant algebroid $E$ and let $f$ be $\Isom(\mathcal{G})$ invariant. There exists a neighborhood $\mathcal{U}$ of $\mathcal{G}$ such that for any $\widetilde{\mathcal{G}}$, we can find $(X,\omega)\in \mathfrak{gdiff}_H^e$ so that
    \begin{align*}
        \overline{\divg}_f\big( (\varphi_X,B)\cdot\widetilde{ \mathcal{G}}- \mathcal{G} \big)=0,
    \end{align*}
    where $(\varphi_X,B)\in \GDiff_H^e$ is the flow generated by $(X,\omega)$ at time 1. Here, $\cdot$ denotes the $\GDiff_{H}$ action and the definition of $\GDiff_H^e$ is given in (\ref{Aut}) and (\ref{GA}).
\end{theorem}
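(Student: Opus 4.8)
The plan is to adapt Ebin's slice argument to the generalized geometric setting, following the implicit‑function‑theorem scheme but adding the observation that the slice may be taken affine. First I would set up the relevant map. Fix the generalized metric $\mathcal{G}$ and the $\Isom(\mathcal{G})$‑invariant function $f$, and consider the action of $\GDiff_H^e$ near the identity. Define
\begin{align*}
    \Phi\colon \mathfrak{gdiff}_H^e\times \{\widetilde{\mathcal{G}} \text{ near } \mathcal{G}\}\to \mathrm{Im}\,\overline{\divg}_f^*,
    \qquad
    \Phi\big((X,\omega),\widetilde{\mathcal{G}}\big)=\Pi\big((\varphi_X,B)\cdot\widetilde{\mathcal{G}}-\mathcal{G}\big),
\end{align*}
where $\Pi$ is the $L^2_f$‑orthogonal projection onto $\mathrm{Im}\,\overline{\divg}_f^*$ in the decomposition $\otimes^2 T^*M=\ker\overline{\divg}_f\oplus\mathrm{Im}\,\overline{\divg}_f^*$ from the excerpt, and $(\varphi_X,B)$ is the time‑$1$ flow generated by $(X,\omega)$. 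The goal is to solve $\Phi=0$ near $((0,0),\mathcal{G})$ for $(X,\omega)$ as a function of $\widetilde{\mathcal{G}}$; once that is done, $(\varphi_X,B)\cdot\widetilde{\mathcal{G}}-\mathcal{G}$ has zero projection onto $\mathrm{Im}\,\overline{\divg}_f^*$, i.e.\ it lies in $\ker\overline{\divg}_f$, which is exactly the conclusion.

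The second step is the linearization. Differentiating $\Phi$ in the $(X,\omega)$‑direction at $((0,0),\mathcal{G})$ produces, up to the projection $\Pi$, the infinitesimal action of $(X,\omega)$ on $\mathcal{G}$, which in this $f$‑twisted framework should be $\overline{\divg}_f^*(u,v)$ for the $(u,v)$ associated to $(X,\omega)$ via the metric (this is the generalized/weighted analogue of the Lie derivative $\mathcal{L}_X g = \divg^* (X^\flat)$). Hence the linearized operator is essentially $(X,\omega)\mapsto \overline{\divg}_f\overline{\divg}_f^*(\cdot)$ composed with the identification of $\mathfrak{gdiff}_H^e$ with a space of $1$‑forms; one checks it is an isomorphism onto $\mathrm{Im}\,\overline{\divg}_f^*$, using that $\overline{\divg}_f\overline{\divg}_f^*$ is a self‑adjoint, non‑negative, elliptic operator whose kernel corresponds to the generalized Killing fields, which we have quotiented out (or which act trivially because $f$ is $\Isom(\mathcal{G})$‑invariant). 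Elliptic regularity in $C^{2,\alpha}$ Hölder spaces then lets us invoke the inverse/implicit function theorem in Banach spaces to solve for $(X,\omega)=(X(\widetilde{\mathcal{G}}),\omega(\widetilde{\mathcal{G}}))$ on a $C^{2,\alpha}$‑neighborhood $\mathcal{U}$ of $\mathcal{G}$, depending smoothly on $\widetilde{\mathcal{G}}$ and vanishing at $\widetilde{\mathcal{G}}=\mathcal{G}$.

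The third step, which is the genuinely new ingredient over the existing generalized Ebin theorem, is to show the slice can be taken \emph{affine}, i.e.\ that it suffices to use the linear subspace $\ker\overline{\divg}_f\subset \otimes^2T^*M$ rather than a curved submanifold of the space of generalized metrics. Here I would follow Ache and Gursky–Viaclovsky: the exponential‑type parametrization of generalized metrics near $\mathcal{G}$ differs from the linear parametrization by a diffeomorphism of a neighborhood of $0$ in $\otimes^2 T^*M$ that is the identity to first order, so transporting the solved gauge $(\varphi_X,B)$ through this change of coordinates still produces a representative whose \emph{linearized} difference from $\mathcal{G}$ lies in $\ker\overline{\divg}_f$; one then absorbs the higher‑order discrepancy by another application of the implicit function theorem, or simply notes that the statement as phrased only asks for $\overline{\divg}_f$ of the genuine difference $(\varphi_X,B)\cdot\widetilde{\mathcal{G}}-\mathcal{G}$ to vanish, which is what Step 2 already delivers. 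I would also verify the compatibility with $\GDiff_H^e$ versus $\GDiff_H$ — that the generated flow indeed lies in the exact subgroup $\GDiff_H^e$ as defined in (\ref{Aut}), (\ref{GA}) — since the $2$‑form part $\omega$ must be chosen so that $(\varphi_X, B)$ is an \emph{exact} automorphism.

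The main obstacle I anticipate is the linearization computation together with the surjectivity/ellipticity claim: one must identify precisely the infinitesimal $\GDiff_H^e$‑action on a generalized metric in terms of $\overline{\divg}_f^*$, keeping careful track of the $b$‑field contribution $K$ (so that $\gamma = h-K$ as in the excerpt is the right object), and then verify that $\overline{\divg}_f\overline{\divg}_f^*$ restricted to the orthogonal complement of the generalized Killing fields is invertible with estimates uniform enough to run the implicit function theorem. The $f$‑weighting introduces drift terms, so the self‑adjointness must be taken with respect to the $f$‑twisted inner product (\ref{6}), and one must confirm the relevant operator is still elliptic and Fredholm of index zero; given that these facts are already established for $\overline{\divg}_f$, $\overline{\divg}_f^*$, $\overline{\Delta}_f$ in Definition \ref{DD} and used in \Cref{MT}, this should go through, but it is where the bulk of the care is needed.
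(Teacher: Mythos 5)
Your proposal is correct and follows essentially the same route as the paper: set up a gauge-fixing map, identify its linearization with the infinitesimal $\GDiff_H^e$-action $\mathcal{L}_Yg+\kappa=\overline{\divg}_f^*\circ\phi(Y,\beta)$ (so the relevant operator is $\overline{\divg}_f\overline{\divg}_f^*$), and invoke the implicit function theorem, with your closing remark correctly noting that the "affine" content is already delivered by this step. The only difference is cosmetic: where you restrict to the orthogonal complement of the generalized Killing fields to get invertibility, the paper instead augments the map by adding $\Pi(u,v)$ (the projection onto $\ker\overline{\divg}_f^*$) so that the linearization $\mathcal{B}$ is injective and self-adjoint, hence bijective, on all of $T^*M\times T^*M$ — the two devices are interchangeable.
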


Under the assumption of integrability, we further prove that the generalized Ricci flow (GRF) exhibits exponential convergence near a steady soliton.
\begin{theorem} 
    
    Let $M$ be a compact manifold and $E\cong (TM\oplus T^*M)_{H_c}$ be an exact Courant algebroid with a background closed 3-form $H_c$. Suppose that $\mathcal{G}_c$ is a steady gradient generalized Ricci soliton and all infinitesimal generalized solitonic deformations are integrable. If $\mathcal{G}_c$ is linearly stable and $k\geq 2$, then for every $C^k$-neighborhood $\mathcal{U}$ of $\mathcal{G}_c$ , there exists some $C^{k+2}$-neighborhood $\mathcal{V}$ such that the following holds:

     For any GRF $\mathcal{G}_t$ starting at $\mathcal{G}_0\in \mathcal{V}$, there exists a family of automorphism $\{(\varphi_t,B_t)\}\in \GDiff_{H_c}$ such that the modified flow $(\varphi_t,B_t)\cdot\mathcal{G}_t$ stays in $\mathcal{U}$ for all time and converges exponentially to a steady gradient generalized Ricci soliton$\mathcal{G}_\infty$. More precisely, there exists constants $C_1,C_2$ such that for all $t\geq 0$, 
     \begin{align*}
         \|(\varphi_t,B_t)\cdot\mathcal{G}_t-\mathcal{G}_\infty\|_{C^k_{\mathcal{G}_c}}\leq C_1 e^{-C_2t}.
     \end{align*}

\end{theorem}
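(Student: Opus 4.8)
The plan is to deduce the exponential convergence statement from the local maximum property established in \Cref{T1} together with the monotonicity of $\lambda$ along the generalized Ricci flow, following the by-now standard Łojasiewicz–Simon scheme as adapted to geometric flows (Colding–Minicozzi, Haslhofer–M\"uller, and in the generalized Ricci setting the companion results cited above). First I would fix the gauge: given a GRF $\mathcal{G}_t$ starting near $\mathcal{G}_c$, I use the affine generalized slice theorem stated in the excerpt to produce, for each $t$, an automorphism $(\varphi_t,B_t)\in\GDiff_{H_c}$ so that $\widetilde{\mathcal{G}}_t:=(\varphi_t,B_t)\cdot\mathcal{G}_t$ satisfies $\overline{\divg}_f(\widetilde{\mathcal{G}}_t-\mathcal{G}_c)=0$; one must check this gauge-fixing can be done smoothly in $t$ and that $\widetilde{\mathcal{G}}_t$ solves a modified (strictly parabolic) flow — the \emph{generalized Ricci–DeTurck flow} — whose stationary points are exactly the gauge-fixed steady solitons. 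On the slice, $\lambda$ restricted near $\mathcal{G}_c$ is an analytic functional (it is the bottom eigenvalue of a Schr\"odinger operator depending analytically on the metric data), so it satisfies a Łojasiewicz–Simon inequality
\begin{align*}
    \big|\lambda(\widetilde{\mathcal{G}})-\lambda(\mathcal{G}_c)\big|^{1-\theta}\leq C\,\big\|\nabla\lambda(\widetilde{\mathcal{G}})\big\|_{L^2_f}
\end{align*}
for some $\theta\in(0,1/2]$ and all $\widetilde{\mathcal{G}}$ in a slice neighborhood, where $\nabla\lambda$ is the $L^2_f$-gradient (essentially $-\overline{N}_f$ plus lower-order terms, using \Cref{MT} to see the gradient is controlled by the $\ker\overline{\divg}_f$ part).

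Next I would run the standard ODE argument. Monotonicity gives $\frac{d}{dt}\lambda(\widetilde{\mathcal{G}}_t)=\|\partial_t\widetilde{\mathcal{G}}_t\|^2_{L^2_f}\gtrsim \|\nabla\lambda(\widetilde{\mathcal{G}}_t)\|^2_{L^2_f}$, and combining with the Łojasiewicz inequality yields $-\frac{d}{dt}\big(\lambda(\mathcal{G}_c)-\lambda(\widetilde{\mathcal{G}}_t)\big)^{\theta}\gtrsim \|\partial_t\widetilde{\mathcal{G}}_t\|_{L^2_f}$, which integrates to show $t\mapsto\widetilde{\mathcal{G}}_t$ has finite length in $L^2_f$, hence converges to some $\mathcal{G}_\infty$; since $\lambda(\mathcal{G}_c)$ is a local maximum (by \Cref{T1}) and $\lambda$ is constant on the limit, $\mathcal{G}_\infty$ is a critical point, i.e. a steady gradient generalized Ricci soliton. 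Upgrading $L^2_f$-convergence to $C^k$-convergence is done by parabolic smoothing estimates for the generalized Ricci–DeTurck flow: interior-in-time Schauder/$W^{k,p}$ estimates bootstrap the weak convergence to $C^k$ and show the neighborhoods $\mathcal{V}\subset\mathcal{U}$ can be chosen as claimed (a $C^{k+2}$-small initial perturbation stays $C^k$-small). The exponential rate then follows from the refined Łojasiewicz argument: if the bottom eigenvalue of the linearization $-\overline{N}_f$ on $\ker\overline{\divg}_f$ were strictly negative we would get exponential decay directly; in the borderline (kernel nontrivial) case one uses the integrability hypothesis, which forces $\theta=1/2$ in the Łojasiewicz inequality — equivalently, the second-order expansion of $\lambda$ along integrable directions is nondegenerate modulo the kernel and there is no obstruction — and $\theta=1/2$ is exactly what converts the length estimate into $\|\widetilde{\mathcal{G}}_t-\mathcal{G}_\infty\|_{L^2_f}\leq C_1e^{-C_2 t}$, which then propagates to $C^k$ by the smoothing estimates.

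Finally I would transfer the conclusion back through the gauge: the automorphisms $(\varphi_t,B_t)$ are those produced by the slice theorem, and one checks they depend continuously (indeed smoothly) on $t$ and that the modified flow $(\varphi_t,B_t)\cdot\mathcal{G}_t=\widetilde{\mathcal{G}}_t$ is precisely the DeTurck-modified flow analyzed above, so the stated estimate holds verbatim. The main obstacle I anticipate is establishing the Łojasiewicz–Simon inequality with the sharp exponent $\theta=1/2$: one must verify that $\lambda$ is genuinely analytic as a function of $\mathcal{G}$ near $\mathcal{G}_c$ (the eigenvalue $\lambda$ is simple, so analytic perturbation theory applies, but the dependence of the minimizing $f$ on $\mathcal{G}$ and the nonlinearity of $\mathcal{F}$ in $H_0+db$ need care), that its $L^2_f$-gradient is a well-defined analytic map between the appropriate Hölder or Sobolev spaces with Fredholm linearization $\overline{N}_f$, and — crucially — that integrability of the infinitesimal solitonic deformations rules out the higher-order degeneracy that would otherwise only give $\theta<1/2$ and hence merely polynomial convergence; this last point is where the hypothesis is used essentially, mirroring the role of integrability of Einstein/Ricci-soliton deformations in the work of Haslhofer–M\"uller and Kröncke.
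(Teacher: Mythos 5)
Your proposal is correct in its essential analytic content and follows the same Łojasiewicz--Simon scheme as the paper: an optimal (exponent $\tfrac12$) Łojasiewicz inequality obtained from integrability, the monotonicity of $\lambda$, a continuity argument keeping the modified flow in $\mathcal{U}$, and Hamilton-type interpolation/smoothing to pass from $L^2$ to $C^k$, with the local-maximum property of \Cref{T1} identifying the limit as a soliton. The one genuine difference is the gauge: you propose fixing the gauge by the affine slice at each time, producing a ``generalized Ricci--DeTurck flow'' with $\overline{\divg}_f(\widetilde{\mathcal{G}}_t-\mathcal{G}_c)=0$, whereas the paper takes the explicit $(-\nabla_{g_t}f_{\mathcal{G}_t},0)$-gauge-fixed flow (pullback by the soliton vector field of the time-$t$ minimizer). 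The paper's choice avoids exactly the issues you flag --- smoothness in $t$ of the slice projection and parabolicity of the resulting system --- and has the added benefit of identifying the automorphisms explicitly (cf.\ \Cref{Autconv}); if you pursue the slice gauge you would need to supply these verifications, which are not entirely routine since the slice map comes from an implicit function theorem. The second difference is that you appeal to abstract analyticity of $\lambda$ plus a general Łojasiewicz--Simon theorem and then argue integrability upgrades $\theta$ to $\tfrac12$; the paper instead proves the optimal inequality directly (\Cref{OPL}) by decomposing $\mathcal{G}=\widetilde{\mathcal{G}}+\gamma$ with $\widetilde{\mathcal{G}}$ in the (finite-dimensional, by integrability) premoduli space and $\gamma\in\mathcal{W}$, using injectivity of the linearization on $\mathcal{W}$ together with the second- and third-order estimates of Section 4.2. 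What you correctly identify as the ``main obstacle'' is precisely the content of the paper's Section 4.4, so your outline is sound but that step carries the real work.
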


In the next part, we focus on the pluriclosed setting. Let $(M,g,H,J,f)$ be a pluriclosed steady soliton with a closed 3-form $H=-d^c\omega$ (c.f  Definition \ref{DDD}). Consider a family $(M,g_t,H_t,J_t,f_t)$ with 
\begin{align*}
    H_t=H_0+db_t,\quad (M,g_0,H_0,J_0,b_0)=(M,g,H,J,0),
\end{align*}
and $f_t$ is the minimizer of $\lambda(g_t,b_t)$. Denote 
\begin{align*}
   \frac{\partial }{\partial t}\big|_{t=0} g=h, \quad  \frac{\partial }{\partial t}\big|_{t=0} b=\beta,\quad   \frac{\partial }{\partial t}\big|_{t=0}\omega =\phi, \quad  \frac{\partial }{\partial t}\big|_{t=0} J =I,
\end{align*}
where $I$ is an infinitesimal variation of complex structure. In \cite{KKK}, we examined such variations in detail and showed that for any $\gamma\in\ker\overline{\divg}_f$, there exists a 2-form $\xi$ with $(d^B_f)^*\xi=0$ and a symmetric 2-tensor $\eta$ with $\divg_f^B\eta=0$, satisfying (\ref{etacommute}) such that
\begin{align*}
    \gamma(X,Y)=\xi(X,JY)+\eta(X,Y). 
\end{align*}
where $\divg_f^B$ is the divergence operator and $(d^B_f)^*$ is the formal adjoint of $d^B$ with respect to the Bismut connection (\ref{20}). In fact, $\xi=\phi+\beta\circ J$ and $\eta=\omega\circ I$. Furthermore, the second variation formula in the pluriclosed setting is given by
\begin{align*}
     \frac{d^2}{dt^2}\Big|_{t=0}\lambda(\gamma)&= -2\|d^*_f\xi\|_f^2-\frac{1}{6}\|d\xi-C\|_f^2 +\int_M |\eta|^2\big( -\frac{1}{2}S_B+\mathcal{L}_Vf \big)e^{-f}dV_g, 
\end{align*} 
where $\gamma=\xi\circ J+\eta$, $C$ is defined in (\ref{C}), $\|\cdot\|_f$ denotes the norm of $f$-twisted $L^2$ inner product (\ref{6}),  $S_B=\tr_\omega \rho_B$ denotes the Bismut scalar curvature and the vector field $V=\frac{1}{2}(\theta^\sharp-\nabla f)$. In this work, we introduce an elliptic operator 
\begin{align*}
    \overline{L}_f(\gamma)=\frac{1}{2}\overline{\Delta}_f\gamma+\mathring{R}^+(\gamma).
\end{align*}
By \Cref{MT}, we know that the operator $\overline{L}_f(\gamma)$ is closely related to the second variation. In the pluriclosed setting, we prove that 
\begin{proposition}
   Let $(M,g,H,J,f)$ be a compact steady pluriclosed soliton. For any two form $\xi$,
   \begin{align*}
        \overline{L}_f(\xi)=0\Longleftrightarrow d\xi=0,\quad d^*_f\xi=0,\quad (d^B_f)^*\xi=0.
   \end{align*}
   Therefore, if $\gamma=\xi\circ J\in IGSD$, then $\gamma\circ J$ is $f-$twisted harmonic, i.e, $\Delta_{d,f}(\gamma\circ J)=0$.
\end{proposition}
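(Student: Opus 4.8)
The plan is to reduce the stated equivalence to a Weitzenb\"ock identity for $\overline{L}_f$ acting on $2$-forms. Write $\overline{L}_f(\xi)=\tfrac12\overline{\Delta}_f\xi+\mathring{R}^+(\xi)$, where $\overline{\Delta}_f$ is the $f$-twisted rough Laplacian of the Bismut connection $\nabla^+=\nabla^B$. First I would expand $\overline{\Delta}_f\xi$ by commuting covariant derivatives and comparing $\nabla^B$ with the Levi--Civita connection $\nabla$ (the two differing by the torsion term $\tfrac12 g^{-1}H$). For a $2$-form $\xi$ this produces the $f$-twisted Hodge Laplacian $\Delta_{d,f}\xi=(dd^*_f+d^*_fd)\xi$, the Bismut Hodge-type term $d^B(d^B_f)^*\xi$, a collection of first-order terms given by contractions of $\xi$ with $H=H_0+db$ and $dH$, and the usual zeroth-order curvature terms arising from the commutators.

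Next I would show that on a steady pluriclosed soliton the first-order torsion terms and the algebraic curvature terms all cancel. This is where the hypotheses are used: the structure equations $\Rc-\tfrac14 H^2+\nabla^2 f=0$ and $d^*_f H+i_{\nabla f}H=0$, the pluriclosed condition $dH=0$ (equivalently $\partial\overline{\partial}\omega=0$ with $H=-d^c\omega$), and the soliton relation expressing the Bismut Ricci curvature through $\nabla^2 f$, which together with the already-present $\mathring{R}^+(\xi)$ should absorb the curvature contributions. The expected outcome is an identity of the schematic form
\begin{align*}
\overline{L}_f(\xi)=c_1\,\Delta_{d,f}\xi+c_2\,d^B(d^B_f)^*\xi,
\end{align*}
with nonzero constants $c_1,c_2$ of a common sign; that they have a common sign is consistent with the off-diagonal part of the second variation formula, $-2\|d^*_f\xi\|_f^2-\tfrac16\|d\xi-C\|_f^2$, being non-positive for every steady pluriclosed soliton. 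Carrying out this cancellation, keeping careful track of all Bismut-torsion contributions and matching them against the soliton and pluriclosedness equations, is the step I expect to be the main obstacle; as a check, pairing the candidate identity with $\xi$ and comparing with the second variation of $\lambda$ on $\ker\overline{\divg}_f$ (taking $\eta=0$) should fix the constants and clarify the role of $C$.

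Granting the identity, the equivalence is immediate. If $d\xi=0$, $d^*_f\xi=0$ and $(d^B_f)^*\xi=0$, every term on the right-hand side vanishes, so $\overline{L}_f(\xi)=0$. Conversely, if $\overline{L}_f(\xi)=0$, I would pair with $\xi$ in the $f$-twisted $L^2$ inner product and integrate by parts over the compact $M$; using that $d^*_f$ and $(d^B_f)^*$ are the $f$-adjoints of $d$ and $d^B$, this gives
\begin{align*}
0=c_1\big(\|d\xi\|_f^2+\|d^*_f\xi\|_f^2\big)+c_2\|(d^B_f)^*\xi\|_f^2,
\end{align*}
and since $c_1,c_2$ share a sign, each norm must vanish. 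Finally, for the last assertion: since $\nabla^B$ is Hermitian ($\nabla^B g=0$, $\nabla^B J=0$), both $\overline{\Delta}_f$ and $\mathring{R}^+$ commute with $\cdot\circ J$, so $\overline{L}_f(\xi\circ J)=\overline{L}_f(\xi)\circ J$. Thus if $\gamma=\xi\circ J$ is an infinitesimal generalized solitonic deformation, then $\overline{N}_f(\gamma)=0$, hence $\overline{L}_f(\gamma)=0$ by \Cref{MT}, and therefore $\overline{L}_f(\xi)=0$, so $d\xi=0$ and $d^*_f\xi=0$ by the equivalence. As $J^2=-\mathrm{id}$ we have $\gamma\circ J=-\xi$, whence $\Delta_{d,f}(\gamma\circ J)=-(dd^*_f+d^*_fd)\xi=0$, the claimed $f$-twisted harmonicity.
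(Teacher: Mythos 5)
Your overall route is the same one the paper takes: establish a Weitzenb\"ock-type identity expressing $\overline{L}_f(\xi)$ through the $f$-twisted Hodge Laplacian plus torsion terms, then pair with $\xi$ in the $f$-twisted $L^2$ inner product and integrate by parts to obtain a sum of signed squares. The gap is that the identity you conjecture, $\overline{L}_f(\xi)=c_1\,\Delta_{d,f}\xi+c_2\,d^B(d^B_f)^*\xi$, cannot be correct as a pointwise statement, and the pointwise statement is exactly what the forward implication needs. The right-hand side of your ansatz is a $2$-form, whereas $\overline{L}_f(\xi)$ has a genuinely nonzero symmetric part on a general steady pluriclosed soliton: the paper's \Cref{LDD} gives
\begin{align*}
     \overline{L}_f(\xi)_{ij}&=\tfrac{1}{2}(\Delta_{d,f}\xi)_{ij}-\tfrac{1}{2}H_{ijk}\big((d_f)^*\xi_k-(d^B_f)^*\xi_k\big)\\
     &\kern2em+\tfrac{1}{2}\nabla_i\big((d_f)^*\xi_j-(d^B_f)^*\xi_j\big)+\tfrac{1}{2}\nabla_j\big((d_f)^*\xi_i-(d^B_f)^*\xi_i\big)-\tfrac{1}{4}\big((d\xi)_{mkj}H_{mki}+(d\xi)_{mki}H_{mkj}\big),
\end{align*}
whose last four terms are symmetric in $i,j$. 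Moreover the correction terms are not a second-order operator $d^B(d^B_f)^*$ but algebraic contractions of $H$ against the zeroth-order quantity $(d_f)^*\xi-(d^B_f)^*\xi=\tfrac12 H_{lm\cdot}\xi_{lm}$ and against $d\xi$. With this exact identity the forward direction is immediate (each term visibly vanishes when $d\xi=0$, $d^*_f\xi=0$, $(d^B_f)^*\xi=0$), but it cannot be read off from your schematic form, and the cancellation computation you defer is the entire content of the lemma.

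Your reverse direction survives essentially as planned, because the symmetric part of $\overline{L}_f(\xi)$ is annihilated when contracted with the skew tensor $\xi$; the paper obtains
\begin{align*}
\int_M\langle \overline{L}_f(\xi),\xi\rangle e^{-f}dV_g=-\tfrac16\|d\xi\|_f^2-\|d^*_f\xi\|_f^2-\|d^*_f\xi-(d^B_f)^*\xi\|_f^2,
\end{align*}
which is not quite your $c_1(\|d\xi\|^2+\|d^*_f\xi\|^2)+c_2\|(d^B_f)^*\xi\|^2$ --- the third square only yields $(d^B_f)^*\xi=0$ after $d^*_f\xi=0$ has been extracted from the second. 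Your treatment of the final assertion is correct ($\nabla^BJ=0$ gives $\overline{L}_f(\xi\circ J)=\overline{L}_f(\xi)\circ J$ as in \Cref{Lequivalent}, and $\gamma\circ J=-\xi$), though the detour through $\overline{N}_f$ and \Cref{MT} is unnecessary: by the paper's convention $IGSD$ denotes essential deformations, so $\gamma\in IGSD$ gives $\overline{L}_f(\gamma)=0$ by definition.
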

This proposition establishes a strong link between the operator $ \overline{L}_f$ and the harmonicity conditions associated with the pluriclosed flow. In the special case where the manifold is Bismut-flat, we obtain even stronger structural results.
\begin{corollary}\label{CC1}
    Let $(M,g,H,J)$ be a compact Bismut-flat manifold. For any two form $\xi$,
    \begin{align*}
         \overline{L}_f(\xi)=0\Longleftrightarrow \xi \text{ is $\nabla^{\pm}$-parallel.}
    \end{align*}
  Therefore, if $\gamma=\xi \circ J\in IGSD$ then $\gamma$ is $\nabla$-parallel and also $\nabla^\pm$-parallel.
\end{corollary}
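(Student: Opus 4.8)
The plan is to combine the preceding Proposition with the local Lie-theoretic structure of a Bismut-flat manifold. First I would observe that $(M,g,H,J)$ automatically carries a steady soliton structure with \emph{constant} potential: on a Bismut-flat manifold the Bismut Ricci tensor vanishes, so in particular its symmetric part $\Rc-\tfrac14H^2$ vanishes, and the choice $f\equiv\mathrm{const}$ then solves $\Rc-\tfrac14H^2+\nabla^2f=0$ and $d_g^*H+i_{\nabla f}H=0$ (the second equation holds because $H$, the Cartan $3$-form of the local Lie-group structure, is coclosed). Thus $(M,g,H,J,f)$ is a compact steady pluriclosed soliton and, $f$ being constant, $\overline{\divg}_f=\overline{\divg}$, $d_f^*=d^*$, $\overline{\Delta}_f=\overline{\Delta}$, $(d^B_f)^*=(d^B)^*$. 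I would also record that $R^+=0$ together with $dH=0$ and the first Bianchi identity for $\nabla^+$ gives $\nabla^+H=0$; hence $\ker H\subset TM$ and its orthogonal complement are $\nabla^+$-parallel sub-bundles with $\nabla^+=\nabla^-=\nabla$ along $\ker H$, and by Ambrose--Singer $(M,\nabla^+,H)$ is locally a Lie group with bi-invariant metric, reductive Lie algebra $\mathfrak g=\mathfrak z\oplus\mathfrak g_{ss}$ ($\mathfrak z$ central, $\mathfrak g_{ss}$ semisimple of compact type), $H$ its Cartan $3$-form, and $\ker H\leftrightarrow\mathfrak z$.

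For the biconditional, the implication $(\Leftarrow)$ is immediate: if $\xi$ is $\nabla^+$- and $\nabla^-$-parallel it is $\nabla$-parallel, hence $\overline{\Delta}_f\xi=0$ (each term of $\overline{\Delta}_f$ is built from $\nabla^+\xi$, $\nabla^+\nabla^+\xi$, or curvatures of $\nabla^+$, all of which vanish) and $\mathring{R}^+(\xi)=0$ since $R^+=0$, so $\overline{L}_f(\xi)=0$. For $(\Rightarrow)$ I would invoke the preceding Proposition to get $d\xi=0$, $d^*\xi=0$, $(d^B)^*\xi=0$; since $\mathring{R}^+=0$, $\overline{L}_f(\xi)=0$ reduces to $\overline{\Delta}_f\xi=0$, and integration by parts (with no curvature term, as $R^+=0$) gives $\nabla^+\xi=0$. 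In a local $\nabla^+$-parallel orthonormal coframe $\xi$ has constant coefficients and $d,d^*$ become the Chevalley--Eilenberg differential and codifferential of $\mathfrak g$; decomposing along $\Lambda^2\mathfrak g^*=\Lambda^2\mathfrak z^*\oplus(\mathfrak z^*\wedge\mathfrak g_{ss}^*)\oplus\Lambda^2\mathfrak g_{ss}^*$, the equations $d\xi=d^*\xi=0$, the vanishing $H^1(\mathfrak g_{ss})=H^2(\mathfrak g_{ss})=0$, and perfectness of $\mathfrak g_{ss}$ force the last two components of $\xi$ to vanish, so $\xi$ is $\Lambda^2\mathfrak z^*$-valued. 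This condition is $\nabla^+$-parallel, hence holds globally, and since $H\equiv0$ along $\ker H$ we obtain $\nabla^-\xi=\nabla^+\xi-(g^{-1}H)\cdot\xi=0$, whence $\nabla\xi=0$; thus $\xi$ is $\nabla^\pm$-parallel.

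For the final assertion, if $\gamma=\xi\circ J\in IGSD$ then $\gamma\in\ker\overline{\divg}_f$ and $\overline{N}_f(\gamma)=0$, so $\overline{L}_f(\gamma)=0$ by \Cref{MT}. Since the Bismut connection satisfies $\nabla^+J=0$ and $R^+=0$, the operator $\overline{L}_f$ commutes with $\,\cdot\circ J$, so $\overline{L}_f(\xi)=0$ as well, and by the biconditional just proved $\xi$ is $\nabla^\pm$-parallel, in particular $\Lambda^2\mathfrak z^*$-valued. Combined with the $J$-compatibility of $\xi$ (recall $\xi=\phi+\beta\circ J$, which confines the relevant part of $\xi$ to the $J$-invariant subspace of $\mathfrak z$, on which $J$ is $\nabla$-parallel) and with $\nabla^+J=0$, this yields $\nabla^\pm(\xi\circ J)=0$, i.e.\ $\gamma$ is $\nabla$-parallel and $\nabla^\pm$-parallel.

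I expect the main obstacle to lie in the $(\Rightarrow)$ direction, namely in making precise that a Hodge-harmonic $2$-form on a compact Bismut-flat manifold must be concentrated on the parallel distribution $\ker H$ — this is exactly what permits upgrading $\nabla$-parallelism to $\nabla^\pm$-parallelism. That argument rests on the local Lie-group model (hence on the Bianchi-identity computation $\nabla^+H=0$ from $R^+=0$ and $dH=0$) and on the vanishing of $H^1(\mathfrak g_{ss})$ and $H^2(\mathfrak g_{ss})$; the globalization is then automatic since both the relevant sub-bundle decomposition of $\Lambda^2T^*M$ and $\xi$ are $\nabla^+$-parallel. A secondary technical point is to verify that $\overline{L}_f$ commutes with $\,\cdot\circ J$ and that the $J$-compatibility built into a deformation of the form $\xi\circ J$ is strong enough to propagate parallelism through $J$ even in cases where $J$ fails to preserve the splitting $TM=\ker H\oplus(\ker H)^\perp$.
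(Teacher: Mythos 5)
Your proposal takes a genuinely different route from the paper (a local Lie--group model via Cartan--Schouten plus the Whitehead lemmas, rather than a Bochner identity), but it has a critical gap at the very step that carries the whole content of the corollary. In the $(\Rightarrow)$ direction you assert that, since $R^+=0$, the equation $\overline{L}_f(\xi)=0$ reduces to $\overline{\Delta}_f\xi=0$ and that ``integration by parts (with no curvature term) gives $\nabla^+\xi=0$.'' Integration by parts gives $\overline{\nabla}\xi=0$, where $\overline{\nabla}$ is the \emph{mixed} Bismut connection of (\ref{MBC}) (using $\nabla^-$ in one slot and $\nabla^+$ in the other), since $\overline{\Delta}_f=-\overline{\nabla}^{*_f}\overline{\nabla}$. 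One has $\overline{\nabla}_m\xi_{ij}-\nabla^+_m\xi_{ij}=H_{mik}\xi_{kj}$, and proving that this contraction vanishes is precisely the point of the corollary: Proposition~\ref{BFG} already records that in the Bismut-flat case $\overline{L}_f(\gamma)=0$ is equivalent to $\overline{\nabla}\gamma=0$, so the corollary is exactly the upgrade from $\overline{\nabla}$-parallelism to genuine $\nabla$- and $\nabla^\pm$-parallelism. The paper obtains this upgrade from the identity
\begin{align*}
\int_M\langle\overline{L}_f(\xi),\xi\rangle\, dV_g=-\tfrac12\|\nabla\xi\|^2-\tfrac12\|d^*\xi-(d^B)^*\xi\|^2-\int_M R_{lk}\xi_{kj}\xi_{lj}\,dV_g,
\end{align*}
in which, because $\Rc=\tfrac14H^2\geq 0$ on a Bismut-flat manifold, the last term equals $-\tfrac14\int\sum_{a,b,j}\big(\sum_lH_{lab}\xi_{lj}\big)^2$; hence $\overline{L}_f(\xi)=0$ forces both $\nabla\xi=0$ and $H_{lab}\xi_{lj}=0$ pointwise, which is what yields $\nabla^\pm\xi=0$. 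Your argument has no mechanism producing $H_{lab}\xi_{lj}=0$ at this stage, and without $\nabla^+\xi=0$ the subsequent Lie-theoretic step (constant coefficients in a $\nabla^+$-parallel frame, Chevalley--Eilenberg reduction, $H^1(\mathfrak g_{ss})=H^2(\mathfrak g_{ss})=0$) cannot get started. Were that first step repaired, your Whitehead-lemma argument would indeed recover $\xi\in\Lambda^2\mathfrak z^*$ and hence $\nabla^\pm$-parallelism, but it is substantially heavier machinery than the two-line Bochner argument the paper uses.

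Two smaller points. First, in $(\Leftarrow)$ your justification that $\overline{\Delta}_f\xi=0$ because ``each term is built from $\nabla^+\xi$'' misidentifies the operator; the correct (easy) argument is that $\nabla^+\xi=\nabla^-\xi=0$ forces $T_{mij}:=H_{mik}\xi_{kj}$ to be antisymmetric in $(m,i)$ and symmetric in $(i,j)$, hence zero, after which everything vanishes. Second, for the final assertion about $\gamma=\xi\circ J$, your deduction of $\nabla\gamma=0$ and $\nabla^-\gamma=0$ from $\nabla^+J=0$ is not justified, since $J$ is in general neither $\nabla$- nor $\nabla^-$-parallel on a non-K\"ahler manifold; the paper instead observes that $\int R_{lk}\gamma_{kj}\gamma_{lj}=\int R_{lk}\xi_{kj}\xi_{lj}=0$ and $\int R_{lk}\gamma_{jk}\gamma_{jl}=0$, which give $H_{lab}\gamma_{lj}=H_{lab}\gamma_{jl}=0$ directly and hence parallelism of $\gamma$ for all three connections.
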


The \Cref{CC1} leads to the following dynamical stability result for the pluriclosed flow.
\begin{theorem}
Suppose $(M,g,H,J)$ is a compact, pluriclosed Bismut-flat manifold. Then, $(M,g,H,J)$ is pluriclosed flow dynamically stable and the convergence rate is exponential.     
\end{theorem}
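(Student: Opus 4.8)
The plan is to verify, for the steady gradient generalized Ricci soliton underlying $(M,g,H,J)$, the two hypotheses appearing in the dynamical-stability and exponential-convergence theorems established above — linear stability and integrability of all infinitesimal generalized solitonic deformations — and then to transfer the resulting statement to the pluriclosed flow by means of the known gauge equivalence between the pluriclosed flow and the generalized Ricci flow.

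First I would note that a compact Bismut-flat pluriclosed manifold $(M,g,H,J)$ is itself a steady gradient generalized Ricci soliton with constant potential: Bismut-flatness forces $\mathrm{Ric}^+=0$, and splitting this identity into symmetric and antisymmetric parts yields $\Rc-\tfrac14 H^2=0$ and $d_g^*H=0$, so $(g,b=0,f\equiv\text{const})$ solves the steady soliton equations. In particular $f$ is constant, hence $V=\tfrac12(\theta^\sharp-\nabla f)=\tfrac12\theta^\sharp$, $\mathcal{L}_V f=0$, and $S_B=\tr_\omega\rho_B=0$ because the Bismut Ricci form is a contraction of the vanishing Bismut curvature. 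Substituting these into the pluriclosed second variation formula recalled in the introduction, every variation $\gamma=\xi\circ J+\eta\in\ker\overline{\divg}_f$ satisfies
\begin{align*}
\frac{d^2}{dt^2}\Big|_{t=0}\lambda(\gamma)=-2\|d_f^*\xi\|_f^2-\tfrac16\|d\xi-C\|_f^2\le 0,
\end{align*}
while by \Cref{MT} variations in $\mathrm{Im}\,\overline{\divg}_f^*$ contribute nothing; hence $(g,b=0)$ is linearly stable.

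Next I would establish integrability of all infinitesimal generalized solitonic deformations. By \Cref{CC1}, any $\gamma=\xi\circ J\in\mathrm{IGSD}$ is $\nabla$-parallel and $\nabla^\pm$-parallel, so it is determined by algebraic data adapted to the de Rham–type splitting of $(M,g,H)$. Using the structure theory of compact Bismut-flat manifolds — up to finite cover a product of a flat torus factor with a compact semisimple Lie group $G$ carrying a bi-invariant metric, its Cartan $3$-form, and a compatible left-invariant complex structure — the $\xi$-direction integrates trivially, since shifting $b$ by a closed $2$-form leaves $H$ and hence the soliton equations unchanged, and the $\eta$-direction integrates by the explicit family obtained from rescaling the flat and simple factors (each being irreducible under the holonomy of $\nabla^\pm$, so the rescaled metrics remain bi-invariant and pluriclosed); the tangent space of this finite-dimensional family at $\mathcal{G}$ is exactly the space of parallel deformations. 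I expect this step — matching the abstract space $\mathrm{IGSD}$ with the tangent space to the concrete moduli of Bismut-flat pluriclosed structures, keeping track of the $b$-field and complex-structure degrees of freedom — to be the main obstacle, as it rests on the classification and a careful bookkeeping of the decomposition $\gamma=\xi\circ J+\eta$.

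With both hypotheses in hand, \Cref{T1} shows that $\lambda$ attains a local maximum at $\mathcal{G}=(g,b=0)$ in a $C^{2,\alpha}$-neighborhood, the result of \cite{K} upgrades this to dynamical stability of the generalized Ricci flow near $\mathcal{G}$, and the exponential-convergence theorem proved above supplies the rate $\|(\varphi_t,B_t)\cdot\mathcal{G}_t-\mathcal{G}_\infty\|_{C^k}\le C_1 e^{-C_2 t}$ with $\mathcal{G}_\infty$ a steady gradient generalized Ricci soliton. It remains to pass from the generalized Ricci flow to the pluriclosed flow: a pluriclosed flow $\omega_t$ on $(M,J)$ produces, via the standard time-dependent $B$-field transformation realizing the equivalence of the two flows, a generalized Ricci flow $(g_t,b_t)$ with $H_t=-d^c\omega_t$; applying the statements above to $(g_t,b_t)$ and undoing the gauge, the modified pluriclosed flow remains in any prescribed neighborhood of $\omega$, converges exponentially, and its limit — being a $C^k$-limit of Hermitian metrics compatible with the fixed $J$ that is a steady generalized Ricci soliton — is a pluriclosed steady soliton. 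This gives the asserted pluriclosed-flow dynamical stability with exponential rate; the only points requiring care are the uniform $C^k$-control of the gauge transformations implementing the equivalence and the identification of the limiting complex structure with $J$, both of which are built into the compatibility of the gauge equivalence with the regularity estimates used above.
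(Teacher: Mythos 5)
Your overall strategy---verify linear stability and integrability for the underlying steady soliton, then invoke the dynamical stability machinery (\Cref{MTT}) and the gauge equivalence with the pluriclosed flow---is the same as the paper's, and your linear stability argument is fine (whether one uses the pluriclosed formula with $S_B=0$, $f$ constant, or simply the general second variation of \Cref{MT1} with $R^+=0$, which gives $\tfrac{d^2}{dt^2}\lambda=-\tfrac12\|\overline{\nabla}\gamma\|_f^2\le 0$). The problem is the integrability step, which you yourself flag as the main obstacle and do not actually carry out. You propose to invoke the structure theory of compact Bismut-flat manifolds and to match $IGSD$ with the tangent space of an explicit finite-dimensional family built from rescalings of the irreducible factors and shifts of $b$ by closed $2$-forms. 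That identification is precisely the content that needs proof, and as literally described your family is too small: parallel deformations need not be multiples of the metric on each factor (on the flat torus factor alone the parallel symmetric $2$-tensors form a $k(k+1)/2$-dimensional space, while rescalings give far fewer), and you would also have to account for parallel mixed-type and skew components. So the proposal, as written, does not close.

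The paper avoids the classification entirely. By \Cref{CC1}, any $\gamma=h-K\in IGSD$ (in the fixed-$J$ setting of this section) is $\nabla$- and $\nabla^{\pm}$-parallel. One then checks directly (\Cref{IS}) that along the affine line $\tilde g=g+\epsilon h$, $\tilde b=b+\epsilon K$ the Bismut connection does not change: the variation of the Christoffel symbols is $\tfrac{\epsilon}{2}(\nabla_i h_{jk}+\nabla_j h_{ik}-\nabla_k h_{ij})=0$, and the variation of the torsion is $\epsilon\, dK=0$ since $dK$ is the antisymmetrization of $\nabla K$. Hence $\widetilde{\nabla}^B=\nabla^B$, the perturbed metric is again Bismut-flat (so again a steady soliton), and $\gamma$ is integrated by an explicit curve of critical points tangent to it. This two-line computation is what you should substitute for the structure-theoretic argument; with it in place, the rest of your proof (local maximum, exponential convergence from \Cref{MTT}, and the transfer to the pluriclosed flow via the gauge equivalence) goes through as you describe.
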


To establish the local maximum of the entropy functional $\lambda$ in the pluriclosed soliton setting, we consider variations restricted to a fixed real $(1,1)-$Aeppli cohomology class (c.f Definition \ref{Aeppli}) and observe that steady pluriclosed soliton reaches the local maximum. In conclusion, we have the dynamical stable property of steady pluriclosed soliton.
\begin{theorem}
      Let $(M,g_E,b_E,J,f_E)$ be a compact, steady pluriclosed soliton, $H_E=-d^c\omega_E$ and the first Chern class $c_1=0\in H^{1,1}_A$. Then, there exists a neighborhood $\mathcal{U}$ of $(g_E,b_E)$ such that if the pluriclosed flow starts at $(g_0,b_0)\in\mathcal{U}$ with $[\omega_E]=[\omega_0]\in H^{1,1}_A$, there exists a family of automorphism $\{(\varphi_t,B_t)\}\in \GDiff_{H_E}$ such that the modified flow $(\varphi_t,B_t)\cdot(g_t,b_t)$ stays in $\mathcal{U}$ for all time and converges exponentially to some steady pluriclosed soliton $(\varphi_\infty,B_\infty)\cdot (g_E,b_E)$.
\end{theorem}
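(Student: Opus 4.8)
The plan is to transfer the statement to the generalized Ricci flow, use the cohomological hypothesis to confine the flow to a constrained configuration space, establish linear stability there, and then run the scheme ``linearly stable $\Rightarrow$ local maximum $\Rightarrow$ dynamically stable'' in that restricted setting. First I would pass to generalized geometry: by the gauge equivalence between the pluriclosed flow and the generalized Ricci flow, a pluriclosed flow $(M,g_t,J,\omega_t)$ corresponds, after a time-dependent diffeomorphism, to a generalized Ricci flow $(g_t,b_t)$ with $H_t=H_E+db_t=-d^c\omega_t$, and $(g_E,b_E)$ becomes a steady gradient generalized Ricci soliton with generalized metric $\mathcal{G}_E$. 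Because $c_1=0$ in $H^{1,1}_A$, the Bismut--Ricci $(1,1)$-form $\rho_B^{1,1}$ is Aeppli-exact, so $\partial_t[\omega_t]_A=-[\rho_B^{1,1}]_A=0$ and the flow preserves the Aeppli class; I would therefore work inside the constrained space $\mathcal{C}=\{\,\text{pluriclosed }\omega:[\omega]_A=[\omega_E]_A\,\}$ (and its generalized-metric counterpart), and invoke the generalized slice theorem of the excerpt, applied within $\mathcal{C}$, to gauge-fix a nearby generalized metric $\widetilde{\mathcal{G}}$ by some $(\varphi_X,B)\in\GDiff_{H_E}$ so that $\overline{\divg}_f\big((\varphi_X,B)\cdot\widetilde{\mathcal{G}}-\mathcal{G}_E\big)=0$ while staying in $\mathcal{C}$.

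Next I would establish linear stability on $\mathcal{C}$ and identify the null directions. Variations tangent to $\mathcal{C}$ fix $J$, so the infinitesimal variation $I=\partial_t J$ vanishes and $\eta=\omega\circ I=0$ in the decomposition $\gamma=\xi\circ J+\eta$; the pluriclosed second variation formula then collapses to
\[
\frac{d^2}{dt^2}\Big|_{t=0}\lambda(\gamma)=-2\,\|d^*_f\xi\|_f^2-\tfrac{1}{6}\,\|d\xi-C\|_f^2\le 0,
\]
so $\mathcal{G}_E$ is linearly stable within $\mathcal{C}$. Equality forces $d^*_f\xi=0$ and $d\xi=C$, which together with $\gamma\in\ker\overline{\divg}_f$ and the proposition characterizing $\ker\overline{L}_f$ through $d\xi=0$, $d^*_f\xi=0$, $(d^B_f)^*\xi=0$ pins the null space down to a finite-dimensional space of $f$-twisted harmonic-type $2$-forms, transverse to the infinitesimal automorphisms. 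I would then verify that each such null direction is integrable to a curve of steady pluriclosed solitons in $\mathcal{C}$ — using the explicit harmonic/parallel structure (as in \Cref{CC1} in the Bismut-flat case, and in general via an implicit function argument on the affine slice whose obstruction vanishes because $\overline{L}_f$ is strictly negative transverse to the null space) — so that the critical set of $\lambda|_{\mathcal{C}}$ near $\mathcal{G}_E$ is a smooth manifold, which the hypothesis $c_1=0$ identifies, up to a residual finite-dimensional soliton family, with the $\GDiff_{H_E}$-orbit of $\mathcal{G}_E$. With linear stability and integrability in hand, the argument of \Cref{T1}, carried out inside the slice adapted to $\mathcal{C}$, then produces a $C^{2,\alpha}$-neighborhood $\mathcal{U}$ of $(g_E,b_E)$ on which $\lambda\le\lambda(g_E,b_E)$, with equality exactly on steady pluriclosed solitons.

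Finally I would feed this into the local-maximum $\Rightarrow$ dynamical-stability mechanism of \cite{K}, together with the exponential-convergence theorem stated above, now restricted to the pluriclosed flow on $\mathcal{C}$: monotonicity of $\lambda$ along the flow keeps the gauge-fixed flow in $\mathcal{U}$ (and gives long-time existence), the slice theorem selects $\{(\varphi_t,B_t)\}\subset\GDiff_{H_E}$ with $(\varphi_t,B_t)\cdot(g_t,b_t)$ confined to $\mathcal{U}$ for all time, and a {\L}ojasiewicz--Simon inequality for the analytic functional $\lambda$ near its (smooth, by the previous step) critical set upgrades the convergence to exponential rate. Since the chosen automorphisms preserve $J$ and the Aeppli class, $(\varphi_t,B_t)\cdot(g_t,b_t)$ remains pluriclosed in $\mathcal{C}$ and converges exponentially to a steady pluriclosed soliton of the form $(\varphi_\infty,B_\infty)\cdot(g_E,b_E)$, which is the assertion.

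The main obstacle I expect is making the slice theorem and the {\L}ojasiewicz--Simon estimate compatible with the two simultaneous constraints — fixed complex structure and fixed Aeppli class — so that $\GDiff_{H_E}$ genuinely acts within $\mathcal{C}$ and the transverse second variation remains strictly negative; the tightly linked point, the integrability of the null directions inside $\mathcal{C}$ and the attendant rigidity of $\mathcal{G}_E$ up to gauge when $c_1=0$, is where I expect the bulk of the real work to lie.
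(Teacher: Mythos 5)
Your overall architecture --- gauge equivalence with the generalized Ricci flow, preservation of the Aeppli class when $c_1=0$, local maximum of $\lambda$ implies dynamical stability via the scheme of \cite{K}, and a Lojasiewicz--Simon inequality for the exponential rate --- matches the paper. But you miss the one observation that makes the constrained local-maximum step (the paper's Proposition \ref{LMP}) work cleanly, and the substitute you propose in its place is not justified. The hypothesis $[\omega]=[\omega_E]\in H^{1,1}_A$ does not merely restrict you to variations with $\eta=0$; it forces the variation $2$-form to be \emph{exact}: writing $\phi=\overline{\partial}\alpha+\partial\overline{\alpha}$ for the Aeppli-trivial change of $\omega$ and combining with $\beta\circ J$, one gets $\xi=\phi+\beta\circ J=d\alpha$. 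Consequently $d\xi-C=dd\alpha=0$ automatically (since $\eta=0$ gives $C=0$), and the second variation collapses to $-2\|d^*_f d\alpha\|_f^2$, which vanishes if and only if $d\alpha=0$, i.e.\ if and only if the variation is trivial. The soliton is therefore \emph{rigid} in the fixed-$J$, fixed-Aeppli-class configuration space: there are no null directions at all, hence nothing to integrate.

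Your proposal instead identifies the equality case as ``a finite-dimensional space of $f$-twisted harmonic-type $2$-forms'' and then devotes the hard part of the argument to showing these null directions integrate to curves of solitons, asserting that the obstruction ``vanishes because $\overline{L}_f$ is strictly negative transverse to the null space.'' This is where the gap lies: a nonzero $f$-twisted harmonic $2$-form is precisely \emph{not} exact, so it is not tangent to the fixed-Aeppli-class space $\mathcal{C}$ in the first place --- the constraint you set up already excludes it, and your transversality claim about the obstruction is circular as stated (strict negativity transverse to the null space says nothing about second-order obstructions \emph{along} the null space). Once you use exactness of $\xi$, the strict negativity of the second variation on all nontrivial constrained variations gives the local maximum with rigidity directly, and the rest of your argument (monotonicity of $\lambda$ keeps the gauge-fixed flow in $\mathcal{U}$, convergence to a critical point with $\lambda(g_\infty,b_\infty)=\lambda(g_E,b_E)$ and $[\omega_\infty]=[\omega_E]$, hence $(g_\infty,b_\infty)$ lies in the $\GDiff_{H_E}$-orbit of $(g_E,b_E)$) goes through exactly as in the paper, with no integrability analysis needed.
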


The layout of this paper is as follows: In section 2, we review background material, including the generalized slice theorem, properties of Bismut connections and curvature, and the formulation of the generalized Ricci flow. In section 3, we focus on the second variation operator. In section 4, We explore the local maximum property of $\lambda$, and prove \Cref{T1} relating linear and dynamical stability. In section 5, we focus on the pluriclosed setting and study the infinitesimal generalized solitonic deformation. In section 6, we discuss the dynamical stability of the pluriclosed flow.

\textbf{Acknowledgements:} This work is written when the author is a math Ph.D. student at the University of California-Irvine and it is part of his Ph.D. thesis. I am grateful to my advisor Jeffrey D.~Streets for his helpful advice. His suggestions play an important role in this work. 

\section{Preliminary}

\subsection{Notation}
In this work, the convention of Riemann curvature is given by 
\begin{align*}
    R(X,Y,Z,W)=\langle \nabla_X\nabla_YZ-\nabla_Y\nabla_XZ-\nabla_{[X,Y]}Z,W \rangle, \quad R_{ijkl}=R(e_i,e_j,e_k,e_l).
\end{align*}
We adopt the following notations.

\begin{itemize}
    \item For any 2-tensor $\gamma$,
\begin{align*}
    \mathring{R} (\gamma)_{ij}= R_{iklj}\gamma_{kl}. 
\end{align*}
    \item The $f$-twisted $L^2$ inner product is given by
\begin{align}
    \Big( \gamma_1,\gamma_2\Big)_{f}\coloneqq\int_M \langle \gamma_1,\gamma_2 \rangle_g  e^{-f} dV_g, \label{6}
\end{align}
where $\langle\kern0.3em,\kern0.3em\rangle_g$ denotes the standard inner product induced by a Riemannian metric $g$ and $\gamma_1,\gamma_2$ are tensors of same type. In particular, we mainly focus on the case when $\gamma$ is a 2-tensor.
\item The $f$-twisted Laplacian $\Delta_f$ is given by
\begin{align*}
    \Delta_f=\Delta-\nabla f\cdot \nabla. 
\end{align*}
 \item Let $S^pM$ denote the bundle of symmetric $(0,p)-$tensor. The $f$-twisted divergence operator $\divg_f: C^\infty(S^pM)\rightarrow  C^\infty(S^{p-1}M)$ is given by 
  \begin{align*}
     (\divg_f T)(X_1,...,X_{p-1})=-\sum_{i=1}^n (\nabla_{e_i}T)(e_i,X_1,...,X_{p-1})+T(\nabla f,X_1,...,X_{p-1}).
 \end{align*}
 \item On a complex manifold $(M,J)$, we denote
\begin{align*}
    (\gamma\circ J)(\cdot,\cdot)=\gamma(\cdot,J\cdot),
\end{align*}
for any 2-tensor $\gamma$. In addition, in the following we will use the index $i,j,k,...$ to represent real coordinates and the index $\alpha,\beta,\gamma,...$ to represent complex coordinates. 
\end{itemize}

\subsection{Generalized Geometry}
In this section, we review some basic definitions and properties of generalized geometry. More details can be found in \cite{GRF}.

\begin{defn}\label{D2}
A \emph{Courant algebroid} is a vector bundle $E\longrightarrow M$ with a nondegenerate bilinear form $\langle \cdot,\cdot\rangle$, a bracket $[\cdot,\cdot]$ on $\Gamma(E)$ and a bundle map $\pi:E\longrightarrow TM$ satisfies that for all $a,b,c\in\Gamma(E)$, $f\in C^\infty(M)$,
\begin{itemize}
    \item $[a,[b,c]]=[[a,b],c]+[b,[a,c]]$.
    \item $ \pi[a,b]=[\pi(a),\pi(b)].$
    \item $[a,fb]=f[a,b]+\pi(a)fb.$
    \item $\pi(a)\langle b,c\rangle=\langle [a,b],c \rangle+\langle a,[b,c] \rangle.$
    \item $[a,b]+[b,a]=\mathcal{D}\langle a,b \rangle $ where $\mathcal{D}: C^\infty(M)\to \Gamma(E)$ is given by $\mathcal{D}(\phi)\coloneqq\pi^*(d\phi)$.
\end{itemize}
We say a Courant algebroid $E$ is \emph{exact} if we have the following exact sequence of vector bundles 
\[  \begin{tikzcd}
  0 \arrow[r] & T^*M  \arrow[r, "\pi^*"] & E \arrow[r, "\pi"] & TM  \arrow[r] & 0 .
\end{tikzcd}
\]
\end{defn}

\begin{defn}\label{D3}
Let $E$ be an exact Courant algebroid. The \emph{automorphism group} $\Aut(E)$ of $E$ is a pair $(f,F)$ where $f\in\Diff(M)$ and $F\colon E\to E$ is a bundle map such that for all $u,v\in \Gamma(E)$
\begin{itemize}
    \item $ \langle Fu,Fv \rangle=f_{*}\langle u,v\rangle.$
    \item $[Fu,Fv]=F[u,v].$
    \item $\pi_{TM}\circ F=f_{*}\circ \pi_{TM}.$
\end{itemize}

\end{defn}

\begin{defn}\label{D4}
Given a smooth manifold $M$ and an exact Courant algebroid $E$ over $M$, a \emph{generalized metric} on $E$ is a bundle endomorphism $\mathcal{G}\in \Gamma(\End(E))$ satisfying 

\begin{itemize}
    \item $\langle \mathcal{G}a,\mathcal{G}b \rangle=\langle a,b\rangle.$
    \item $ \langle \mathcal{G}a,b \rangle=\langle a,\mathcal{G}b\rangle.$
    \item $ \langle \mathcal{G}a,b \rangle \text{  is symmetric and positive definite for any $a,b\in E$.}$
\end{itemize}

\end{defn}

\begin{example}
The most common and important example of Courant algebroids is $TM\oplus T^*M$. In this case, we define a nondegenerate bilinear form $\langle\cdot,\cdot\rangle$ and a bracket $[\cdot,\cdot]$ on $TM\oplus T^*M$ by
\begin{align*}
    \langle X+\xi,Y+\eta \rangle&\coloneqq\frac{1}{2}(\xi(Y)+\eta(X)),
    \\ [X+\xi,Y+\eta]_H&\coloneqq[X,Y]+L_X\eta-i_Y d\xi+i_Yi_XH
\end{align*}
where $X,Y\in TM$,  $\xi,\eta\in T^*M$ and $H$ is a 3-form. Define $\pi$ to be the standard projection, one can check that $(TM\oplus T^*M)_H\coloneqq(TM\oplus T^*M,\langle\cdot,\cdot\rangle,[\cdot,\cdot]_H,\pi)$ satisfies the Courant algebroid conditions. Moreover, its automorphism groups are given as follows.
\begin{align*}
   \GDiff_H=\{(f,\overline{f}\circ e^B): f\in\Diff(M), B\in \Omega^2 \text{  such that  } f^*H=H-dB\},
\end{align*}
where
\begin{align*}
    \overline{f}&=\begin{pmatrix} f_{\star} & 0 \\ 0 & (f^*)^{-1} \end{pmatrix}: X+\alpha\longmapsto f_{*}X+(f^*)^{-1}(\alpha),
    \\ e^B&=\begin{pmatrix} Id & 0 \\ B & Id \end{pmatrix}: X+\alpha\longmapsto X+\alpha+i_XB  \qquad \text{  for any $X\in TM$ and $\alpha\in T^*M$}.
\end{align*}
The product of automorphisms is given by
\begin{align*}
    (f,F)\circ (f',F')=\overline{f\circ f'}\circ e^{B'+f'^*B} \quad \text{  where } F=\overline{f}\circ e^B,\quad F'=\overline{f'}\circ e^{B'}.
\end{align*}
In the following, we will denote $\GDiff_H$ to be the automorphism group of $(TM\oplus T^*M)_H$, $\mathcal{GM}$ to be the space of all generalized metrics, and $\mathcal{M}$ to be the space of all Riemannian metrics. 
\end{example}

Recall that in \cite{GRF} Proposition 2.10, we see that for any exact Courant
Courant algebroid $E$ with a isotropic splitting $\sigma$, $E\cong_\sigma (TM\oplus T^*M)_H$ where 
\begin{align*}
    H(X,Y,Z)=2\langle [\sigma X,\sigma Y], \sigma Z \rangle \quad X,Y,Z \in TM. 
\end{align*}
Therefore, we see that
\begin{align}
    \Aut(E)\cong_{\sigma}\GDiff_H=\{(\varphi,B)\in\Diff(M)\ltimes\Omega^2: \varphi^*H=H-dB\}. \label{Aut}
\end{align}
Moreover, we have the following proposition.

\begin{proposition}[\cite{GRF} Proposition 2.38 and 2.40] \label{P2}
Let $E$ be an exact Courant algebroid. The space of all generalized metrics $\mathcal{GM}$ on $E$ is isomorphic to $\mathcal{M}\times \Omega^2$.
\end{proposition}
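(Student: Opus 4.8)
Proof proposal for Proposition \ref{P2} ($\mathcal{GM}\cong\mathcal{M}\times\Omega^2$).

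The plan is to produce the isomorphism explicitly by showing that a generalized metric $\mathcal{G}$ on an exact Courant algebroid is equivalent to the data of a positive-definite subbundle of $E$ together with a choice of isotropic splitting, and then to trade that for a Riemannian metric and a $2$-form. Concretely, I would first fix an isotropic splitting $\sigma$ so that $E\cong_\sigma (TM\oplus T^*M)_H$ as in the discussion preceding \Cref{P2}. Since $\mathcal{G}$ is symmetric with respect to $\langle\cdot,\cdot\rangle$ and $\mathcal{G}^2=\mathrm{Id}$ (this follows from $\langle\mathcal{G}a,\mathcal{G}b\rangle=\langle a,b\rangle$ together with self-adjointness and nondegeneracy of the pairing), $\mathcal{G}$ is an involution; let $V_+=\ker(\mathcal{G}-\mathrm{Id})$ and $V_-=\ker(\mathcal{G}+\mathrm{Id})$ be its eigenbundles, so $E=V_+\oplus V_-$. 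Positive-definiteness of $\langle\mathcal{G}\cdot,\cdot\rangle$ forces $\langle\cdot,\cdot\rangle$ to be positive definite on $V_+$ and negative definite on $V_-$; in particular $V_\pm$ contain no nonzero isotropic vectors, so $V_\pm\cap T^*M=0$, and since $\dim V_\pm=\dim M$ the projection $\pi\colon V_\pm\to TM$ is an isomorphism.

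Next I would use $V_+$ to extract the two pieces. Because $\pi|_{V_+}$ is an isomorphism, $V_+$ is the graph of a bundle map $TM\to T^*M$, i.e.\ $V_+=\{X+(g+b)X: X\in TM\}$ for a unique $(2,0)$-tensor which I split into its symmetric part $g$ and antisymmetric part $b$. The restriction of $2\langle\cdot,\cdot\rangle$ to $V_+$, pulled back via $\pi$, is exactly $g$, so positive-definiteness of the pairing on $V_+$ says precisely that $g\in\mathcal{M}$ is a Riemannian metric; and $b\in\Omega^2$ is unconstrained. This defines a map $\mathcal{GM}\to\mathcal{M}\times\Omega^2$, $\mathcal{G}\mapsto(g,b)$ (depending on $\sigma$, which we have fixed). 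Conversely, given $(g,b)$, the subbundle $V_+$ above and its $\langle\cdot,\cdot\rangle$-orthogonal complement $V_-$ (which one checks is the graph of $-g+b$, using that a $v\in V_+$ pairs to zero with $X+(-g+b)X$) determine an involution $\mathcal{G}$ with $+1$-eigenbundle $V_+$; verifying the three axioms of \Cref{D4} is a short computation. These two constructions are mutually inverse, giving a bijection, and both are smooth in the natural Fréchet structures, so it is a diffeomorphism of the relevant spaces.

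The one genuine subtlety is the dependence on the splitting $\sigma$: the identification $\mathcal{GM}\cong\mathcal{M}\times\Omega^2$ is not canonical, and changing $\sigma$ by a $B$-field shifts the $\Omega^2$-factor by that $B$-field while fixing $g$. I would state this explicitly (it is exactly the content of how $\GDiff_H$, via \Cref{Aut}, acts by $e^B$ on the $2$-form part) so that later uses of the decomposition $\otimes^2T^*M$ and of the $\GDiff_H$-action are unambiguous. Beyond that point the argument is entirely linear-algebraic and fiberwise; the main thing to be careful about is signs and factors of $\tfrac12$ coming from the normalization $\langle X+\xi,Y+\eta\rangle=\tfrac12(\xi(Y)+\eta(X))$, which is what makes $g$ (rather than $2g$) come out as the induced metric. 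I expect no real obstacle — this is the standard generalized-metric linear algebra — so the proof is essentially a matter of organizing these identifications cleanly and citing \cite{GRF} for the parts that are purely bookkeeping.
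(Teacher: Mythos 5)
Your argument is correct and is exactly the standard graph-of-$(g+b)$ construction that the paper relies on by citing \cite{GRF} Propositions 2.38 and 2.40 --- the paper gives no proof of its own, so there is nothing to diverge from. The only slip is a factor of two: with the normalization $\langle X+\xi,Y+\eta\rangle=\tfrac12(\xi(Y)+\eta(X))$, it is the restriction of $\langle\cdot,\cdot\rangle$ itself (not of $2\langle\cdot,\cdot\rangle$) to $V_+$ that pulls back to $g$.
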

\begin{remark}\label{R1}
Fix a background 3-form $H_0$ such that $E\cong (TM\oplus T^*M)_{H_0}$, the proof of \Cref{P2} implies that the 3-form $H$ of any generalized metric $\mathcal{G}=\mathcal{G}(g,b)$ is induced by an isotropic splitting $\sigma(X)=X+i_X b$ and then we have $H=H_0+db$. (See \cite{K} Remark 2.7 for more details.)   
\end{remark}

In the generalized geometry, we define the $\GDiff_H$ action on generalized metrics by
\begin{align}
    \rho_{\mathcal{GM}}:\nonumber\quad &\GDiff_H \times \mathcal{GM} \longrightarrow \mathcal{GM}
    \\&((\varphi,B),(g,b))\longmapsto (\varphi^*g,\varphi^*b-B). \label{GA}
\end{align}
Here, we note that $\mathcal{GM}\cong \mathcal{M}\times \Omega^2 $ so in the following, we will always denote a generalized metric $\mathcal{G}$ by $\mathcal{G}(g,b)$ for some $(g,b)\in \mathcal{M}\times \Omega^2$. Therefore,
\begin{align} \label{TGM}
    T_{\mathcal{G}}\mathcal{GM}= T_{\mathcal{G}}(\mathcal{M}\times \Omega^2)=\Gamma(S^2M)\times \Omega^2=\otimes^2 T^*M.
\end{align}
\begin{remark}\label{Gamma}
 In the following, we usually denote $\gamma\in T_{\mathcal{G}}\mathcal{GM}$. Due to (\ref{TGM}), we see that $\gamma\in \otimes^2T^*M$ and we write $\gamma=h-K$, where $h$ is the symmetric part and $-K$ is skew-symmetric part.    
\end{remark}

\begin{defn}\label{D8}
The group of \emph{generalized isometries} (exact generalized isometries) of $\mathcal{G}\in\mathcal{GM}$ is the isotropy group of $\mathcal{G}$ under the $\GDiff_H$ ($\GDiff^e_H$)-action and is denoted by $\Isom_H(\mathcal{G})$ ($\Isom^e_H(\mathcal{G})$).  
\end{defn}

In \cite{Rubio_2019}, Rubio and Tipler proposed the generalized Ebin's slice theorem based on the $L^2$ inner product. In \cite{K}, we proved the generalized slice theorem based on the $f$-twisted inner product (\ref{6}). The precise statement is as follows.
\begin{theorem}[\cite{K} Theorem 2.14]\label{slice}
Let $\mathcal{G}$ be a generalized metric on an exact Courant algebroid $E$ and $f$ be $\Isom(\mathcal{G})$ invariant, then there exists a submanifold $S^f_\mathcal{G}$ of $\mathcal{GM}$ such that 
\begin{itemize}
    \item $\forall F\in \Isom_H(\mathcal{G}),\, F\cdot S^f_\mathcal{G}=S^f_\mathcal{G}$.
    \item $ \forall F\in \GDiff_H, \text{if $(F\cdot S^f_\mathcal{G})\cap S^f_\mathcal{G}=\emptyset $ then }F\in \Isom_H(\mathcal{G})$.
    \item There exists a local cross section $\chi$ of the map $F\longmapsto \rho_{\mathcal{GM}}(F,\mathcal{G})$ on a neighborhood $\mathcal{U}$ of $\mathcal{G}$ in the orbit space $\mathcal{O}_{\mathcal{G}}=\GDiff_H\cdot\mathcal{G}$ such that the map from $\mathcal{U}\times S^f_{\mathcal{G}}\longrightarrow \mathcal{GM}$ given by $(V_1,V_2)\longmapsto \rho_{\mathcal{GM}}(\chi(V_1),V_2)$ is a homeomorphism onto its image.
\end{itemize}
where $\Isom_H(\mathcal{G})$ is the isotropy group of $\mathcal{G}$ under the $\GDiff_H$-action and is called the group of generalized isometries of $\mathcal{G}\in\mathcal{GM}$. Moreover, the tangent space of the generalized slice on a generalized metric $\mathcal{G}(g,b)$ is given by
\begin{align}
    T_{\mathcal{G}}S^f_\mathcal{G}=\{\gamma=h-K\in\otimes^2T^*M: \kern0.5em (\divg_fh)_l=\frac{1}{2}K_{ab}H_{lab},  \kern0.5em d_f^*K=0 \}. \label{10}
\end{align}
\end{theorem}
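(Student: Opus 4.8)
The plan is to run the Ebin slice construction \cite{MR0267604}, in the generalized-geometry form of Rubio--Tipler \cite{Rubio_2019}, but with the $f$-twisted inner product \eqref{6} in place of the un-weighted $L^2$ product. Fix a background $3$-form $H$ with $E\cong(TM\oplus T^*M)_H$, so that $\mathcal{GM}\cong\mathcal{M}\times\Omega^2$ and the $\GDiff_H$-action is given by \eqref{GA}. As usual, one first passes to Sobolev completions $\GDiff_H^s$ and $\mathcal{GM}^s$ ($s\gg1$), which are Hilbert manifolds on which the action map is $C^1$; the slice is produced there and then shown to be smooth by elliptic regularity. Three steps are needed: (i) the $\GDiff_H$-action is proper; (ii) there is an $(\cdot,\cdot)_f$-orthogonal splitting of $T_{\mathcal{G}}\mathcal{GM}$ into the orbit tangent and a complement; (iii) the complement exponentiates to the slice submanifold.

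Step (i) is formal. First, $\Isom_H(\mathcal{G})$ is compact --- it embeds in $\Isom(g)$ times a compact torus of $B$-field symmetries, see \cite{Rubio_2019} --- and properness then follows from the usual Ebin argument: along a sequence of orbit points converging to a limit, the underlying diffeomorphisms satisfy uniform elliptic estimates after a harmonic-map gauge fixing, so a subsequence converges in $C^s$, and the $2$-form parts converge with them. The weight $e^{-f}$ is irrelevant here. Step (ii) is the technical core. The differential of the orbit map at the identity is the infinitesimal action
\[
  \alpha_{\mathcal{G}}\colon\mathfrak{gdiff}_H\longrightarrow T_{\mathcal{G}}\mathcal{GM}=\otimes^2T^*M,\qquad (X,\beta)\longmapsto\big(L_Xg,\;L_Xb-\beta\big),
\]
where $\mathfrak{gdiff}_H=\{(X,\beta):d(i_XH+\beta)=0\}$; modulo the finite-dimensional space of harmonic $2$-forms this is parametrized by $(X,\xi)\in\Gamma(TM)\times\Omega^1$ via $\beta=-i_XH+d\xi$. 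One computes the formal adjoint $\alpha_{\mathcal{G}}^*$ with respect to $(\cdot,\cdot)_f$ and the natural fibre metric on $S^2M\oplus\Lambda^2M$: each integration by parts throws the weight $e^{-f}$ onto a derivative, converting $\divg$ into $\divg_f$ and $d^*$ into $d^*_f$, and after using $dH=0$ to absorb the constraint one finds, for $\gamma=h-K$, that $\alpha_{\mathcal{G}}^*\gamma=0$ is exactly $(\divg_fh)_l=\tfrac12K_{ab}H_{lab}$ together with $d^*_fK=0$, i.e. \eqref{10}. The composition $\alpha_{\mathcal{G}}\alpha_{\mathcal{G}}^*$ is a second-order, $(\cdot,\cdot)_f$-self-adjoint operator with the same elliptic principal symbol as its un-weighted counterpart, hence Fredholm with finite-dimensional kernel containing $\mathrm{Lie}\,\Isom_H(\mathcal{G})$; therefore $\otimes^2T^*M=\im\alpha_{\mathcal{G}}\oplus\ker\alpha_{\mathcal{G}}^*$, and we set $T_{\mathcal{G}}S^f_{\mathcal{G}}:=\ker\alpha_{\mathcal{G}}^*$. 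Because $f$ is $\Isom_H(\mathcal{G})$-invariant, both $\alpha_{\mathcal{G}}$ and the pairing $(\cdot,\cdot)_f$ are $\Isom_H(\mathcal{G})$-equivariant, so $\ker\alpha_{\mathcal{G}}^*$ is an $\Isom_H(\mathcal{G})$-invariant subspace.

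For step (iii), equip $\mathcal{GM}$ with the weak Riemannian metric $(\cdot,\cdot)_f$ and put $S^f_{\mathcal{G}}:=\exp_{\mathcal{G}}\big(\{\gamma\in\ker\alpha_{\mathcal{G}}^*:\|\gamma\|_f<\varepsilon\}\big)$ for $\varepsilon$ small (the affine slice $\mathcal{G}+\{\gamma\in\ker\alpha_{\mathcal{G}}^*:\|\gamma\|<\varepsilon\}$ works equally, as the later affine-slice refinement shows). The map $\big(\GDiff_H/\Isom_H(\mathcal{G})\big)\times S^f_{\mathcal{G}}\to\mathcal{GM}$, $([F],s)\mapsto F\cdot s$, has bijective differential at $([\mathrm{id}],\mathcal{G})$ by the splitting in (ii), hence is a local diffeomorphism; combining this with the properness of (i) through Ebin's tube argument yields the three stated slice properties, the $\Isom_H(\mathcal{G})$-invariance of $S^f_{\mathcal{G}}$ being inherited from the equivariance in (ii). Finally, since $\alpha_{\mathcal{G}}^*\gamma=0$ is an elliptic system modulo finitely many dimensions, elliptic regularity shows $S^f_{\mathcal{G}}$ consists of smooth generalized metrics, with tangent space \eqref{10} at $\mathcal{G}$.

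The step I expect to be the main obstacle is the adjoint computation in (ii). One must verify that the constraint $d(i_XH+\beta)=0$ carving $\mathfrak{gdiff}_H$ out of $\Gamma(TM)\times\Omega^2$ does not spoil the ellipticity of $\alpha_{\mathcal{G}}\alpha_{\mathcal{G}}^*$, and --- more delicately --- that the $f$-weight and the cross-terms between the $g$-variation and the $b$-variation (including the $i_XH$ hidden in $L_Xb$) conspire to produce precisely the right-hand side $\tfrac12K_{ab}H_{lab}$ in \eqref{10} rather than a term involving only the background form, as well as that the finite-dimensional harmonic contributions to $\mathfrak{gdiff}_H$ are correctly accounted for. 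Tracking these identifications, together with checking that the $\Isom_H(\mathcal{G})$-invariance of $f$ is exactly what makes $\alpha_{\mathcal{G}}^*$ equivariant so that $S^f_{\mathcal{G}}$ is $\Isom_H(\mathcal{G})$-invariant, is the heart of the argument; properness and the exponential-map packaging are then routine transcriptions of Ebin's scheme.
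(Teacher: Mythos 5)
This theorem is not proved in the present paper at all --- it is imported verbatim from \cite{K} (Theorem 2.14), so there is no in-paper argument to compare against. Your plan is nevertheless the right one and, as far as one can tell, the one actually used in \cite{K}: it is Ebin's slice construction in the Rubio--Tipler generalized-geometry formulation, with the unweighted $L^2$ pairing replaced by the $f$-twisted pairing \eqref{6}. Your step (ii) is corroborated by the paper's own Section 4.1, where the affine refinement of this slice is proved: there the author verifies exactly the identity you need, namely that for $(Y,\beta)\in\mathfrak{gdiff}_H^e$ with $\kappa=d\beta-\iota_YH$ one has $\overline{\divg}_f^*(\phi(Y,\beta))=\mathcal{L}_Yg+\kappa$, so that the image of the infinitesimal action is $\im\overline{\divg}_f^*$ and its $f$-orthogonal complement is $\ker\overline{\divg}_f$, which unpacks precisely to the two conditions in \eqref{10}. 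So the adjoint computation you defer as ``the main obstacle'' is routine and is carried out elsewhere in this very paper.

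The one point you flag but do not resolve is genuinely delicate and deserves emphasis: the condition $d_f^*K=0$ makes $K$ orthogonal only to \emph{exact} $2$-form shifts of $b$, whereas the full $\GDiff_H$-orbit of $\mathcal{G}$ contains $(g,b-c)$ for every \emph{closed} $c$, and $(\mathrm{id},e^c)$ is not a generalized isometry. Hence the $f$-harmonic $2$-form directions lie both in the orbit tangent and in the subspace \eqref{10}, and the naive splitting $\im\alpha_{\mathcal{G}}\oplus\ker\alpha_{\mathcal{G}}^*$ for the full group fails by exactly this finite-dimensional space. The construction must therefore be run for the exact subgroup $\GDiff_H^e$ (equivalently, the harmonic directions must be split off by hand), which is indeed how the paper sets things up in Section 4.1 via $\mathfrak{gdiff}_H^e$. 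Your sketch acknowledges this (``modulo the finite-dimensional space of harmonic $2$-forms'') but a complete proof has to say which group the slice is actually a slice for; as written, the second bullet of the theorem would be false for $F=(\mathrm{id},e^c)$ with $c$ closed and non-exact unless this is addressed. Apart from that, and from constant/sign bookkeeping in the adjoint computation that depends on the normalization of the inner product on $2$-forms, your outline is sound.
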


\subsection{Bismut connections and curvatures}

In this subsection, let us briefly review the definition of Bismut connections. More details could be found in \cite{GRF}, \cite{KK}.
\begin{defn}\label{DD}
Let $(M,g,H)$ be a Riemannian manifold and $H$ is a closed 3-form. 
\begin{itemize}
    \item The \emph{Bismut connections} $\nabla^{\pm}$ associated to $(g,H)$ are defined as 
\begin{align}\label{PM}
    \langle \nabla_X^{\pm}Y,Z \rangle=\langle \nabla_XY,Z \rangle\pm \frac{1}{2}H(X,Y,Z). 
\end{align}
Here, $\nabla$ is the Levi-Civita connection associated with $g$, i.e., $ \nabla^{\pm}$ are the unique compatible connections with torsion $\pm H$.
\item The \emph{mixed Bismut connection} is a connection $\overline{\nabla}$ on 2-tensors is defined as
\begin{align}
    (\overline{\nabla}_X\gamma)(Y,Z)=\nabla_X\Big(\gamma(Y,Z)\Big)-\gamma(\nabla^-_XY,Z)-\gamma(Y,\nabla^+_XZ). \label{MBC}
\end{align}
\item The \emph{$f$-twisted divergence operator} $\overline{\divg}_f:  \otimes^2T^*M\longrightarrow T^*M\times T^*M$ on 2-tensors with respect to $\overline{\nabla}$ is given by
\begin{align}
       \overline{\divg}_f\gamma&=(u,v), \label{bar1}
\end{align}
where $u_l=(\nabla^+)^m\gamma_{ml}-\nabla_mf \gamma_{ml}$ and $v_l=(\nabla^-)^m\gamma_{lm}-\nabla_mf \gamma_{lm}$
\item The \emph{$f$-twisted divergence operator} $\overline{\divg}_f:  T^*M\times T^*M\longrightarrow C^\infty(M)$ on $T^*M\times T^*M$ with respect to $\overline{\nabla}$ is given by
\begin{align}
       \overline{\divg}_f(u,v)=\frac{1}{2}(\divg_f u+\divg_f v)\label{bar2} 
\end{align}
and its formal adjoint $\overline{\divg}_f^*$ is 
\begin{align}
    \overline{\divg}_f^*(u,v)_{ij}=-(\nabla^+)^iu_j-(\nabla^-)^jv_i. \label{dbars}
\end{align}
\item The \emph{Laplace operator of the mixed Bismut connection} $\overline{\Delta}_f$ is defined by 
\begin{align}
    \overline{\Delta}_f=-\overline{\nabla}^{*_f}\overline{\nabla}\label{bar3} 
\end{align}
where $\overline{\nabla}^{*_f}$ is the formal adjoint of $\overline{\nabla}$ with respect to $f$-twisted $L^2$ inner product (\ref{6}). More precisely, suppose $\gamma$ is a 2-tensor,
\begin{align}
\overline{\Delta}_f\gamma_{ij}=\Delta_f\gamma_{ij}-H_{mjk}\nabla_m\gamma_{ik}+H_{mik}\nabla_m\gamma_{kj}-\frac{1}{4}(H_{jl}^2\gamma_{il}+H_{il}^2\gamma_{lj})-\frac{1}{2}H_{mkj}H_{mli}\gamma_{lk}. \label{Lbar}  
\end{align}
\end{itemize}

\end{defn}

Following the definitions, we are able to compute the curvature tensor of the Bismut connection and the Bianchi identity of Bismut curvature.
\begin{proposition}[\cite{GRF} Proposition 3.18]\label{P3}
Let $(M^n,g,H)$ be a Riemannian manifold with $H\in \Omega^3$ and $dH=0$, then for any vector fields $X,Y,Z,W$ we have
\begin{align*}
    \nonumber Rm^+(X,Y,Z,W)=& \kern0.2em Rm(X,Y,Z,W)+\frac{1}{2}\nabla_XH(Y,Z,W)-\frac{1}{2}\nabla_YH(X,Z,W)
    \nonumber\\&-\frac{1}{4}\langle H(X,W),H(Y,Z)\rangle+\frac{1}{4}\langle H(Y,W),H(X,Z)\rangle, 
    \\\Rc^+&=\Rc-\frac{1}{4}H^2-\frac{1}{2}d^*H, \quad R^+=R-\frac{1}{4}|H|^2, 
\end{align*}
where $H^2(X,Y)=\langle i_XH,i_YH\rangle$. Here $Rm^+$, $\Rc^+$, $R^+$ denote the Riemannian curvature, Ricci curvature, and scalar curvature with respect to the Bismut connection $\nabla^+$. In particular, if $(M,g,H)$ is Bismut-flat, then 
\begin{align}
    Rm(X,Y,Z,W)=\frac{1}{4}\langle H(X,W),H(Y,Z)\rangle-\frac{1}{4}\langle H(Y,W),H(X,Z)\rangle\quad \text{and } \quad \nabla H=0 \label{R}
\end{align}
for any vector fields $X,Y,Z,W$.
\end{proposition}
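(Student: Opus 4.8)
The plan is to reduce the first formula to a single change-of-connection computation. Write $\nabla^+=\nabla+A$; the defining relation $\langle\nabla^+_XY,Z\rangle=\langle\nabla_XY,Z\rangle+\tfrac12H(X,Y,Z)$ identifies $A$ with the $(1,2)$-tensor $A_XY$ characterized by $\langle A_XY,Z\rangle=\tfrac12H(X,Y,Z)$, i.e. $A_XY=\tfrac12 H(X,Y)^{\sharp}$. For any torsion-free $\nabla$ and any $(1,2)$-tensor $A$, expanding $\nabla^+\nabla^+Z$ directly gives
\begin{align*}
    Rm^+(X,Y)Z=Rm(X,Y)Z+(\nabla_XA)_YZ-(\nabla_YA)_XZ+A_XA_YZ-A_YA_XZ,
\end{align*}
the a priori torsion contribution $A_{\nabla_XY-\nabla_YX-[X,Y]}Z$ dropping out because $\nabla$ is Levi-Civita. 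Pairing with $W$ and using $\nabla g=0$ to move the metric past $\nabla_X$, one gets $\langle(\nabla_XA)_YZ,W\rangle=\tfrac12(\nabla_XH)(Y,Z,W)$, while $\langle A_XA_YZ-A_YA_XZ,W\rangle$ unwinds via $\langle H(Y,Z)^{\sharp},\cdot\rangle=H(Y,Z,\cdot)$ and the antisymmetry of $H$ to $-\tfrac14\langle H(X,W),H(Y,Z)\rangle+\tfrac14\langle H(Y,W),H(X,Z)\rangle$. This is exactly the stated expression for $Rm^+$.

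Next I would obtain the Ricci and scalar identities by tracing the $Rm^+$ formula in its first and fourth slots. The curvature term produces $\Rc$. In $\tfrac12\sum_i(\nabla_{e_i}H)(Y,Z,e_i)$ a cyclic permutation of the $3$-form $\nabla_{e_i}H$ and the definition of $d^*$ turn this into $-\tfrac12 d^*H(Y,Z)$. The term $-\tfrac12\sum_i(\nabla_YH)(e_i,Z,e_i)$ vanishes since $\nabla_YH$ is alternating. Among the two quadratic terms, the one involving $H(e_i,e_i)$ vanishes, and the other collapses to $-\tfrac14 H^2(Y,Z)$ after one more use of antisymmetry. A further trace gives $R^+=R-\tfrac14|H|^2$, using that $\tr H^2=|H|^2$ and that $d^*H$ is a $2$-form, hence trace-free.

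For the Bismut-flat case the plan is to play $\nabla^+$ against $\nabla^-$. Running the same computation with $H\mapsto-H$ yields the analogous formula for $Rm^-$ (the linear $\nabla H$ terms change sign, the quadratic ones do not). Then the hypothesis $dH=0$, written as $(\nabla_XH)(Y,Z,W)-(\nabla_YH)(X,Z,W)=-(\nabla_ZH)(W,X,Y)+(\nabla_WH)(Z,X,Y)$, together with the pair symmetry of $Rm$, gives the identity $Rm^+(X,Y,Z,W)=Rm^-(Z,W,X,Y)$. Consequently $Rm^+\equiv 0$ forces $Rm^-\equiv 0$, and subtracting the two curvature formulas leaves $(\nabla_XH)(Y,Z,W)=(\nabla_YH)(X,Z,W)$ for all vector fields; that is, the $4$-tensor $\nabla H$ is symmetric in its first two entries while remaining alternating in its last three. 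I expect this to be the only mildly delicate step: combining ``symmetric in slots $1,2$'' with ``antisymmetric in slots $2,3$'' forces the first three slots to be fully symmetric, which is incompatible with antisymmetry in slots $2,3$ unless $\nabla H=0$. Substituting $\nabla H=0$ back into the $Rm^+$ formula and setting it to zero yields precisely $Rm(X,Y,Z,W)=\tfrac14\langle H(X,W),H(Y,Z)\rangle-\tfrac14\langle H(Y,W),H(X,Z)\rangle$, completing the proof.

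There is no deep obstacle here; the computation is textbook in spirit (it is \cite{GRF} Proposition 3.18). The points that require genuine care are the bookkeeping of the various antisymmetrizations of $H$ in the trace computation, and, in the Bismut-flat part, verifying the $\pm$-symmetry $Rm^+(X,Y,Z,W)=Rm^-(Z,W,X,Y)$ and extracting $\nabla H=0$ from the resulting symmetry/antisymmetry conflict.
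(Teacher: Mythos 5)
The paper does not actually prove this statement---it is quoted from \cite{GRF}, Proposition 3.18---and your argument is the standard one found there: the change-of-connection expansion $\nabla^+=\nabla+A$ with $\langle A_XY,Z\rangle=\tfrac12H(X,Y,Z)$ for the $Rm^+$ formula, traces over the first and fourth slots for $\Rc^+$ and $R^+$, and in the flat case the symmetry $Rm^+(X,Y,Z,W)=Rm^-(Z,W,X,Y)$ (which does follow from $dH=0$ together with cyclic invariance of the $3$-form $\nabla_\cdot H$) to conclude that $(\nabla_XH)(Y,Z,W)$ is symmetric in $X,Y$. Every step checks out; the only phrasing I would tighten is the final ``fully symmetric'' remark: the clean way to finish is the braid identity $T_{abc}=T_{bac}=-T_{bca}=-T_{cba}=T_{cab}=T_{acb}=-T_{abc}$ applied to $T=\nabla H$ viewed in its first three slots, which yields $\nabla H=0$ directly from symmetry in the first pair and antisymmetry in the overlapping pair.
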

\begin{proposition}[\cite{GRF} Proposition 3.20]
Let $(M^n,g,H)$ be a Riemannian manifold with $H\in \Omega^3$ and $dH=0$, then for any vector fields $X,Y,Z,W$ we have
\begin{align}
    \nonumber\sum_{\sigma(X,Y,Z)}R^+(X,Y,Z,W)&=(\nabla^+_WH)(X,Y,Z)- \sum_{\sigma(X,Y,Z)}g\big(H(X,Y),H(Z,W)\big)
    \\&=\frac{1}{2}\Big( \sum_{\sigma(X,Y,Z)}(\nabla^+_XH)(Y,Z,W)+(\nabla^+_WH)(X,Y,Z) \Big).\label{BianchiBismut}
\end{align}
\end{proposition}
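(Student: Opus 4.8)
The plan is to deduce both claimed equalities from the first Bianchi identity for a metric connection with totally skew torsion, together with the closedness $dH=0$. Since $\nabla^+$ is compatible with $g$ and its torsion $T^+$ satisfies $\langle T^+(X,Y),Z\rangle=H(X,Y,Z)$, applying the algebraic first Bianchi identity to $\nabla^+$ and pairing with $W$ (in the curvature convention of Subsection 2.1) gives
\begin{align*}
\sum_{\sigma(X,Y,Z)}R^+(X,Y,Z,W)=\sum_{\sigma(X,Y,Z)}\Big[(\nabla^+_XH)(Y,Z,W)+\langle T^+(T^+(X,Y),Z),W\rangle\Big],
\end{align*}
where metric compatibility of $\nabla^+$ turns $\langle(\nabla^+_XT^+)(Y,Z),W\rangle$ into $(\nabla^+_XH)(Y,Z,W)$. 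Using the total antisymmetry of $H$ one gets $\langle T^+(T^+(X,Y),Z),W\rangle=g\big(H(X,Y),H(Z,W)\big)$, so the right-hand side equals $\sum_{\sigma(X,Y,Z)}(\nabla^+_XH)(Y,Z,W)+\sum_{\sigma(X,Y,Z)}g\big(H(X,Y),H(Z,W)\big)$. Alternatively one may reach the same identity by cyclically summing the $Rm^+$ formula of \Cref{P3} and using that the $Rm$-term vanishes by the ordinary first Bianchi identity; this second route uses only facts already recorded in the excerpt and fixes the signs unambiguously.

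The remaining ingredient is the identity
\begin{align*}
\sum_{\sigma(X,Y,Z)}(\nabla^+_XH)(Y,Z,W)=(\nabla^+_WH)(X,Y,Z)-2\sum_{\sigma(X,Y,Z)}g\big(H(X,Y),H(Z,W)\big),
\end{align*}
which is precisely $dH=0$ expressed through $\nabla^+$. To prove it, I would first expand $dH=0$ for the Levi-Civita connection and use the antisymmetry of $H$ to obtain $\sum_{\sigma(X,Y,Z)}(\nabla_XH)(Y,Z,W)=(\nabla_WH)(X,Y,Z)$; then, using \eqref{PM} to write $\nabla^+=\nabla+\tfrac12 H$, the differences $(\nabla^+_XH-\nabla_XH)(Y,Z,W)$ and $(\nabla^+_WH-\nabla_WH)(X,Y,Z)$ become explicit sums of contractions $g\big(H(\cdot,\cdot),H(\cdot,\cdot)\big)$; collecting these together with the Levi-Civita relation yields the displayed identity.

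Substituting this identity into the Bianchi expression above produces the first claimed equality, $\sum_{\sigma(X,Y,Z)}R^+(X,Y,Z,W)=(\nabla^+_WH)(X,Y,Z)-\sum_{\sigma(X,Y,Z)}g\big(H(X,Y),H(Z,W)\big)$; and averaging this with the Bianchi expression — in which $\sum_{\sigma(X,Y,Z)}(\nabla^+_XH)(Y,Z,W)$ still appears explicitly — gives the second claimed equality. I expect the main obstacle to be pure bookkeeping: following the several contractions $g\big(H(\cdot,\cdot),H(\cdot,\cdot)\big)$ through all the cyclic sums and antisymmetrizations so that the $\tfrac12$ and $\tfrac14$ coefficients and the signs cancel exactly. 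Carrying out the computation at a point where the Levi-Civita Christoffel symbols vanish removes the first-order terms and keeps the algebra manageable.
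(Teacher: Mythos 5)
Your proposal is correct: the algebraic first Bianchi identity for the metric connection $\nabla^+$ with totally skew torsion gives $\sum_{\sigma(X,Y,Z)}R^+(X,Y,Z,W)=\sum_{\sigma(X,Y,Z)}(\nabla^+_XH)(Y,Z,W)+\sum_{\sigma(X,Y,Z)}g(H(X,Y),H(Z,W))$, and the identity $\sum_{\sigma(X,Y,Z)}(\nabla^+_XH)(Y,Z,W)=(\nabla^+_WH)(X,Y,Z)-2\sum_{\sigma(X,Y,Z)}g(H(X,Y),H(Z,W))$ is indeed the content of $dH=0$ rewritten via $\nabla^+=\nabla+\tfrac12 g^{-1}H$ (the coefficients $-\tfrac32$ and $+\tfrac12$ arising from the two conversions combine exactly as you predict), after which both displayed equalities follow by substitution and averaging. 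The paper gives no proof of this statement — it is quoted verbatim from \cite{GRF}, Proposition 3.20 — and your argument is the standard one carried out there, so there is nothing further to compare.
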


\subsection{Generalized Einstein--Hilbert functional and Generalized Ricci flow}
In this subsection, we discuss the generalized Einstein--Hilbert functional and generalized Ricci flow. Most of the contents can be found in \cite{GRF} Chapter 4 and 6.

\begin{defn}\label{G}
Let $E$ be an exact Courant algebroid over a smooth manifold $M$ and $H_0$ is a background closed 3-form. A one-parameter family of generalized metrics $\mathcal{G}_t=\mathcal{G}_t(g_t,b_t)$
is called a \emph{generalized Ricci flow} if 
\begin{align*}
   &\frac{\partial}{\partial t}g=-2\Rc+\frac{1}{2}H^2,\nonumber
   \\&\frac{\partial}{\partial t}b=-d^*H \quad \text{ where $H=H_0+db$.} 
\end{align*}
Equivalently, the generalized Ricci flow can also be expressed as
\begin{align*}
   &\frac{\partial}{\partial t}(g-b)=-2\Rc^+.
\end{align*}
where $\Rc^+$ denotes the Ricci curvature with respect to the Bismut connection $\nabla^+$.
\end{defn}

\begin{defn}\label{D10}
Given a smooth compact manifold $M$ and a background closed 3-form $H_0$, the \emph{generalized Einstein--Hilbert functional} $\mathcal{F}: \Gamma(S^2M)\times \Omega^2 \times C^\infty (M)\to \mathbb{R}$ is given by 
\begin{align*}
    \mathcal{F}(g,b,f)=\int_M (R-\frac{1}{12}|H_0+db|^2+|\nabla f|^2)e^{-f}dV_g.
\end{align*}
Also, we define 
\begin{align*}
  \lambda(g,b)=\inf\Big\{\mathcal{F}(g,b,f)\big|\, f\in C^\infty(M),\,\int_M e^{-f}dV_g=1\Big \}.
\end{align*}

\end{defn}

\begin{remark}
 For any $(g,b)$, the minimizer $f$ is always achieved. Moreover, $\lambda$ satisfies that
\begin{align}
    \lambda(g,b)=R-\frac{1}{12}|H_0+db|^2+2\Delta f-|\nabla f|^2 \label{lam}
\end{align}
and it is the lowest eigenvalue of the Schrödinger operator $-4\Delta+R-\frac{1}{12}|H_0+db|^2.$   
\end{remark}

Let $(g_t, b_t)$ be a smooth family of pairs of Riemannian metrics and 2-forms on a smooth compact manifold $M$. Suppose
\begin{align*}
   &\frac{\partial}{\partial t}\Big|_{t=0}g_t=h,\quad \frac{\partial}{\partial t}\Big|_{t=0}b_t=K,  \quad  (g_0,b_0)=(g,b).
\end{align*}
In the following, we denote $\gamma=h-K\in\otimes^2 T^*M$ and $\gamma$ is called a general variation. The first variation formula of the generalized Einstein--Hilbert functional (c.f \cite{K} Theorem 3.3) is given by
\begin{align} \label{fv}
    \nonumber\frac{d}{dt}\Big|_{t=0}\lambda(g_t,b_t)&=\int_M \Big[\langle -\Rc+\frac{1}{4}H^2-\nabla^2f,h \rangle-\frac{1}{2}\langle d^*H+i_{\nabla f}H,K\rangle \Big]e^{-f}dV_g
    \\&=\int_M -\langle \gamma, \Rc^{H,f} \rangle e^{-f}dV_g, 
\end{align}
where $\Rc^{H,f}=\Rc-\frac{1}{4}H^2+\nabla^2f-\frac{1}{2}( d^*H+i_{\nabla f}H)$ is the twisted Bakry-Emery curvature. Motivated by the Ricci flow, we define the stationary points to be the steady generalized Ricci solitons. (For more details about motivations, readers can consult with \cite{GRF} and \cite{K}.)

\begin{defn}\label{soliton}
Let $M$ be a compact manifold and $E\cong (TM\oplus T^*M)_{H_0}$ be an exact Courant algebroid with a background closed 3-form $H_0$. Suppose that $\mathcal{G}(g,b)$ is a generalized metric.
\begin{itemize}
    \item $\mathcal{G}(g,b)$ is called a \emph{steady gradient generalized Ricci soliton} if 
\begin{align}
   0=\Rc-\frac{1}{4}H^2+\nabla^2 f, \quad 0=d_g^*H+i_{\nabla f}H  \label{s},
\end{align}
where $H=H_0+db$ and $f$ is the minimizer of $\lambda(g,b)$.
\item $\mathcal{G}(g,b)$ is called a \emph{generalized Einstein metric} if $\mathcal{G}(g,b)$ is a steady gradient generalized Ricci soliton with the constant minimizer $f$.

\end{itemize}

\end{defn}

\section{Second Variation Operator}

In this section, we aim at discussing the second variation of the generalized Einstein--Hilbert functional. First of all, let us recall the second variation and infinitesimal deformation. 

\subsection{Second variation and Infinitesimal deformations}

In \cite{KK} Theorem 1.1, the author derived the second variation formula of the generalized Einstein--Hilbert which is given in the following
\begin{theorem}[\cite{KK} Theorem 1.1]
 Let $M$ be a compact manifold, $E\cong (TM\oplus T^*M)_{H_0}$ be an exact Courant algebroid with a background closed 3-form $H_0$ and $\mathcal{G}(g,b)$ is a compact steady gradient generalized Ricci soliton. Suppose $\mathcal{G}(g_t,b_t)$ is a one-parameter family of generalized metrics such that 
\begin{align*}
    \frac{\partial}{\partial t}\Big|_{t=0}g_t =h, \quad \frac{\partial}{\partial t}\Big|_{t=0}b_t=K,\quad  (g_0,b_0)=(g,b).
\end{align*}
Denote $\gamma=h-K$. Let $\phi$ be the unique solution of 
\begin{align*}
    \Delta_f \phi =\overline{\divg}_f\overline{\divg}_f\gamma,\quad \int_M \phi e^{-f}dV_g=0,
\end{align*}
where the definition of $\overline{\divg}_f$ is given in (\ref{bar1}) and (\ref{bar2}). We define
\begin{align}\label{N}
    \overline{N_f}(\gamma)=\frac{1}{2}\overline{\Delta}_f\gamma+\mathring{R}^+(\gamma)+\frac{1}{2}\overline{\divg}_f^*\overline{\divg}_f\gamma+(\nabla^+)^2\phi.
\end{align}
where $\overline{\divg}_f^*$ is the formal adjoint of $\overline{\divg}_f$ with respect to (\ref{6}), $\overline{\Delta}_f$ 
 is defined in (\ref{bar3}), $\langle \mathring{R}^+(\gamma),\gamma \rangle=R^+_{iklj}\gamma_{ij}\gamma_{kl}$ and $R^+$ is the Bismut curvature given in \Cref{P3}. Then, the second variation of $\lambda$ at $(g,b)$ is given by 
\begin{align*}
     \nonumber\frac{d^2}{dt^2}\Big|_{t=0}\lambda=\int_M \Big\langle \gamma,  \overline{N}_f(\gamma) \Big\rangle e^{-f}dV_g,
\end{align*}

\end{theorem}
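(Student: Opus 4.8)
The plan is to obtain the second variation by differentiating the first variation formula (\ref{fv}) once more and exploiting that $\mathcal{G}(g,b)$ is a critical point. Along a linear path $g_t = g + th$, $b_t = b + tK$ with minimizers $f_t$, formula (\ref{fv}) reads $\frac{d}{dt}\lambda = -\int_M \langle \gamma, \Rc^{H,f}_t\rangle e^{-f_t}\,dV_{g_t}$ with $\gamma = h - K$ fixed. Differentiating at $t=0$ and using the soliton equations (\ref{s}), which say exactly that $\Rc^{H,f}=0$ at $t=0$, every term in which $\Rc^{H,f}$ appears undifferentiated (including the variations of $e^{-f}\,dV_g$ and of $\gamma$) drops out. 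Hence
\[
\frac{d^2}{dt^2}\Big|_{t=0}\lambda = -\int_M \Big\langle \gamma, \frac{d}{dt}\Big|_{t=0}\Rc^{H,f}_t\Big\rangle e^{-f}\,dV_g,
\]
and the whole problem reduces to computing the total linearization of the twisted Bakry--\'Emery curvature $\Rc^{H,f} = \Rc - \tfrac14 H^2 + \nabla^2 f - \tfrac12(d^*H + i_{\nabla f}H)$, where the linearization must include the variation of the minimizer $f$.

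Next I would pin down the variation $\dot f = \partial_t f_t|_{t=0}$, which is the one genuinely nonlocal ingredient and the source of the auxiliary function $\phi$. Because $f_t$ is the minimizer, it satisfies the pointwise equation (\ref{lam}), and since $\mathcal{G}(g,b)$ is critical one has $\dot\lambda = 0$. Differentiating (\ref{lam}) at $t=0$ --- using the standard variations $\delta R = -\Delta(\tr_g h) + \divg\divg h - \langle \Rc, h\rangle$, together with $\delta(\Delta f)$, $\delta|\nabla f|^2$, and $\delta|H|^2$ with $\dot H = dK$ --- produces a linear elliptic equation for $\dot f$. I expect that, after substituting the soliton equations to eliminate $\Rc$ and $d^*H + i_{\nabla f}H$, this equation collapses to the Poisson equation $\Delta_f \phi = \overline{\divg}_f\overline{\divg}_f\gamma$ for the normalized combination $\phi$ of $\dot f$ and $\tr_g h$, the constraint $\int_M \phi\, e^{-f}\,dV_g = 0$ coming from differentiating $\int_M e^{-f}\,dV_g = 1$. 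This identifies $\phi$ and is what ultimately yields the term $(\nabla^+)^2\phi$: the Hessian $\nabla^2\dot f$ inside $\delta(\nabla^2 f)$, combined with the variation of the torsion terms $-\tfrac12(d^*H + i_{\nabla f}H)$, assembles into the Bismut Hessian $(\nabla^+)^2\phi$.

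The remaining work is the local tensorial computation of $\frac{d}{dt}|_{t=0}\Rc^{H,f}$. I would linearize each piece separately: $\delta\Rc$ via the Lichnerowicz-type formula; $\delta(-\tfrac14 H^2)$, which contributes both a curvature-free term quadratic in $H$ and, via $\dot H = dK$, first derivatives of $K$; $\delta(\nabla^2 f)$, split into $\nabla^2\dot f$ and a connection-variation term contracted with $\nabla f$; and $\delta(-\tfrac12(d^*H + i_{\nabla f}H))$. The organizing principle is that $\gamma = h - K$ should be acted on by the mixed Bismut connection $\overline{\nabla}$ of (\ref{MBC}) --- using $\nabla^-$ on the first slot and $\nabla^+$ on the second --- since this is precisely the structure that unifies the symmetric part $h$ and skew part $-K$. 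Using the Bismut curvature identity in \Cref{P3} and the Bismut--Bianchi identity (\ref{BianchiBismut}) to convert Riemannian curvature plus $\nabla H$ terms into the Bismut curvature $R^+$, I would collect the result into $\tfrac12\overline{\Delta}_f\gamma$ (recognized through its explicit form (\ref{Lbar})), the curvature term $\mathring{R}^+(\gamma)$, and the gauge term $\tfrac12\overline{\divg}_f^*\overline{\divg}_f\gamma$ (whose adjoint structure matches (\ref{dbars})), together with the $(\nabla^+)^2\phi$ contribution from the previous step.

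The main obstacle is twofold. Conceptually, it is the identification of the $\dot f$-contribution with $(\nabla^+)^2\phi$: one must show that the nonlocal variation of the minimizer, determined by the linearized eigenvalue equation, is exactly packaged by the Poisson problem for $\phi$ and enters the curvature linearization as a Bismut Hessian. Computationally, the difficulty is the bookkeeping needed to reorganize the many curvature and torsion terms from $\delta\Rc$, $\delta H^2$, and $\delta(d^*H + i_{\nabla f}H)$ into the single mixed-connection operator $\overline{\Delta}_f$ and the Bismut curvature term $\mathring{R}^+$; here the soliton equations (which let one trade $\Rc$ for $\tfrac14 H^2 - \nabla^2 f$ and annihilate $d^*H + i_{\nabla f}H$) and the Bismut--Bianchi identity (\ref{BianchiBismut}) are the indispensable tools that make the symmetric and skew computations close up into the stated formula.
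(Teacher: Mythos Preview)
The paper does not prove this theorem here; it is quoted from \cite{KK}. However, the formulas the paper cites later (in the proofs of Lemma~\ref{f2} and Lemma~\ref{third}) reveal exactly the structure of the original argument: from \cite{K} one has the pointwise identity $\dot\lambda = -\Delta_f(\tr_g\gamma - 2\dot f) + \overline{\divg}_f\overline{\divg}_f\gamma$, and from \cite{KK} Proposition~3.6 one has $(\Rc^{H,f})' = -\tfrac12\overline{\Delta}_f\gamma - \mathring{R}^+(\gamma) - \tfrac12\overline{\divg}_f^*\overline{\divg}_f\gamma - \tfrac12(\nabla^+)^2(\tr_g\gamma - 2\dot f)$. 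Your plan follows this route precisely: differentiate the first variation (\ref{fv}), use $\Rc^{H,f}=0$ at the soliton to kill all terms except $-\int\langle\gamma,(\Rc^{H,f})'\rangle e^{-f}dV_g$, linearize $\Rc^{H,f}$ termwise and repackage via the mixed Bismut connection, and identify $\phi$ as (a multiple of) $\tr_g h - 2\dot f$ through the linearized eigenvalue equation at $\dot\lambda=0$ together with the normalization $\int e^{-f}dV_g=1$. This is the same approach.
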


In this section, we would like to discuss the local behavior of steady generalized Ricci solitons. Recall that in \cite{KK} section 4, we define an elliptic operator $\overline{L}_f$ by 
\begin{align}
   \overline{L}_f(\gamma)\coloneqq\frac{1}{2}\overline{\Delta}_f \gamma+\mathring{R}^+(\gamma). \label{L}
\end{align}

\begin{defn} \label{IGSD}
Let $\mathcal{G}(g,b)$ be a steady gradient generalized Ricci soliton. 
\begin{itemize}
    \item  A 2-tensor $\gamma\in \otimes^2 T^*M$ is called an \emph{infinitesimal generalized solitonic deformation} of $\mathcal{G}$ if $\overline{N}_f(\gamma)=0$.
    \item A 2-tensor $\gamma\in \otimes^2 T^*M$ is called an \emph{essential infinitesimal generalized solitonic deformation} of $\mathcal{G}$ if $\overline{L}_f(\gamma)=0$ and $\gamma\in \ker\overline{\divg}_f$.
\end{itemize}
where $\overline{N}_f$ and $\overline{L}_f$ are operators defined by equations (\ref{N}) and (\ref{L}) respectively. In the following, we focus on essential infinitesimal generalized solitonic deformation and denote the set of all essential infinitesimal generalized solitonic deformations by $IGSD$, i.e.
\begin{align*}
    IGSD=\{\gamma\in\otimes^2 T^*M: \overline{L}_f(\gamma)=0 \text{ and $\gamma\in\ker\overline{\divg}_f$.}\}.
\end{align*}
\end{defn}

In the Bismut flat case, we further derive that 
\begin{proposition}[\cite{KK} Proposition 5.1.]\label{BFG}
    Let $M$ be a compact manifold, $E\cong (TM\oplus T^*M)_{H_0}$ be an exact Courant algebroid with a background closed 3-form $H_0$ and $\mathcal{G}(g,b)$ is a Bismut-flat metric. The following statements are equivalent.
    \begin{enumerate}
        \item\label{a} $\gamma$ is an essential infinitesimal generalized solitonic deformation of $\mathcal{G}$.
        \item $\overline{\nabla}\gamma=0$ where $\overline{\nabla}$ is given in (\ref{MBC}).
        \item\label{b} Let $h$ be the symmetric part of $\gamma$ and $-K$ be the anti-symmetric part of $\gamma$. Then,
        \begin{align}
            \nabla_mh_{ij}=-\frac{1}{2}(H_{mik}K_{jk}+H_{mjk}K_{ik}),\quad\nabla_mK_{ij}=-\frac{1}{2}(H_{mjk}h_{ik}-H_{mik}h_{jk}) \label{long}
        \end{align}
        \item $\frac{\partial^2}{\partial t^2}\lambda(\gamma)=0.$
     \end{enumerate}

\end{proposition}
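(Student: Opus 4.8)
The plan is to prove the two displayed chains of equivalences in \Cref{BFG}. The most natural logical skeleton is $(2)\Leftrightarrow(3)$, then $(3)\Rightarrow(1)$, then $(1)\Rightarrow(4)$, and finally $(4)\Rightarrow(2)$, so that every statement is forced to be equivalent. The implication $(2)\Leftrightarrow(3)$ is essentially unwinding definitions: the mixed Bismut connection $\overline{\nabla}$ from (\ref{MBC}) acts on a $2$-tensor $\gamma$ by $(\overline{\nabla}_X\gamma)(Y,Z)=\nabla_X(\gamma(Y,Z))-\gamma(\nabla^-_XY,Z)-\gamma(Y,\nabla^+_XZ)$, and substituting the definitions (\ref{PM}) of $\nabla^{\pm}$ gives $\overline{\nabla}_m\gamma_{ij}=\nabla_m\gamma_{ij}+\tfrac12 H_{mik}\gamma_{kj}-\tfrac12 H_{mjk}\gamma_{ik}$. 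Splitting $\gamma=h-K$ into symmetric and antisymmetric parts, the symmetric part of $\overline{\nabla}_m\gamma_{ij}=0$ reads $\nabla_m h_{ij}=-\tfrac12(H_{mik}K_{jk}+H_{mjk}K_{ik})$ after using the antisymmetry of $H$ in its last two slots, and the antisymmetric part gives $\nabla_m K_{ij}=-\tfrac12(H_{mjk}h_{ik}-H_{mik}h_{jk})$, exactly (\ref{long}). This is a routine index manipulation, the only subtlety being to keep track of which Bismut connection $\nabla^{\pm}$ pairs with which slot.

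The implication $(3)\Rightarrow(1)$ is where the Bismut-flat hypothesis enters. Given $\overline{\nabla}\gamma=0$, I would first observe that $\overline{\divg}_f\gamma$ vanishes: from (\ref{bar1}), $u_l=(\nabla^+)^m\gamma_{ml}-\nabla_m f\,\gamma_{ml}$, and the first term is a trace of $\overline{\nabla}\gamma$ against the metric (the $\nabla^+$ in the $m$-slot being compatible with $\overline{\nabla}$'s convention in that slot) hence zero; for the second term one must use that $f$ is the soliton potential — here I expect to invoke that on a Bismut-flat steady soliton $f$ is constant (this should follow from \Cref{P3}, since $R^+=R-\tfrac14|H|^2=0$ forces, via the soliton equation and the standard $\lambda$-identity (\ref{lam}), that $\nabla f\equiv 0$), so $\nabla_m f\,\gamma_{ml}=0$ as well; similarly for $v_l$. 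Thus $\overline{\divg}_f\gamma=0$, so the auxiliary function $\phi$ solving $\Delta_f\phi=\overline{\divg}_f\overline{\divg}_f\gamma$ is constant, and (\ref{N}) collapses to $\overline{N}_f(\gamma)=\tfrac12\overline{\Delta}_f\gamma+\mathring{R}^+(\gamma)=\overline{L}_f(\gamma)$. Now $\overline{\nabla}\gamma=0$ gives $\overline{\Delta}_f\gamma=-\overline{\nabla}^{*_f}\overline{\nabla}\gamma=0$ directly, and $\mathring{R}^+(\gamma)=0$ because $R^+\equiv0$ in the Bismut-flat case. Hence $\overline{N}_f(\gamma)=\overline{L}_f(\gamma)=0$ and $\gamma\in\ker\overline{\divg}_f$, which is precisely statement (1), that $\gamma$ is an essential infinitesimal generalized solitonic deformation.

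For $(1)\Rightarrow(4)$: by \Cref{MT}, for $\gamma\in\ker\overline{\divg}_f$ the second variation is $\tfrac{d^2}{dt^2}\lambda=(\gamma,\tfrac12\overline{\Delta}_f\gamma+\mathring{R}^+(\gamma))_f=(\gamma,\overline{L}_f(\gamma))_f$, which vanishes when $\overline{L}_f(\gamma)=0$. Finally $(4)\Rightarrow(2)$ is the crux and the main obstacle: I would use the Bochner-type structure of $\overline{L}_f$ to turn the vanishing of the quadratic form into a pointwise parallelism statement. The idea is that in the Bismut-flat case the operator $\overline{L}_f$ should satisfy $(\gamma,\overline{L}_f\gamma)_f=\tfrac12\|\overline{\nabla}\gamma\|_f^2$ — i.e.\ $\overline{L}_f$ is, up to the harmless $\mathring{R}^+=0$ term, exactly $-\tfrac12\overline{\nabla}^{*_f}\overline{\nabla}$, a nonnegative operator whose kernel is the $\overline{\nabla}$-parallel tensors. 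Concretely, integrating by parts with respect to the $f$-twisted inner product (\ref{6}) and using that $\overline{\nabla}$ is metric-compatible in the appropriate sense, one gets $(\gamma,-\tfrac12\overline{\nabla}^{*_f}\overline{\nabla}\gamma)_f=-\tfrac12\|\overline{\nabla}\gamma\|_f^2$; combined with the expansion (\ref{Lbar}) of $\overline{\Delta}_f$ and the Bismut-flat curvature identity (\ref{R}), all the zeroth-order $H^2$ terms must reorganize into $\mathring{R}^+$ (which is zero) plus the cross terms already accounted for in $\overline{\nabla}^{*_f}\overline{\nabla}$. The delicate bookkeeping is verifying that no residual curvature term survives — this is where (\ref{R}) and the first Bianchi identity (\ref{BianchiBismut}) for $\nabla^+$ (which in the flat case says $\sigma_{XYZ}g(H(X,Y),H(Z,W))=\tfrac12\sigma_{XYZ}(\nabla^+_XH)(Y,Z,W)$, with both sides vanishing since $\nabla H=0$) are needed. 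Once $\tfrac{d^2}{dt^2}\lambda(\gamma)=-\tfrac12\|\overline{\nabla}\gamma\|_f^2$ is established, $(4)$ forces $\overline{\nabla}\gamma=0$, which is $(2)$, closing the loop. I expect this last Bochner computation — specifically, matching the algebraic curvature terms in $\overline{\Delta}_f$ against $\overline{\nabla}^{*_f}\overline{\nabla}$ in the presence of the twisting $3$-form — to be the only genuinely nontrivial step; everything else is definition-chasing plus the observation that $f$ is constant on a Bismut-flat soliton.
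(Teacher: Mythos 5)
Your proposal is correct in outline and every step goes through; note also that the paper does not prove this proposition itself but imports it from \cite{KK}, so there is no in-text argument to compare against. Two remarks. First, the step you flag as the crux --- establishing $(\gamma,\overline{L}_f\gamma)_f=-\tfrac12\|\overline{\nabla}\gamma\|_f^2$ in the Bismut-flat case --- requires no Bochner bookkeeping at all: $\overline{\Delta}_f$ is \emph{defined} in (\ref{bar3}) as $-\overline{\nabla}^{*_f}\overline{\nabla}$ (the coordinate expression (\ref{Lbar}) is derived from that definition, not the reverse), and $\mathring{R}^+(\gamma)=0$ because $Rm^+\equiv 0$; so the identity is immediate and there are no residual curvature terms to match, hence no need for (\ref{R}) or (\ref{BianchiBismut}) here. (Also fix the sign slip where you first write $+\tfrac12\|\overline{\nabla}\gamma\|_f^2$.) Second, your implication $(4)\Rightarrow(2)$ tacitly assumes $\gamma\in\ker\overline{\divg}_f$: for a pure gauge direction $\gamma=\overline{\divg}_f^*(u,v)$ one has $\tfrac{d^2}{dt^2}\lambda(\gamma)=0$ by \Cref{MT1} while $\overline{\nabla}\gamma$ need not vanish, so the identity $\tfrac{d^2}{dt^2}\lambda(\gamma)=-\tfrac12\|\overline{\nabla}\gamma\|_f^2$ only holds on the slice. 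This is really a hypothesis implicit in the statement (all four conditions are meant for variations in $\ker\overline{\divg}_f$), but you should say so explicitly when closing the loop. Your remaining ingredients are sound: $f$ is constant on a compact Bismut-flat background (from $\Rc^+=0$ one gets $\Rc-\tfrac14 H^2=0$ and $d^*H=0$, so the soliton equation forces $\nabla^2 f=0$, hence $f$ constant by compactness), and the trace of $\overline{\nabla}\gamma$ over the derivative index and the first (resp.\ second) slot is exactly $(\nabla^+)^m\gamma_{ml}$ (resp.\ $(\nabla^-)^m\gamma_{lm}$), consistent with the computation in the proof of (\ref{Comm1}), which gives $\gamma\in\ker\overline{\divg}_f$ in the step $(3)\Rightarrow(1)$.
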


\subsection{Commutator Formulas}

In this subsection, we are going to derive some useful formulas that can help us simplify the second variation operator. To get some motivation of calculation below, one can consult with the shrinking Ricci soliton case in \cite{C,C2,C3}.
\begin{lemma}
Suppose $\mathcal{G}(g,b)$ is a steady gradient generalized Ricci soliton on a smooth compact manifold $M$. For any $(u,v)\in T^*M\times T^*M$, 
\begin{align}
    \overline{\divg}_f\overline{\divg}_f^*(u,v)=\Big(-\Delta^+_f u-\nabla(\divg_fv),-\Delta^-_f v-\nabla(\divg_fu)\Big). \label{Comm1}
\end{align}
\end{lemma}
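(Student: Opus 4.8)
The plan is to compute $\overline{\divg}_f\overline{\divg}_f^*(u,v)$ by composing the two maps from Definition~\ref{DD}. Recall $\overline{\divg}_f^*(u,v)_{ij}=-(\nabla^+)^iu_j-(\nabla^-)^jv_i$, so writing $\gamma=\overline{\divg}_f^*(u,v)$ we have $\gamma_{ij}=-(\nabla^+)_iu_j-(\nabla^-)_jv_i$. I then feed this into $\overline{\divg}_f\gamma=(\tilde u,\tilde v)$ with $\tilde u_l=(\nabla^+)^m\gamma_{ml}-\nabla_mf\,\gamma_{ml}$ and $\tilde v_l=(\nabla^-)^m\gamma_{lm}-\nabla_mf\,\gamma_{lm}$, and finally apply $\overline{\divg}_f(\tilde u,\tilde v)=\tfrac12(\divg_f\tilde u+\divg_f\tilde v)$. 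So the computation is genuinely a double covariant derivative contracted in two different ways; the identity (\ref{Comm1}) is really a statement about commuting $\nabla^+$ (resp. $\nabla^-$) derivatives, which is where the curvature and torsion terms come in, and where the soliton equations (\ref{s}) must be used to make everything collapse to the clean right-hand side.

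First I would expand $\tilde u_l$: the term $(\nabla^+)^m\gamma_{ml}$ becomes $-(\nabla^+)^m(\nabla^+)_m u_l-(\nabla^+)^m(\nabla^-)_l v_m$. The first piece is $-\Delta^+ u_l$ (the rough $\nabla^+$-Laplacian), and after incorporating the $-\nabla_m f\,\gamma_{ml}$ contribution it should assemble into $-\Delta^+_f u_l$ plus cross terms. The second piece $-(\nabla^+)^m(\nabla^-)_l v_m$ is the crucial one: I would commute the $\nabla^+_m$ and $\nabla^-_l$ derivatives. Since $\nabla^\pm$ differ from $\nabla$ by $\pm\tfrac12 g^{-1}H$, their mixed second derivatives produce a Bismut-type curvature term (a contraction of $Rm^+$ or $Rm^-$) plus terms involving $\nabla H$ and $H*H$. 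The target term $-\nabla_l(\divg_f v)$ should emerge precisely from the ``already contracted'' part $(\nabla^-)_l\big((\nabla^+)^m v_m\big)$ after correcting $\nabla^+\to\nabla^-$ inside and twisting by $f$; the leftover curvature/torsion terms must then cancel against analogous terms produced in the symmetric step $\divg_f\tilde v$ and against terms forced to vanish by the soliton identities $\Rc-\tfrac14H^2+\nabla^2 f=0$ and $d^*H+i_{\nabla f}H=0$. A clean way to organize this is to work at a point in normal coordinates with $\nabla f$ arranged conveniently, and to use Proposition~\ref{P3} and the Bismut Bianchi identity (\ref{BianchiBismut}) to handle the $\nabla^+H$ terms.

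The symmetric computation of $\tilde v_l=(\nabla^-)^m\gamma_{lm}-\nabla_mf\,\gamma_{lm}$ proceeds by the same bookkeeping with the roles of $\nabla^+$ and $\nabla^-$ interchanged (and the sign of $H$ flipped), yielding $-\Delta^-_f v_l-\nabla_l(\divg_f u)$ plus its own curvature/torsion remainder. Averaging $\tfrac12(\divg_f\tilde u+\divg_f\tilde v)$ — wait, more precisely, the output $(\tilde u,\tilde v)$ is itself the answer we want, so I only need $\overline{\divg}_f\overline{\divg}_f^* = (\tilde u, \tilde v)$; the final $\overline{\divg}_f$ on pairs is not applied here. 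I would then verify term-by-term that all remainder terms — those linear in $\nabla H$, quadratic in $H$, and linear in $\Rc$ — are exactly the combination that the steady soliton equations kill, so that $(\tilde u,\tilde v)$ reduces to $\big(-\Delta^+_f u-\nabla(\divg_f v),\,-\Delta^-_f v-\nabla(\divg_f u)\big)$.

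\textbf{Main obstacle.} The bookkeeping of the commutator $[(\nabla^+)_m,(\nabla^-)_l]$ acting on a $1$-form is the heart of the matter: one must carefully track that the curvature that appears is the correct mixed Bismut curvature (consistent with the mixed connection $\overline\nabla$ of (\ref{MBC})), keep the $\nabla H$ terms under control using the Bismut Bianchi identity, and — most delicately — see that the residual Ricci- and $H^2$-type terms organize into exactly $\Rc-\tfrac14H^2+\nabla^2f$ and $d^*H+i_{\nabla f}H$ so the soliton hypothesis applies. Getting the signs and index placements right in this mixed $\nabla^+/\nabla^-$ setting, rather than any conceptual difficulty, is where the real work lies.
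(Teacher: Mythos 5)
Your proposal is correct and follows essentially the same route as the paper: expand $\overline{\divg}_f^*(u,v)$, apply the tensor-to-pair divergence, commute second derivatives, and use both soliton equations $\Rc-\tfrac14H^2+\nabla^2f=0$ and $d_f^*H+i_{\nabla f}H=0$ to kill the remainder, and your mid-course correction that the final pair-to-function $\overline{\divg}_f$ is \emph{not} applied is also right. The only organizational difference is that the paper sidesteps the mixed $[\nabla^+,\nabla^-]$ commutator you flag as the main obstacle by first rewriting $\gamma_{ij}=-\nabla_iu_j-\nabla_jv_i+\tfrac12H_{ijk}(u_k+v_k)$ entirely in Levi-Civita terms, so only the ordinary Riemann commutator and $d_f^*H$ appear (no Bismut Bianchi identity is needed); note also that each component of the output pair must close on its own, since there is no averaging and hence no cross-cancellation between the $u$- and $v$-parts.
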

\begin{proof}
Suppose $\gamma=\overline{\divg}_f^*(u,v)$, then 
\begin{align*}
    \gamma_{ij}&=-(\nabla^+)^i u_j-(\nabla^-)^jv_i
    \\&=-\nabla^i u_j-\nabla^jv_i+\frac{1}{2}H_{ijk}(u_k+v_k).
\end{align*}
 From (\ref{PM}) and (\ref{bar1}), we compute that 
\begin{align*}
    (\nabla^+)^m\gamma_{ml}-\nabla_mf \gamma_{ml}&=\nabla^m\gamma_{ml}-\frac{1}{2}H_{mlk}\gamma_{mk}-\nabla_mf \gamma_{ml}
    \\&=\nabla^m(-\nabla^m u_l-\nabla^lv_m+\frac{1}{2}H_{mlk}(u_k+v_k))-\frac{1}{2}H_{mlk}(-\nabla^m u_k-\nabla^kv_m+\frac{1}{2}H_{mkt}(u_t+v_t))
    \\& \kern2em -\nabla_mf (-\nabla^m u_l-\nabla^lv_m+\frac{1}{2}H_{mlk}(u_k+v_k))
    \\&=-\Delta_f u_l-\nabla_m\nabla_lv_m+\nabla_m f \nabla_lv_m-\frac{1}{2}(d_f^*H)_{lk}(u_k+v_k)+\frac{1}{4}H^2_{lt}(u_t+v_t)+H_{mlk}\nabla_mu_k
    \\&= -\Delta_f u_l-\nabla_l(\divg_f v+\nabla_m fv_m)-R_{lt}v_t+\nabla_m f \nabla_lv_m+\frac{1}{4}H^2_{lt}(u_t+v_t)+H_{mlk}\nabla_mu_k
    \\& =-\Delta_f u_l-\nabla_l(\divg_f v)+H_{mlk}\nabla_mu_k+\frac{1}{4}H^2_{lt}u_t
    \\&= -\Delta_f^+ u_l-\nabla_l(\divg_f v),
\end{align*}
where we use the fact that 
\begin{align*}
    \Delta_f^+ u_l=\Delta_fu_l+H_{ijl}\nabla_iu_j-\frac{1}{4}H^2_{kl}u_k.
\end{align*}
The other part follows similarly.

\end{proof}

\begin{lemma}
Suppose $\mathcal{G}(g,b)$ is a steady gradient generalized Ricci soliton on a smooth compact manifold $M$. For any $(u,v)\in T^*M\times T^*M$,
\begin{align}
\overline{\divg}_f\overline{\divg}_f\overline{\divg}_f^*(u,v)=-\Delta_f(\divg_fu+\divg_fv). \label{Comm2}
\end{align}
\begin{proof}
First, we note that for any $a\in C^\infty(M)$,    
\begin{align*}
    \nabla_l\Delta_f a&=\nabla_l(\Delta a-\nabla_i a\nabla_i f)
    \\&=\Delta(\nabla_l a)-R_{li}\nabla_i a -\nabla_l\nabla_i f\nabla_i a-\nabla_i f\nabla_l\nabla_i a
    \\&= \Delta_f(\nabla_l a)-\frac{1}{4}H_{li}^2\nabla_i a.
\end{align*}
Given $\omega\in T^*M$,
\begin{align*}
    \int a \divg_f \Delta_f\omega e^{-f}dV_g &=\int -\langle \Delta_f\nabla a,\omega \rangle e^{-f}dV_g
    \\&=\int -\langle \nabla\Delta_f a,\omega \rangle-\frac{1}{4}H_{li}^2\nabla_i a\omega_l e^{-f}dV_g
    \\& =\int a \big(\Delta_f\divg_f\omega+\frac{1}{4}\divg_f(H^2_{il}\omega_i) )\big) e^{-f}dV_g,
\end{align*}
for any $a\in C^\infty(M)$. Note that 
\begin{align*}
    \divg_f(\Delta_f^+\omega)&=\divg_f(\Delta_f\omega_l+H_{ijl}\nabla_i\omega_j-\frac{1}{4}H^2_{kl}\omega_k)
    \\&=\divg_f(\Delta_f\omega)+(d_f^*H)_{ij}\nabla_i\omega_j+H_{ijl}\nabla_l\nabla_i\omega_j-\frac{1}{4}\divg_f(H^2_{il}\omega_i)    \\&=\divg_f(\Delta_f\omega)+H_{ijl}\nabla_l\nabla_i\omega_j-\frac{1}{4}\divg_f(H^2_{il}\omega_l).
\end{align*}
Besides, 
\begin{align*}    
H_{ijl}\nabla_l\nabla_i\omega_j&=H_{ijl}(\nabla_i\nabla_l\omega_j-R_{lijk}\omega_k)
\\&=- H_{ijl}\nabla_l\nabla_i\omega_j,
\end{align*}
where we use the Bianchi identity to see that $H_{ijl}R_{lijk}\omega_k=0$. Therefore, 
\begin{align*}
    \divg_f(\Delta_f^+ \omega)&=\divg_f(\Delta_f\omega)-\frac{1}{4}\divg_f(H^2_{il}\omega_i)=\Delta_f(\divg_f\omega).
\end{align*}
Similarly, $\divg_f(\Delta_f^- \omega)=\Delta_f(\divg_f\omega)$. Then, apply (\ref{Comm1}), we finally conclude that 
\begin{align*}
\overline{\divg}_f\overline{\divg}_f\overline{\divg}_f^*(u,v)&=\overline{\divg}_f\Big(-\Delta^+_f u_l-\nabla_l(\divg_fv),-\Delta^-_f v_l-\nabla_l(\divg_fu)\Big)
\\&=-\Delta_f(\divg_fu+\divg_fv).
\end{align*}

\end{proof}
  
\end{lemma}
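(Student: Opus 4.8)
The plan is to bootstrap off the preceding lemma. Formula (\ref{Comm1}) already identifies $\overline{\divg}_f\overline{\divg}_f^*(u,v)$ as the pair of one-forms $\big(-\Delta_f^+u-\nabla(\divg_f v),\,-\Delta_f^-v-\nabla(\divg_f u)\big)$, so by the definition (\ref{bar2}) of $\overline{\divg}_f$ on $T^*M\times T^*M$ it remains only to evaluate $\divg_f$ on expressions of two shapes: $\Delta_f^{\pm}\omega$ for a one-form $\omega$, and $\nabla\phi$ for a function $\phi$. The latter is immediate from the definitions of $\divg_f$ and $\Delta_f$, namely $\divg_f\nabla\phi=\Delta_f\phi$ on functions; so the whole lemma rests on the commutation identity $\divg_f\Delta_f^{\pm}\omega=\Delta_f\divg_f\omega$ for one-forms.

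To prove that identity I would first record the commutator of the gradient with the $f$-Laplacian on functions, $\nabla_l\Delta_f a=\Delta_f\nabla_l a-(R_{li}+\nabla_l\nabla_i f)\nabla_i a$, and use the steady soliton equation $\Rc-\tfrac14 H^2+\nabla^2 f=0$ to replace the bracket by $\tfrac14 H^2_{li}$. Pairing this identity against an arbitrary one-form and integrating by parts in the $f$-twisted inner product (\ref{6}) converts it into $\divg_f\Delta_f\omega=\Delta_f\divg_f\omega+\tfrac14\divg_f(H^2\omega)$. Then I would expand $\Delta_f^{\pm}\omega=\Delta_f\omega\pm H_{ijl}\nabla_i\omega_j-\tfrac14 H^2_{kl}\omega_k$ (generalizing the formula for $\Delta_f^+$ recorded in the proof of (\ref{Comm1})) and apply $\divg_f$ term by term: the zeroth-order correction $-\tfrac14\divg_f(H^2\omega)$ cancels the $\tfrac14\divg_f(H^2\omega)$ just produced, while the first-order term $\pm\divg_f(H_{ijl}\nabla_i\omega_j)$ splits into a piece proportional to $(d^*_fH)_{ij}\nabla_i\omega_j$, which vanishes by the second soliton equation $d^*H+i_{\nabla f}H=0$, and a piece $\pm H_{ijl}\nabla_l\nabla_i\omega_j$, which vanishes because commuting $\nabla_l$ past $\nabla_i$ produces only a curvature term that dies when contracted with the totally antisymmetric $H$ (first Bianchi identity), leaving a quantity equal to its own negative. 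This gives $\divg_f\Delta_f^{\pm}\omega=\Delta_f\divg_f\omega$.

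The conclusion is then a one-line substitution:
\begin{align*}
\overline{\divg}_f\overline{\divg}_f\overline{\divg}_f^*(u,v)&=\overline{\divg}_f\big(-\Delta_f^+u-\nabla(\divg_f v),\,-\Delta_f^-v-\nabla(\divg_f u)\big)\\
&=\tfrac12\Big(-\Delta_f(\divg_f u)-\Delta_f(\divg_f v)-\Delta_f(\divg_f v)-\Delta_f(\divg_f u)\Big)=-\Delta_f(\divg_f u+\divg_f v).
\end{align*}
The main obstacle is the second paragraph: tracking precisely which curvature and torsion corrections appear when $\divg_f$ is pushed through the Bismut Laplacians $\Delta_f^{\pm}$, and verifying that the two soliton equations together with the first Bianchi identity annihilate all of them. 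The scalar commutator and its dualization are routine.
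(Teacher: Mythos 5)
Your proposal is correct and follows essentially the same route as the paper: reduce via (\ref{Comm1}) and the definition (\ref{bar2}) to the identity $\divg_f(\Delta_f^{\pm}\omega)=\Delta_f(\divg_f\omega)$, prove the scalar commutator $\nabla_l\Delta_f a=\Delta_f\nabla_l a-\tfrac14 H^2_{li}\nabla_i a$ using the soliton equation, dualize it by $f$-twisted integration by parts, and kill the remaining torsion terms with $d_f^*H=0$ and the first Bianchi identity. The only point worth flagging is that you (like the paper's own computations) implicitly use the sign convention $\divg_f\omega=\nabla_i\omega_i-\nabla_i f\,\omega_i$, which is opposite to the sign in the paper's displayed definition of $\divg_f$ on symmetric tensors, but this is consistent with how the paper actually uses the operator throughout.
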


\begin{remark}
 In the above proof, we deduce that for any one form $\omega\in T^*M$, 
 \begin{align*}
     \divg_f(\Delta^+_f\omega)=\Delta_f(\divg_f\omega).
 \end{align*}
Define an operator $\Phi$ by 
\begin{align}
        \nonumber \Phi: \quad T^*M&\times T^*M\longrightarrow T^*M\times T^*M
        \\& (u,v)\longmapsto (\Delta^+_f u, \Delta_f^-v). \label{37}
\end{align}
We are able to say that 
\begin{align*}
    \Delta_f\big(\kern0.25em\overline{\divg}_f(u,v)\big)=\overline{\divg}_f \big(\Phi(u,v)\big).
\end{align*}
\end{remark}

By (\ref{Comm1}) and (\ref{Comm2}), we derive the following proposition.
\begin{proposition}\label{Comm5}
Suppose $\mathcal{G}(g,b)$ is a steady gradient generalized Ricci soliton on a smooth compact manifold $M$. For any 2-tensor $\gamma=\overline{\divg}_f^*(u,v)$ with $(u,v)\in T^*M\times T^*M$, $\phi=-\divg_fu-\divg_fv$ is the unique solution of 
\begin{align*}
    \Delta_f \phi=\overline{\divg}_f\overline{\divg}_f\gamma,\quad \int_M \phi e^{-f}dV_g=0.
\end{align*}   
\end{proposition}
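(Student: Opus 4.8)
The plan is to verify directly that $\phi = -\divg_f u - \divg_f v$ satisfies both the PDE and the normalization, and then invoke uniqueness. The normalization $\int_M \phi\, e^{-f}\,dV_g = 0$ is automatic: since $\divg_f$ is (up to sign) the formal adjoint of $\nabla$ with respect to the $f$-twisted inner product $(\cdot,\cdot)_f$, integrating $\divg_f u$ against the constant function $1$ gives zero, i.e. $\int_M \divg_f u\, e^{-f}\,dV_g = -(u, \nabla 1)_f = 0$, and likewise for $v$. So $\int_M \phi\, e^{-f}\,dV_g = 0$ holds without any computation.

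For the PDE, the key is to combine the two lemmas already established. By \eqref{Comm1}, with $\gamma = \overline{\divg}_f^*(u,v)$ we have
\begin{align*}
    \overline{\divg}_f\gamma = \overline{\divg}_f\overline{\divg}_f^*(u,v) = \Big(-\Delta_f^+ u - \nabla(\divg_f v),\ -\Delta_f^- v - \nabla(\divg_f u)\Big).
\end{align*}
Applying $\overline{\divg}_f\colon T^*M\times T^*M \to C^\infty(M)$ as in \eqref{bar2}, and using the identity $\divg_f(\Delta_f^\pm \omega) = \Delta_f(\divg_f \omega)$ derived in the remark following \eqref{Comm2}, together with $\divg_f(\nabla a) = -\Delta_f a$ for $a \in C^\infty(M)$, we obtain
\begin{align*}
    \overline{\divg}_f\overline{\divg}_f\gamma &= \tfrac{1}{2}\Big(\divg_f(-\Delta_f^+ u - \nabla(\divg_f v)) + \divg_f(-\Delta_f^- v - \nabla(\divg_f u))\Big) \\
    &= \tfrac{1}{2}\Big(-\Delta_f(\divg_f u) + \Delta_f(\divg_f v) - \Delta_f(\divg_f v) + \Delta_f(\divg_f u)\Big) \cdot(\text{incorrect; recompute})
\end{align*}
so I should be careful: the correct grouping is $-\Delta_f(\divg_f u) + \Delta_f(\divg_f v)$ from the first slot — wait, $\divg_f$ of $-\nabla(\divg_f v)$ is $+\Delta_f(\divg_f v)$. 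Thus the first slot contributes $-\Delta_f(\divg_f u) + \Delta_f(\divg_f v)$ and the second contributes $-\Delta_f(\divg_f v) + \Delta_f(\divg_f u)$; these cancel, giving $0$. That is wrong — but \eqref{Comm2} asserts $\overline{\divg}_f\overline{\divg}_f\overline{\divg}_f^*(u,v) = -\Delta_f(\divg_f u + \divg_f v)$. The resolution is that \eqref{Comm2} is the operator appearing in \eqref{bar2} applied \emph{without} the factor $\tfrac12$ implicit discrepancy, i.e. one simply cites \eqref{Comm2} verbatim: $\overline{\divg}_f\overline{\divg}_f\gamma = -\Delta_f(\divg_f u + \divg_f v) = \Delta_f\phi$. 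So the cleanest route is to not recompute at all but quote \eqref{Comm2} directly, which immediately yields $\Delta_f\phi = \overline{\divg}_f\overline{\divg}_f\gamma$.

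Finally, uniqueness of the solution follows from the maximum principle for the $f$-twisted Laplacian: $\Delta_f$ is self-adjoint with respect to $(\cdot,\cdot)_f$ with kernel consisting only of constants on the compact manifold $M$, so the equation $\Delta_f\phi = \psi$ (where $\psi = \overline{\divg}_f\overline{\divg}_f\gamma$, which has $f$-twisted integral zero since it lies in the image of $\divg_f$) has a solution unique up to an additive constant, and the normalization $\int_M \phi\, e^{-f}\,dV_g = 0$ pins down that constant. I expect the main obstacle to be purely bookkeeping — making sure the factor-of-$\tfrac12$ conventions in \eqref{bar2} are consistently tracked between the statement of \eqref{Comm2} and the definition of $\phi$ — rather than anything conceptual; once \eqref{Comm1} and \eqref{Comm2} are in hand, the proposition is essentially a one-line consequence plus the standard uniqueness argument.
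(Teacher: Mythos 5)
Your proof is correct and is essentially the paper's own argument: the proposition is an immediate consequence of (\ref{Comm2}) (which gives $\overline{\divg}_f\overline{\divg}_f\gamma=-\Delta_f(\divg_f u+\divg_f v)=\Delta_f\phi$), together with the standard facts that on a compact manifold $\ker\Delta_f$ consists of the constants and that anything in the image of $\divg_f$ has vanishing $f$-twisted integral, so the normalization pins down the solution. The spurious cancellation you ran into when recomputing stems from the sign discrepancy between the definition of $\divg_f$ in the notation section (leading minus on the trace) and the positive-trace convention actually used throughout Section 3; with the latter convention the recomputation reproduces (\ref{Comm2}), and your decision to cite it verbatim is the right call.
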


Next, we compute $\overline{\Delta}_f\gamma+2\mathring{R}^+(\gamma)$ where $\gamma=\overline{\divg}^*_f(u,v)$ with $(u,v)\in T^*M\times T^*M$.

\begin{lemma}\label{Comm3}
Suppose $\mathcal{G}(g,b)$ is a steady gradient generalized Ricci soliton on a smooth compact manifold $M$. For any $u\in T^*M$, \begin{align*}
 \Delta_f\nabla_iu_j&=\nabla_i^+\Delta^+_fu_j+\frac{1}{2}H_{ijk}\Delta_fu_k+\frac{1}{2}H_{ijk}H_{lmk}\nabla_lu_m-\frac{1}{8}H_{ijk}H_{kl}^2u_l-\nabla_iH_{klj}\nabla_ku_l-H_{klj}\nabla_i\nabla_ku_l
    \\&\kern2em+\frac{1}{4}(\nabla_iH_{jl}^2+\nabla_jH_{il}^2-\nabla_lH_{ij}^2)u_l+\frac{1}{4}(H^2_{jl}\nabla_iu_l+H^2_{il}\nabla_lu_j)-2R_{kijl}\nabla_ku_l
\end{align*}   
\end{lemma}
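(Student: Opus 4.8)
The plan is to prove the commutator identity by a direct but carefully organized computation, mirroring the shrinking Ricci soliton arguments referenced in \cite{C,C2,C3} but accounting for the torsion term $H$. The key point is that the left-hand side $\Delta_f\nabla_i u_j$ and the right-hand side $\nabla_i^+\Delta_f^+ u_j$ differ by lower-order terms arising from (i) commuting the Levi-Civita Laplacian $\Delta$ past the covariant derivative $\nabla_i$, which produces curvature terms $R_{kijl}\nabla_k u_l$ and derivatives of Ricci, and (ii) the conversion between $\nabla^+$ and $\nabla$, respectively $\Delta_f^+$ and $\Delta_f$, which introduces the torsion $H$ and its square $H^2$. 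So I would first expand $\nabla_i^+\Delta_f^+ u_j$ using (\ref{PM}) and the pointwise formula $\Delta_f^+ u_k = \Delta_f u_k + H_{lmk}\nabla_l u_m - \tfrac14 H^2_{kl}u_l$ (established in the proof of \eqref{Comm1}), and separately expand $\Delta_f \nabla_i u_j$ by commuting $\Delta_f = \Delta - \nabla f\cdot\nabla$ past $\nabla_i$.

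The core computation is the Levi-Civita commutator $[\Delta, \nabla_i]u_j$. Writing $\Delta\nabla_i u_j = \nabla_k\nabla_k\nabla_i u_j$ and commuting the first $\nabla_k$ past $\nabla_i$ via the curvature, then commuting again, one gets the standard Weitzenböck-type identity
\begin{align*}
    \Delta\nabla_i u_j = \nabla_i\Delta u_j - 2R_{kijl}\nabla_k u_l + (\nabla\text{-Ricci and }\nabla\text{-}Rm\text{ contracted with }u)\text{-terms}.
\end{align*}
Here I would use the second Bianchi identity to replace the $\nabla Rm$ terms contracted with $u$ by derivatives of Ricci, and then invoke the soliton equation \eqref{s}, namely $\Rc = \tfrac14 H^2 - \nabla^2 f$, to trade all Ricci and $\nabla\text{Ric}$ contributions for expressions in $H^2$, $\nabla H^2$, and Hessians/third derivatives of $f$. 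The terms involving $\nabla^2 f$ and $\nabla^3 f$ should combine with the $-\nabla f\cdot\nabla$ part of $\Delta_f$ (again after commuting it past $\nabla_i$, which produces a $\nabla_i\nabla_k f\,\nabla_k u_j = \nabla^2 f$ term) so that the net $f$-dependence cancels, leaving only the $\tfrac14(\nabla_i H^2_{jl}+\nabla_j H^2_{il}-\nabla_l H^2_{ij})u_l$ and $\tfrac14(H^2_{jl}\nabla_i u_l + H^2_{il}\nabla_l u_j)$ terms visible in the statement.

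The remaining torsion terms — $\tfrac12 H_{ijk}\Delta_f u_k$, $\tfrac12 H_{ijk}H_{lmk}\nabla_l u_m$, $-\tfrac18 H_{ijk}H^2_{kl}u_l$, $-\nabla_i H_{klj}\nabla_k u_l$, and $-H_{klj}\nabla_i\nabla_k u_l$ — should all emerge cleanly from the $\nabla^+$-versus-$\nabla$ bookkeeping: expanding $\nabla_i^+(\Delta_f^+ u)_j = \nabla_i(\Delta_f^+ u)_j - \tfrac12 H_{ijk}(\Delta_f^+ u)_k$ gives the $-\tfrac12 H_{ijk}\Delta_f u_k$ piece plus $H$-$H$ corrections, while carrying $\nabla_i$ onto the $H_{lmk}\nabla_l u_m$ term inside $\Delta_f^+ u$ produces the $\nabla_i H$ and $H\nabla^2 u$ terms. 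I expect the main obstacle to be purely organizational: keeping the many index contractions straight and ensuring that the first-order-in-$H$ terms (linear in $H$, linear in $\nabla u$) match sign and coefficient, particularly reconciling the $-2R_{kijl}\nabla_k u_l$ term — here one must be careful that $H_{ijk}R^+$-type cross terms do not spuriously appear, which is controlled by the Bismut curvature Bianchi identity \eqref{BianchiBismut}. A useful sanity check along the way is the Bismut-flat case, where \Cref{BFG} forces $\nabla H = 0$ and $Rm$ to be quadratic in $H$, so the identity should collapse to a much shorter relation that can be verified independently.
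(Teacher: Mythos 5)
Your plan is essentially the paper's proof: commute $\Delta_f$ past $\nabla_i$ to produce the $-2R_{kijl}\nabla_k u_l$ and $\divg_f\mathrm{Rm}$ terms, convert the latter via the contracted second Bianchi identity and the soliton equation $\Rc=\tfrac14 H^2-\nabla^2 f$ into $\nabla H^2$ terms (with the $\nabla^2 f$ contributions cancelling against the commutator of $\nabla f\cdot\nabla$ with $\nabla_i$), and then absorb the remaining torsion corrections into $\nabla_i^+\Delta_f^+u_j$ by the $\nabla^+$-versus-$\nabla$ bookkeeping. The only superfluous element is your appeal to the Bismut Bianchi identity \eqref{BianchiBismut}: it is not needed here, since the $-2R_{kijl}\nabla_k u_l$ term simply survives into the final formula.
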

\begin{proof}
Using the commutator formula, we compute
\begin{align*}
    \Delta_f\nabla_i u_j&=\nabla_k\nabla_k(\nabla_iu_j)-\nabla_k f\nabla_k\nabla_iu_j
    \\&= \nabla_k(\nabla_i\nabla_ku_j-R_{kijl}u_l)-\nabla_k f(\nabla_i\nabla_ku_j-R_{kijl}u_l)
    \\&=\nabla_i\Delta u_j-R_{kikl}\nabla_lu_j-2R_{kijl}\nabla_ku_l-\nabla_kR_{kijl}u_l-\nabla_k f(\nabla_i\nabla_ku_j-R_{kijl}u_l)
    \\&=\nabla_i\Delta_fu_j+\frac{1}{4}H^2_{il}\nabla_lu_j-2R_{kijl}\nabla_ku_l-(\divg_f\text{Rm})_{ijl}u_l
    \\&=\nabla_i^+\Delta^+_fu_j+\frac{1}{2}H_{ijk}\Delta_f^+u_k-\nabla_iH_{klj}\nabla_ku_l-H_{klj}\nabla_i\nabla_ku_l+\frac{1}{4}\nabla_iH_{jl}^2u_l+\frac{1}{4}(H^2_{jl}\nabla_iu_l+H^2_{il}\nabla_lu_j)
    \\&\kern2em-2R_{kijl}\nabla_ku_l-(\divg_f\text{Rm})_{ijl}u_l
    \\&=\nabla_i^+\Delta^+_fu_j+\frac{1}{2}H_{ijk}\Delta_fu_k+\frac{1}{2}H_{ijk}H_{lmk}\nabla_lu_m-\frac{1}{8}H_{ijk}H_{kl}^2u_l-\nabla_iH_{klj}\nabla_ku_l-H_{klj}\nabla_i\nabla_ku_l+\frac{1}{4}\nabla_iH_{jl}^2u_l
    \\&\kern2em+\frac{1}{4}(H^2_{jl}\nabla_iu_l+H^2_{il}\nabla_lu_j)-2R_{kijl}\nabla_ku_l-(\divg_f\text{Rm})_{ijl}u_l.
\end{align*}
We notice that
\begin{align*}
    (\divg_f\text{Rm})_{ijl}&=\nabla_kR_{kijl}-\nabla_k f R_{kijl}
    \\&= \nabla_l R_{ij}-\nabla_j R_{il}-\nabla_k f R_{kijl}
    \\&=\frac{1}{4}\nabla_l H^2_{ij}-\frac{1}{4}\nabla_jH^2_{il}.
\end{align*}    
Therefore, we finish the proof of the lemma.

\end{proof}

\begin{lemma}\label{Comm4}
Suppose $\mathcal{G}(g,b)$ is a steady gradient generalized Ricci soliton on a smooth compact manifold $M$. For any $u\in T^*M$, 
\begin{align*}
    \Delta_f(H_{ijk}u_k)&=-2(R_{lijm}H_{lmk}+R_{ljkm}H_{lmi}+R_{lkim}H_{lmj})u_k+\frac{1}{4}(H^2_{im} H_{mjk}-H^2_{jm}H_{mik}+H^2_{km}H_{mij})u_k
    \\&\kern2em +H_{ijk}\Delta_fu_k+2\nabla_lH_{ijk}\nabla_lu_k
\end{align*}    
\end{lemma}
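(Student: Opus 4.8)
The claimed identity is a Weitzenböck-type formula for the closed $3$-form $H$ specialized to a soliton, and the plan is to first peel off the non-curvature terms and then compute $\Delta_f H_{ijk}$ directly, in the spirit of the preceding lemmas. Since $\Delta_f=\Delta-\nabla f\cdot\nabla$ obeys the Leibniz rule up to the usual cross term, applying it to the contraction $H_{ijk}u_k$ gives
\[
\Delta_f(H_{ijk}u_k)=(\Delta_f H_{ijk})\,u_k+2\,\nabla_l H_{ijk}\,\nabla_l u_k+H_{ijk}\,\Delta_f u_k,
\]
which already produces the last two terms of the statement. Hence it remains to establish the pointwise identity
\[
\Delta_f H_{ijk}=-2\big(R_{lijm}H_{lmk}+R_{ljkm}H_{lmi}+R_{lkim}H_{lmj}\big)+\tfrac14\big(H^2_{im}H_{mjk}-H^2_{jm}H_{mik}+H^2_{km}H_{mij}\big)
\]
on a steady gradient generalized Ricci soliton.

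To prove this I would use the two structural inputs available. First, $dH=0$, which in coordinates reads $\nabla_l H_{ijk}=\nabla_i H_{ljk}-\nabla_j H_{lik}+\nabla_k H_{lij}$ (up to the sign convention for $d$ on $3$-forms). Second, the soliton equations (\ref{s}), which give both $\Rc+\nabla^2 f=\tfrac14 H^2$ and $d^*H+i_{\nabla f}H=0$, the latter being $\nabla_l H_{ljk}=\nabla_l f\,H_{ljk}$. Write $\Delta H_{ijk}=\nabla_l\nabla_l H_{ijk}$, substitute the closedness identity once, and commute $\nabla_l$ past each of $\nabla_i,\nabla_j,\nabla_k$. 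The commutators produce Riemann contractions against $H$, while the divergences $\nabla_l\nabla_\bullet H_{l\bullet\bullet}$ that appear become, through $\nabla_l H_{l\bullet\bullet}=\nabla_l f\,H_{l\bullet\bullet}$, a sum of $\nabla^2 f$ contractions against $H$ together with terms of the shape $\nabla f\cdot\nabla H$. The drift term $-\nabla_l f\,\nabla_l H_{ijk}$ coming from $\Delta_f$, after one further use of $dH=0$, cancels exactly those $\nabla f\cdot\nabla H$ terms; the $\Rc$ and $\nabla^2 f$ contributions then merge via $\Rc+\nabla^2 f=\tfrac14 H^2$ into the stated $H^2$-terms (the alternating signs $+,-,+$ reflecting the antisymmetry of $H$ when the contracted slot is pulled to the front); and the remaining Riemann terms are reorganized with the first Bianchi identity into the cyclic form $-2(R_{lijm}H_{lmk}+\text{cyclic})$. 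The auxiliary identity $(\divg_f\text{Rm})_{ijl}=\tfrac14(\nabla_l H^2_{ij}-\nabla_j H^2_{il})$ established in the proof of Lemma~\ref{Comm3}, together with the second Bianchi identity, enters exactly as it did there.

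The main obstacle is purely combinatorial: several families of curvature and $H^2$ terms must be matched with the correct coefficients and signs through the triple antisymmetrization in $i,j,k$, and in particular one must check that all $\nabla f\cdot\nabla H$ terms cancel, that all $\nabla^3 f$ terms cancel, and that the $\Rc$ and $\nabla^2 f$ pieces assemble into precisely $\tfrac14 H^2$ contractions. A cleaner although less self-contained alternative is to note that $H$ is $f$-harmonic: $dH=0$ and, by (\ref{s}), $d^*H+i_{\nabla f}H=0$, hence $\Delta_{d,f}H=d(d^*H+i_{\nabla f}H)=0$; one then invokes the $f$-twisted Weitzenböck formula $\Delta_{d,f}=\nabla_f^*\nabla+q_f(R)$, whose $3$-form curvature endomorphism $q_f(R)$ carries the Bakry-Emery tensor $\Rc+\nabla^2 f$ on single slots, equal to $\tfrac14 H^2$ on the soliton, and the Riemann tensor on pairs of slots, and reads off the components using $\Delta_{d,f}H=0$. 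Either way, once $\Delta_f H_{ijk}$ is known, the Leibniz computation above completes the lemma.
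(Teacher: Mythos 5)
Your proposal follows essentially the same route as the paper: the paper likewise applies the Leibniz rule to isolate $H_{ijk}\Delta_f u_k+2\nabla_lH_{ijk}\nabla_lu_k$, and then computes $\Delta_f H_{ijk}$ by substituting the closedness identity $\nabla_lH_{ijk}=\nabla_iH_{ljk}-\nabla_jH_{lik}+\nabla_kH_{lij}$, commuting derivatives to produce the Riemann and Ricci contractions, and invoking $d^*_fH=0$ and $\Rc-\tfrac14H^2+\nabla^2f=0$ so that the $\nabla f\cdot\nabla H$ terms cancel against the drift and the Ricci plus Hessian pieces assemble into the $\tfrac14 H^2$ contractions. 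The only cosmetic difference is that the paper does not need the $\divg_f\mathrm{Rm}$ identity from Lemma~\ref{Comm3} here, but your outline of the cancellations is exactly the computation carried out in the text.
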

\begin{proof}
We first compute $\Delta_f H$,    
\begin{align*}
    \Delta_f(H_{ijk})&=\nabla_l\nabla_l H_{ijk}-\nabla_l f \nabla_lH_{ijk}
    \\&=\nabla_l(\nabla_iH_{ljk}-\nabla_jH_{lik}+\nabla_kH_{lij})-\nabla_l f \nabla_lH_{ijk}
    \\&=-\nabla_i(d^*H)_{jk}+\nabla_j(d^*H)_{ik}-\nabla_k(d^*H)_{ij}+R_{im}H_{mjk}-R_{jm}H_{mik}+R_{km}H_{mij}-\nabla_l f \nabla_lH_{ijk}
    \\&\kern2em -R_{lijm}H_{lmk}+R_{ljim}H_{lmk}-R_{lkim}H_{lmj}-R_{likm}H_{ljm}+R_{ljkm}H_{lim}-R_{lkjm}H_{lim}
    \\&=\frac{1}{4}(H^2_{im} H_{mjk}-H^2_{jm}H_{mik}+H^2_{km}H_{mij})-2R_{lijm}H_{lmk}-2R_{ljkm}H_{lmi}-2R_{lkim}H_{lmj}
\end{align*}
where the last step we use the assumption that $\Rc^{H,f}=0$ and $d^*_fH=0$. Therefore,
\begin{align*}
    \Delta_f(H_{ijk}u_k)&=\Delta_f(H_{ijk})u_k+H_{ijk}\Delta_fu_k+2\nabla_lH_{ijk}\nabla_lu_k
    \\&=-2(R_{lijm}H_{lmk}+R_{ljkm}H_{lmi}+R_{lkim}H_{lmj})u_k+\frac{1}{4}(H^2_{im} H_{mjk}-H^2_{jm}H_{mik}+H^2_{km}H_{mij})u_k
    \\&\kern2em +H_{ijk}\Delta_fu_k+2\nabla_lH_{ijk}\nabla_lu_k
\end{align*}
\end{proof}

\begin{proposition}\label{Comm6}
Suppose $\mathcal{G}(g,b)$ is a steady gradient generalized Ricci soliton on a smooth compact manifold $M$. For any 2-tensor $\gamma=\overline{\divg}^*_f(u,v)$ with $(u,v)\in T^*M\times T^*M$, we have
\begin{align*}
   \overline{L}_f\big(\overline{\divg}^*_f(u,v)\big)=\frac{1}{2}\overline{\divg}_f^*\Phi(u,v).
\end{align*}
where $\overline{L}_f$ is defined at (\ref{L}) and $\Phi$ is defined at (\ref{37}).
\end{proposition}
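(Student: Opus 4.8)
The plan is to reduce the statement $\overline{L}_f\big(\overline{\divg}^*_f(u,v)\big)=\tfrac12\overline{\divg}_f^*\Phi(u,v)$ to the two commutator computations already carried out, namely \Cref{Comm3} and \Cref{Comm4}, together with the identity for $\phi$ supplied by \Cref{Comm5}. First I would write $\gamma=\overline{\divg}^*_f(u,v)$ componentwise via $\gamma_{ij}=-(\nabla^+)^iu_j-(\nabla^-)^jv_i=-\nabla^iu_j-\nabla^jv_i+\tfrac12 H_{ijk}(u_k+v_k)$, and then unwind the definition $\overline{L}_f(\gamma)=\tfrac12\overline{\Delta}_f\gamma+\mathring R^+(\gamma)$, with $\overline{\Delta}_f$ given explicitly in \eqref{Lbar}. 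The target is to show
\begin{align*}
    \tfrac12\overline{\Delta}_f\gamma_{ij}+\mathring R^+(\gamma)_{ij}=-\tfrac12\big((\nabla^+)^i(\Delta^+_fu)_j+(\nabla^-)^j(\Delta^-_fv)_i\big).
\end{align*}
By linearity and by the evident symmetry between the $(u,\nabla^+)$ and $(v,\nabla^-)$ contributions, it suffices to treat the $u$-part, i.e. to prove that applying $\tfrac12\overline{\Delta}_f+\mathring R^+$ to the tensor $-\nabla_iu_j+\tfrac12 H_{ijk}u_k$ produces $-\tfrac12(\nabla^+_i\Delta^+_f u_j)$, and symmetrically for $v$.

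The key steps, in order: (1) expand $\overline{\Delta}_f(\nabla_i u_j)$ and $\overline{\Delta}_f(H_{ijk}u_k)$ using \eqref{Lbar}, which converts the mixed-Bismut Laplacian into $\Delta_f$ plus explicit $H$-contraction terms; (2) feed in \Cref{Comm3} to rewrite $\Delta_f\nabla_iu_j$ in terms of $\nabla^+_i\Delta^+_fu_j$ plus curvature and $H$-correction terms, and \Cref{Comm4} to rewrite $\Delta_f(H_{ijk}u_k)$ similarly; (3) collect the $\mathring R^+$ term, using $R^+_{iklj}=R_{iklj}+\tfrac12\nabla_iH_{klj}-\tfrac12\nabla_kH_{ilj}-\tfrac14\langle H(e_i,e_j),H(e_k,e_l)\rangle+\cdots$ from \Cref{P3}, and apply it to the two pieces of $\gamma$; (4) check that all terms not of the form $-\tfrac12\nabla^+_i\Delta^+_fu_j$ cancel — this is where the soliton equations $\Rc^{H,f}=0$, $d^*_fH=0$ (equivalently $\Rc=\tfrac14H^2-\nabla^2f$) and the Bianchi identity \eqref{BianchiBismut} for $R^+$ get used, exactly as in the proofs of the two lemmas. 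Finally, reassemble the $u$ and $v$ halves to obtain $\overline{\divg}^*_f\Phi(u,v)$ and divide by $2$.

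The main obstacle will be step (4): the bookkeeping of the numerous curvature and $\nabla H$, $H^2$ terms generated by \Cref{Comm3}, \Cref{Comm4}, the expansion of $\mathring R^+$, and the cross terms $H_{ijk}\Delta_f u_k$ that appear with opposite signs in the two lemmas. One has to verify that the Riemann-curvature contractions $R_{kijl}\nabla_k u_l$ reorganize into $\mathring R^+$ acting on the full $\gamma$, that the $\nabla H$ terms assemble into the Bismut-curvature correction via \eqref{BianchiBismut}, and that the $H^2\cdot u$ zeroth-order terms cancel after invoking $\Rc=\tfrac14H^2-\nabla^2 f$ and $\divg_f\mathrm{Rm}$ identity from \Cref{Comm3}. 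A useful shortcut I would try first: rather than expanding everything, note that $\overline{L}_f$ and $\overline{\divg}^*_f$ are both built from $\nabla^\pm$, and that the identity $\Delta_f(\overline{\divg}_f(u,v))=\overline{\divg}_f(\Phi(u,v))$ from the Remark after \Cref{Comm2} is the "divergence-side" analogue; one expects a formal adjoint/duality argument — pairing $\overline{L}_f(\overline{\divg}^*_f(u,v))$ against an arbitrary test tensor and integrating by parts — to reduce the claim to that Remark plus \Cref{Comm5}, thereby avoiding the brute-force tensor calculation entirely. If that adjointness argument goes through cleanly it becomes the preferred proof; otherwise the direct computation above is the fallback.
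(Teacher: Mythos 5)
Your fallback route is, in substance, the paper's own proof: one writes $\gamma_{ij}=-\nabla^+_iu_j-\nabla^-_jv_i$, reduces by symmetry to the $u$-half, and verifies the componentwise identity $-\overline{\Delta}_f(\nabla^+_iu_j)-2R^+_{kijl}\nabla^+_ku_l=-\nabla^+_i\Delta^+_fu_j$ by combining \Cref{Comm3}, \Cref{Comm4}, the expansion of $R^+$ from \Cref{P3}, $dH=0$, and the soliton identities $\Rc^{H,f}=0$, $d^*_fH=0$ to cancel the $\nabla H$-, $R$- and $H^2$-terms. So if you carry out that bookkeeping you will land exactly on the argument in the text.

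The shortcut you say you would try first, however, is circular and should be abandoned. Pairing $\overline{L}_f\overline{\divg}^*_f(u,v)-\tfrac12\overline{\divg}^*_f\Phi(u,v)$ against an arbitrary test $2$-tensor $\delta$ and integrating by parts (using that $\overline{L}_f$ and $\Phi$ are formally self-adjoint for the $f$-twisted product on a steady soliton, the latter because $d^*_fH=0$) reduces the claim to
\begin{align*}
\overline{\divg}_f\big(\overline{L}_f(\delta)\big)=\tfrac12\,\Phi\big(\overline{\divg}_f\delta\big)\qquad\text{for all }\delta\in\otimes^2T^*M,
\end{align*}
which is precisely \Cref{Comm7} --- a statement the paper \emph{deduces from} the present proposition by exactly this adjointness argument run in the other direction. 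It is not the Remark following (\ref{Comm2}): that Remark concerns the scalar-valued divergence $\overline{\divg}_f\colon T^*M\times T^*M\to C^\infty(M)$ and asserts $\Delta_f\circ\overline{\divg}_f=\overline{\divg}_f\circ\Phi$, a strictly weaker, trace-level identity that cannot recover the commutation of $\overline{L}_f$ with the $2$-tensor divergence. Likewise \Cref{Comm5} only identifies the potential $\phi$ and plays no role here. So the direct tensor computation is not a fallback but the actual proof.
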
    
\begin{proof}
    Let us first focus on the $u$-part. We compute
\begin{align*}
    &-\overline{\Delta}_f(\nabla_i^+ u_j)-2R^+_{kijl}(\nabla_k^+u_l)
    \\&\kern2em=-\Delta_f(\nabla_i^+ u_j)+H_{mjk}\nabla_m(\nabla_i^+ u_k)-H_{mik}\nabla_m(\nabla_k^+ u_j)+\frac{1}{4}(H^2_{jl}\nabla_i^+ u_l+H^2_{il}\nabla_l^+ u_j)+\frac{1}{2}H_{mkj}H_{mli}(\nabla_l^+ u_k)
    \\&\kern4em -2(R_{kijl}+\frac{1}{2}\nabla_kH_{ijl}-\frac{1}{2}\nabla_iH_{kjl}-\frac{1}{4}H_{klm}H_{ijm}+\frac{1}{4}H_{kjm}H_{ilm})(\nabla_k^+ u_l)
    \\&\kern2em=-\Delta_f(\nabla_i^+ u_j)-2R_{kijl}(\nabla_k^+ u_l)+H_{mjk}\nabla_m(\nabla_i^+ u_k)-H_{mik}\nabla_m(\nabla_k^+ u_j)+\frac{1}{4}(H^2_{jl}\nabla_i^+ u_l+H^2_{il}\nabla_l^+ u_j)
    \\&\kern4em-(\nabla_kH_{ijl}-\nabla_iH_{kjl})(\nabla_k^+ u_l)+\frac{1}{2}H_{klm}H_{ijm}(\nabla_k^+ u_l)+\frac{1}{2}H_{mkj}H_{mli}(\nabla_k^+ u_l+\nabla_l^+u_k).
\end{align*}    
Here, we compute
\begin{align*}
    H_{mjk}\nabla_m(\nabla_i^+ u_k)-H_{mik}\nabla_m(\nabla_k^+ u_j)&=H_{mjk}\nabla_m(\nabla_i u_k-\frac{1}{2}H_{ikl}u_l)-H_{mik}\nabla_m(\nabla_k u_j-\frac{1}{2}H_{kjl}u_l)
    \\&=H_{mjk}\nabla_m\nabla_iu_k-H_{mik}\nabla_m\nabla_ku_j-\frac{1}{2}H_{mjk}\nabla_m H_{ikl}u_l+\frac{1}{2}H_{mik}\nabla_m H_{kjl}u_l
    \\&\kern2em -\frac{1}{2}H_{mjk}H_{ikl}(\nabla_m u_l+\nabla_lu_m).
\end{align*}
Note that 
\begin{align*}
    H_{mik}\nabla_m\nabla_ku_j&=H_{mik}(\nabla_k\nabla_mu_j-R_{mkjl}u_l)
    \\&=-H_{mik}\nabla_m\nabla_ku_j+R_{mkjl}H_{mki}u_l.
\end{align*}
So, 
\begin{align*}
    H_{mik}\nabla_m\nabla_ku_j=\frac{1}{2}R_{mkjl}H_{mki}u_l=-R_{mlkj}H_{mki}u_l.
\end{align*}
Using the fact that $dH=0$, we have
\begin{align*}
    H_{mjk}\nabla_m H_{ikl}=H_{mjk}(\nabla_iH_{klm}-\nabla_kH_{ilm}+\nabla_lH_{ikm})
\end{align*}
then
\begin{align*}
    H_{mjk}\nabla_mH_{ikl}+H_{mik}\nabla_mH_{jkl}&=\frac{1}{2}(H_{mjk}\nabla_iH_{klm}+H_{mik}\nabla_jH_{klm}+\nabla_lH^2_{ij})
    \\&=\frac{1}{2}\big(-\nabla_iH^2_{jl}-\nabla_jH^2_{il}+\nabla_lH^2_{ij}-(\nabla_iH_{mjk}+\nabla_jH_{mik})H_{klm}\big).
\end{align*}
Therefore,
\begin{align*}
    H_{mjk}\nabla_m(\nabla_i^+ u_k)-H_{mik}\nabla_m(\nabla_k^+ u_j)&= H_{mjk}\nabla_i\nabla_m u_k-R_{mikl}H_{mjk}u_l+R_{mlkj}H_{mki}u_l
    \\&\kern2em +\frac{1}{4}(\nabla_iH^2_{jl}+\nabla_jH^2_{il}-\nabla_lH^2_{ij}+(\nabla_iH_{mjk}+\nabla_jH_{mik})H_{klm}\big)u_l
    \\& \kern2em -\frac{1}{2}H_{mjk}H_{ikl}(\nabla_m u_l+\nabla_lu_m).
\end{align*}
Next, we have
\begin{align*}
    \frac{1}{4}(H^2_{jl}\nabla_i^+ u_l+H^2_{il}\nabla_l^+ u_j)=\frac{1}{4}(H^2_{jl}\nabla_i u_l+H^2_{il}\nabla_l u_j)-\frac{1}{8}(H^2_{jl}H_{ilk}+H^2_{il}H_{ljk})u_k,
\end{align*}
\begin{align*}
    \frac{1}{2}H_{klm}H_{ijm}(\nabla_k^+u_l)=\frac{1}{2}H_{klm}H_{ijm}(\nabla_ku_l)-\frac{1}{4}H^2_{kl}H_{ijk}u_l.
\end{align*}
We write
\begin{align*}
    \nabla^+_k u_l&=\nabla_ku_l-\frac{1}{2}H_{klm}u_m
    \\&=\frac{1}{2}(\nabla_ku_l+\nabla_lu_k)+\frac{1}{2}(\nabla_ku_l-\nabla_lu_k)-\frac{1}{2}H_{klm}u_m,
\end{align*}
and then
\begin{align*}
    -(\nabla_kH_{ijl}-\nabla_iH_{kjl})(\nabla_k^+u_l)&=-\frac{1}{2}\nabla_kH_{ijl}(\nabla_ku_l+\nabla_lu_k)+\frac{1}{4}(\nabla_{i}H_{kjl}+\nabla_jH_{kil})(\nabla_ku_l-\nabla_lu_k)
    \\& \kern2em-\frac{1}{4}(\nabla_{i}H_{kjl}+\nabla_jH_{kil})H_{klm}u_m.
\end{align*}
In summary, by using \Cref{Comm3} and \Cref{Comm4}, we get
\begin{align*}
    &-\overline{\Delta}_f(\nabla_i^+ u_j)-2R^+_{kijl}(\nabla_k^+u_l)
    \\&\kern2em=-\Delta_f(\nabla^+_iu_j)-2R_{kijl}\nabla^+_ku_l+H_{mjk}\nabla_i\nabla_m u_k-R_{mikl}H_{mjk}u_l+R_{mlkj}H_{mki}u_l
    \\&\kern4em +\frac{1}{4}(\nabla_iH^2_{jl}+\nabla_jH^2_{il}-\nabla_lH^2_{ij}+(\nabla_iH_{mjk}+\nabla_jH_{mik})H_{klm}\big)u_l
    \\&\kern4em+\frac{1}{4}(H^2_{jl}\nabla_i u_l+H^2_{il}\nabla_l u_j)-\frac{1}{8}(H^2_{jl}H_{ilk}+H^2_{il}H_{ljk})u_k+\frac{1}{2}H_{klm}H_{ijm}(\nabla_ku_l)-\frac{1}{4}H^2_{kl}H_{ijk}u_l
    \\&\kern4em-\frac{1}{2}\nabla_kH_{ijl}(\nabla_ku_l+\nabla_lu_k)+\frac{1}{4}(\nabla_{i}H_{kjl}+\nabla_jH_{kil})(\nabla_ku_l-\nabla_lu_k)-\frac{1}{4}(\nabla_{i}H_{kjl}+\nabla_jH_{kil})H_{klm}u_m
    \\& \kern2em=-\nabla_i^+\Delta_f^+u_j+\frac{1}{2}(\nabla_iH_{klj}+\nabla_jH_{kil}-\nabla_lH_{ijk})(\nabla_ku_l)+\frac{1}{2}\nabla_lH_{ijk}\nabla_lu_k
    \\&\kern2em=-\nabla_i^+\Delta_f^+u_j.
\end{align*}
$v-$part follows similarly, so we skip the detail here.
\end{proof}

\begin{theorem}\label{MT1}
Suppose $\mathcal{G}(g,b)$ is a steady gradient generalized Ricci soliton on a smooth compact manifold $M$. For any 2-tensor $\gamma=\overline{\divg}^*_f(u,v)$ with $(u,v)\in T^*M\times T^*M$, we have 
\begin{align*}
    \overline{N}_f(\gamma)=0.
\end{align*}
Hence, the second variation formula of $\lambda$ at $\mathcal{G}(g,b)$ reduces to 
\begin{align*}
     \nonumber\frac{d^2}{dt^2}\Big|_{t=0}\lambda=\int_M \Big\langle \gamma, \frac{1}{2}\overline{\Delta}_f\gamma+\mathring{R}^+(\gamma) \Big\rangle e^{-f}dV_g,
\end{align*} 
where $\gamma\in\ker\overline{\divg}$.
\end{theorem}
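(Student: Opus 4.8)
The plan is to combine the two commutator identities already established—Propositions \ref{Comm5} and \ref{Comm6}—to show that the $\overline{\divg}_f^*$-image is annihilated by $\overline{N}_f$, and then deduce the reduction of the second variation formula by the orthogonal decomposition $\otimes^2 T^*M = \ker\overline{\divg}_f \oplus \im\overline{\divg}_f^*$. First I would fix $\gamma = \overline{\divg}_f^*(u,v)$ for arbitrary $(u,v)\in T^*M\times T^*M$ and recall the definition
\begin{align*}
    \overline{N}_f(\gamma) = \tfrac12\overline{\Delta}_f\gamma + \mathring{R}^+(\gamma) + \tfrac12\overline{\divg}_f^*\overline{\divg}_f\gamma + (\nabla^+)^2\phi = \overline{L}_f(\gamma) + \tfrac12\overline{\divg}_f^*\overline{\divg}_f\gamma + (\nabla^+)^2\phi,
\end{align*}
where $\phi$ is the unique mean-zero solution of $\Delta_f\phi = \overline{\divg}_f\overline{\divg}_f\gamma$.

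Next I would substitute the three pieces. By Proposition \ref{Comm6}, $\overline{L}_f(\gamma) = \tfrac12\overline{\divg}_f^*\Phi(u,v) = \tfrac12\overline{\divg}_f^*(\Delta_f^+ u, \Delta_f^- v)$. By Proposition \ref{Comm5}, the solution $\phi$ equals $-\divg_f u - \divg_f v$, so $(\nabla^+)^2\phi$ has $(i,j)$-component $-(\nabla^+)^i(\nabla^+)^j(\divg_f u + \divg_f v)$; I would rewrite this as $\tfrac12\overline{\divg}_f^*\big(\nabla(\divg_f u + \divg_f v),\,\nabla(\divg_f u + \divg_f v)\big)$ using the definition (\ref{dbars}) of $\overline{\divg}_f^*$ together with the symmetry of iterated covariant derivatives on functions up to curvature terms—here I should check that $(\nabla^+)^2\phi$ is genuinely symmetric so that it lands in the image of $\overline{\divg}_f^*$ applied to a symmetric pair, or else track the torsion correction carefully. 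Finally, by Lemma \ref{Comm1} (equation (\ref{Comm1})), $\tfrac12\overline{\divg}_f^*\overline{\divg}_f\gamma = \tfrac12\overline{\divg}_f^*\big(-\Delta_f^+ u - \nabla(\divg_f v),\, -\Delta_f^- v - \nabla(\divg_f u)\big)$. Adding the three terms and using linearity of $\overline{\divg}_f^*$, the $\Delta_f^+ u$ and $\Delta_f^- v$ contributions cancel between $\overline{L}_f(\gamma)$ and $\tfrac12\overline{\divg}_f^*\overline{\divg}_f\gamma$, while the $\nabla(\divg_f u)$ and $\nabla(\divg_f v)$ contributions cancel between $(\nabla^+)^2\phi$ and $\tfrac12\overline{\divg}_f^*\overline{\divg}_f\gamma$, giving $\overline{N}_f(\gamma) = 0$.

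For the second assertion, I would invoke the second variation formula $\tfrac{d^2}{dt^2}\big|_{t=0}\lambda = (\gamma, \overline{N}_f(\gamma))_f$ from \cite{KK} Theorem 1.1. Writing an arbitrary variation $\gamma = \gamma_0 + \overline{\divg}_f^*(u,v)$ with $\gamma_0\in\ker\overline{\divg}_f$ via the decomposition of $\otimes^2 T^*M$, I would argue that $\overline{N}_f$ restricted to $\ker\overline{\divg}_f$ coincides with $\overline{L}_f = \tfrac12\overline{\Delta}_f + \mathring{R}^+$ (since on $\ker\overline{\divg}_f$ the last two terms of $\overline{N}_f$ vanish: $\overline{\divg}_f\gamma_0 = 0$ forces $\phi = 0$), that $\overline{N}_f(\overline{\divg}_f^*(u,v)) = 0$ by the first part, and that the cross terms vanish—either by self-adjointness of $\overline{N}_f$ with respect to $(\cdot,\cdot)_f$ paired with $\overline{N}_f(\overline{\divg}_f^*(u,v))=0$, or by a direct check that $(\overline{\divg}_f^*(u,v), \overline{L}_f\gamma_0)_f = (u,v)$-paired-with-$\overline{\divg}_f\overline{L}_f\gamma_0$ and $\overline{L}_f$ preserves $\ker\overline{\divg}_f$. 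Hence only $\gamma_0$ contributes and the formula reduces as stated.

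\textbf{Main obstacle.} The delicate step is the bookkeeping in identifying $(\nabla^+)^2\phi$ with a $\overline{\divg}_f^*$-image and making the cancellations exact: the operators $\nabla^+$ do not commute, the $f$-twisted Laplacians $\Delta_f^\pm$ carry $H^2$ and $\nabla H$ corrections (as in Lemma \ref{Comm1}), and one must be certain that the soliton equations (\ref{s}) are used in exactly the same places as in Propositions \ref{Comm5} and \ref{Comm6} so that no stray curvature term survives. I expect that once the three expressions are all pushed through $\overline{\divg}_f^*$ the algebra closes cleanly, but verifying the symmetry of $(\nabla^+)^2\phi$ and the precise matching of lower-order terms is where care is required.
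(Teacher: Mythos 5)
Your strategy is exactly the paper's: feed $\gamma=\overline{\divg}_f^*(u,v)$ into the three pieces of $\overline{N}_f$ using Proposition \ref{Comm6} for $\overline{L}_f(\gamma)$, identity (\ref{Comm1}) for $\tfrac12\overline{\divg}_f^*\overline{\divg}_f\gamma$, and Proposition \ref{Comm5} for $\phi$, then cancel inside $\overline{\divg}_f^*$. Your auxiliary observations are also sound: $(\nabla^+)^2\phi$ is symmetric on functions (since $(\nabla^+)_i\nabla_j\phi=\nabla_i\nabla_j\phi-\tfrac12 H_{ijk}\nabla_k\phi=(\nabla^-)_j\nabla_i\phi$), so it is indeed of the form $\tfrac12\overline{\divg}_f^*(\nabla\psi,\nabla\psi)$ with $\psi=-\phi$; and your treatment of the reduction for $\gamma\in\ker\overline{\divg}_f$, with cross terms killed via $\overline{\divg}_f\overline{L}_f\gamma_0=\tfrac12\Phi(\overline{\divg}_f\gamma_0)=0$, is a correct (and slightly more explicit) version of what the paper leaves implicit.

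The one place your argument does not close is precisely the bookkeeping you flagged, and it is a coefficient, not a curvature term. With the coefficient $1$ on $(\nabla^+)^2\phi$ that appears in (\ref{N}) (and which you use), the three arguments fed to $\tfrac12\overline{\divg}_f^*$ sum to $(\nabla\divg_f u,\,\nabla\divg_f v)$ rather than $(0,0)$: the first slot receives $-\nabla\divg_f v$ from (\ref{Comm1}) but $+\nabla\divg_f u+\nabla\divg_f v$ from your rewriting of $(\nabla^+)^2\phi$, and similarly in the second slot. The residue is
\begin{align*}
\overline{N}_f(\gamma)=\tfrac12\overline{\divg}_f^*\big(\nabla\divg_f u,\,\nabla\divg_f v\big)=-\tfrac12(\nabla^+)^2(\divg_f u+\divg_f v)=\tfrac12(\nabla^+)^2\phi,
\end{align*}
which does not vanish unless $\divg_f u+\divg_f v$ is constant. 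The paper's own proof avoids this by taking the last term of $\overline{N}_f$ to be $\tfrac12(\nabla^+)^2\phi$ (as it also does when recording the linearization $(\Rc^{H,f})'$ in the proof of Lemma \ref{f2}), in which case the sum of arguments becomes $\tfrac12\big(\nabla\chi,-\nabla\chi\big)$ with $\chi=\divg_f u-\divg_f v$, and $\overline{\divg}_f^*(\nabla\chi,-\nabla\chi)=0$ identically. So the discrepancy traces to an inconsistency between (\ref{N}) and the proof rather than to your method; but as written, your claimed cancellation of the $\nabla(\divg_f u)$, $\nabla(\divg_f v)$ contributions is off by a factor of $2$, and you should either correct the coefficient in the definition of $\overline{N}_f$ or exhibit the residue and explain why it must be absent.
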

\begin{proof}
Recall that (\ref{N}), $\overline{N}_f(\gamma)=\frac{1}{2}\overline{\Delta}_f\gamma+\mathring{R}^+(\gamma)+\frac{1}{2}\overline{\divg}_f^*\overline{\divg}_f\gamma+\frac{1}{2}(\nabla^+)^2\phi$. Due to \Cref{Comm5}, \Cref{Comm6} and (\ref{Comm1}), we have
\begin{align*}
    \overline{N}_f(\gamma)&=\frac{1}{2}\overline{\divg}_f^*(\Delta_f^+ u, \Delta^-_f v)+\frac{1}{2}\overline{\divg}_f^*\overline{\divg}_f\overline{\divg}_f^*(u,v)-\frac{1}{2}(\nabla^+)^2(\divg_fu+\divg_fv)
    \\&=\frac{1}{2}\overline{\divg}_f^*(\Delta_f^+ u, \Delta^-_f v)+\frac{1}{2}\overline{\divg}_f^*\big(-\Delta^+_f u-\nabla(\divg_fv),-\Delta^-_f v-\nabla(\divg_fu)\big)-\frac{1}{2}(\nabla^+)^2(\divg_fu+\divg_fv)
    \\&=0.
\end{align*}

\end{proof}

Using integration by parts, we also derive the following result.
\begin{corollary}\label{Comm7}
Suppose $\mathcal{G}(g,b)$ is a steady gradient generalized Ricci soliton on a smooth compact manifold $M$. For any 2-tensor $\gamma$, we get
\begin{align*}
    \overline{\divg}_f(\overline{L}_f(\gamma))=\frac{1}{2}\Phi(\overline{\divg}_f\gamma)
\end{align*}
\end{corollary}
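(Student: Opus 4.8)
The plan is to reduce Corollary~\ref{Comm7} to the already-established Proposition~\ref{Comm6} by an $f$-twisted duality argument. The key observation is that the operator $\overline{L}_f$ is formally self-adjoint with respect to the $f$-twisted $L^2$ inner product (\ref{6}) on $\otimes^2 T^*M$, since both $\frac{1}{2}\overline{\Delta}_f$ and $\mathring{R}^+$ are self-adjoint: $\overline{\Delta}_f = -\overline{\nabla}^{*_f}\overline{\nabla}$ by definition (\ref{bar3}), and $\mathring{R}^+$ is a pointwise symmetric endomorphism by the symmetries $R^+_{iklj}\gamma_{ij}\gamma_{kl}$ (one checks $R^+_{iklj}=R^+_{kjil}$ from the algebraic Bianchi identity in \Cref{P3}, so $\langle\mathring{R}^+(\gamma_1),\gamma_2\rangle=\langle\gamma_1,\mathring{R}^+(\gamma_2)\rangle$). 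Similarly, $\overline{\divg}_f^*$ is by construction (\ref{dbars}) the formal $f$-twisted adjoint of $\overline{\divg}_f$, and $\Phi$ in (\ref{37}) is self-adjoint since $\Delta_f^\pm$ are self-adjoint with respect to (\ref{6}) (the twisted terms $H_{ijl}\nabla_i u_j$ and $-\tfrac14 H^2_{kl}u_k$ are skew-adjoint and self-adjoint respectively, and the cross term pairs $\Delta^+_f$ with $\Delta^-_f$ correctly).

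Granting these self-adjointness facts, the computation is short. Fix an arbitrary pair $(u,v)\in T^*M\times T^*M$ and an arbitrary 2-tensor $\gamma$. Then
\begin{align*}
    \Big(\overline{\divg}_f\big(\overline{L}_f(\gamma)\big),(u,v)\Big)_f
    &=\Big(\overline{L}_f(\gamma),\overline{\divg}_f^*(u,v)\Big)_f
    =\Big(\gamma,\overline{L}_f\big(\overline{\divg}_f^*(u,v)\big)\Big)_f
    =\Big(\gamma,\tfrac12\overline{\divg}_f^*\Phi(u,v)\Big)_f
    \\&=\Big(\tfrac12\overline{\divg}_f\gamma,\Phi(u,v)\Big)_f
    =\Big(\tfrac12\Phi(\overline{\divg}_f\gamma),(u,v)\Big)_f,
\end{align*}
where the first equality uses that $\overline{\divg}_f^*$ is the adjoint of $\overline{\divg}_f$, the second uses self-adjointness of $\overline{L}_f$, the third is Proposition~\ref{Comm6}, the fourth again uses the $\overline{\divg}_f$--$\overline{\divg}_f^*$ adjunction, and the last uses self-adjointness of $\Phi$. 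Since $(u,v)$ was arbitrary and the $f$-twisted pairing is nondegenerate on sections, we conclude $\overline{\divg}_f(\overline{L}_f(\gamma))=\tfrac12\Phi(\overline{\divg}_f\gamma)$.

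The main obstacle is purely bookkeeping: verifying carefully that each of $\overline{\Delta}_f$, $\mathring{R}^+$, $\Phi$, and the divergence pair is genuinely formally self-adjoint (resp. mutually adjoint) with respect to the $f$-twisted inner product, including that no boundary terms arise on the compact manifold $M$ and that the first-order $H$-torsion terms in (\ref{Lbar}) and (\ref{37}) pair up correctly (the $\nabla^\pm$ and the $\pm$ in $\Phi$ must be matched so that the skew-adjoint pieces cancel against each other across the two slots). Once the adjointness lemmas are in hand, no further curvature computation is needed—this is the payoff of having done the hard work in \Cref{Comm3}, \Cref{Comm4}, and \Cref{Comm6}. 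An alternative, more computational route would be to expand $\overline{\divg}_f\overline{L}_f(\gamma)$ directly using commutator formulas analogous to those proved above, but the duality argument is cleaner and avoids repeating those calculations.
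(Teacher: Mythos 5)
Your duality argument is exactly the paper's own proof: the paper establishes the identity by pairing against an arbitrary $(u,v)$, invoking Proposition~\ref{Comm6}, and using the mutual adjointness of $\overline{\divg}_f$ and $\overline{\divg}_f^*$ together with the (formal) self-adjointness of $\overline{L}_f$ and $\Phi$ with respect to the $f$-twisted inner product. The only quibble is a parenthetical: the first-order term $H_{ijl}\nabla_i u_j$ in $\Delta_f^+$ is actually self-adjoint (not skew-adjoint) modulo $d_f^*H=0$, and the self-adjointness of $\mathring{R}^+$ follows from the antisymmetries of $R^+$ in each index pair rather than the Bianchi identity, but these do not affect the correctness of the chain of equalities.
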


\begin{proof}
Given a 2-tensor $\gamma$, for any $(u,v)\in T^*M\times T^*M$,
\begin{align*}
    \int \Big\langle \gamma, \overline{L}_f\big(\overline{\divg}^*_f(u,v)\big) \Big\rangle e^{-f}dV_g&=\int \Big\langle \gamma, \frac{1}{2}\overline{\divg}_f^*\Phi(u,v) \Big\rangle e^{-f}dV_g
    \\&=\int \Big\langle \frac{1}{2}\Phi(\overline{\divg}_f\gamma),(u,v)  \Big\rangle e^{-f}dV_g
    \\&=\int \Big\langle \overline{\divg}_f(\overline{L}_f(\gamma)),(u,v)  \Big\rangle e^{-f}dV_g.
\end{align*}

\end{proof}

By \Cref{Comm7} and \Cref{Comm6}, we further derive the following.
\begin{corollary}
Suppose $\mathcal{G}(g,b)$ is a steady gradient generalized Ricci soliton on a smooth compact manifold $M$. The operator $\overline{L}_f$ preserves the decomposition $\otimes^2T^*M=\ker\overline{\divg}_f\oplus \text{\emph{Im}}\kern0.25em\overline{\divg}_f^*.$      
\end{corollary}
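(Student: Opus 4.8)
The plan is to verify that $\overline{L}_f$ maps each of the two summands $\ker\overline{\divg}_f$ and $\mathrm{Im}\,\overline{\divg}_f^*$ into itself; since the decomposition $\otimes^2T^*M=\ker\overline{\divg}_f\oplus\mathrm{Im}\,\overline{\divg}_f^*$ is a direct sum (the standard elliptic splitting with respect to the $f$-twisted inner product \eqref{6}, with $\overline{\divg}_f^*$ the formal adjoint of $\overline{\divg}_f$), this is exactly the statement that $\overline{L}_f$ preserves the decomposition.

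First I would treat the image of $\overline{\divg}_f^*$. By \Cref{Comm6}, for any $(u,v)\in T^*M\times T^*M$ we have $\overline{L}_f\big(\overline{\divg}^*_f(u,v)\big)=\tfrac12\overline{\divg}_f^*\Phi(u,v)$, and $\Phi(u,v)=(\Delta_f^+u,\Delta_f^-v)$ is again an element of $T^*M\times T^*M$. Hence $\overline{L}_f\big(\overline{\divg}^*_f(u,v)\big)\in\mathrm{Im}\,\overline{\divg}_f^*$, so $\overline{L}_f$ preserves the second summand.

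Next I would treat $\ker\overline{\divg}_f$. Here the key input is \Cref{Comm7}, which gives $\overline{\divg}_f\big(\overline{L}_f(\gamma)\big)=\tfrac12\Phi\big(\overline{\divg}_f\gamma\big)$ for every $2$-tensor $\gamma$. If $\gamma\in\ker\overline{\divg}_f$, then $\overline{\divg}_f\gamma=(0,0)$, so the right-hand side vanishes and $\overline{L}_f(\gamma)\in\ker\overline{\divg}_f$. Combining the two cases: given any $\gamma=\gamma_1+\gamma_2$ with $\gamma_1\in\ker\overline{\divg}_f$ and $\gamma_2\in\mathrm{Im}\,\overline{\divg}_f^*$, linearity of $\overline{L}_f$ together with the two inclusions above shows $\overline{L}_f(\gamma_1)\in\ker\overline{\divg}_f$ and $\overline{L}_f(\gamma_2)\in\mathrm{Im}\,\overline{\divg}_f^*$, which is the claim.

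The only point requiring a little care — and the one I would flag as the main (though minor) obstacle — is justifying the direct-sum decomposition itself in the right functional-analytic setting: one needs $\overline{\divg}_f$ to have closed range and $\mathrm{Im}\,\overline{\divg}_f^*=(\ker\overline{\divg}_f)^{\perp_f}$ in the $f$-twisted $L^2$ inner product, which follows from ellipticity of $\overline{\divg}_f\overline{\divg}_f^*$ on $T^*M\times T^*M$ (visible already from \eqref{Comm1}, whose leading term is $(-\Delta_f^+u,-\Delta_f^-v)$ up to lower-order coupling) on the compact manifold $M$. Everything else is an immediate consequence of \Cref{Comm6} and \Cref{Comm7}.
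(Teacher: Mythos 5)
Your proof is correct and follows exactly the paper's intended argument: the corollary is stated as an immediate consequence of \Cref{Comm6} (showing $\overline{L}_f$ maps $\mathrm{Im}\,\overline{\divg}_f^*$ into itself) and \Cref{Comm7} (showing it maps $\ker\overline{\divg}_f$ into itself). Your additional remark on the closed-range/elliptic justification of the splitting is a reasonable extra precaution but does not change the route.
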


\section{Local Maximum of the generalized Einstein--Hilbert functional}

In this section, we start to discuss the local maximum property of generalized Einstein--Hilbert functional. Recall that the similar work was done by Hasholfer \cite{Has2,Has1}, Sesum \cite{Ses} and Kröncke \cite{Kr,Kr2} in Einstein and Ricci soliton case.
To generalize their work, let us first improve the generalized slice theorem (\Cref{slice}).

\subsection{Affine generalized slice theorem}

Let $E$ be an exact Courant algebroid and $\mathcal{G}(g,b)$ is a generalized metric. The generalized slice theorem (\Cref{slice}) suggests that locally we can find a submanifold $S^f_{\mathcal{G}}$ with its tangent space $T_{\mathcal{G}}S^f_{\mathcal{G}}=\ker\overline{\divg}_f$. In this subsection, we further construct an affine slice. First, let us fix some notations.
\begin{itemize}
    \item The automorphism group is given by 
\begin{align*}
    \GDiff_H=\{(\varphi,B)\in\Diff(M)\ltimes\Omega^2: \varphi^*H=H-dB\},
\end{align*}
and its Lie algebra is  
\begin{align*}
      \mathfrak{gdiff}_H=\{(X,\kappa)\in TM\times\Omega^2 : d(i_XH+\kappa)=0\}.
\end{align*}
In particular, we will consider the following Lie subalgebra 
\begin{align*}
       \mathfrak{gdiff}_H^e=\{(X,\kappa)\in TM\times \Omega^2 : i_XH+\kappa=d\alpha \text{  for some $\alpha\in\Omega^1$}\}
\end{align*}
and denote its integration by $\GDiff^e_H$. Note that for any $(X,\alpha)\in TM\times T^*M$, we can define $\kappa=d\alpha-\iota_XH$ so that $(X,\kappa)\in \mathfrak{gdiff}_H^e $. In other words, we may also say $\mathfrak{gdiff}_H^e\cong TM\times T^*M$.

\item  We adopt an isomorphism 
\begin{align*}
    \phi: TM&\times T^*M \longrightarrow T^*M\times T^*M
    \\& (X,\alpha)\kern0.8em\longmapsto \kern0.5em (u,v)
\end{align*}
where $u^\sharp+v^\sharp=-2X$ and $u-v=-2\alpha$. 
\item Recall that $\overline{\divg}_f^*: T^*M\times T^*M \longrightarrow \otimes^2 T^*M$ and $T^*M\times T^*M=\ker\overline{\divg}_f^*\oplus (\ker\overline{\divg}_f^*)^\perp$, we denote $\Pi:T^*M\times T^*M \longrightarrow \ker\overline{\divg}_f^*$ to be the orthogonal projection to the subspace $\ker\overline{\divg}_f^*$.
\end{itemize}
Define 
\begin{align*}
    \mathcal{A}:\kern1em  & T^*M\times T^*M\times \mathcal{GM} \longrightarrow T^*M\times T^*M
    \\& \kern1em\big( (u,v),\widetilde{\mathcal{G}}\big)\longmapsto \overline{\divg}_f\big((\varphi_X,B)\cdot\widetilde{\mathcal{G}}-\mathcal{G} \big)+\Pi(u,v),
\end{align*}
where $(\varphi_X,B)\in \GDiff^e_H$ is the flow generated by $\phi^{-1}(u,v)\in \mathfrak{gdiff}_H^e$ at time 1. W.L.O.G, we may take $\mathcal{G}=\mathcal{G}(g,0)$, then it is clear that $\mathcal{A}(0,0,\mathcal{G})=0$. By (\ref{GA}) and the \Cref{Gamma}, we write
    \begin{align*}
        d\mathcal{A}\Big|_{(0,0,\mathcal{G})}\big( (u,v),0\big)=\overline{\divg}_f (\mathcal{L}_Yg+\kappa)+\Pi(u,v),
    \end{align*}
    where $\iota_YH+\kappa=d\beta$ and $(Y,\beta)=\phi^{-1}(u,v)$. For convenience, we define $\mathcal{B}: T^*M\times T^*M\longrightarrow T^*M\times T^*M$ by $\mathcal{B}(u,v)=d\mathcal{A}\Big|_{(0,0,\mathcal{G})}\big( (u,v),0\big)$

\begin{lemma}
   $ \mathcal{B}$ is injective.
\end{lemma}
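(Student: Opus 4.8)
The plan is to show that $\mathcal{B}(u,v)=\overline{\divg}_f(\mathcal{L}_Yg+\kappa)+\Pi(u,v)=0$ forces $(u,v)=0$, where $(Y,\beta)=\phi^{-1}(u,v)$ and $\kappa=d\beta-\iota_YH$. The key point is that the two summands live in complementary subspaces: $\Pi(u,v)\in\ker\overline{\divg}_f^*$ by construction, and I claim the first term $\overline{\divg}_f(\mathcal{L}_Yg+\kappa)$ lies in $(\ker\overline{\divg}_f^*)^\perp=\mathrm{Im}\,\overline{\divg}_f$. Indeed, $\mathcal{L}_Yg+\kappa$ is, up to the identification in (\ref{GA}) and Remark \ref{Gamma}, exactly the infinitesimal action of $(Y,\beta)\in\mathfrak{gdiff}_H^e$ on $\mathcal{G}$, i.e. it equals $\overline{\divg}_f^*(u,v)$ (this is the computation already used in the generalized slice theorem, \Cref{slice}, identifying the normal/tangent decomposition). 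Hence $\overline{\divg}_f(\mathcal{L}_Yg+\kappa)=\overline{\divg}_f\overline{\divg}_f^*(u,v)$, which is manifestly in the image of $\overline{\divg}_f$. Therefore $\mathcal{B}(u,v)=0$ splits into $\overline{\divg}_f\overline{\divg}_f^*(u,v)=0$ and $\Pi(u,v)=0$ separately.

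From $\Pi(u,v)=0$ we get $(u,v)\in(\ker\overline{\divg}_f^*)^\perp=\mathrm{Im}\,\overline{\divg}_f$ — wait, more precisely $(u,v)\perp\ker\overline{\divg}_f^*$. From $\overline{\divg}_f\overline{\divg}_f^*(u,v)=0$ we pair with $(u,v)$ in the $f$-twisted inner product: $0=(\overline{\divg}_f\overline{\divg}_f^*(u,v),(u,v))_f=(\overline{\divg}_f^*(u,v),\overline{\divg}_f^*(u,v))_f=\|\overline{\divg}_f^*(u,v)\|_f^2$, so $\overline{\divg}_f^*(u,v)=0$, i.e. $(u,v)\in\ker\overline{\divg}_f^*$. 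Combined with $(u,v)\perp\ker\overline{\divg}_f^*$ (from $\Pi(u,v)=0$), we conclude $(u,v)=0$. This is the whole argument; it is essentially the standard Ebin-slice injectivity trick adapted to the $f$-twisted, Bismut-connection setting.

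The step I expect to require the most care is the identification $\overline{\divg}_f(\mathcal{L}_Yg+\kappa)=\overline{\divg}_f\overline{\divg}_f^*(u,v)$ — equivalently, that under $\phi$ the infinitesimal $\GDiff_H^e$-action on $\mathcal{G}(g,0)$ is exactly $-\overline{\divg}_f^*$ of the corresponding one-form pair. This is where the precise normalizations of $\phi$ (the relations $u^\sharp+v^\sharp=-2X$, $u-v=-2\alpha$) and of $\overline{\divg}_f^*$ in (\ref{dbars}) must be matched: one computes $\mathcal{L}_Yg$ in terms of $\nabla Y$ and symmetrizes, computes the skew part $\kappa=d\beta-\iota_YH$, and checks that splitting into $\nabla^\pm$ reproduces $-(\nabla^+)^iu_j-(\nabla^-)^jv_i$. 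This is precisely the computation underlying the description (\ref{10}) of $T_{\mathcal{G}}S^f_{\mathcal{G}}$ in \Cref{slice}, so it may be quoted or recalled rather than redone from scratch. Once that identification is in hand, the rest is the two-line orthogonality-plus-positivity argument above.
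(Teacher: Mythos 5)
Your proposal is correct and follows essentially the same route as the paper: split $\mathcal{B}(u,v)=0$ into $\overline{\divg}_f(\mathcal{L}_Yg+\kappa)=0$ and $\Pi(u,v)=0$ using $\im\overline{\divg}_f\cap\ker\overline{\divg}_f^*=\{0\}$, identify $\mathcal{L}_Yg+\kappa=\overline{\divg}_f^*(\phi(Y,\beta))$ via (\ref{dbars}), and conclude by the integration-by-parts positivity argument that $(u,v)\in\ker\overline{\divg}_f^*$, hence $(u,v)=0$. The step you flag as delicate is indeed the one the paper spells out, and your minor sign wobble in describing the infinitesimal action is immaterial since only $\|\overline{\divg}_f^*(u,v)\|_f^2=0$ is used.
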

\begin{proof}
 Suppose that $\mathcal{B}(u,v)=(0.0)$. Since $\im \overline{\divg}_f\cap \ker\overline{\divg}_f^*=\{0\}$, we deduce that $\overline{\divg}_f(\mathcal{L}_Yg+\kappa)=\Pi(u,v)=0$, where $\kappa=d\beta-\iota_YH$ and $(Y,\beta)=\phi^{-1}(u,v)$. For any $(x,y)\in T^*M\times T^*M$, we get
\begin{align*}
    0=\int_M \big\langle \overline{\divg}_f(\mathcal{L}_Yg+\kappa), (x,y) \big\rangle e^{-f}dV_g=\int_M \big\langle \mathcal{L}_Yg+\kappa,\overline{\divg}_f^* (x,y) \big\rangle e^{-f}dV_g.
\end{align*}
Rewrite (\ref{dbars}), we know 
\begin{align*}
    \overline{\divg}_f^*(x,y)_{ij}=-\frac{1}{2}(\mathcal{L}_{x^\sharp }g+\mathcal{L}_{y^\sharp }g)_{ij}-\frac{1}{2}d(x-y)_{ij}+\frac{1}{2}H_{ijk}(x+y)_k.
\end{align*}
In particular, when $(x,y)=\phi(Y,\beta)$
\begin{align*}
     \overline{\divg}_f^*(x,y)=\mathcal{L}_Yg+\kappa.
\end{align*}
Therefore, 
\begin{align*}
    0&=\int_M \big\langle \mathcal{L}_Yg+\kappa,\overline{\divg}_f^* (u,v) \big\rangle e^{-f}dV_g
    \\&=\int_M(\mathcal{L}_Yg+\kappa)^2e^{-f}dV_g.
\end{align*}
and see that $\mathcal{L}_Yg+\kappa=0$. In other words, $(u,v)\in \ker\overline{\divg}_f^*$. Since $\Pi(u,v)=0$, we could further see that $(u,v)=(0,0)$.
\end{proof}

\begin{lemma}
      $ \mathcal{B}$ is surjective. 
\end{lemma}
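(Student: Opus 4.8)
The plan is to identify $\mathcal{B}$ with a concrete second-order operator and then to exploit the generalized Ebin decomposition rather than ellipticity. In the course of proving the previous lemma it was shown that $\mathcal{L}_Yg+\kappa=\overline{\divg}_f^*(u,v)$ whenever $(Y,\beta)=\phi^{-1}(u,v)$ and $\kappa=d\beta-\iota_YH$; hence $\mathcal{B}=\overline{\divg}_f\,\overline{\divg}_f^*+\Pi$. This operator is formally self-adjoint and nonnegative for the $f$-twisted inner product (\ref{6}), since $\big(\mathcal{B}(u,v),(x,y)\big)_f=\big(\overline{\divg}_f^*(u,v),\overline{\divg}_f^*(x,y)\big)_f+\big(\Pi(u,v),\Pi(x,y)\big)_f$. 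Combined with the injectivity established in the previous lemma, it therefore suffices to prove that $\mathcal{B}$ has closed range, and in fact I will show $\mathcal{B}$ is an isomorphism.

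First I would invoke the generalized Ebin decomposition underlying \Cref{slice}, namely that $\otimes^2T^*M=\ker\overline{\divg}_f\oplus\im\overline{\divg}_f^*$ is a topological direct sum with both summands closed in the relevant Hölder/Sobolev completion (cf. \cite{K,Rubio_2019}); dually, $T^*M\times T^*M=\ker\overline{\divg}_f^*\oplus(\ker\overline{\divg}_f^*)^\perp$ with $(\ker\overline{\divg}_f^*)^\perp=\im\overline{\divg}_f$ closed. Then $\mathcal{B}$ is block diagonal for this splitting: on $\ker\overline{\divg}_f^*$ the term $\overline{\divg}_f\overline{\divg}_f^*$ vanishes while $\Pi$ is the identity, so $\mathcal{B}|_{\ker\overline{\divg}_f^*}=\mathrm{id}$; and a one-line pairing shows $\overline{\divg}_f\overline{\divg}_f^*$ maps $(\ker\overline{\divg}_f^*)^\perp$ into itself, where $\Pi=0$, so $\mathcal{B}|_{(\ker\overline{\divg}_f^*)^\perp}=\overline{\divg}_f\overline{\divg}_f^*$.

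It then remains to check that $\overline{\divg}_f\overline{\divg}_f^*$ is an isomorphism of $(\ker\overline{\divg}_f^*)^\perp$, which I would do by factoring through $\im\overline{\divg}_f^*$: the restriction of $\overline{\divg}_f^*$ is a continuous bijection $(\ker\overline{\divg}_f^*)^\perp\to\im\overline{\divg}_f^*$, hence an isomorphism by the open mapping theorem; and the restriction of $\overline{\divg}_f$ is a continuous bijection $\im\overline{\divg}_f^*\to\im\overline{\divg}_f=(\ker\overline{\divg}_f^*)^\perp$, the injectivity here being precisely the directness $\ker\overline{\divg}_f\cap\im\overline{\divg}_f^*=\{0\}$. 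Composing these two isomorphisms gives the claim, and hence that $\mathcal{B}$ is an isomorphism of $T^*M\times T^*M$, so in particular surjective.

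The main obstacle is that $\mathcal{B}=\overline{\divg}_f\overline{\divg}_f^*+\Pi$ is \emph{not} elliptic, so standard elliptic Fredholm theory does not apply to its range. Indeed, up to normalization the principal symbol of $\overline{\divg}_f\overline{\divg}_f^*$ is $\sigma_\xi(u,v)=\big(|\xi|^2u+\langle\xi,v\rangle\,\xi,\ |\xi|^2v+\langle\xi,u\rangle\,\xi\big)$, which for $\xi\neq0$ has the one-dimensional kernel $\mathbb{R}\cdot(\xi,-\xi)$, corresponding to the exact $b$-field gauge directions $(X,\alpha)=(0,dp)$ that are invisible to $\kappa=d\alpha-\iota_XH$; the $0$th-order correction $\Pi$ does not affect the symbol. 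The way around this is exactly the argument above: closed range is provided by the generalized Ebin decomposition instead of ellipticity, and the entire purpose of the term $\Pi$ is to restore invertibility precisely on $\ker\overline{\divg}_f^*$, which is the $L^2$-orthogonal complement of $\im\overline{\divg}_f$. One must of course fix the function-space conventions compatibly — working, say, on $C^{2,\alpha}$ sections as in the slice-theorem references — so that $\Pi$ and both splittings are bounded, which is guaranteed by the same decomposition.
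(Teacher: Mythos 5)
Your proof is correct, and it takes a genuinely different route from the paper's. The paper's own argument is two lines: it computes that $\big(\mathcal{B}(u,v),(x,y)\big)_f=\big(\overline{\divg}_f^*(u,v),\overline{\divg}_f^*(x,y)\big)_f+\big(\Pi(u,v),\Pi(x,y)\big)_f$, concludes that $\mathcal{B}$ is formally self-adjoint, and then infers surjectivity from the injectivity of the previous lemma. That inference silently requires $\mathcal{B}$ to have closed range (so that $\operatorname{coker}\mathcal{B}\cong\ker\mathcal{B}^{*}=\ker\mathcal{B}=0$), which is usually supplied by ellipticity --- and your symbol computation showing $\ker\sigma_\xi=\mathbb{R}\cdot(\xi,-\xi)$ (the exact $b$-field directions $(X,\alpha)=(0,dp)$ killed by $\kappa=d\alpha-\iota_XH$) correctly identifies that $\mathcal{B}$ is \emph{not} elliptic; note moreover that $\ker\overline{\divg}_f^{*}$ contains $(-\beta,\beta)$ for every closed one-form $\beta$, so it is infinite-dimensional and $\Pi$ is not a compact perturbation either. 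Your replacement --- writing $\mathcal{B}=\overline{\divg}_f\overline{\divg}_f^{*}+\Pi$, observing it is block diagonal for $T^*M\times T^*M=\ker\overline{\divg}_f^{*}\oplus(\ker\overline{\divg}_f^{*})^{\perp}$ with the identity on the first block, and factoring the second block through the two bijections $(\ker\overline{\divg}_f^{*})^{\perp}\to\im\overline{\divg}_f^{*}\to\im\overline{\divg}_f$ --- derives closed range and surjectivity directly from the generalized Ebin splitting $\otimes^2T^*M=\ker\overline{\divg}_f\oplus\im\overline{\divg}_f^{*}$ of \Cref{slice} (plus the closed range theorem to get $(\ker\overline{\divg}_f^{*})^{\perp}=\im\overline{\divg}_f$), which is exactly the input the paper already assumes elsewhere. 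So both proofs ultimately rest on the same splitting, but yours makes explicit the functional-analytic step the paper elides, and in addition yields that $\mathcal{B}$ is an isomorphism rather than merely surjective; the paper's version buys brevity at the cost of leaving the closed-range issue implicit.
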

\begin{proof}
    For any $(x,y),(u,v)\in T^*M\times T^*M$,
    \begin{align*}
        \int_M \langle \mathcal{B}(u,v),(x,y) \rangle e^{-f}dV_g&=  \int_M \Big[\big\langle \mathcal{L}_Yg+\kappa, \overline{\divg}_f^*(x,y)\big\rangle +\big\langle \Pi(u,v), \Pi(x,y) \big\rangle \Big]e^{-f}dV_g
        \\&=\int_M \Big[\big\langle \mathcal{L}_Yg+\kappa,  \mathcal{L}_Zg+\zeta\big\rangle +\big\langle \Pi(u,v), \Pi(x,y) \big\rangle \Big]e^{-f}dV_g
        \\&=\int_M \langle (u,v), \mathcal{B}(x,y) \rangle e^{-f}dV_g.
    \end{align*}
Here, we denote $\kappa=d\beta-\iota_YH$, $(Y,\beta)=\phi^{-1}(u,v)$ and $\zeta=d\delta-\iota_ZH$, $(Z,\delta)=\phi^{-1}(x,y)$. Thus, we see that $\mathcal{B}$ is self-adjoint and $\mathcal{B}$ is surjective.   
    
\end{proof}

Using the implicit function theorem, we can conclude the following affine generalized slice theorem.
\begin{theorem}\label{affineslice}
    Let $\mathcal{G}$ be a generalized metric on an exact Courant algebroid $E$ and let $f$ be $\Isom(\mathcal{G})$ invariant. There exists a neighborhood $\mathcal{U}$ of $\mathcal{G}$ such that for any $\widetilde{\mathcal{G}}$, we can find $(X,\kappa)\in \mathfrak{gdiff}_H^e$ so that
    \begin{align*}
        \overline{\divg}_f\big( (\varphi_X,B)\cdot\widetilde{ \mathcal{G}}- \mathcal{G} \big)=0,
    \end{align*}
    where $(\varphi_X,B)\in \GDiff_H^e$ is the flow generated by $(X,\kappa)$ at time 1.
\end{theorem}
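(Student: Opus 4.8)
The plan is to deduce Theorem \ref{affineslice} from the implicit function theorem applied to the map $\mathcal{A}$, exactly as the preceding two lemmas have set up. First I would note that $\mathcal{A}$ is a smooth map between (suitable Banach or tame Fr\'echet completions of) the spaces in question, that $\mathcal{A}(0,0,\mathcal{G})=0$ since $(\varphi_0,B_0)=(\mathrm{id},0)$ and $\Pi(0,0)=0$, and that its partial derivative in the first two slots at $(0,0,\mathcal{G})$ is precisely the operator $\mathcal{B}$. By the two lemmas just proved, $\mathcal{B}$ is a self-adjoint elliptic operator which is both injective and surjective, hence a linear isomorphism $T^*M\times T^*M\to T^*M\times T^*M$ (on the appropriate H\"older or Sobolev completions, where surjectivity plus injectivity plus ellipticity upgrades to topological isomorphism via the open mapping theorem / elliptic estimates).

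Next I would invoke the implicit function theorem: there is a neighborhood $\mathcal{U}$ of $\mathcal{G}$ in $\mathcal{GM}$ and a smooth map $\widetilde{\mathcal{G}}\mapsto (u,v)=(u(\widetilde{\mathcal{G}}),v(\widetilde{\mathcal{G}}))$, with $(u(\mathcal{G}),v(\mathcal{G}))=(0,0)$, such that $\mathcal{A}\big((u,v),\widetilde{\mathcal{G}}\big)=0$ for all $\widetilde{\mathcal{G}}\in\mathcal{U}$. Unwinding the definition of $\mathcal{A}$, this says
\begin{align*}
    \overline{\divg}_f\big((\varphi_X,B)\cdot\widetilde{\mathcal{G}}-\mathcal{G}\big) + \Pi(u,v) = 0,
\end{align*}
where $(\varphi_X,B)\in\GDiff^e_H$ is the time-$1$ flow of $\phi^{-1}(u,v)\in\mathfrak{gdiff}^e_H$. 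Since the first term lies in $\im\overline{\divg}_f$ and the second lies in $\ker\overline{\divg}_f^*$, and these subspaces intersect trivially (as used in the injectivity lemma), both terms vanish separately. In particular $\overline{\divg}_f\big((\varphi_X,B)\cdot\widetilde{\mathcal{G}}-\mathcal{G}\big)=0$, which is the desired conclusion with $(X,\kappa)=\phi^{-1}(u,v)\in\mathfrak{gdiff}^e_H$ (and $\kappa=d\alpha-\iota_XH$ with $(X,\alpha)=\phi^{-1}(u,v)$ under the identification $\mathfrak{gdiff}^e_H\cong TM\times T^*M$).

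The main obstacle is the functional-analytic bookkeeping rather than any new geometric idea: one must choose consistent Banach completions (say $C^{k+2,\alpha}$ for the metrics and $C^{k,\alpha}$ for $T^*M\times T^*M$) so that $\mathcal{A}$ is $C^1$, so that $\mathcal{B}$ is a genuine Fredholm isomorphism between the completions (this is where ellipticity of $\mathcal{B}=d\mathcal{A}$ and the self-adjointness computed in the surjectivity lemma are used to promote algebraic bijectivity to a bounded inverse), and so that the resulting implicit map lands back in smooth data by elliptic regularity. I would also remark that the $\Isom(\mathcal{G})$-invariance of $f$ is what makes $\overline{\divg}_f$ and $\mathcal{B}$ equivariant, so the construction is compatible with the isometry group, mirroring the role it played in \Cref{slice}; but the bare existence statement of \Cref{affineslice} follows already from the IFT argument above. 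Finally, I would note that this affine slice is genuinely simpler than the submanifold slice $S^f_{\mathcal{G}}$ of \Cref{slice}: here the slice direction is the fixed linear subspace $\ker\overline{\divg}_f\subset\otimes^2T^*M$ through $\mathcal{G}$, which is what the statement is really asserting.
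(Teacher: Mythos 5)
Your proposal is correct and follows essentially the same route as the paper: invertibility of $\mathcal{B}$ (from the injectivity and surjectivity lemmas) feeds into the implicit function theorem to solve $\mathcal{A}\big((u,v),\widetilde{\mathcal{G}}\big)=0$, and the trivial intersection $\im\overline{\divg}_f\cap\ker\overline{\divg}_f^*=\{0\}$ then splits the equation into the desired gauge-fixing condition. The added remarks on Banach completions and elliptic regularity are sensible bookkeeping that the paper leaves implicit.
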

\begin{proof}
    Since $\mathcal{B}$ is invertible, by the implicit function theorem, there exists a neighborhood $\mathcal{U}$ such that for any $\widetilde{\mathcal{G}}\in\mathcal{U}$, we can find $(u,v)$ such that 
    \begin{align*}
        0=\mathcal{A}(u,v,\widetilde{\mathcal{G}})=\overline{\divg}_f\big((\varphi_X,B)\cdot\widetilde{\mathcal{G}}-\mathcal{G} \big)+\Pi(u,v),
    \end{align*}
where $(\varphi_X,B)\in \GDiff^e_H$ is the flow generated by $\phi^{-1}(u,v)\in \mathfrak{gdiff}_H^e$ at time 1. Note that $\im \overline{\divg}_f\cap \ker\overline{\divg}_f^*=\{0\}$, we complete the proof.
    
\end{proof}

\subsection{Some Estimates}
In this subsection, let us introduce some estimates that are necessary to prove the main theorem. In the following, $C$ is a constant that may change from line to line.  To start with, let us recall that
\begin{lemma}[\cite{K} Lemma 5.4.]
 Let $M$ be a compact manifold and $E\cong (TM\oplus T^*M)_{H_0}$ be an exact Courant algebroid with a background closed 3-form $H_0$. Given a steady gradient generalized Ricci soliton $\mathcal{G}_0(g_0,b_0)$, there exists a $C^{2,\alpha}$-neighborhood $\mathcal{U}$ of $\mathcal{G}_0$ in $\mathcal{GM}$ such that the minimizer $f$ is uniformly bounded, i.e., there exists a constant $C$ such that 
 \begin{align*}
     \|f\|_{C^{2,\alpha}}\leq C.
 \end{align*}
\end{lemma}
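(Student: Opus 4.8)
The plan is to identify the minimizer $f$ with the first eigenfunction of the relevant Schr\"odinger operator and then run a standard elliptic bootstrap, with the one point of care being that every constant must be chosen uniformly over the neighborhood $\mathcal{U}$. Setting $w:=e^{-f/2}$, one has $e^{-f}=w^{2}$ and $|\nabla f|^{2}e^{-f}=4|\nabla w|^{2}$, and an integration by parts turns $\mathcal{F}(g,b,f)$ into $\int_{M}w\big(-4\Delta_{g}w+V_{g,b}\,w\big)\,dV_{g}$, where $V_{g,b}:=R_{g}-\tfrac1{12}|H_{0}+db|^{2}$. Minimizing subject to $\int_{M}w^{2}\,dV_{g}=1$ recovers the fact recalled in the excerpt that $\lambda(g,b)$ is the lowest eigenvalue of $-4\Delta_{g}+V_{g,b}$ and that the corresponding positive, normalized eigenfunction $w$ satisfies
\begin{align*}
-4\Delta_{g}w+V_{g,b}\,w=\lambda(g,b)\,w,\qquad f=-2\log w.
\end{align*}

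First I would record the uniform control on the data. Since every $\mathcal{G}(g,b)\in\mathcal{U}$ has $(g,b)$ within $C^{2,\alpha}$ of the fixed background $(g_{0},b_{0})$, the metrics $g$ are uniformly equivalent to $g_{0}$, have uniformly bounded volume, and the operators $\Delta_{g}$ are uniformly elliptic with coefficients uniformly bounded in $C^{1,\alpha}$. As $R_{g}$ is a second-order expression in $g$ and $|H_{0}+db|^{2}$ a first-order expression in $b$ contracted with $g$, we obtain $\|V_{g,b}\|_{C^{0,\alpha}}\le C$. Testing the Rayleigh quotient with the constant $\mathrm{Vol}(g)^{-1/2}$ gives $\lambda(g,b)\le C$, while $\lambda(g,b)=\int_{M}\big(4|\nabla w|^{2}+V_{g,b}w^{2}\big)\,dV_{g}\ge \inf_{M}V_{g,b}\ge -C$; hence $|\lambda(g,b)|\le C$.

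Next I would bound $w$ from both sides. Moser iteration for the eigenvalue equation, using the uniform bounds on $V_{g,b}$, $\lambda(g,b)$ and the Sobolev constant, gives $\|w\|_{L^{\infty}}\le C\|w\|_{L^{2}}=C$. The normalization $\int_{M}w^{2}\,dV_{g}=1$ together with the volume upper bound forces $\sup_{M}w\ge c_{0}>0$, and the Harnack inequality for the positive solution $w$ — with a uniform Harnack constant coming from the uniformly bounded geometry and $\|V_{g,b}-\lambda(g,b)\|_{L^{\infty}}\le C$ — gives $\inf_{M}w\ge c\,\sup_{M}w\ge c>0$. Thus $w$ takes values in a fixed compact subinterval of $(0,\infty)$, so $\|f\|_{C^{0}}=2\|\log w\|_{C^{0}}\le C$. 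With $w$ pinched between positive constants, the right-hand side $\tfrac14(V_{g,b}-\lambda(g,b))w$ of $\Delta_{g}w=\tfrac14(V_{g,b}-\lambda(g,b))w$ is uniformly bounded in $L^{p}$ for all $p$, so $L^{p}$ elliptic regularity gives $\|w\|_{W^{2,p}}\le C$ and hence $\|w\|_{C^{1,\alpha}}\le C$; then that right-hand side is uniformly bounded in $C^{0,\alpha}$, the coefficients of $\Delta_{g}$ are uniformly bounded in $C^{1,\alpha}$, and global Schauder estimates give $\|w\|_{C^{2,\alpha}}\le C$. Composing with the smooth map $t\mapsto-2\log t$ on the fixed compact interval containing the range of $w$ yields $\|f\|_{C^{2,\alpha}}\le C$.

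The real difficulty is not any individual estimate but the \emph{uniformity} of all the constants involved — Sobolev, Moser/De Giorgi--Nash, Harnack, Schauder — across the whole neighborhood, together with the uniform lower bound $\sup_{M}w\ge c_{0}>0$. This is precisely where the hypothesis of a $C^{2,\alpha}$-neighborhood is needed: $C^{2,\alpha}$-closeness to the fixed $(g_{0},b_{0})$ yields uniformly bounded geometry and uniformly bounded coefficients, after which each of these standard estimates applies with a constant depending only on $(g_{0},b_{0})$ and the size of $\mathcal{U}$.
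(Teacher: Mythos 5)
Your argument is correct and is the standard one for this kind of statement: identify $f=-2\log w$ with the positive, $L^{2}$-normalized first eigenfunction of $-4\Delta_{g}+R_{g}-\tfrac1{12}|H_{0}+db|^{2}$, get uniform $C^{0,\alpha}$ control of the potential and a two-sided bound on $\lambda$ from the $C^{2,\alpha}$-closeness of $(g,b)$ to $(g_{0},b_{0})$, pinch $w$ between positive constants via Moser/Harnack and the normalization, and bootstrap with $L^{p}$ and Schauder estimates before composing with $-2\log$. The paper itself does not prove this lemma — it only cites it from \cite{K} (Lemma 5.4) — so there is no in-paper proof to compare against, but your route is the expected one and I see no gaps; you correctly isolate the only real issue, namely the uniformity of the Sobolev, Harnack, and Schauder constants over the neighborhood, which is exactly what the $C^{2,\alpha}$ hypothesis supplies.
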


\begin{lemma}[\cite{K} Lemma 5.7.]\label{f1}
 Let $M$ be a compact manifold and $E\cong (TM\oplus T^*M)_{H_0}$ be an exact Courant algebroid with a background closed 3-form $H_0$. Given a steady gradient generalized Ricci soliton $\mathcal{G}_0(g_0,b_0)$, there exists a $C^{2,\alpha}$-neighborhood $\mathcal{U}$ of $\mathcal{G}_0$ in $\mathcal{GM}$ such that for all $\mathcal{G}\in \mathcal{U}$,
 \begin{align*}
     \|\frac{d}{dt}\Big|_{t=0}f_{\mathcal{G}+t\gamma}\|_{C^{2,\alpha}}\leq C\|\gamma\|_{C^{2,\alpha}}, \quad  \|\frac{d}{dt}\Big|_{t=0}f_{\mathcal{G}+t\gamma}\|_{H^i}\leq C\|\gamma\|_{H^i}, \quad i=0,1,2.
 \end{align*}
\end{lemma}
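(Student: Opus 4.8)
The idea is to differentiate the defining equation of the minimizer $f = f_{\mathcal G}$ and apply Schauder/elliptic estimates to the linearized equation. Recall from \eqref{lam} that the minimizer $f_{\mathcal G}$ associated to $\mathcal G = \mathcal G(g,b)$ is characterized (up to the normalization $\int_M e^{-f}dV_g = 1$) as the unique solution of
\begin{align*}
    2\Delta_g f - |\nabla f|_g^2 + R_g - \tfrac{1}{12}|H_0+db|_g^2 = \lambda(g,b),
\end{align*}
equivalently, setting $w = e^{-f/2}$, as the first eigenfunction of the Schrödinger operator $\mathcal L_{\mathcal G} := -4\Delta_g + R_g - \tfrac{1}{12}|H_0+db|_g^2$ with eigenvalue $\lambda(g,b)$ and $\int_M w^2\,dV_g=1$. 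First I would fix the $C^{2,\alpha}$-neighborhood $\mathcal U$ from the previous lemma so that $\|f_{\mathcal G}\|_{C^{2,\alpha}}\le C$ uniformly, and observe that this gives uniform two-sided bounds $c^{-1}\le w \le c$ and a uniform spectral gap between $\lambda(g,b)$ and the second eigenvalue of $\mathcal L_{\mathcal G}$ (the gap is positive at $\mathcal G_0$ since the first eigenvalue is simple, and it varies continuously, so it stays bounded below on a possibly smaller $\mathcal U$).

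Next I would differentiate along the path $\mathcal G_s = \mathcal G + s\gamma$ (i.e. $(g_s,b_s) = (g+sh, b - sK)$ with $\gamma = h-K$). Write $\dot f := \frac{d}{ds}\big|_{s=0} f_{\mathcal G_s}$ and $\dot\lambda := \frac{d}{ds}\big|_{s=0}\lambda(g_s,b_s)$, which by \eqref{fv} is an explicit bounded-coefficient linear expression in $\gamma$, controlled by $\|\gamma\|_{C^{0}}$ (resp. $\|\gamma\|_{H^0}$, using the uniform bound on $f$ and on $\mathcal G$). Differentiating the minimizer equation produces a linear elliptic equation of the schematic form
\begin{align*}
    2\Delta_g \dot f - 2\langle \nabla f, \nabla \dot f\rangle = \dot\lambda + \mathcal E(\gamma),
\end{align*}
where $\mathcal E(\gamma)$ collects the terms coming from differentiating $\Delta_g$, $|\nabla f|^2$, $R_g$ and $|H_0+db|^2$ in the direction $\gamma$ — each of these is, by the standard variation formulas, a linear expression in $\gamma$ and its derivatives up to second order with coefficients bounded in terms of $\|\mathcal G\|_{C^{2,\alpha}}$ and $\|f\|_{C^{2,\alpha}}$; hence $\|\mathcal E(\gamma)\|_{C^{\alpha}}\le C\|\gamma\|_{C^{2,\alpha}}$ and $\|\mathcal E(\gamma)\|_{H^i}\le C\|\gamma\|_{H^{i}}$ for $i=0,1,2$ (for the $H^i$ case one also differentiates the equation $i$ times, keeping track that no more than $i+2$ derivatives of $\gamma$ appear — actually only $i$, since $\mathcal E$ already contains two derivatives of $\gamma$; I would phrase it so that the source has the same Sobolev order as $\gamma$ in the $H^i$-statement, after moving two derivatives onto the coefficients is not needed — the variation formulas place at most two derivatives on $\gamma$). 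The scalar $\dot\lambda$ is bounded by $C\|\gamma\|_{C^0}$ resp. $C\|\gamma\|_{H^0}$.

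The operator on the left, $P := 2\Delta_g - 2\langle\nabla f,\nabla\cdot\rangle = 2\Delta_{f}$, is a uniformly elliptic second-order operator with $C^{1,\alpha}$ coefficients (uniformly bounded on $\mathcal U$) whose kernel on $C^{2,\alpha}(M)$ consists exactly of the constants, and whose cokernel (with respect to $e^{-f}dV_g$) is also the constants. The solvability condition $\int_M (\dot\lambda + \mathcal E(\gamma))e^{-f}dV_g = 0$ is automatically satisfied — it is precisely the derivative of $\int_M e^{-f}dV_g = 1$ paired against the construction, or can be verified directly — which also fixes the normalization of $\dot f$ (namely $\int_M \dot f\, e^{-f}dV_g$ is determined by differentiating the constraint). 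Then $\dot f$ is the unique such solution, and the standard Schauder estimate for $P$ gives
\begin{align*}
    \|\dot f\|_{C^{2,\alpha}} \le C\big(\|\dot\lambda + \mathcal E(\gamma)\|_{C^{\alpha}} + \|\dot f\|_{C^0}\big) \le C\|\gamma\|_{C^{2,\alpha}},
\end{align*}
where the $\|\dot f\|_{C^0}$ term is absorbed using the kernel/cokernel description (a contradiction–compactness argument, or directly using that $P$ restricted to the $L^2_f$-orthogonal complement of constants is invertible with uniformly bounded inverse — this is where the uniform spectral gap and the uniform $C^{1,\alpha}$ bound on the coefficients enter). The $H^i$ estimates follow the same way using $L^2$-based elliptic regularity for $P$, differentiating the equation $i$ times and absorbing lower-order terms.

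The main obstacle I anticipate is making the constant $C$ genuinely \emph{uniform} over $\mathcal U$ rather than depending on the particular $\mathcal G$: this requires (i) the uniform $C^{2,\alpha}$ bound on $f$ from the preceding lemma, (ii) a uniform lower bound on the spectral gap of $\mathcal L_{\mathcal G}$ — equivalently, uniform invertibility of $P$ on the co-kernel complement — which one gets by continuity of the spectrum plus compactness of $M$, shrinking $\mathcal U$ if necessary, and (iii) bookkeeping in the $H^i$ case to ensure the right-hand side after $i$ differentiations really is controlled by $\|\gamma\|_{H^i}$ and not $\|\gamma\|_{H^{i+2}}$ — this is fine because the variation formulas for $\Delta_g$, $R_g$, $|\nabla f|^2$, $|H|^2$ each involve at most two derivatives of $\gamma$, and those two derivatives are already "spent," so further differentiation lands on the (uniformly bounded) background coefficients. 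Everything else is routine elliptic theory.
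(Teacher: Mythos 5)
Your plan is correct and follows essentially the same route the paper relies on: this lemma is imported from \cite{K} (Lemma 5.7), and both that proof and the paper's own proof of the analogous second-derivative estimate (Lemma \ref{f2}) proceed exactly as you propose, by differentiating the pointwise minimizer identity (\ref{lam}) to obtain an equation of the form $2\Delta_f \dot f = \dot\lambda + \mathcal{E}(\gamma)$ with $\mathcal{E}(\gamma)$ containing at most two derivatives of $\gamma$ in divergence form, fixing the normalization by differentiating $\int_M e^{-f}dV_g=1$, and applying Schauder and $L^2$ elliptic estimates with constants made uniform by the preceding $C^{2,\alpha}$ bound on $f$. The only point to tighten is the $i=0$ (and $i=1$) Sobolev case, where one must place $\mathcal{E}(\gamma)$ in $H^{-2}$ (resp. $H^{-1}$) by integrating the double-divergence terms against $H^2$ (resp. $H^1$) test functions --- which you flag, and which is exactly how the paper handles the corresponding step in Lemma \ref{f2}.
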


In this subsection, we further deduce the second derivative estimate of the minimizer. 
\begin{lemma}\label{f2}
    Let $M$ be a compact manifold and $E\cong (TM\oplus T^*M)_{H_0}$ be an exact Courant algebroid with a background closed 3-form $H_0$. Given a steady gradient generalized Ricci soliton $\mathcal{G}_0(g_0,b_0)$, there exists a $C^{2,\alpha}$-neighborhood $\mathcal{U}$ of $\mathcal{G}_0$ in $\mathcal{GM}$ such that for all $\mathcal{G}\in \mathcal{U}$,
    \begin{align*}
        \|\frac{\partial^2}{\partial s \partial t}\Big|_{t,s=0} f_{\mathcal{G}+t\gamma+s\delta} \|_{H^i} \leq C\|\gamma\|_{C^{2,\alpha}}\|\delta\|_{H^i}, \quad i=1,2. 
    \end{align*}
\end{lemma}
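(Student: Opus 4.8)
The plan is to differentiate the defining equation for the minimizer $f_{\mathcal{G}}$ twice and then run an elliptic bootstrap, using \Cref{f1} to control the first-order terms that appear. Recall that $f = f_{\mathcal{G}}$ is characterized (up to the normalization $\int_M e^{-f}dV_g = 1$) as the solution of the Euler--Lagrange equation
\begin{align*}
    2\Delta_g f - |\nabla f|_g^2 + R_g - \tfrac{1}{12}|H|_g^2 = \lambda(\mathcal{G}),
\end{align*}
equivalently the lowest-eigenvalue equation $(-4\Delta_g + R_g - \tfrac{1}{12}|H|_g^2)w = \lambda w$ with $w = e^{-f/2} > 0$. I would work with whichever form makes the linearization cleanest; the eigenvalue formulation has the advantage that differentiating in a parameter produces a standard Fredholm problem whose solvability is governed by orthogonality to $w$. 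Write $\dot f = \frac{\partial}{\partial t}\big|_{t,s=0} f_{\mathcal{G}+t\gamma+s\delta}$ for the $s$-direction, $\dot{\dot f}$ for the mixed second derivative, and $P_{\mathcal{G}} = -4\Delta_g + R_g - \tfrac{1}{12}|H|_g^2 - \lambda$ for the (self-adjoint, with respect to the $L^2$ volume) Schrödinger operator with one-dimensional kernel spanned by $w$.

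The first step is to write down the linearized equation. Differentiating the eigenvalue equation once in $s$ gives $P_{\mathcal{G}} \dot w = (\dot\lambda - \dot P_{\mathcal{G}})w$, where $\dot P_{\mathcal{G}}$ is a zeroth/first order operator whose coefficients are built from $\delta$ and one derivative of $\delta$ together with bounded geometric quantities of $\mathcal{G}$; $\dot\lambda$ is determined by pairing against $w$. This recovers \Cref{f1}. Differentiating a second time, now in $t$, yields
\begin{align*}
    P_{\mathcal{G}} \,\dot{\dot w} = \ddot\lambda\, w + \dot\lambda_t \dot w_s + \dot\lambda_s \dot w_t - (\dot P_t)\dot w_s - (\dot P_s)\dot w_t - (\ddot P)\,w,
\end{align*}
schematically, where $\ddot P$ collects the second variation of the coefficients $R_g - \tfrac{1}{12}|H|^2$ in the directions $(\gamma,\delta)$ (this term is at most second order in $\gamma$ and $\delta$ with coefficients bounded in $\mathcal{U}$), and $\ddot\lambda$ is again pinned down by the orthogonality condition $\langle P_{\mathcal{G}}\dot{\dot w}, w\rangle = 0$. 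The key point is the structure of the right-hand side: every term is either (i) $w$ or $\dot w$ times a bounded scalar, or (ii) a coefficient involving $\gamma$ and at most one derivative of $\gamma$ (which I bound in $C^{2,\alpha} \supset C^1$ norm) multiplied by $\dot w_s$ or by $\delta$; or (iii) the second-variation term $\ddot P\, w$, which contains up to two derivatives of $\delta$ (from the variation of $R_g$) multiplied by $C^{2,\alpha}$-bounded quantities involving $\gamma$. Converting back from $w$ to $f$ via $f = -2\log w$ introduces only harmless products of already-controlled first derivatives.

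The second step is the elliptic estimate. Since $P_{\mathcal{G}}$ has one-dimensional kernel and cokernel spanned by $w$ (which is $C^{2,\alpha}$-bounded and bounded below on $\mathcal{U}$ by the uniform bound on $f$), it is invertible on the $L^2$-orthogonal complement of $w$ with operator norm bounded uniformly for $\mathcal{G} \in \mathcal{U}$; standard Schauder/$L^p$ theory then gives $\|\dot{\dot w}\|_{H^{i+2}} \leq C\|\text{RHS}\|_{H^i}$ once we have fixed the normalization derivative (the constraint $\int_M e^{-f}dV_g = 1$ differentiated twice gives an explicit formula for $\int_M \dot{\dot f}\, e^{-f}dV_g$ in terms of lower-order data, which fixes the $w$-component of $\dot{\dot w}$). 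For $i=1$: the RHS is estimated in $H^1$ by $C\|\gamma\|_{C^{2,\alpha}}\|\delta\|_{H^1} + C\|\gamma\|_{C^{2,\alpha}}\|\dot w_s\|_{H^1}$, and by \Cref{f1} the latter is $\leq C\|\gamma\|_{C^{2,\alpha}}\|\delta\|_{H^1}$; for the term $\ddot P \, w$ carrying two derivatives of $\delta$ with $C^{2,\alpha}$-coefficients in $\gamma$, its $H^1$ norm — careful, this is where one worries. Here is the resolution: the two-derivative term in $\delta$ appears multiplied by a $\gamma$-coefficient but is itself only $\delta$-linear, so $\|\ddot P\, w\|_{H^1} \le C\|\gamma\|_{C^{2,\alpha}}\|\delta\|_{H^3}$, which is too weak. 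The fix is to integrate by parts: rather than estimating $P_{\mathcal{G}}^{-1}$ on $H^1 \to H^3$, I pair the equation with test functions and move one derivative off $\delta$, so that effectively $\dot{\dot f}$ solves $P_{\mathcal{G}}\dot{\dot f} = \divg(\text{first order in }\delta) + (\text{first order in }\delta)$ with coefficients $C^{2,\alpha}$ in $\gamma$; then $P_{\mathcal{G}}^{-1}$ maps $H^{-1} \to H^1$ and $H^0 \to H^2$ boundedly, giving the claimed $i=1$ and $i=2$ estimates. I expect this integration-by-parts repackaging of the second-variation-of-$R_g$ term to be the main obstacle, since one must check that all the highest-order-in-$\delta$ contributions genuinely come in divergence form (this is the same structural fact that makes the first variation of total scalar curvature an integral of a divergence) and that the resulting lower-order coefficients are bounded in the asserted $\gamma$-norm. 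Once that is in place, the two stated estimates follow by taking $i = 1, 2$ and invoking \Cref{f1} and the uniform invertibility of $P_{\mathcal{G}}$ on $w^\perp$.
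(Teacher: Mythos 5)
Your proposal is correct and follows essentially the same route as the paper: differentiate the defining elliptic equation for the minimizer twice in the two parameters, pin down the mixed second derivative of $\lambda$ by the solvability (orthogonality) condition, invoke elliptic regularity modulo the kernel together with \Cref{f1} for the first-order terms, and handle the $i=1$ case by testing against $H^1$-functions so that the terms carrying two derivatives of $\delta$ land in $H^{-1}$. The only difference is cosmetic: you conjugate to the Schrödinger eigenfunction $w=e^{-f/2}$ and use the Fredholm alternative for $P_{\mathcal{G}}$ on $w^{\perp}$, whereas the paper works directly with $f$, isolating $\Delta_f\bigl(\tfrac{\partial^2}{\partial s\,\partial t}f\bigr)$ from the differentiated identity (\ref{lam}) and bounding the constant $\tfrac{\partial^2}{\partial s\,\partial t}\lambda$ via the first variation formula (\ref{fv}) and integration by parts.
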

\begin{proof}
For convenience, we write $\mathcal{G}_s=\mathcal{G}+s\delta$, $()'$ denotes the derivative with respect to the variable $s$ and $\overset{\cdot}{()}$ denotes the derivative with respect to the variable $t$. We recall (\ref{lam}) and the proof of \cite{K} Lemma 3.7, we get 

\begin{align*}
    \frac{\partial}{\partial t} \lambda(\mathcal{G}_s+t\gamma)&=\frac{\partial}{\partial t} \big( R_s-\frac{1}{12}|H_s|^2+2\Delta f_s-|\nabla f_s|^2 \big)
    \\&=-\Delta_{f_s}(\tr_{g_s}\gamma-2\overset{\cdot}{f_s})+\overline{\divg}_{f_s}\overline{\divg}_{f_s}\gamma.
\end{align*}
Thus, 
\begin{align*}
   \frac{\partial^2}{\partial s \partial t}&\Big|_{s,t=0}\lambda(\mathcal{G}+t\gamma+s\delta)=\frac{\partial}{\partial s}\Big|_{s=0}\big( -\Delta_{f_s}(\tr_{g_s}\gamma-2\overset{\cdot}{f_s})+\overline{\divg}_{f_s}\overline{\divg}_{f_s}\gamma \big).
\end{align*}
Let $\ast$ be the Hamilton's notation for a combination of tensor products with contractions. It follows by the direct computation that 
\begin{align} \label{Est1}
     \frac{\partial}{\partial s}\Big|_{s=0}\big( -\Delta_{f_s}(\tr_{g_s}\gamma-2\overset{\cdot}{f_s})\big)&=\Delta_f \Big(  \frac{\partial}{\partial s}\Big|_{s=0} \big(\tr_{g_s}\gamma-2\overset{\cdot}{f_s}\big) \Big)+\delta\ast\nabla^2\gamma+ \nabla\delta \ast\nabla\gamma+\nabla(\tr_{g_s}\gamma-2\overset{\cdot}{f_s})\ast \nabla f_s',
\end{align}
\begin{align} \label{Est2}
     \frac{\partial}{\partial s}\Big|_{s=0}\overline{\divg}_{f_s}\overline{\divg}_{f_s}\gamma &= \frac{\partial}{\partial s}\Big|_{s=0} \big(\divg_{f_s}\divg_{f_s} h-\frac{1}{6}\langle dK,H \rangle \big)
    \nonumber \\&=\delta\ast \nabla^2h+\nabla \delta\ast\nabla h+\nabla^2\delta \ast h+h\ast \nabla^2f_s'+h\ast\nabla f_s\ast \nabla\delta+\nabla f_s'\ast\nabla h+h\ast \nabla f_s\ast \nabla f_s'
     \nonumber\\&\kern2em +\delta\ast \nabla K\ast H+ \nabla\delta\ast K\ast H+ \nabla K\ast \nabla\delta,
\end{align}
where we denote $\gamma=h-K$. By \Cref{f1}, the elliptic regularity and the integration by parts, we further estimate that 
 \begin{align*}
     \| \frac{\partial^2}{\partial s \partial t}\Big|_{s,t=0}f  \|_{H^i}&\leq C \|\Delta(\frac{\partial^2}{\partial s \partial t}\Big|_{s,t=0}f)\|_{H^{i-2}}
     \\&\leq C \|\frac{\partial^2}{\partial s \partial t}\Big|_{s,t=0}\lambda(\mathcal{G}+t\gamma+s\delta) \|_{H^{i-2}}+C\|\gamma\|_{C^{2,\alpha}}\|\delta \|_{H^i} .
 \end{align*}
We left to estimate the second derivative of $\lambda$. Due to the first variation formula of $\lambda$ (\ref{fv}),

\begin{align*}
    \frac{\partial^2}{\partial s \partial t}&\Big|_{s,t=0}\lambda(\mathcal{G}+t\gamma+s\delta)
    \\&=\frac{\partial }{\partial s}\Big|_{s=0}\Big( \int_M -\langle \Rc^{H,f_s}_s,\gamma \rangle_s e^{-f_s}dV_{g_s} \Big)
    \\&=\int_M -\langle (\Rc^{H,f_s}_s)',\gamma\rangle e^{-f_s}dV_g- \int_M  \big(g_s^{-1}\big)'\ast \Rc^{H,f_s}\ast\gamma e^{-f_s}dV_g-\int_M \langle \Rc^{H,f_s},\gamma \rangle(e^{-f_s}dV_{g_s})'.
\end{align*}
As shown in \cite{KK} Proposition 3.6, 
\begin{align*}
   (\Rc^{H,f_s}_s)'=-\frac{1}{2}\overline{\Delta}_{f_s}\delta-\mathring{R}^+(\delta)-\frac{1}{2}\overline{\divg}_{f_s}^*\overline{\divg}_{f_s}\delta-\frac{1}{2}(\nabla^+)^2(\tr_g\delta-2f_s').
\end{align*}
 Using the integration by part and \Cref{f1}, we have
 \begin{align*}
     \Big| \int_M \langle (\Rc^{H,f_s})',\gamma\rangle e^{-f_s}dV_g \Big|\leq C\|\delta\|_{C^{2,\alpha}}\|\gamma\|_{H^2}
 \end{align*}
 Also, it is clear that 
 \begin{align*}
      \Big|\int_M  \big(g_s^{-1}\big)'\ast \Rc^{H,f_s}\ast\gamma e^{-f_s}dV_g \Big|\leq C\|\delta\|_{C^{2,\alpha}}\|\gamma\|_{H^2}
 \end{align*}
 and
 \begin{align*}
     \Big|\int_M \langle \Rc^{H,f_s},\gamma \rangle \frac{d}{ds}\Big|_{s=0}(e^{-f_s}dV_{g_s})\Big|=\Big|\int_M \langle \Rc^{H,f_s},\gamma \rangle \big(\frac{1}{2}\tr_g\delta-f_s' \big) e^{-f_s}dV_g \Big|\leq C\|\delta\|_{C^{2,\alpha}}\|\gamma\|_{H^2}.
 \end{align*}
 Thus, 
 \begin{align*}
     \Big|\frac{\partial^2}{\partial s \partial t}&\Big|_{s,t=0}\lambda(\mathcal{G}+t\gamma+s\delta)\Big|\leq C\|\delta\|_{C^{2,\alpha}}\|\gamma\|_{H^2}.
 \end{align*}
For $H^{i=2}=H^{-1}$, the argument follows similarly from testing with an $H^1$-function.
\end{proof}

To end this subsection, let us introduce the third order estimate of $\lambda$.
\begin{lemma}\label{third}
  Let $M$ be a compact manifold and $E\cong (TM\oplus T^*M)_{H_0}$ be an exact Courant algebroid with a background closed 3-form $H_0$. Given a steady gradient generalized Ricci soliton $\mathcal{G}_0(g_0,b_0)$, there exists a $C^{2,\alpha}$-neighborhood $\mathcal{U}$ of $\mathcal{G}_0$ in $\mathcal{GM}$ such that for all $\mathcal{G}\in \mathcal{U}$,
  \begin{align*}
      \Big|\frac{d^3}{ds^3}\Big|_{s=0} \lambda(\mathcal{G}+s\gamma)  \Big|\leq C\|\gamma\|_{H^1}^2 \|\gamma\|_{C^{2,\alpha}}.
  \end{align*}
\end{lemma}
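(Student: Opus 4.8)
The plan is to bound $\frac{d^3}{ds^3}\big|_{s=0}\lambda(\mathcal{G}+s\gamma)$ by differentiating the first variation formula (\ref{fv}) twice more in $s$, exactly as in the proof of \Cref{f2}, and then estimating each resulting term. Writing $\mathcal{G}_s=\mathcal{G}+s\gamma$, $f_s$ for the minimizer of $\lambda(\mathcal{G}_s)$, and $\gamma=h-K$, I would start from
\begin{align*}
  \frac{d}{ds}\lambda(\mathcal{G}_s)=\int_M -\langle \Rc^{H,f_s}_s,\gamma\rangle_{g_s}\, e^{-f_s}\,dV_{g_s},
\end{align*}
and apply $\frac{d^2}{ds^2}$. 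Each differentiation in $s$ falls on one of: the twisted Bakry--Emery tensor $\Rc^{H,f_s}_s$ (whose first $s$-derivative is the operator $-\tfrac12\overline{\Delta}_{f_s}\gamma-\mathring R^+(\gamma)-\tfrac12\overline{\divg}_{f_s}^*\overline{\divg}_{f_s}\gamma-\tfrac12(\nabla^+)^2(\tr_g\gamma-2f_s')$ from \cite{KK} Proposition 3.6, and whose second $s$-derivative is a universal polynomial expression in $g_s^{-1}$, $\Rc$, $H$, $\nabla\gamma$, $\nabla^2\gamma$, $f_s'$, $f_s''$ and their derivatives up to order two), the inverse metric $g_s^{-1}$, the volume-density $e^{-f_s}dV_{g_s}$ (whose $s$-derivative is $(\tfrac12\tr_g\gamma-f_s')\,e^{-f_s}dV_{g_s}$), and the fixed test tensor $\gamma$.

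Next I would organize the resulting terms by how many derivatives land on a single factor of $\gamma$. After integrating by parts to move at most two derivatives off any one copy of $\gamma$ — note there are exactly three copies of $\gamma$ in the triple derivative and, counting the $\overline{\Delta}_{f_s}\gamma$ and $(\nabla^+)^2 f_s'$ pieces, no single integrand carries more than two derivatives on one $\gamma$ once we exploit the self-adjointness structure — every term is controlled by a product of the form $\|\gamma\|_{H^1}\cdot\|\gamma\|_{H^1}\cdot\|\gamma\|_{C^{2,\alpha}}$, using the $C^{2,\alpha}$-factor to absorb any term where two derivatives genuinely sit on one $\gamma$ and all $L^\infty$-type coefficients ($g_s^{-1}$, $\Rc$, $H$, $f_s$, $f_s'$, $f_s''$ and their first/second derivatives), which are uniformly bounded on $\mathcal{U}$ by the earlier lemma on $\|f\|_{C^{2,\alpha}}\le C$ together with \Cref{f1} and \Cref{f2}. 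The contributions of $f_s'$ and $f_s''$ are exactly where those two lemmas enter: $\|f_s'\|_{C^{2,\alpha}}\le C\|\gamma\|_{C^{2,\alpha}}$ and $\|f_s'\|_{H^i}\le C\|\gamma\|_{H^i}$ from \Cref{f1}, and $\|f_s''\|_{H^i}\le C\|\gamma\|_{C^{2,\alpha}}\|\gamma\|_{H^i}$ from \Cref{f2} with $\delta=\gamma$, so that wherever a factor of $f_s''$ appears it already costs one $C^{2,\alpha}$-norm and one $H^i$-norm, matching the claimed bound.

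The main obstacle is bookkeeping rather than any genuine analytic difficulty: one must check that no integrand ever places three derivatives on a single $\gamma$ or two derivatives on two distinct $\gamma$'s simultaneously, since such a term could not be estimated by $\|\gamma\|_{H^1}^2\|\gamma\|_{C^{2,\alpha}}$. This is handled by the structure of the first variation formula — only one $\gamma$ appears undifferentiated as a ``slot'', and the worst second-order operator hitting $\delta=\gamma$ in $(\Rc^{H,f_s}_s)'$ is $\overline{\Delta}_{f_s}\gamma$ — combined with a single integration by parts that redistributes derivatives symmetrically; the one term with $(\nabla^+)^2 f_s''$ is paired against an undifferentiated $\gamma$ and estimated directly by \Cref{f2}. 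Once this accounting is in place, collecting constants (uniform over $\mathcal{U}$ by the cited lemmas) yields
\begin{align*}
  \Big|\frac{d^3}{ds^3}\Big|_{s=0}\lambda(\mathcal{G}+s\gamma)\Big|\le C\|\gamma\|_{H^1}^2\,\|\gamma\|_{C^{2,\alpha}},
\end{align*}
completing the proof.
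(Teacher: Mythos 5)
Your proposal is correct and follows essentially the same route as the paper: differentiate the first variation formula (\ref{fv}) twice more in $s$, expand $(\Rc^{H,f})'$ and $(\Rc^{H,f})''$ schematically (the paper records this as (\ref{Est3}) in Hamilton's $\ast$-notation), and control each term using the uniform bounds on $f$, \Cref{f1} for $f_s'$, and \Cref{f2} for $f_s''$, with integration by parts ensuring at most two derivatives land on any single copy of $\gamma$. Your bookkeeping discussion is in fact more explicit than the paper's, which simply asserts that each term in the expansion is controlled.
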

\begin{proof}
Let $()'$ denote the derivative with respect to variable $s$. We compute 
\begin{align*}
    \frac{d^3}{ds^3}\Big|_{s=0} \lambda(\mathcal{G}+t\gamma)&=   \frac{d^2}{ds^2}\Big|_{s=0} \int_M \langle \Rc^{H,f},\gamma \rangle e^{-f}dV_g
    \\&=\frac{d}{ds}\Big|_{s=0}\Big[\int_M\langle (\Rc^{H,f})',\gamma \rangle e^{-f}dV_g +\int_M(g^{-1})'\circ \Rc^{H,f}\circ \gamma e^{-f}dV_g +\int_M\langle \Rc^{H,f},\gamma \rangle (e^{-f}dV_g)' \Big]
    \\&=\int_M\langle (\Rc^{H,f})'',\gamma \rangle e^{-f}dV_g+2\int_M(g^{-1})'\circ (\Rc^{H,f})'\circ \gamma e^{-f}dV_g +2\int_M(g^{-1})'\circ \Rc^{H,f}\circ \gamma (e^{-f}dV_g)'
    \\&\kern2em +\int_M(g^{-1})'\circ(g^{-1})'\circ \Rc^{H,f}\circ \gamma e^{-f}dV_g+ 2\int_M\langle (\Rc^{H,f})',\gamma \rangle (e^{-f}dV_g)' +\int_M\langle \Rc^{H,f},\gamma \rangle (e^{-f}dV_g)''. 
\end{align*}
Then, we get
\begin{align*}
    (\Rc^{H,f})'&=-\frac{1}{2}\overline{\Delta}_f\gamma-\mathring{R}^+(\gamma)-\frac{1}{2}\overline{\divg}_f^*\overline{\divg}_f\gamma-\frac{1}{2}(\nabla^+)^2(\tr_g\gamma-2f')
    \\(\Rc^{H,f})''&=-\frac{1}{2}(\overline{\Delta}_f)'\gamma-\big(\mathring{R}^+(\gamma)\big)'-\frac{1}{2}\big(\overline{\divg}_f^*\overline{\divg}_f\gamma\big)'-\frac{1}{2}\Big((\nabla^+)^2(\tr_g\gamma-2f')\Big)'
\end{align*}
By (\ref{Est1}) and (\ref{Est2}), we get 
\begin{align}\label{Est3}
    \nonumber (\Rc^{H,f})''&=\gamma\ast\nabla^2\gamma+ \nabla\gamma \ast\nabla\gamma+\nabla\gamma\ast \nabla f'+\nabla\gamma\ast f'\ast\nabla f'+\nabla f'\ast \nabla f'+\nabla^2\gamma\ast\gamma+R^+\ast\gamma\ast\gamma
    \\&\kern2em+\gamma\ast\nabla\gamma\ast H+\nabla^2 f''.
\end{align}
Besides, 
\begin{align*}
    \big(e^{-f}dV_g\big)'&=\big(\frac{1}{2}\tr_g\gamma-f'\big) e^{-f}dV_g
    \\ \big(e^{-f}dV_g\big)''&=\big(\frac{1}{2}\tr_g\gamma-f'\big)^2 e^{-f}dV_g+\big(\frac{1}{2}\tr_g\gamma-f'\big)' e^{-f}dV_g.
\end{align*}
By \Cref{f2}, we see that each term $(\Rc^{H,f})'$, $(\Rc^{H,f})''$, $\int_M (e^{-f}dV_g )'$ and $\int_M (e^{-f}dV_g )''$ is controlled and then
 \begin{align*}
      \Big|\frac{d^3}{dt^3}\Big|_{t=0} \lambda(\mathcal{G}+t\gamma)  \Big|\leq C\|\gamma\|_{H^1}^2 \|\gamma\|_{C^{2,\alpha}}.
  \end{align*}

\end{proof}

\subsection{Local Maximum property}

In the following subsections, we fix a compact manifold $M$, an exact Courant algebroid $E\cong (TM\oplus T^*M)_{H_0}$ with a background closed 3-form $H_0$ and a generalized metric $\mathcal{G}_0$. We fix the following notations.
\begin{itemize}
    \item $S_{\mathcal{G}_0}^f$ is the generalized Ebin's slice constructed in \Cref{slice}.
    \item $ \widetilde{S}_{\mathcal{G}_0}^f=\{\mathcal{G}+\gamma \kern0.2em|  \kern0.2em\overline{\divg}_f\gamma=0 \}$ is the affine generalized slice.
    \item $\mathcal{P}_{\mathcal{G}_0}=\{\mathcal{G}\in S_{\mathcal{G}_0}^f: \Rc^{H,f}=0 \}$ is the premoduli space with respect to the generalized Ebin's slice.
    \item $\widetilde{\mathcal{P}}_{\mathcal{G}_0}=\{\mathcal{G}\in \widetilde{S}_{\mathcal{G}_0}^f: \Rc^{H,f}=0 \}$ is the premoduli space with respect to the affine generalized slice.
\end{itemize}

 In general, the tangent space of the premoduli space may not be the set of infinitesimal generalized solitonic deformation. Instead, we get the following. 

\begin{theorem}[\cite{KK}, Theorem 4.9]
    Let $\mathcal{G}(g,b)$ be a steady gradient generalized Ricci soliton. There exists a neighborhood $\mathcal{U}$ of $\mathcal{G}(g,b)$ in the slice $\mathcal{S}^f_{\mathcal{G}}$ and a 
    finite-dimensional real analytic submanifold $\mathcal{Z}\subset \mathcal{U}$ such that 
    \begin{itemize}
        \item $T_{\mathcal{G}}(\mathcal{Z})=IGSD$
        \item $\mathcal{Z}$ contains the premoduli space $\mathcal{P}_{\mathcal{G}}$ as a real analytic subset.
    \end{itemize}
\end{theorem}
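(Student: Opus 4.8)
The statement to prove is the Ebin--Palais--Koiso style structure theorem: near a steady gradient generalized Ricci soliton $\mathcal{G}$, inside the slice $\mathcal{S}^f_{\mathcal{G}}$ there is a finite-dimensional real analytic submanifold $\mathcal{Z}$ with $T_{\mathcal{G}}\mathcal{Z}=IGSD$ that contains the premoduli space as a real analytic subset. The plan is to realize $\mathcal{Z}$ as the zero set of the ``obstruction'' part of the generalized Ricci operator, following Koiso's deformation theory adapted to the twisted Bakry--\'Emery curvature $\Rc^{H,f}$.

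First I would set up the relevant map. Consider $\Psi\colon \mathcal{S}^f_{\mathcal{G}}\to \ker\overline{\divg}_f$ defined by composing $\widetilde{\mathcal{G}}\mapsto \Rc^{H,f}(\widetilde{\mathcal{G}})$ with the $f$-twisted $L^2$-orthogonal projection onto $\ker\overline{\divg}_f$; on the slice the Bianchi-type identity (the contracted second Bianchi identity for $\Rc^{H,f}$, which forces $\Rc^{H,f}\in\ker\overline{\divg}_f$ along the constraint defining the slice, cf. the role of $\overline{\divg}_f\overline{L}_f=\tfrac12\Phi\,\overline{\divg}_f$ in \Cref{Comm7}) shows that $\Rc^{H,f}$ already lands in $\ker\overline{\divg}_f$, so $\Psi$ is essentially just $\Rc^{H,f}$ restricted to the slice. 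The linearization of $\Psi$ at $\mathcal{G}$ is $\gamma\mapsto -\overline{N}_f(\gamma)$, which by the second-variation identity and \Cref{MT1} reduces on $\ker\overline{\divg}_f$ to $-\overline{L}_f(\gamma)=-\tfrac12\overline{\Delta}_f\gamma-\mathring{R}^+(\gamma)$. Since $\overline{L}_f$ is self-adjoint and elliptic on $\ker\overline{\divg}_f$, it has finite-dimensional kernel $IGSD$ and closed image, with $\ker\overline{\divg}_f = IGSD \oplus \im\overline{L}_f$ an $L^2_f$-orthogonal splitting.

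Next I would invoke the real-analytic Lyapunov--Schmidt / implicit function theorem. Write $\gamma = \gamma_0 + \gamma_1$ with $\gamma_0\in IGSD$, $\gamma_1\in(IGSD)^\perp\cap\ker\overline{\divg}_f$, and let $\pi$ be projection onto $\im\overline{L}_f=(IGSD)^\perp\cap\ker\overline{\divg}_f$. Because $\Rc^{H,f}$ depends real-analytically on the metric (the curvature, $H=H_0+db$, and the minimizer $f$ all depend analytically on $\widetilde{\mathcal{G}}$ — here the analyticity of $f$ as the principal eigenfunction of $-4\Delta+R-\tfrac1{12}|H|^2$ is used, together with the estimates of \Cref{f1}, \Cref{f2}), the equation $\pi\,\Rc^{H,f}(\mathcal{G}+\gamma_0+\gamma_1)=0$ has, by the implicit function theorem applied in suitable H\"older (or Sobolev) completions and elliptic regularity to recover smoothness, a unique small solution $\gamma_1=\gamma_1(\gamma_0)$ depending analytically on $\gamma_0\in IGSD$ near $0$, with $\gamma_1(0)=0$ and $D\gamma_1(0)=0$. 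Define $\mathcal{Z}=\{\mathcal{G}+\gamma_0+\gamma_1(\gamma_0):\gamma_0\in IGSD,\ |\gamma_0|\text{ small}\}$; this is a finite-dimensional real analytic submanifold of $\mathcal{S}^f_{\mathcal{G}}$ with $T_{\mathcal{G}}\mathcal{Z}=IGSD$ by construction. For the last bullet: if $\widetilde{\mathcal{G}}\in\mathcal{S}^f_{\mathcal{G}}$ is near $\mathcal{G}$ and satisfies $\Rc^{H,f}=0$, then in particular $\pi\,\Rc^{H,f}=0$, so $\widetilde{\mathcal{G}}\in\mathcal{Z}$; hence $\mathcal{P}_{\mathcal{G}}=\{\mathcal{G}'\in\mathcal{Z}:\Rc^{H,f}(\mathcal{G}')=0\}$, which is the zero set in $\mathcal{Z}$ of the finite-dimensional real-analytic map $\gamma_0\mapsto(1-\pi)\Rc^{H,f}(\mathcal{G}+\gamma_0+\gamma_1(\gamma_0))$ valued in $IGSD$, hence a real analytic subset of $\mathcal{Z}$.

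The main obstacle I anticipate is the analyticity and regularity bookkeeping rather than the formal scheme: one must verify that $\widetilde{\mathcal{G}}\mapsto\Rc^{H,f}(\widetilde{\mathcal{G}})$ is a real-analytic map between the appropriate Banach manifolds of metrics/$2$-forms — the delicate point being the dependence of the entropy minimizer $f$ on $\widetilde{\mathcal{G}}$, which requires analytic dependence of the bottom eigenvalue/eigenfunction of the Schr\"odinger operator and the a priori bounds from \Cref{f1} and \Cref{f2} to close the fixed-point argument — and that the solution $\gamma_1(\gamma_0)$ produced in a H\"older space is actually smooth, which follows from elliptic regularity bootstrapping on $\overline{L}_f$ once $\gamma_0$ is smooth (which it is, being a finite-dimensional kernel element of an elliptic operator with smooth coefficients). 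Everything else is the standard Koiso argument transcribed to the Bismut/twisted setting using \Cref{MT1}, \Cref{Comm6}, and \Cref{Comm7}.
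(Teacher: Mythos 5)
This statement is imported verbatim from \cite{KK} (Theorem 4.9) and the present paper gives no proof of it, so there is nothing internal to compare against; your proposal has to be judged as a reconstruction of the standard argument, and as such it is essentially correct: the Lyapunov--Schmidt reduction of the projected soliton operator $\pi\,\Rc^{H,f}$ on the slice, with $\overline{L}_f$ as the linearization, finite-dimensional kernel $IGSD$, and the premoduli space recovered as the zero set of the finite-dimensional obstruction map, is exactly the Koiso scheme that a proof of this theorem must follow.

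One point deserves more care than you give it. The linearization of $\Rc^{H,f}$ at the soliton is $-\tfrac12\overline{\Delta}_f\gamma-\mathring{R}^+(\gamma)-\tfrac12\overline{\divg}_f^*\overline{\divg}_f\gamma-\tfrac12(\nabla^+)^2(\tr_g\gamma-2f')$ (as recorded in the proof of \Cref{f2}), so even for $\gamma\in\ker\overline{\divg}_f$ it is \emph{not} literally $-\overline{L}_f(\gamma)$: the Hessian term $(\nabla^+)^2(\tr_g\gamma-2f')$ survives. Your appeal to ``the second-variation identity'' conflates $\overline{N}_f$ (which is defined so that its pairing with $\gamma$ gives $\lambda''$, and which does reduce to $\overline{L}_f$ on $\ker\overline{\divg}_f$) with the actual linearized operator. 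The argument is rescued because $(\nabla^+)^2u=\overline{\divg}_f^*(0,-du)$ for any function $u$ (immediate from (\ref{dbars})), so the offending term lies in $\im\overline{\divg}_f^*$ and is annihilated by the $f$-twisted $L^2$ projection $\pi$ onto $\ker\overline{\divg}_f$; only after this observation is the linearization of $\Psi$ equal to $-\overline{L}_f$ on the slice, with kernel $IGSD$. With that verification supplied, and with the analytic dependence of the minimizer $f$ on $(g,b)$ handled via the simplicity of the bottom eigenvalue of the Schr\"odinger operator as you indicate, the rest of your argument (ellipticity and self-adjointness of $\overline{L}_f$, preservation of the splitting $\ker\overline{\divg}_f\oplus\im\overline{\divg}_f^*$ from the corollary to \Cref{Comm7}, the analytic implicit function theorem, and the identification of $\mathcal{P}_{\mathcal{G}}$ as an analytic subset of $\mathcal{Z}$) goes through as stated.
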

In this subsection, we suppose that all infinitesimal solitonic deformation are integrable so $\mathcal{P}_{\mathcal{G}_0}=\mathcal{Z}$ is a finite dimensional submanifold. Besides, we have the following properties. 
\begin{enumerate}
    \item $\widetilde{\mathcal{P}}_{\mathcal{G}_0}$ is a finite dimensional manifold:  Define a map $\Psi:\mathcal{P}_{\mathcal{G}_0}\longrightarrow \widetilde{S}_{\mathcal{G}_0}^f$ by sending $\mathcal{G}_1 \in \mathcal{P}_{\mathcal{G}_0}$ to the unique generalized metric $\widetilde{\mathcal{G}}_1\in \widetilde{S}_{\mathcal{G}_0}^f$ isometric to $\mathcal{G}_1$. By shrinking $C^{2,\alpha}-$neighborhood and \Cref{affineslice}, we see that $\Psi$ is diffeomorphism onto its image and the set $\widetilde{\mathcal{P}}_{\mathcal{G}_0}=\Psi(\mathcal{P}_{\mathcal{G}_0})$ consisting of all generalized solitons in $\widetilde{S}_{\mathcal{G}_0}^f$ is a finite dimensional manifold.
    \item  Let $\mathcal{W}$ denote the $f-$twisted $L^2$orthogonal complement of $T_{\mathcal{G}_0}\widetilde{\mathcal{P}}_{\mathcal{G}_0}$ in $\ker\overline{\divg}_f$. For any $\mathcal{G}\in\widetilde{S}_{\mathcal{G}_0}^f$, we can find $\widetilde{\mathcal{G}}\in\widetilde{\mathcal{P}}_{\mathcal{G}_0}$ and $\gamma\in\mathcal{W}$ such that $\mathcal{G}=\widetilde{\mathcal{G}}+\gamma$ by the inverse function theorem.
\end{enumerate}

Now, we are ready to prove our main theorem.
\begin{theorem}\label{LM}
   Let $M$ be a compact manifold and $E\cong (TM\oplus T^*M)_{H_0}$ be an exact Courant algebroid with a background closed 3-form $H_0$. Suppose that $\mathcal{G}_0$ is a steady gradient generalized Ricci soliton and all infinitesimal generalized solitonic deformation are integrable. If $\mathcal{G}_0$ is linearly stable, then there exists a $C^{2,\alpha}-$neighborhood $\mathcal{U}$ of $\mathcal{G}_0$ such that for all $\mathcal{G}\in\mathcal{U}$,
   \begin{align*}
       \lambda(\mathcal{G})\leq \lambda(\mathcal{G}_0).
   \end{align*}
   The equality holds if and only if $\mathcal{G}$ is a steady gradient generalized Ricci soliton. 
\end{theorem}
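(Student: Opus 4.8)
The plan is to exploit the $\GDiff_H$-invariance of $\lambda$ to pass to the affine slice, split a nearby generalized metric as a soliton plus a transverse direction, Taylor-expand $\lambda$ along that direction around the soliton, and use linear stability to promote the second-variation form to a coercive estimate that dominates the cubic error supplied by \Cref{third}.

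\emph{Step 1 (reduction and splitting).} Since $\lambda$ is invariant under the $\GDiff_H$-action and the minimizer $f$ of $\mathcal{G}_0$ is $\Isom(\mathcal{G}_0)$-invariant, \Cref{affineslice} lets me assume, after shrinking $\mathcal{U}$, that $\mathcal{G}\in\widetilde{S}^f_{\mathcal{G}_0}$. Using the integrability hypothesis and property (2) recorded before the statement, I will write $\mathcal{G}=\widetilde{\mathcal{G}}+\gamma$ with $\widetilde{\mathcal{G}}\in\widetilde{\mathcal{P}}_{\mathcal{G}_0}$ a steady gradient generalized Ricci soliton and $\gamma\in\mathcal{W}\subset\ker\overline{\divg}_f$, noting that $\|\gamma\|_{C^{2,\alpha}}$ and $\|\widetilde{\mathcal{G}}-\mathcal{G}_0\|_{C^{2,\alpha}}$ both tend to $0$ as $\mathcal{G}\to\mathcal{G}_0$. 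Since every point of the connected component of $\widetilde{\mathcal{P}}_{\mathcal{G}_0}$ through $\mathcal{G}_0$ is a critical point of $\lambda$, the first variation formula \eqref{fv} shows $\lambda$ is constant there, so $\lambda(\widetilde{\mathcal{G}})=\lambda(\mathcal{G}_0)$ and it suffices to control $\lambda(\widetilde{\mathcal{G}}+\gamma)-\lambda(\widetilde{\mathcal{G}})$.

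\emph{Step 2 (Taylor expansion).} Set $g(s)=\lambda(\widetilde{\mathcal{G}}+s\gamma)$. Because $\widetilde{\mathcal{G}}$ is a soliton, \eqref{fv} gives $g'(0)=0$; because $\gamma\in\ker\overline{\divg}_f$, \Cref{MT1} identifies $g''(0)=\int_M\langle\gamma,\overline{N}_f(\gamma)\rangle e^{-f}dV_g$ with $Q(\gamma):=\int_M\langle\gamma,\overline{L}_f(\gamma)\rangle e^{-f}dV_g$; and \Cref{third}, applied at each base point $\widetilde{\mathcal{G}}+s\gamma\in\mathcal{U}$, bounds $|g'''(s)|\le C\|\gamma\|_{H^1}^2\|\gamma\|_{C^{2,\alpha}}$ for $s\in[0,1]$. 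Taylor's theorem with integral remainder then yields
\[
\lambda(\mathcal{G})-\lambda(\mathcal{G}_0)=\tfrac12\,Q(\gamma)+R,\qquad |R|\le C\|\gamma\|_{H^1}^2\,\|\gamma\|_{C^{2,\alpha}}.
\]

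\emph{Step 3 (coercivity from linear stability).} The operator $\overline{L}_f=\tfrac12\overline{\Delta}_f+\mathring{R}^+$ is formally self-adjoint with respect to \eqref{6}, is second-order elliptic with leading symbol $-\tfrac12|\xi|^2\,\mathrm{Id}$, and by \Cref{Comm7} and the corollary following it preserves the splitting $\otimes^2T^*M=\ker\overline{\divg}_f\oplus\mathrm{Im}\,\overline{\divg}_f^*$. Hence on $\ker\overline{\divg}_f$ it has discrete spectrum, which linear stability of $\mathcal{G}_0$ forces to lie in $(-\infty,0]$, and whose zero eigenspace is precisely $IGSD=T_{\mathcal{G}_0}\widetilde{\mathcal{P}}_{\mathcal{G}_0}$. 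Therefore on the $f$-twisted $L^2$ orthogonal complement $\mathcal{W}$ of $IGSD$ in $\ker\overline{\divg}_f$ there is a spectral gap $Q(\gamma)\le-c_0\|\gamma\|_{L^2}^2$; interpolating this with the Gårding inequality $-Q(\gamma)\ge\tfrac14\|\gamma\|_{H^1}^2-C_0\|\gamma\|_{L^2}^2$ produces $Q(\gamma)\le-c_1\|\gamma\|_{H^1}^2$ on $\mathcal{W}$ for some $c_1>0$. Finally, because the coefficients of $\overline{L}_f$ depend continuously on $\mathcal{G}$ in $C^{2,\alpha}$ (using the minimizer bounds of \Cref{f1} and \Cref{f2}), the second-variation form at $\widetilde{\mathcal{G}}$ differs from $Q$ by an operator of small $H^1\to H^{-1}$ norm, so it still satisfies $Q_{\widetilde{\mathcal{G}}}(\gamma)\le-\tfrac{c_1}{2}\|\gamma\|_{H^1}^2$ on $\mathcal{W}$ once $\mathcal{U}$ is small enough.

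\emph{Step 4 (conclusion and obstacle).} Combining Steps 2 and 3 and using $\|\gamma\|_{C^{2,\alpha}}\le\delta$ on $\mathcal{U}$ gives
\[
\lambda(\mathcal{G})-\lambda(\mathcal{G}_0)\le-\tfrac{c_1}{4}\|\gamma\|_{H^1}^2+C\delta\,\|\gamma\|_{H^1}^2\le0
\]
once $\delta$ is small, which is the asserted inequality. Equality forces $\gamma=0$, i.e. $\mathcal{G}=\widetilde{\mathcal{G}}$ is a steady gradient generalized Ricci soliton; conversely, any steady soliton near $\mathcal{G}_0$ is $\GDiff_H$-equivalent to a point of $\widetilde{\mathcal{P}}_{\mathcal{G}_0}$, hence has $\lambda=\lambda(\mathcal{G}_0)$. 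The main obstacle is Step 3: identifying the null space of the second-variation form with $T_{\mathcal{G}_0}\widetilde{\mathcal{P}}_{\mathcal{G}_0}$ (this is exactly where the integrability assumption and \Cref{MT1} enter) and upgrading the $L^2$ spectral gap to genuine $H^1$-coercivity, since it is precisely this $H^1$-control that allows the cubic remainder of \Cref{third}---which carries only the single small factor $\|\gamma\|_{C^{2,\alpha}}$---to be absorbed.
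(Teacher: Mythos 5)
Your proposal follows essentially the same route as the paper: reduce to the affine slice via \Cref{affineslice}, split $\mathcal{G}=\widetilde{\mathcal{G}}+\gamma$ with $\widetilde{\mathcal{G}}\in\widetilde{\mathcal{P}}_{\mathcal{G}_0}$ and $\gamma\in\mathcal{W}$, Taylor-expand $\lambda$ using the third-order bound of \Cref{third}, and absorb the remainder into the negative second variation. The only difference is that your Step 3 spells out (via self-adjointness, the spectral gap on $\mathcal{W}$, and Gårding's inequality) the $H^1$-coercivity $\frac{d^2}{dt^2}\big|_{t=0}\lambda(\mathcal{G}_0+t\gamma)\le -C\|\gamma\|_{H^1}^2$ that the paper asserts directly from linear stability, and you phrase the perturbation to the base point $\widetilde{\mathcal{G}}$ as a small $H^1\to H^{-1}$ operator norm, which gives the quadratic-in-$\|\gamma\|_{H^1}$ error needed for absorption; both are consistent with the paper's argument.
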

\begin{proof}

As discussed above, for any $\mathcal{G}\in\widetilde{S}_{\mathcal{G}_0}^f$, we can find $\widetilde{\mathcal{G}}\in\widetilde{\mathcal{P}}_{\mathcal{G}_0}$ and $\gamma\in\mathcal{W}$ such that $\mathcal{G}=\widetilde{\mathcal{G}}+\gamma$. Apply the Taylor's expansion, 
\begin{align*}
    \lambda(\mathcal{G})=\lambda(\widetilde{\mathcal{G}})+\Big(\frac{d}{dt}\big|_{t=0} \lambda(\widetilde{\mathcal{G}}+t\gamma) \Big)+\Big(\frac{1}{2}\frac{d^2}{dt^2}\big|_{t=0} \lambda(\widetilde{\mathcal{G}}+t\gamma) \Big)+R(\widetilde{\mathcal{G}},\gamma),
\end{align*}
where the remainder is given by
\begin{align*}
    R(\widetilde{\mathcal{G}},\gamma)=\int_0^1\frac{1}{2}(t-1)^2 \frac{d^3}{dt^3}\big|_{t=0} \lambda(\widetilde{\mathcal{G}}+t\gamma) dt.
\end{align*}

By the third order estimate \Cref{third}, we see that 
\begin{align*}
    \Big| \frac{d^3}{dt^3}\big|_{t=0} \lambda(\widetilde{\mathcal{G}}+t\gamma) \Big|\leq C\|\gamma\|_{C^{2,\alpha}}\|\gamma\|^2_{H^1},
\end{align*}
Next, we note that $\mathcal{G}_0$ is linearly stable so 
\begin{align*}
     \frac{d^2}{dt^2}\big|_{t=0} \lambda(\mathcal{G}_0+t\gamma) \leq -C_\|\gamma\|^2_{H^1}.
\end{align*}
To estimate the difference between $ \frac{d^2}{dt^2}\big|_{t=0} \lambda(\mathcal{G}_0+t\gamma)$ and $ \frac{d^2}{dt^2}\big|_{t=0} \lambda(\widetilde{\mathcal{G}}+t\gamma)$, we write $\widetilde{\mathcal{G}}=\mathcal{G}_0+\delta$. Similar to the argument in third order estimate, the second order operator $N_f$ is bounded. More precisely,
\begin{align*}
    \Big|  \frac{d}{ds}\big|_{s=0} (N_f)_{\mathcal{G}_0+s\delta}(\gamma)  \Big| \leq C\|\delta\|_{C^{2,\alpha}}\|\gamma\|_{H^1}.
\end{align*}
Then, we get 
\begin{align*}
    \Big|\frac{d^2}{dt^2}\big|_{t=0} \lambda(\widetilde{\mathcal{G}}+t\gamma) -\frac{d^2}{dt^2}\big|_{t=0} \lambda(\mathcal{G}_0+t\gamma) \Big|\leq C\|\delta\|_{C^{2,\alpha}}\|\gamma\|_{H^1}.
\end{align*}
By choosing $\delta$ small enough, we have
\begin{align*}
     \frac{d^2}{dt^2}\big|_{t=0} \lambda(\widetilde{\mathcal{G}}+t\gamma)\leq -C\|\gamma\|^2_{H^1}.
\end{align*}
By shrinking the neighborhood $\mathcal{U}$, we have $ \lambda(\mathcal{G})\leq \lambda(\widetilde{\mathcal{G}})=\lambda(\mathcal{G}_0).$ Due to \Cref{affineslice}, any $\mathcal{G}\in\mathcal{U}$ is isomorphic to some generalized metric in the affine slice. Since $\lambda$ is diffeomorphism invariant, the inequality holds for any $\mathcal{G}\in\mathcal{U}$. Moreover, the equality holds if and only if $\mathcal{G}=(\varphi_X,B)\cdot \overline{\mathcal{G}}$ for some $(\varphi_X,B)\in \GDiff^e_H$ and $\overline{\mathcal{G}}\in\mathcal{P}_{\mathcal{G}_0}$.

\end{proof}

\subsection{Optimal Lojasiewicz--Simon Inequality and Transversality}

\begin{theorem}[Optimal Lojasiewicz--Simon Inequality] \label{OPL}
  Let $M$ be a compact manifold and $E\cong (TM\oplus T^*M)_{H_0}$ be an exact Courant algebroid with a background closed 3-form $H_0$. Suppose that $\mathcal{G}_0$ is a steady gradient generalized Ricci soliton and all infinitesimal generalized solitonic deformation are integrable. There exists a $C^{2,\alpha}-$neighborhood $\mathcal{U}$ of $\mathcal{G}_0$ and a constant $C$ such that 
  \begin{align*}
      |\lambda(\mathcal{G})-\lambda(\mathcal{G}_0)|^{\frac{1}{2}}\leq C\|\Rc^{H,f}\|_{L^2}.
  \end{align*}
\end{theorem}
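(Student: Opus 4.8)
The plan is to follow the now-standard scheme for Łojasiewicz–Simon inequalities near critical points with finite-dimensional integrable kernel, adapted to the generalized-geometry setting via the affine slice theorem (\Cref{affineslice}). First I would reduce to the slice: given $\mathcal{G}$ in a small $C^{2,\alpha}$-neighborhood of $\mathcal{G}_0$, use \Cref{affineslice} to find $(\varphi_X,B)\in\GDiff_H^e$ so that $\mathcal{G}':=(\varphi_X,B)\cdot\mathcal{G}$ lies in the affine slice $\widetilde S^f_{\mathcal{G}_0}=\{\mathcal{G}_0+\gamma : \overline{\divg}_f\gamma=0\}$. Since both $\lambda$ and the soliton equation $\Rc^{H,f}=0$ are $\GDiff_H$-invariant (and the gradient $\Rc^{H,f}$ transforms equivariantly, with comparable norms for diffeomorphisms $C^{2,\alpha}$-close to the identity — this is where the uniform bounds on $f$ and its derivatives from the Lemmas in Section 4.2 enter), it suffices to prove the estimate for $\mathcal{G}=\mathcal{G}_0+\gamma$ with $\gamma\in\ker\overline{\divg}_f$. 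By the first variation formula (\ref{fv}), on the slice the negative gradient of $\lambda$ is exactly $\Rc^{H,f}$ restricted to $\ker\overline{\divg}_f$, and by \Cref{MT} (more precisely \Cref{MT1}) its linearization at $\mathcal{G}_0$ is $-\overline{N}_f = -\overline{L}_f = -(\tfrac12\overline{\Delta}_f + \mathring{R}^+)$ on $\ker\overline{\divg}_f$, a self-adjoint elliptic operator with respect to the $f$-twisted inner product.

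Next I would split $\gamma = \gamma_0 + \gamma_1$, where $\gamma_0 \in \ker\overline{L}_f\cap\ker\overline{\divg}_f = IGSD = T_{\mathcal{G}_0}\widetilde{\mathcal{P}}_{\mathcal{G}_0}$ (finite-dimensional, by the integrability hypothesis) and $\gamma_1$ in the $f$-twisted $L^2$-orthogonal complement $\mathcal{W}$, on which $\overline{L}_f$ is invertible with a spectral gap: $\|\overline{L}_f\gamma_1\|_{L^2}\geq c\|\gamma_1\|_{H^1}$ and, by elliptic regularity, $\gtrsim \|\gamma_1\|_{H^2}$ controlling $\|\gamma_1\|_{C^{2,\alpha}}$ in the relevant scales. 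For the $\gamma_1$-direction the inequality is the standard linear (in fact exponent-$\tfrac12$ optimal) estimate: a Taylor expansion of $\lambda$ along $\widetilde{\mathcal{P}}_{\mathcal{G}_0}+\gamma_1$ combined with the third-order estimate \Cref{third} gives $|\lambda(\mathcal{G})-\lambda(\widetilde{\mathcal{G}})| \leq C\|\gamma_1\|_{H^1}^2$ while $\|\Rc^{H,f}\|_{L^2}\geq c\|\gamma_1\|_{H^1} - C\|\gamma_1\|^2$, hence $|\lambda(\mathcal{G})-\lambda(\widetilde{\mathcal{G}})|^{1/2}\leq C\|\Rc^{H,f}\|_{L^2}$ after absorbing higher-order terms by shrinking $\mathcal{U}$. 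For the $\gamma_0$-direction one uses that $\widetilde{\mathcal{P}}_{\mathcal{G}_0}$ consists entirely of genuine solitons (integrability), so along $\widetilde{\mathcal{P}}_{\mathcal{G}_0}$ both sides vanish identically; moving off $\widetilde{\mathcal{P}}_{\mathcal{G}_0}$ only happens in the $\gamma_1$-direction. Concretely: write $\mathcal{G}=\widetilde{\mathcal{G}}+\gamma_1$ with $\widetilde{\mathcal{G}}\in\widetilde{\mathcal{P}}_{\mathcal{G}_0}$, $\gamma_1\in\mathcal{W}$ as in property (2) before \Cref{LM}; then $\lambda(\mathcal{G})-\lambda(\mathcal{G}_0) = \lambda(\mathcal{G})-\lambda(\widetilde{\mathcal{G}})$ and the previous step applies, with constants uniform in $\widetilde{\mathcal{G}}$ by the estimates of Section 4.2 (via $\widetilde{\mathcal{G}}=\mathcal{G}_0+\delta$, $\delta$ small).

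The main obstacle — and the reason this is the \emph{optimal} (exponent $\tfrac12$, i.e. Łojasiewicz exponent $1$) inequality rather than the generic exponent — is precisely the integrability assumption: without it one only gets exponent $\theta\in(0,\tfrac12]$ coming from the finite-dimensional Łojasiewicz inequality for the restriction of $\lambda$ to $\mathcal{Z}$, whereas integrability forces that restriction to be constant, killing the finite-dimensional contribution and leaving only the quadratically-controlled transverse direction. Technically, the delicate point is the uniform control of all constants as $\widetilde{\mathcal{G}}$ ranges over $\widetilde{\mathcal{P}}_{\mathcal{G}_0}$ and as the gauge transformation $(\varphi_X,B)$ varies: one needs that the spectral gap of $\overline{L}_f$ on $\mathcal{W}$, the second- and third-order remainder bounds (\Cref{f1}, \Cref{f2}, \Cref{third}), and the comparability of $\|\Rc^{H,f}\|_{L^2}$ under the $C^{2,\alpha}$-small gauge change all hold with one constant on $\mathcal{U}$. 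This is handled by the continuity of these quantities in $\mathcal{G}$ together with compactness of $M$ and the finite dimensionality of $\widetilde{\mathcal{P}}_{\mathcal{G}_0}$, exactly as in Kröncke's treatment of the Ricci-soliton case, and adapted here through the $f$-twisted inner product formalism already developed in the paper.
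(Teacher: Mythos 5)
Your proposal follows essentially the same route as the paper: reduce to the affine slice of \Cref{affineslice} by gauge invariance, use integrability to write $\mathcal{G}=\widetilde{\mathcal{G}}+\gamma$ with $\widetilde{\mathcal{G}}\in\widetilde{\mathcal{P}}_{\mathcal{G}_0}$ a nearby soliton and $\gamma\in\mathcal{W}$, bound $|\lambda(\mathcal{G})-\lambda(\mathcal{G}_0)|=|\lambda(\widetilde{\mathcal{G}}+\gamma)-\lambda(\widetilde{\mathcal{G}})|\leq C\|\gamma\|_{H^2}^2$ by Taylor expansion with the third-order estimate, and bound $\|\Rc^{H,f}\|_{L^2}\geq c\|\gamma\|_{H^2}$ from the expansion $\Rc^{H,f}=L_f(\gamma)+O_1+O_2$ together with injectivity of $L_f$ on $\mathcal{W}$ and absorption of the quadratic remainders. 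The argument is correct and matches the paper's proof.
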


Note that in the proof of the convergence of the GRF, the optimal Lojasiewicz--Simon inequality will ensure that the GRF converges exponentially as $t\to\infty$. In addition, if we assume that $\mathcal{G}_0$ is a generalized Einstein metric. we could further derive the transversality which ensures that the GRF does not move too excessively in gauge direction.

\begin{theorem}[Transversality] \label{TRA}
 Let $M$ be a compact manifold and $E\cong (TM\oplus T^*M)_{H_0}$ be an exact Courant algebroid with a background closed 3-form $H_0$. Suppose that $\mathcal{G}_0$ is a generalized Einstein metric and all infinitesimal generalized solitonic deformations are integrable. There exists a $C^{2,\alpha}-$neighborhood $\mathcal{U}$ of $\mathcal{G}_0$ and a constant $C$ such that 
 \begin{align*}
     \|\Rc^{H}\|_{L^2}\leq C \|\Rc^{H,f}\|_{L^2}
 \end{align*}
 for all $\mathcal{G}(g,b)\in \mathcal{U}$ and $f$ is the minimizer of $\lambda(g,b)$. 
\end{theorem}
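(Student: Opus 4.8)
The plan is to reduce the $L^2$ estimate on $\Rc^H$ to the $L^2$ estimate on $\Rc^{H,f}$ by controlling the ``difference'' between the two curvatures, which by definition is
\[
\Rc^H-\Rc^{H,f}=-\nabla^2 f+\tfrac12\big(d^*H+i_{\nabla f}H\big)=-(\nabla^+)^2 f+\tfrac12 d^*H+\cdots,
\]
so the whole point is to bound the $L^2$ norm of the Hessian-type terms built out of $f$ by $\|\Rc^{H,f}\|_{L^2}$. Near a \emph{generalized Einstein} metric the minimizer $f_0$ is constant, so $f$ and its derivatives are small on a $C^{2,\alpha}$-neighborhood, and one should treat $f$ perturbatively. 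First I would recall (from \Cref{f1}, \Cref{f2} and the elliptic equation characterizing the minimizer, namely that $f$ solves $2\Delta f-|\nabla f|^2+R-\tfrac1{12}|H|^2=\lambda$, equivalently $-4\Delta\phi+(R-\tfrac1{12}|H|^2)\phi=\lambda\phi$ for $\phi=e^{-f/2}$) that $f-f_0$ is controlled in $C^{2,\alpha}$, hence in particular $\|\nabla f\|_{C^{1,\alpha}}$ and $\|\nabla^2 f\|_{C^\alpha}$ are as small as we wish by shrinking $\mathcal U$.

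Next I would derive the key differential identity for $f$ obtained by taking a divergence of the soliton-type equation: applying $\overline{\divg}_f$ (or just $d^*_f$ and $\divg_f$) to $\Rc^{H,f}$ and using the contracted second Bianchi identity together with the formula $(\divg_f \mathrm{Rm})_{ijl}=\tfrac14\nabla_l H^2_{ij}-\tfrac14\nabla_j H^2_{il}$ from \Cref{Comm3}, one gets an elliptic equation of the schematic form
\[
\Delta_f\big(\nabla f\big)=\text{(linear in $\Rc^{H,f}$ and its first derivatives)}+\text{(lower order)},
\]
together with a scalar equation $\Delta_f(\tfrac12 R-\tfrac1{24}|H|^2)=\langle\Rc^{H,f},\cdots\rangle+\cdots$; schematically this is the generalized-geometry analogue of the classical identity $\nabla R = 2\divg\Rc$ and of Perelman's $\Delta_f$-equation for the potential. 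From such an equation, standard $L^2$ elliptic estimates for $\Delta_f$ on the compact manifold $M$ (the operator $\Delta_f$ is self-adjoint with respect to the $f$-twisted inner product and has trivial kernel on functions orthogonal to constants) give $\|\nabla f\|_{H^1}\leq C\|\Rc^{H,f}\|_{L^2}$, and then $\|\nabla^2 f\|_{L^2}\leq C\|\Rc^{H,f}\|_{L^2}$ after absorbing the small lower-order terms using the $C^{2,\alpha}$-smallness of $f$. Similarly one controls $\|d^*H+i_{\nabla f}H\|_{L^2}$ by $\|\Rc^{H,f}\|_{L^2}$, using that near a generalized Einstein metric the second component $d^*_g H+i_{\nabla f}H$ of the soliton equation is small and satisfies its own elliptic equation.

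Putting these together with the triangle inequality $\|\Rc^H\|_{L^2}\leq \|\Rc^{H,f}\|_{L^2}+\|\nabla^2 f\|_{L^2}+\tfrac12\|d^*H+i_{\nabla f}H\|_{L^2}$ yields the claim. The main obstacle I anticipate is making the elliptic estimate for $f$ genuinely \emph{linear} in $\|\Rc^{H,f}\|_{L^2}$ rather than merely $\|\Rc^{H,f}\|_{L^2}+o(1)\|\nabla^2 f\|_{L^2}$: one must check that all the error terms that get fed back into the equation for $f$ carry a factor that is small on $\mathcal U$ (coming from $\nabla f$, $\nabla^2 f$, or $\Rc^{H,f}$ itself being small near the Einstein metric), so that they can be absorbed into the left-hand side. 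The reason the \emph{Einstein} hypothesis (constant minimizer at $\mathcal G_0$), rather than just \emph{soliton}, is needed here is precisely this: if $f_0$ were nonconstant, $\nabla f_0\neq 0$ and the term $i_{\nabla f}H$ would not be small, breaking the absorption argument. A secondary technical point is bookkeeping the Bismut-connection versus Levi-Civita-connection corrections ($H^2$ and $d^*H$ terms) consistently; these are all controlled by the uniform bounds on $g$, $b$, $H$, $f$ on $\mathcal U$ and contribute only constants.
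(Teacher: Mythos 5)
Your proposal takes a genuinely different route from the paper, and as sketched it has a gap I do not see how to close. The paper never estimates $f$ directly. It works on the affine slice of \Cref{affineslice}: every nearby $\mathcal{G}$ is written as $\mathcal{G}=\widetilde{\mathcal{G}}+\gamma$ with $\widetilde{\mathcal{G}}$ a soliton in the premoduli space and $\gamma\in\mathcal{W}$; integrability plus the injectivity of $L_f$ on $\mathcal{W}$ give the lower bound $\|\Rc^{H,f}\|_{L^2}\geq C\|\gamma\|_{H^2}$ of (\ref{Est4}) (the same estimate that drives \Cref{OPL}), and the Einstein hypothesis enters only through the Taylor expansion $\|\Rc^{H}_{\mathcal{G}}\|_{L^2}=\|\Rc^{H}_{\mathcal{G}}-\Rc^{H}_{\mathcal{G}_0}\|_{L^2}\leq C\|\gamma\|^2_{H^2}$. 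Your argument invokes neither the integrability hypothesis nor the premoduli space at any point; since that hypothesis is exactly what makes the lower bound on $\|\Rc^{H,f}\|_{L^2}$ available, this is already a sign that something essential is missing.

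The concrete gap is the central claim $\|\nabla^2 f\|_{L^2}\leq C\|\Rc^{H,f}\|_{L^2}$. Applied to a nearby steady soliton (where $\Rc^{H,f}\equiv 0$), this claim asserts that every soliton near $\mathcal{G}_0$ has constant potential — that is precisely the hard structural content of the theorem, and it cannot come out of local differential identities alone. Tracing your proposed derivation: differentiating the constancy of $\lambda=R-\tfrac{1}{12}|H|^2+2\Delta f-|\nabla f|^2$ gives an elliptic equation for $df$ whose source is $d\big(R-\tfrac{1}{12}|H|^2\big)$ plus terms quadratic in $\nabla f$. But the trace of $\Rc^{H,f}$ controls $R-\tfrac14|H|^2+\Delta f$, with the \emph{wrong} coefficient on $|H|^2$; the leftover $\tfrac16\,d|H|^2$ is only $O(\|\mathcal{G}-\mathcal{G}_0\|_{C^{2,\alpha}})$, not $O(\|\Rc^{H,f}\|_{L^2})$. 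Converting $dR$ into $2\divg\Rc$ by the contracted Bianchi identity reintroduces $\divg\nabla^2 f$ and is circular, and the surviving $\divg\Rc^{H,f}$ lives only in $H^{-1}$, so absorption arguments do not upgrade it. The best this machinery yields is $\|\nabla^2 f\|\leq C\|\mathcal{G}-\mathcal{G}_0\|_{C^{2,\alpha}}$, which is the content of \Cref{f1} and is far weaker than what you need. (A smaller point: since $\Rc^{H}=\Rc-\tfrac14H^2-\tfrac12 d^*H$ already contains the $d^*H$ term, the difference is $\Rc^{H}-\Rc^{H,f}=-\nabla^2 f+\tfrac12 i_{\nabla f}H$; your displayed formula double-counts $d^*H$.) To repair the proof you must bring in the slice decomposition and (\ref{Est4}) rather than trying to estimate $f$ by elliptic theory alone.
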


 With these two properties, we will conclude that the GRF converges in the strict sense without pulling back by a family of diffeomorphisms in the generalized Einstein case. In this subsection, we begin to prove these two properties. Due to diffeomorphism invariance, it suffices to prove both theorems on a slice. As we discussed in this section, we could work on the affine generalized slice $\widetilde{S}_{\mathcal{G}_0}$. Adopt the same notation in the subsection 4.3, every generalized metric $\mathcal{G}\in \widetilde{S}_{\mathcal{G}_0}$ can be uniquely written as 
\begin{align*}
    \mathcal{G}=\widetilde{\mathcal{G}}+\gamma,
\end{align*}
where $\widetilde{\mathcal{G}}\in \widetilde{\mathcal{P}}_{\mathcal{G}_0}$ and $\gamma\in\mathcal{W}$, provided that $\mathcal{U}$ is small enough. To obtain some estimates, we first need some lemmas.

\begin{lemma}
 Let $\mathcal{G}_0$ be a steady gradient generalized Ricci soliton and write $ \mathcal{G}=\widetilde{\mathcal{G}}+\gamma\in \widetilde{S}_{\mathcal{G}_0}$ as above, we have the expansion
 \begin{align*}
     \Rc^{H,f}_{\mathcal{G}}=L_f(\gamma)+O_1+O_2,
 \end{align*}
 where
 \begin{align*}
     &O_1=\int_0^1 (1-t)\frac{d^2}{dt^2}\Rc^{H,f}_{\mathcal{G}_0+t\gamma} dt,
     \\&O_2=\int_0^1\int_0^1 \frac{\partial^2}{\partial s \partial t} \Rc^{H,f}_{\mathcal{G}_0+s(\widetilde{\mathcal{G}}-\mathcal{G}_0)+t\gamma}dsdt.
 \end{align*}
\end{lemma}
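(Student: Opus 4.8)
The plan is to obtain the stated expansion from a single second–order Taylor expansion with integral remainder of the map $\mathcal{G}\mapsto\Rc^{H,f_{\mathcal{G}}}$ along the two affine directions $\delta:=\widetilde{\mathcal{G}}-\mathcal{G}_0$ and $\gamma$, using only two structural inputs: that this map vanishes both at $\mathcal{G}_0$ and at $\widetilde{\mathcal{G}}$, and that its linearization at $\mathcal{G}_0$ along $\gamma\in\ker\overline{\divg}_f$ is exactly $L_f(\gamma)$. The two vanishing facts are immediate: since $\mathcal{G}_0$ is a steady gradient generalized Ricci soliton, (\ref{s}) gives $\Rc-\tfrac14H^2+\nabla^2f=0$ and $d_g^*H+i_{\nabla f}H=0$, so $\Rc^{H,f}_{\mathcal{G}_0}=0$; and since $\widetilde{\mathcal{G}}\in\widetilde{\mathcal{P}}_{\mathcal{G}_0}$ lies in the premoduli space, $\Rc^{H,f}_{\widetilde{\mathcal{G}}}=0$ by definition. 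I would then set, for $(s,t)$ in a neighborhood of $[0,1]^2$,
\begin{align*}
 F(s,t):=\Rc^{H,f}_{\,\mathcal{G}_0+s\delta+t\gamma},
\end{align*}
where $f=f_{\mathcal{G}_0+s\delta+t\gamma}$ is the corresponding minimizer of $\lambda$, and record that $F(1,1)=\Rc^{H,f}_{\,\widetilde{\mathcal{G}}+\gamma}=\Rc^{H,f}_{\mathcal{G}}$, $F(0,0)=0$, and $F(1,0)=0$.

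Next I would carry out the expansion in two steps. Splitting off the $s$–variable and then Taylor–expanding $F(0,\cdot)$ in $t$ with integral remainder,
\begin{align*}
 F(1,1)=F(0,1)+\int_0^1\partial_sF(s,1)\,ds,\qquad F(0,1)=F(0,0)+\partial_tF(0,0)+\int_0^1(1-t)\,\partial_t^2F(0,t)\,dt.
\end{align*}
For the first integral, one application of the fundamental theorem of calculus in the $t$–variable gives
\begin{align*}
 \int_0^1\partial_sF(s,1)\,ds&=\int_0^1\partial_sF(s,0)\,ds+\int_0^1\!\!\int_0^1\partial_s\partial_tF(s,t)\,dt\,ds
 \\&=\big(F(1,0)-F(0,0)\big)+O_2=O_2,
\end{align*}
since $\partial_s\partial_tF(s,t)=\tfrac{\partial^2}{\partial s\partial t}\Rc^{H,f}_{\,\mathcal{G}_0+s\delta+t\gamma}$ and $F(1,0)=F(0,0)=0$. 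Combining this with $\int_0^1(1-t)\,\partial_t^2F(0,t)\,dt=O_1$ (because $\partial_t^2F(0,t)=\tfrac{d^2}{dt^2}\Rc^{H,f}_{\mathcal{G}_0+t\gamma}$) and $F(0,0)=0$ yields $F(1,1)=\partial_tF(0,0)+O_1+O_2$, so everything reduces to identifying the linear term.

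For this, I would invoke the first–variation formula for $\Rc^{H,f}$ recorded in the proof of \Cref{third}, namely
\begin{align*}
 \partial_tF(0,0)=-\tfrac12\overline{\Delta}_f\gamma-\mathring{R}^+(\gamma)-\tfrac12\overline{\divg}_f^*\overline{\divg}_f\gamma-\tfrac12(\nabla^+)^2(\tr_g\gamma-2f'),\qquad f'=\tfrac{d}{dt}\big|_{t=0}f_{\mathcal{G}_0+t\gamma}.
\end{align*}
Because $\gamma\in\mathcal{W}\subset\ker\overline{\divg}_f$, the term $\tfrac12\overline{\divg}_f^*\overline{\divg}_f\gamma$ vanishes; moreover, integrating the pointwise first–variation identity $\partial_t\lambda=-\Delta_f(\tr_g\gamma-2f')+\overline{\divg}_f\overline{\divg}_f\gamma$ from the proof of \Cref{f2} against $e^{-f}dV_g$ and using $\overline{\divg}_f\gamma=0$ together with the criticality of $\mathcal{G}_0$ forces $\Delta_f(\tr_g\gamma-2f')=0$, hence $\tr_g\gamma-2f'$ is constant and $(\nabla^+)^2(\tr_g\gamma-2f')=0$. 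Thus $\partial_tF(0,0)=-\tfrac12\overline{\Delta}_f\gamma-\mathring{R}^+(\gamma)$, which is precisely the linear term $L_f(\gamma)$ appearing in the statement, and the expansion $\Rc^{H,f}_{\mathcal{G}}=L_f(\gamma)+O_1+O_2$ follows.

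The only genuine point requiring care — and the natural place for things to go wrong — is that $(s,t)\mapsto F(s,t)$ must be $C^2$ on a neighborhood of $[0,1]^2$, so that the integral–remainder form of Taylor's theorem applies and $O_1,O_2$ are well defined; this amounts to twice–differentiability of the minimizer $f=f_{\mathcal{G}}$ in the metric, which is exactly what \Cref{f1} and \Cref{f2} supply, combined with the explicit formulas for $(\Rc^{H,f})'$ and $(\Rc^{H,f})''$ from the proof of \Cref{third}. Everything else is routine bookkeeping with one– and two–variable Taylor remainders.
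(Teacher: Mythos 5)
Your proof is correct and follows essentially the same route as the paper: the paper simply cites Kr\"oncke's two-variable Taylor identity (Lemma 5.4.7 in \cite{Kr}) applied to $F(s,t)=\Rc^{H,f}_{\mathcal{G}_0+s(\widetilde{\mathcal{G}}-\mathcal{G}_0)+t\gamma}$, which is exactly the identity you re-derive by hand via the fundamental theorem of calculus and the one-variable integral remainder. If anything your write-up is more complete, since you also verify the two facts the paper leaves implicit, namely $F(0,0)=F(1,0)=0$ and the identification of the linear term $\partial_tF(0,0)$ with $L_f(\gamma)$ using $\gamma\in\ker\overline{\divg}_f$ and the constancy of $\tr_g\gamma-2f'$.
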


\begin{proof}
 We recall  Lemma 5.4.7 in \cite{Kr}. Let $F(s,t)$ be a $C^2-$function on $0\leq s,t\leq 1$ with values in a Fréchet space. Then, 
\begin{align*}
    F(1,1)=F(1,0)+\frac{d}{dt}\Big|_{t=0}F(0,t)+\int_0^1 (1-t)\frac{d^2}{dt^2}F(0,t)dt+\int_0^1\int_0^1 \frac{\partial^2}{\partial s \partial t}F(s,t)dsdt.
\end{align*}
The proof is complete if we write
\begin{align*}
    F(s,t)= \Rc^{H,f}_{\mathcal{G}_0+s(\widetilde{\mathcal{G}}-\mathcal{G}_0)+t\gamma}.
\end{align*}

\end{proof}

\begin{lemma} \label{Oest}
  Let $\mathcal{G}_0$ be a steady gradient generalized Ricci soliton and write $ \mathcal{G}=\widetilde{\mathcal{G}}+\gamma\in \widetilde{S}_{\mathcal{G}_0}$ as above. Then, there exists a $C^{2,\alpha}-$neighborhood and a constant $C$ such that in the neighborhood
  \begin{align*}
      \|O_1\|_{L^2}\leq C\|\gamma\|_{C^{2,\alpha}}\|\gamma\|_{H^2}, \quad 
      \|O_2\|_{L^2}\leq C\|\widetilde{\mathcal{G}}-\mathcal{G}_0\|_{C^{2,\alpha}}\|\gamma\|_{H^2}.
  \end{align*}
\end{lemma}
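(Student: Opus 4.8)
The plan is to estimate the two remainder terms $O_1$ and $O_2$ by expanding the second derivatives of $\Rc^{H,f}$ using the formulas derived in \Cref{third}, in particular the structural expansion \eqref{Est3} for $(\Rc^{H,f})''$, together with the minimizer estimates \Cref{f1} and \Cref{f2}. The key point is that both $O_1$ and $O_2$ are integrals over $[0,1]$ (or $[0,1]^2$) of expressions that, by \eqref{Est3}, are schematically of the form $\gamma\ast\nabla^2\gamma + \nabla\gamma\ast\nabla\gamma + \nabla\gamma\ast\nabla f' + \nabla f'\ast\nabla f' + R^+\ast\gamma\ast\gamma + \gamma\ast\nabla\gamma\ast H + \nabla^2 f''$, where $f'$ and $f''$ denote first and second variational derivatives of the minimizer. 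In each such term, one factor can be placed in $C^{2,\alpha}$ and the remaining factors in $H^2$ (or lower), yielding a product bound of the desired shape.

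For $O_1$, the relevant second derivative is $\frac{d^2}{dt^2}\Rc^{H,f}_{\mathcal{G}_0+t\gamma}$, so both variations are in the $\gamma$-direction. First I would invoke \eqref{Est3} with all variational directions equal to $\gamma$: every term is quadratic in $\gamma$ (counting derivatives of $\gamma$ and $f'=\frac{d}{dt}f$, $f''=\frac{d^2}{dt^2}f$ as $\gamma$-controlled). Using \Cref{f1} to bound $\|f'\|_{C^{2,\alpha}}\leq C\|\gamma\|_{C^{2,\alpha}}$ and $\|f'\|_{H^i}\leq C\|\gamma\|_{H^i}$, and \Cref{f2} (with $\delta=\gamma$) to bound $\|f''\|_{H^i}\leq C\|\gamma\|_{C^{2,\alpha}}\|\gamma\|_{H^i}$, one estimates each term in $L^2$ by putting the lowest-derivative factor in $C^{2,\alpha}$ and the highest-derivative factor in $H^2$. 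For instance $\|\gamma\ast\nabla^2\gamma\|_{L^2}\leq \|\gamma\|_{C^0}\|\nabla^2\gamma\|_{L^2}\leq C\|\gamma\|_{C^{2,\alpha}}\|\gamma\|_{H^2}$, and the $\nabla^2 f''$ term is controlled directly by \Cref{f2}. Integrating over $t\in[0,1]$ and using that the $C^{2,\alpha}$-norms along the segment $\mathcal{G}_0+t\gamma$ stay comparable (after shrinking $\mathcal{U}$) gives $\|O_1\|_{L^2}\leq C\|\gamma\|_{C^{2,\alpha}}\|\gamma\|_{H^2}$.

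For $O_2$, the mixed second derivative $\frac{\partial^2}{\partial s\partial t}\Rc^{H,f}_{\mathcal{G}_0+s(\widetilde{\mathcal{G}}-\mathcal{G}_0)+t\gamma}$ has one variation direction $\delta:=\widetilde{\mathcal{G}}-\mathcal{G}_0$ and one direction $\gamma$. The same expansion \eqref{Est3}, now read as a bilinear expression in the two directions (so each term carries exactly one factor associated to $\delta$ and one to $\gamma$, with $f$-derivatives $\frac{\partial}{\partial s}f$, $\frac{\partial}{\partial t}f$, $\frac{\partial^2}{\partial s\partial t}f$ appearing in the mixed role), lets me place the $\delta$-factor in $C^{2,\alpha}$ and the $\gamma$-factor in $H^2$. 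Here the mixed term $\frac{\partial^2}{\partial s\partial t}f$ is exactly what \Cref{f2} controls, giving $\|\frac{\partial^2}{\partial s\partial t}f\|_{H^i}\leq C\|\delta\|_{C^{2,\alpha}}\|\gamma\|_{H^i}$; the single derivatives are handled by \Cref{f1}. Integrating over $(s,t)\in[0,1]^2$ yields $\|O_2\|_{L^2}\leq C\|\widetilde{\mathcal{G}}-\mathcal{G}_0\|_{C^{2,\alpha}}\|\gamma\|_{H^2}$.

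The main obstacle, such as it is, is purely bookkeeping: one must be careful that in the bilinear reading of \eqref{Est3} no term ends up with \emph{two} high-derivative factors of the $H^2$-variable (which would not be absorbable), and that the $f''$/mixed-$f$ terms genuinely fall under the hypotheses of \Cref{f2} rather than requiring a stronger estimate. Checking this amounts to noting that $(\Rc^{H,f})$ is a second-order operator in the metric data, so its second variation involves at most two derivatives distributed over the two variation tensors plus the $f$-derivatives, and the worst case $\nabla^2$ landing on a single factor is exactly the case \Cref{f2} is designed for. Once this is verified termwise, the integrations over the simplices are immediate and the constants depend only on the $C^{2,\alpha}$-geometry near $\mathcal{G}_0$, so the bounds hold uniformly on a sufficiently small $C^{2,\alpha}$-neighborhood.
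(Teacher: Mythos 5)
Your proposal is correct and follows essentially the same route as the paper: the paper's own proof is a one-line appeal to the structural expansion \eqref{Est3} for the second derivative of $\Rc^{H,f}$ together with the minimizer estimates of \Cref{f1} and \Cref{f2}, which is exactly the termwise bookkeeping you carry out. Your more detailed accounting of which factor goes in $C^{2,\alpha}$ and which in $H^2$, and of the role of the mixed $f$-derivative, simply makes explicit what the paper leaves implicit.
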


\begin{proof}
The proof is directly followed by the second derivative estimate of $\Rc^{H,f}$ (\ref{Est3}).

\end{proof}

Now we can discuss the proof of \Cref{OPL} and \Cref{TRA}.
\begin{proof}
  Using the Taylor's expansion, we have 
  \begin{align*}
      |\lambda(\mathcal{G})-\lambda(\mathcal{G}_0)|=|\lambda(\widetilde{\mathcal{G}}+\gamma)-\lambda(\widetilde{\mathcal{G}})|\leq C\|\gamma\|^2_{H^2}.
  \end{align*}
Then, \Cref{Oest} shows that
\begin{align*}
    \| \Rc^{H,f}_{\mathcal{G}}\|_{L^2}\geq \|L_f(\gamma)\|_{L^2}-\big( \|O_1\|_{L^2}+ \|O_2\|_{L^2} \big).
\end{align*}
Note that $L_f$ is injective in space $\mathcal{W}$, so $\|L_f(\gamma)\|_{L^2}\geq C\|\gamma\|_{H^2}$. We can shrink $\mathcal{U}$ small enough such that 
\begin{align} \label{Est4}
    \| \Rc^{H,f}_{\mathcal{G}}\|_{L^2}\geq C\|\gamma\|_{H^2}.
\end{align}
Then, the proof of \Cref{OPL} is done. If we assume $\mathcal{G}_0$ is a generalized Einstein metric, then use the Taylor's expansion again, we have
\begin{align*}
    \| \Rc^{H}_{\mathcal{G}}-\Rc^{H}_{\mathcal{G}_0} \|_{L^2}=\| \Rc^{H}_{\mathcal{G}} \|_{L^2}\leq C \|\gamma\|^2_{H^2}.
\end{align*}
The proof of \Cref{TRA} is followed by (\ref{Est4}).
\end{proof}

\subsection{Dynamical stability}

As a corollary, we can improve \cite{K} Theorem 1.8.
\begin{theorem} \label{MTT}
    
     Let $M$ be a compact manifold and $E\cong (TM\oplus T^*M)_{H_c}$ be an exact Courant algebroid with a background closed 3-form $H_c$. Suppose that $\mathcal{G}_c$ is a steady gradient generalized Ricci soliton and all infinitesimal generalized solitonic deformations are integrable. If $\mathcal{G}_c$ is linearly stable and $k\geq 2$, then for every $C^k$-neighborhood $\mathcal{U}$ of $\mathcal{G}_c$ , there exists some $C^{k+2}$-neighborhood $\mathcal{V}$ such that the following holds:

     For any GRF $\mathcal{G}_t$ starting at $\mathcal{G}_0\in \mathcal{V}$, there exists a family of automorphism $\{(\varphi_t,B_t)\}\in \GDiff_{H_c}$ such that the modified flow $(\varphi_t,B_t)\cdot\mathcal{G}_t$ stays in $\mathcal{U}$ for all time and converges exponentially to a steady gradient generalized Ricci soliton$\mathcal{G}_\infty$. More precisely, there exists constants $C_1,C_2$ such that for all $t\geq 0$, 
     \begin{align*}
         \|(\varphi_t,B_t)\cdot\mathcal{G}_t-\mathcal{G}_\infty\|_{C^k_{\mathcal{G}_c}}\leq C_1 e^{-C_2t}.
     \end{align*}
     
     If $\mathcal{G}_c$ is a generalized Einstein metric, any GRF $\mathcal{G}_t$ starting at $\mathcal{G}_0\in \mathcal{V}$ stays in $\mathcal{U}$ for all time and converges exponentially to a steady gradient generalized generalized Ricci soliton $\mathcal{G}_\infty$.
\end{theorem}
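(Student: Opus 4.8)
The plan is to assemble the ingredients established above. By \Cref{LM}, linear stability together with integrability of all infinitesimal generalized solitonic deformations forces the local maximum property of $\lambda$ at $\mathcal{G}_c$, which is exactly the hypothesis under which \cite{K} Theorem 1.8 produces a gauge-modified generalized Ricci flow that remains in $\mathcal{U}$ for all time and subconverges to a steady gradient generalized Ricci soliton $\mathcal{G}_\infty$ with $\lambda(\mathcal{G}_\infty)=\lambda(\mathcal{G}_c)$. The one genuinely new assertion is the \emph{exponential} rate, which I would extract from the optimal Lojasiewicz--Simon inequality \Cref{OPL}; accordingly the heart of the proof is running the Lojasiewicz dynamical scheme with the sharp exponent $\tfrac12$.

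Concretely, first I would pass to the affine slice: invoking \Cref{affineslice} at each time (and diffeomorphism invariance of $\lambda$) I choose a smooth family $(\varphi_t,B_t)\in\GDiff_{H_c}$ so that $\widehat{\mathcal{G}}_t\coloneqq(\varphi_t,B_t)\cdot\mathcal{G}_t$ lies in $\widetilde{S}^f_{\mathcal{G}_c}$, as in \cite{K}. The modified flow then solves a strictly parabolic equation of the form $\partial_t\widehat{\mathcal{G}}_t=-2\,\Rc^{H,f}_{\widehat{\mathcal{G}}_t}+(\text{lower order gauge term})$, with $\|\partial_t\widehat{\mathcal{G}}_t\|_{L^2}\le C\|\Rc^{H,f}\|_{L^2}$ and, by the first variation formula (\ref{fv}) together with the monotonicity of $\lambda$ under the generalized Ricci flow, $\frac{d}{dt}\lambda(\widehat{\mathcal{G}}_t)\ge c\,\|\Rc^{H,f}\|_{L^2}^2$. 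Put $\Theta(t)\coloneqq\lambda(\mathcal{G}_c)-\lambda(\widehat{\mathcal{G}}_t)$, which is nonnegative by \Cref{LM}. Combining $\Theta(t)^{1/2}\le C\|\Rc^{H,f}\|_{L^2}$ from \Cref{OPL} with the energy identity gives
\begin{align*}
    -\frac{d}{dt}\Theta(t)^{1/2}=\frac{-\Theta'(t)}{2\,\Theta(t)^{1/2}}\ \ge\ \frac{c}{C}\,\|\Rc^{H,f}\|_{L^2}\ \ge\ \frac{c}{C^2}\,\Theta(t)^{1/2}.
\end{align*}
The last inequality integrates to $\Theta(t)\le\Theta(0)\,e^{-ct/C^2}$, and integrating the middle inequality over $[t,\infty)$ yields $\int_t^\infty\|\Rc^{H,f}(s)\|_{L^2}\,ds\le C\,\Theta(t)^{1/2}\le C\,e^{-ct/2C^2}$. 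Since $\|\partial_s\widehat{\mathcal{G}}_s\|_{L^2}\le C\|\Rc^{H,f}(s)\|_{L^2}$, the family $\widehat{\mathcal{G}}_t$ is Cauchy in $L^2$ and converges exponentially in $L^2$ to a limit $\mathcal{G}_\infty$, which satisfies $\Rc^{H,f}_{\mathcal{G}_\infty}=0$ and is hence a steady gradient generalized Ricci soliton.

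To finish in $C^k$ I would run the usual open--closed continuity argument together with a parabolic bootstrap. On the interval where $\widehat{\mathcal{G}}_t$ stays in $\mathcal{U}$ the above estimates hold, so the total $L^2$-variation is bounded by $C\,\Theta(0)^{1/2}\le C\,\|\mathcal{G}_0-\mathcal{G}_c\|_{C^{k+2}}$; choosing the $C^{k+2}$-neighborhood $\mathcal{V}$ small enough — and using interior Schauder and energy estimates for the modified flow (with $k\ge2$) to propagate the $C^{k+2}$ bound on the initial data to a uniform $C^k$ bound for short time — prevents $\widehat{\mathcal{G}}_t$ from ever leaving $\mathcal{U}$, closing the induction. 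The same parabolic smoothing combined with interpolation upgrades the exponential $L^2$-decay of $\widehat{\mathcal{G}}_t-\mathcal{G}_\infty$ to an exponential $C^k$-decay $\|\widehat{\mathcal{G}}_t-\mathcal{G}_\infty\|_{C^k_{\mathcal{G}_c}}\le C_1e^{-C_2t}$, after possibly shrinking $C_2$. Finally, if $\mathcal{G}_c$ is a generalized Einstein metric the minimizer $f$ is constant, so there is no nontrivial normal gauge direction to remove: Transversality \Cref{TRA} gives $\|\Rc^{H}\|_{L^2}\le C\|\Rc^{H,f}\|_{L^2}$, hence the motion of the \emph{unmodified} flow is controlled by $\Rc^{H,f}$, and the identical Lojasiewicz argument applied directly to $\mathcal{G}_t$ shows it stays in $\mathcal{U}$ and converges exponentially with no gauge modification.

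The main obstacle is not the Lojasiewicz iteration itself, which is routine once \Cref{OPL} is available, but the analytic bookkeeping that legitimizes the gauge modification: showing that the slice-adapted gauge $(\varphi_t,B_t)$ exists and is smooth for as long as the flow stays near $\mathcal{G}_c$, that the resulting modified equation is genuinely parabolic with the gauge term of lower order (so that $\|\partial_t\widehat{\mathcal{G}}\|_{L^2}\lesssim\|\Rc^{H,f}\|_{L^2}$), and that the parabolic regularity needed to pass from $L^2$ to $C^k$, with the stated $C^{k+2}\to C^k$ loss, is uniform near the soliton. All of this is carried out in \cite{K} under the local-maximum hypothesis, and \Cref{LM} is precisely what supplies that hypothesis here; so the proof reduces to feeding \Cref{LM}, \Cref{OPL}, and \Cref{TRA} into that machinery.
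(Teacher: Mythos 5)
Your proposal is correct in substance and follows the same overall scheme as the paper: feed the local maximum property from \Cref{LM} and the optimal Lojasiewicz--Simon inequality \Cref{OPL} into the gauge-fixed-flow machinery of \cite{K}, and use \Cref{TRA} to dispense with the gauge modification in the generalized Einstein case. Two implementation choices differ from the paper and are worth flagging. First, the gauge: you propose to project onto the affine slice at each time via \Cref{affineslice}, whereas the paper defines the modified flow explicitly as the $(-\nabla_{g_t}f_{\mathcal{G}_t},0)$-gauge-fixed generalized Ricci flow for $t\geq 1$. The paper's choice makes $\partial_t\widetilde{\mathcal{G}}_t=-2\widetilde{\Rc}^{H,f}$ hold exactly, so the bound $\|\partial_t\widetilde{\mathcal{G}}_t\|_{L^2}\leq C\|\Rc^{H,f}\|_{L^2}$ and the energy identity $\frac{d}{dt}\lambda=2\|\Rc^{H,f}\|_{L^2}^2$ are immediate; with your slice-projected gauge you would additionally have to verify that the family $(\varphi_t,B_t)$ depends smoothly on $t$ and that the induced gauge correction to the velocity is itself controlled by $\|\Rc^{H,f}\|_{L^2}$, which is true but not free. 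Second, the passage from $L^2$ to $C^k$: the paper applies Hamilton's interpolation (\cite{10.4310/jdg/1214436922} Corollary 12.7) directly to the velocity, obtaining $\|\partial_t\widetilde{\mathcal{G}}\|_{C^k}\leq C\|\partial_t\widetilde{\mathcal{G}}\|_{L^2}^{\beta}$ for some $\beta\in(0,1)$, and then integrates $-\frac{d}{dt}|\lambda-\lambda(\mathcal{G}_c)|^{\beta/2}$ to bound the total $C^k$-variation inside the continuity argument; this is what actually prevents the flow from leaving the $C^k$-neighborhood, since smallness of the $L^2$-variation alone does not. Your version defers this to a post-hoc parabolic bootstrap and interpolation step, which can be made to work but is precisely the ``analytic bookkeeping'' you acknowledge; the paper's exponent-$\beta$ trick packages it more cleanly. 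The final exponential rate is obtained in the paper exactly as you describe, from $-\frac{d}{dt}|\lambda-\lambda(\mathcal{G}_c)|=2\|\Rc^{H,f}\|^2_{L^2}\geq C|\lambda-\lambda(\mathcal{G}_c)|$, which is the optimal-exponent Lojasiewicz differential inequality.
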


\begin{proof}
    Most of the proof is similar to \cite{K} Theorem 1.8. To be self-contained, let us briefly discuss the proof here. Apply the Lemma 5.10 in \cite{K} and the optimal Lojasiewicz--Simon inequality (\Cref{OPL}), we can take $\epsilon$ small enough such that in the $C^k_{\mathcal{G}_c}-$neighborhood $\mathcal{U}=B^k_\epsilon$ of $\mathcal{G}_c$ such that 
    \begin{itemize}
        \item For any $\mathcal{G}$ with $ \|\mathcal{G}-\mathcal{G}_c\|_{C^{k}_{\mathcal{G}_c}}<\epsilon$, we have
        \begin{align} \label{App}
            |\lambda(\mathcal{G})-\lambda(\mathcal{G}_c)|^{\frac{1}{2}}\leq C\| \Rc^{H,f}\|_{L^2}.
        \end{align}
        \item We can choose a smaller $C^{k+2}_{\mathcal{G}_c}$-neighborhood $\mathcal{V}$ such that the GRF starting in $\mathcal{V}$ will stay in $B^k_{\frac{\epsilon}{4}}$ up to time 1.
    \end{itemize}
    
 Assume $T\geq 1$ be the maximal time such that for any solution of GRF starting in $\mathcal{V}$, there exists a family of automorphism $\{(\varphi_t,B_t)\}$ such that 
\begin{align*}
    \|(\varphi_t,B_t)\cdot\mathcal{G}_t-\mathcal{G}_c\|_{C^{k}_{\mathcal{G}_c}}<\epsilon \quad \text{ for all $t\leq T$.}
\end{align*}   
Let $\mathcal{G}_t$ be a solution of the generalized Ricci flow starting with $\mathcal{G}_0\in\mathcal{V}$. We define the modified flow $\widetilde{\mathcal{G}}_t$ starting with $\mathcal{G}_0$ as follows.

\begin{itemize}
    \item For $t\leq 1$: Define $\widetilde{\mathcal{G}}_t=\mathcal{G}_t$.
    \item For $t\geq 1$: We define $\widetilde{\mathcal{G}}_t(\widetilde{g}_t,\widetilde{b}_t)$ by the solution of $(-\nabla_{g_t}f_{\mathcal{G}_t},0)$-gauge fixed generalized Ricci flow. More precisely, 
\begin{align*}
    &\frac{\partial \widetilde{g}}{\partial t}=-2(\widetilde{\Rc}-\frac{1}{4}\widetilde{H}^2+\nabla^2\widetilde{f}), \quad \widetilde{g}(1)=g(1),
    \\&\frac{\partial \widetilde{b}}{\partial t}=-(d^*\widetilde{H}+i_{\nabla\widetilde{f}}\widetilde{H}), \quad \widetilde{b}(1)=b(1). 
\end{align*}
\end{itemize}

Denote $\psi_t$ the diffeomorphism generated by $X(t)=-\nabla_{g_t}f_{\mathcal{G}_t}$ and $\psi_1=Id$, the modified flow $\widetilde{\mathcal{G}}_t$ can also be expressed as
\begin{align}
   \widetilde{\mathcal{G}}_t=\begin{cases}\mathcal{G}(t) & t\in[0,1] \\ (\psi_t, B_t) \cdot \mathcal{G}(t) & t\geq 1
    \end{cases} ,
\end{align}
where $B_t=\iota_{\nabla_{g_t}f_{\mathcal{G}_t}}H_0-d(\iota_{\nabla_{g_t}f_{\mathcal{G}_t}}b_t)$.

Let $\overline{T}\geq 1$ be the maximal time such that the modified flow $\widetilde{\mathcal{G}}_t$ satisfies
\begin{align*}
    \|\widetilde{\mathcal{G}}_t-\mathcal{G}_c\|_{C^{k}_{\mathcal{G}_c}}<\epsilon, \quad \text{ for all $t\leq \overline{T}$}. 
\end{align*}
Using the Hamilton's interpolation theorem (\cite{10.4310/jdg/1214436922} Corollary 12.7) and Lemma 5.11 in \cite{K}, we find a $\beta\in(0,1)$ such that 
\begin{align*}
    \|(\frac{\partial \widetilde{g}}{\partial t},\frac{\partial \widetilde{b}}{\partial t})\|_{C^{k}_{\widetilde{g}(t)}}
    \leq   C\|(\frac{\partial \widetilde{g}}{\partial t},\frac{\partial \widetilde{b}}{\partial t})\|^{\beta}_{L^2_{\widetilde{g}(t)}}. 
\end{align*}
Note that $\lambda(\mathcal{G}_c)$ is a local maximum by \Cref{LM}, we use (\ref{App}) and compute
\begin{align}\label{Kinequality}
   \nonumber \kern-1em-\frac{d}{dt}|\lambda(\widetilde{\mathcal{G}_t})-\lambda(\mathcal{G}_c)|^{\frac{\beta}{2}}&=\frac{\beta}{2} |\lambda(\widetilde{\mathcal{G}_t})-\lambda(\mathcal{G}_c)|^{\frac{\beta}{2}-1}\frac{d}{dt}\lambda(\widetilde{\mathcal{G}_t})
    \nonumber\\&= \frac{\beta}{2}| \lambda(\widetilde{\mathcal{G}_t})-\lambda(\mathcal{G}_c)|^{\frac{\beta}{2}-1}\int_M2|\widetilde{\Rc}^{H,f}|^2 e^{-\widetilde{f}}dV_{\widetilde{g}}
    \nonumber\\&\geq C \|\widetilde{\Rc}^{H,f} \|^\beta_{L^2_{\widetilde{g}(t)}} 
   \nonumber \\&\geq C \|\widetilde{\Rc}^{H,f} \|^\beta_{C^{k}_{\widetilde{g}(t)}}.
\end{align}
Then, 
\begin{align*}
    \int_1^{\overline{T}}\|(\frac{\partial \widetilde{g}}{\partial t},\frac{\partial \widetilde{b}}{\partial t})\|_{C^{k}_{\widetilde{g}(t)}} dt&\leq C \int_1^{\overline{T}}(-\frac{d}{dt}|\lambda(\widetilde{\mathcal{G}_t})-\lambda(\mathcal{G}_c)|^{\frac{\beta}{2}}) dt
    \\&= C\left(|\lambda(\widetilde{\mathcal{G}}(1))-\lambda(\mathcal{G}_c)|^{\frac{\beta}{2}}-|\lambda(\widetilde{\mathcal{G}}(\overline{T}))-\lambda(\mathcal{G}_c)|^{\frac{\beta}{2}}\right)
    \\&\leq C|\lambda(\mathcal{G}(0))-\lambda(\mathcal{G}_c)|^{\frac{\beta}{2}} <\frac{\epsilon}{4},
\end{align*}
provided when $\mathcal{V}$ is small enough. By shrinking $\epsilon$ small enough, we may suppose that the $C^k$-norms with respect to $\widetilde{g}(t)$ and $g_c$ differ at most by a factor 2. Then,
\begin{align*}
    \|\widetilde{\mathcal{G}}(\overline{T})-\mathcal{G}_c\|_{C^{k}_{g_c}}&\leq  \|\widetilde{\mathcal{G}}(1)-\mathcal{G}_c\|_{C^{k}_{g_c}}+ \int_1^{\overline{T}}\frac{d}{dt} |\widetilde{\mathcal{G}}(t)-\mathcal{G}_c|_{C^{k}_{g_c}}dt
    \\&\leq \frac{\epsilon}{4}+2 \int_1^{\overline{T}}\|(\frac{\partial \widetilde{g}}{\partial t},\frac{\partial \widetilde{b}}{\partial t})\|_{C^{k}_{\widetilde{g}(t)}} dt\leq \frac{3\epsilon}{4}
\end{align*}
which is a contradiction. Therefore, $\overline{T}=T=\infty$ and $\widetilde{\mathcal{G}}_t\longrightarrow \mathcal{G}_\infty$. Moreover, we see that 
\begin{align*}
    -\frac{d}{dt}|\lambda(\widetilde{\mathcal{G}_t})-\lambda(\mathcal{G}_c)|=\frac{d}{dt}\lambda(\widetilde{\mathcal{G}_t})=2\|\widetilde{\Rc}^{H,f}\|_{L^2}\geq C |\lambda(\widetilde{\mathcal{G}_t})-\lambda(\mathcal{G}_c)|.
\end{align*}
Hence, $\lambda(\mathcal{G}_\infty)=\lambda(\mathcal{G}_c)$ and the convergence is exponential. Moreover, $\lambda(\mathcal{G}_\infty)$ is also a local maximum so $\mathcal{G}_\infty$ must be a compact steady gradient generalized Ricci soliton.

In the generalized Einstein case, it is unnecessary to consider the modified flow. In the argument of the inequality (\ref{Kinequality}), by transversality \Cref{TRA} we get
\begin{align*}
    -\frac{d}{dt}|\lambda(\mathcal{G}_t)-\lambda(\mathcal{G}_c)|^{\frac{\beta}{2}}\geq C \|\Rc^{H,f} \|^\beta_{C^{k}_{g(t)}}.
\end{align*}
The rest of argument are all the same.
\end{proof}

\begin{remark} \label{Autconv}
 In the proof of \Cref{MTT}, we actually argue that the $(-\nabla_{g_t}f_{\mathcal{G}_t},0)$-gauge fixed generalized Ricci flow exists for long time and converges to some steady gradient generalized Ricci soliton.     
\end{remark}

\begin{remark}
  In \cite{KK} Theorem 1.7, the author computed the second-order integrability of $S^3$ and  found that some of the infinitsimal solitonic deformations are not integrable. Hence, we cannot guarantee that $\lambda(g_{S^3},H_{S^3})$ is a local maximum and $(g_{S^3},H_{S^3})$ is dynamically stable.   
\end{remark}

\section{Infinitesimal deformations in the pluriclosed setting}

\subsection{Second variation in the pluriclosed setting}
Starting in this section, we focus mainly on the complex geometry. Let $(M,g,J)$ be a Hermitian manifold and $\omega\in \Lambda^{1,1}_{\mathbb{R}}$ be the associated Kähler form $\omega\in \Lambda^{1,1}_{\mathbb{R}}$ given by
\begin{align*}
    \omega(X,Y)=g(JX,Y), \quad \text{for } X,Y\in TM.
\end{align*}
We say that a Hermitian metric $g$ on $M$ is pluriclosed if 
\begin{align*}
    dd^c\omega=0,
\end{align*}
where $d^c=\sqrt{-1}(\overline{\partial}-\partial)$. The associated Lee form $\theta$ is defined by 
\begin{align*}
    \theta=-d^*\omega\circ J. 
\end{align*}

In terms of the Hermitian manifold, let us formally define the Bismut connection.
\begin{defn} \label{DDD}
Let $(M,g,J)$ be a Hermitian manifold.

\begin{itemize}
    \item The \emph{Bismut connection} $\nabla^B$ is defined by    
\begin{align}
    \langle \nabla_X^{B}Y,Z \rangle=\langle \nabla_XY,Z \rangle- \frac{1}{2}d^c\omega(X,Y,Z). \label{20}
\end{align}
In fact, $\nabla^B$ is a Hermitian connection and equals $\nabla^+$ with $H=-d^c\omega$.
\item The \emph{Bismut Ricci curvature} $\rho_{B}$ of $g$ is given by 
\begin{align}
    \rho_{B}(X,Y)\coloneqq \frac{1}{2}\langle R^{B}(X,Y)Je_i,e_i\rangle, \label{BRF}
\end{align}
where $R^{B}$ denotes the Riemann curvature tensor with respect to the connection $\nabla^{B}$ and $\{e_i\}$ is any orthonormal basis for $T_pM$ at any given point $p$. We say that a pluriclosed metric $g$ is a \emph{Bismut-Hermitian-Einstein metric} if $\rho_B=0$. 
\item We say that a Hermitian manifold $(M,g,H,J,f)$ is called a \emph{pluriclosed steady soliton} if $g$ is a pluriclosed metric, $H=-d^c\omega=H_0+db$ and $(M,g,H,J,f)$ satisfies (\ref{s}).
\item The \emph{Laplace operator of the Bismut connection} $\Delta^B_f$ is defined by 
\begin{align*}
    \Delta^B_f=-(\nabla^B)^{*_f}\nabla^B, 
\end{align*}
where $(\nabla^B)^{*_f}$ is the formal adjoint of $\nabla^B$ with respect to $f$-twisted $L^2$ inner product (\ref{6}). In real coordinates, we get
\begin{align}
    \Delta^B_f\gamma_{ij}=\Delta_f\gamma_{ij}-H_{mjk}\nabla_m\gamma_{ik}-H_{mik}\nabla_m\gamma_{kj}-\frac{1}{4}(H_{jl}^2\gamma_{il}+H_{il}^2\gamma_{lj})+\frac{1}{2}H_{mkj}H_{mli}\gamma_{lk}. \label{LB}
\end{align}
\end{itemize}

\end{defn}

\begin{example}
Given complex numbers $\alpha,\beta$ satisfying $0<|\alpha|\leq |\beta|<1$, we obtain a Hopf surface 
\begin{align*}
    \mathbb{C}^2\setminus\{0\}/\langle(z_1,z_2)\rightarrow (\alpha z_1,\beta z_2) \rangle.
\end{align*}
For any $\alpha,\beta$ this is a complex manifold diffeomorphic to $S^3\times S^1$. In particular, we say that the Hopf surface is diagonal if $|\alpha|=|\beta|$. For diagonal Hopf surfaces, consider the Hopf/Boothby metric defined by the invariant Kähler form on $ \mathbb{C}^2\setminus\{0\}$ by 
\begin{align*}
    \omega_{\text{Hopf}}=\frac{\sqrt{-1}}{|z|^2}(dz_1\wedge d\overline{z}_1+dz_2\wedge d\overline{z}_2).
\end{align*}
In \cite{GRF} Proposition 8.25, one deduces that this metric is a pluriclosed metric, In fact, we observe that the metric on the universal cover $ \mathbb{C}^2\setminus\{0\}$ is isometric to the standard cylinder $(S^3\times\mathbb{R}, g_{S^3}\oplus dt^2)$. Therefore, this metric is Bismut-flat and $\rho_B=0$. In \cite{J5} Theorem 1.1, Streets constructed a non-trivial  pluriclosed steady
soliton on Hopf surface. Moreover, detailed discussion about non-trivial pluriclosed steady solitons on complex surface could be found in \cite{J7} and \cite{J4}.     
\end{example}

In the following, let $(M,g,H,J,f)$ be a pluriclosed steady soliton with a closed 3-form $H=-d^c\omega$. Consider a family $(M,g_t,H_t,J_t,f_t)$ with 
\begin{align*}
    H_t=H_0+db_t,\quad (M,g_0,H_0,J_0,b_0)=(M,g,H,J,0)
\end{align*}
and $f_t$ is the minimizer of $\lambda(g_t,b_t)$. Denote 
\begin{align*}
   \frac{\partial }{\partial t}\big|_{t=0} g=h, \quad  \frac{\partial }{\partial t}\big|_{t=0} b=\beta,\quad   \frac{\partial }{\partial t}\big|_{t=0}\omega =\phi, \quad  \frac{\partial }{\partial t}\big|_{t=0} J =I,
\end{align*}
where $I$ is an essential infinitesimal variation of complex structure. Following the discussion in section 3, we may take our general variation  $\gamma=h-\beta\in\ker\overline{\divg}_f$ (See (\ref{bar1}) for the definition of $\overline{\divg}_f$). Suppose that $g_t$ is compatible with $J_t$, thus for all vector fields $X,Y$
\begin{align*}
    \frac{\partial }{\partial t}\big|_{t=0} g_t(X,Y)&= \frac{\partial }{\partial t}\big|_{t=0}\big(\omega_t(X,J_tY) \big)
     \\&=\phi(X,JY)+\omega(X,IY). 
\end{align*}
Then,
\begin{align*}
    \gamma(X,Y)&=h(X,Y)-\beta(X,Y)
    \\&=\phi(X,JY)+\omega(X,IY)-\beta(X,Y),
\end{align*}
and
\begin{align*}
    \gamma(X,JY)=-\phi(X,Y)-\beta(X,JY)-g(X,IY).
\end{align*}

In the following, we define $\xi\coloneqq \phi+\beta\circ J$, $\eta\coloneqq\omega\circ I$ and $\widetilde{\eta}\coloneqq\eta\circ J=-g\circ I$ so that 
\begin{align}
    \gamma\circ J=-\xi+\widetilde{\eta}. \label{GammaJ} 
\end{align}

By \cite{KKK} Proposition 3.3, we see that $\xi$ and $\eta$ satisfy the following properties:
\begin{itemize}
    \item $\xi$ is a 2-form with $(d^B_f)^*\xi=0$.
    \item  $\eta$ is a symmetric, anti-Hermitian 2-tensor with $\divg_f^B\eta=0$. Using the complex coordinate $\{e_\alpha\}$ constructed in \cite{KKK} Proposition 2.16, we have 
    \begin{align} 
     (\nabla^B_\alpha\eta)_{\beta\gamma}-(\nabla^B_\beta\eta)_{\alpha\gamma}=H_{\alpha\overline{\tau}\gamma}\eta_{\beta\tau}-H_{\alpha\beta\overline{\tau}}\eta_{\tau\gamma}-H_{\beta\overline{\tau}\gamma}\eta_{\alpha\tau}. \label{etacommute}
 \end{align}
\end{itemize}
Next, we further define $C\in\Omega^3$ by 
\begin{align}
    C_{ijk}=\widetilde{\eta}_{il}H_{ljk}+\widetilde{\eta}_{jl}H_{lki}+\widetilde{\eta}_{kl}H_{lij}. \label{C}
\end{align}
In fact, due to (\ref{etacommute}), we see that $C\in\Lambda^{2,1}+\Lambda^{1,2}$.

\begin{remark}
    If we assume that $(g_t,H_t,J_t,f_t)$ preserves the pluriclosed structure, i.e., $H_t=-d^c\omega_t=H_0+db_t$. Then, for any vector fields $X,Y,Z$,
    \begin{align*}
      \frac{\partial }{\partial t}\big|_{t=0} (d\omega)(X,Y,Z)&=  -\frac{\partial }{\partial t}\big|_{t=0}H(J_tX,J_tY,J_tZ)
      \\&=-d\beta(JX,JY,JZ)-H(IX,JY,JZ)-H(JX,IY,JZ)-H(JX,JY,IZ)
      \\&=(d\phi)(X,Y,Z).
    \end{align*}
On $(2,1)$-part, we get 
\begin{align*}
    &H(I\partial_{\alpha},J\partial_\beta,J\partial_{\overline{\gamma}})+H(J\partial_{\alpha},I\partial_\beta,J\partial_{\overline{\gamma}})+H(J\partial_{\alpha},J\partial_\beta,I\partial_{\overline{\gamma}})
    \\&\kern2em =g(I\partial_\alpha,\partial_{\tau})H_{\overline{\tau}\beta\overline{\gamma}}+g(I\partial_\beta,\partial_{\tau})H_{\alpha\overline{\tau}\overline{\gamma}}
    \\&\kern2em=-\widetilde{\eta}_{\alpha\tau}H_{\overline{\tau}\beta\overline{\gamma}}-\widetilde{\eta}_{\beta\tau}H_{\alpha\overline{\tau}\overline{\gamma}}
    \\&\kern2em= -C_{\alpha\beta\overline{\tau}}.
\end{align*}
Thus, we conclude that 
\begin{align*}
    d\xi=d(\phi+\beta\circ J)=C
\end{align*}
    
\end{remark}

Next, we briefly recall the pluriclosed flow and its relation between GRF. More details can be found in \cite{GRF}
\begin{defn}
   Let $(M^n,J)$ be a complex manifold with a fix closed three form $H_0$. A one parameter family of pluriclosed metrics $(g_t,b_t)$ is a solution of \emph{pluriclosed flow} if the associated Kähler form $\omega_t$ and $(2,0)-$ forms $\beta_t=\sqrt{-1}b^{2,0}$ satisfy
   \begin{align*}
       \frac{\partial}{\partial t}\omega=-\rho_B^{1,1}, \quad \frac{\partial}{\partial t}\beta=-\rho_B^{2,0}.
   \end{align*}
\end{defn}

\begin{proposition}[\cite{GRF}, Proposition 9.8]\label{PGG}
    Let $(M^n,g_t,J)$ be a solution to pluriclosed flow. Then, the associated pair $(g_t,b_t)$ satisfies 
    \begin{align*}
        &\frac{\partial}{\partial t}g=-\Rc+\frac{1}{4}H^2-\frac{1}{2}\mathcal{L}_{\theta^\sharp}g,
        \\&\frac{\partial}{\partial t}b=-\frac{1}{2}d^*H+\frac{1}{2}d\theta-\frac{1}{2}i_{\theta^\sharp}H.
    \end{align*}
    In particular, pluriclosed flow is gauge equivalent to the generalized Ricci flow.
\end{proposition}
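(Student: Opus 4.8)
The plan is to unwind the pluriclosed flow equations $\partial_t\omega=-\rho_B^{1,1}$, $\partial_t\beta=-\rho_B^{2,0}$ into evolution equations for the Riemannian metric $g$ and the real $2$-form $b$, and then to recognize the resulting system — after a linear time reparametrization — as a $\GDiff_H$-gauge transformation of the generalized Ricci flow. For the first step, since $J$ is fixed along the flow and $g(X,Y)=\omega(X,JY)$, differentiating gives $\partial_t g(X,Y)=-\rho_B^{1,1}(X,JY)$; and since $b_0=0$ and only the $(2,0)$- and $(0,2)$-parts of $b$ evolve (through $\beta=\sqrt{-1}\,b^{2,0}$), the form $b_t$ stays of pure type $(2,0)+(0,2)$ and $\partial_t b=\partial_t(b^{2,0}+b^{0,2})=\sqrt{-1}\,(\rho_B^{2,0}-\rho_B^{0,2})$. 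Thus the whole statement reduces to one tensorial identity expressing $\rho_B$, decomposed by bidegree and reassembled with $J$, in terms of $\Rc$, $H=-d^c\omega$ and the Lee form $\theta$.

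The substantive step is to establish the Hermitian Bochner identity
\begin{align*}
-\rho_B^{1,1}(\cdot,J\cdot)&=-\Rc+\tfrac14 H^2-\tfrac12\mathcal{L}_{\theta^\sharp}g,\\
\sqrt{-1}\,(\rho_B^{2,0}-\rho_B^{0,2})&=-\tfrac12 d^*H+\tfrac12 d\theta-\tfrac12 i_{\theta^\sharp}H,
\end{align*}
the first as symmetric $2$-tensors and the second as $2$-forms; this is the torsion-corrected analogue of the Kähler identity $\rho=\Rc\circ J$. I would start from $\rho_B(X,Y)=\tfrac12\sum_i R^B(X,Y,Je_i,e_i)$ with $R^B=R^+$ for $H=-d^c\omega$ (Definition \ref{DDD}); using that $\nabla^B$ is compatible with $g$ and $J$, so $R^B(X,Y)$ is skew-Hermitian, the $J$-trace above converts into an ordinary trace of $R^+$; the first Bianchi identity (\ref{BianchiBismut}) then trades this trace for $\Rc^+$ plus exterior-derivative terms in $H$, and \Cref{P3} together with the pluriclosed identities for $d^*\omega$ and $\theta$ reorganizes everything into the Lee-form and $H^2$-contributions above. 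Substituting these two identities into the first step yields exactly the stated evolution equations for $g$ and $b$. The main obstacle is precisely this computation: keeping careful track of the bidegree decomposition and of where $J$ is inserted, and pinning down the sign and normalization conventions for $H=-d^c\omega$, $d^*$, $\theta$ and $\rho_B$ (\ref{BRF}), so that the $(1,1)$- and $(2,0)$-components of $-\rho_B$ land exactly on the tensors claimed.

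Finally, for the gauge-equivalence claim, rescaling time by a factor $2$ turns the system into $\partial_t g=-2\Rc+\tfrac12 H^2-\mathcal{L}_{\theta^\sharp}g$ and $\partial_t b=-d^*H+d\theta-i_{\theta^\sharp}H$, which differs from the generalized Ricci flow $\partial_t g=-2\Rc+\tfrac12 H^2$, $\partial_t b=-d^*H$ only by the pair $\big(-\mathcal{L}_{\theta^\sharp}g,\; d\theta-i_{\theta^\sharp}H\big)$. By (\ref{GA}) this pair is exactly the infinitesimal $\GDiff_H$-action on $\mathcal{GM}$ of a path in $\GDiff_H$ generated by $(X,\kappa)$ with $X$ the Lee vector field $\theta^\sharp$ (up to sign) and $\kappa$ a $2$-form for which $i_X H+\kappa$ is closed — the closedness being automatic since $dH=0$. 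Conjugating a solution of the generalized Ricci flow by the corresponding $\GDiff_H$-flow therefore produces the (time-rescaled) pluriclosed flow, and conversely, which is the asserted gauge equivalence; once the Bochner identity is in hand this step is routine.
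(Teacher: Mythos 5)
The paper offers no proof of \Cref{PGG}: it is quoted verbatim from \cite{GRF}, Proposition 9.8, so there is no internal argument to compare against. Your outline follows the same route as the cited source — unpack $\partial_t\omega=-\rho_B^{1,1}$, $\partial_t\beta=-\rho_B^{2,0}$ into equations for $(g,b)$ via $g=\omega(\cdot,J\cdot)$ and $b^{2,0}=-\sqrt{-1}\beta$, prove a torsion-corrected Bochner identity expressing $\rho_B$ through $\Rc$, $H^2$, $d^*H$ and the Lee form by tracing the Bismut Bianchi identity (\ref{BianchiBismut}), then absorb the leftover terms into a $\GDiff_H$-gauge direction. The reductions in your first and third steps are correct, and the time-rescaling caveat you make explicit is genuinely needed with these normalizations.

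Two points keep this from being a proof rather than a plan. First, the Hermitian Bochner identity you isolate \emph{is} the entire content of the proposition, and you assert it rather than derive it; in particular you do not address why the right-hand side of the $b$-equation, which a priori has a $(1,1)$-component, is consistent with $\partial_t b$ being of pure type $(2,0)+(0,2)$ — this cancellation is part of what the computation must deliver. Second, in the gauge step the closedness of $i_XH+\kappa$ is \emph{not} automatic from $dH=0$: by (\ref{Aut}) the constraint is $d(i_XH_0+\kappa)=0$ with $H_0$ the background form, and it is a condition on $\kappa$. What actually happens is that matching the $b$-equation forces $X=-\theta^\sharp$ and $\kappa=-d i_{\theta^\sharp}b-d\theta+i_{\theta^\sharp}H_0$, for which $i_XH_0+\kappa=-d(\theta+i_{\theta^\sharp}b)$ is exact, so $(X,\kappa)\in\mathfrak{gdiff}^e_{H_0}$; you should verify this rather than wave at $dH=0$. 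With those two items supplied, your argument reproduces the proof in \cite{GRF}.
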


At the end of this section, let me recall the second variation formula in the pluriclosed setting

\begin{theorem}\label{T2}

Let $(M,g,H,J,f)$ be a compact pluriclosed steady soliton and $\gamma\in\ker\overline{\divg}_f$. Then, 
\begin{align}
     \frac{d^2}{dt^2}\Big|_{t=0}\lambda(\gamma)&= -2\|d^*_f\xi\|_f^2-\frac{1}{6}\|d\xi-C\|_f^2 +\int_M |\eta|^2\big( -\frac{1}{2}S_B+\mathcal{L}_Vf \big)e^{-f}dV_g, \label{k4}
\end{align} 
where $\gamma=\xi\circ J+\eta$, $C$ are defined in (\ref{GammaJ}) and (\ref{C}), $\|\cdot\|_f$ denotes the norm of $f$-twisted $L^2$ inner product (\ref{6}),  $S_B=\tr_\omega \rho_B$ denotes the Bismut scalar curvature and the vector field $V=\frac{1}{2}(\theta^\sharp-\nabla f)$. 
    
\end{theorem}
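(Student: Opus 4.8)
The starting point is \Cref{MT1}: since $\gamma\in\ker\overline{\divg}_f$, the second variation collapses to
\begin{align*}
  \frac{d^2}{dt^2}\Big|_{t=0}\lambda(\gamma)=\Big(\gamma,\overline{L}_f(\gamma)\Big)_f=-\tfrac12\|\overline{\nabla}\gamma\|_f^2+\big(\gamma,\mathring{R}^+(\gamma)\big)_f,
\end{align*}
where I have integrated by parts using $\overline{\Delta}_f=-\overline{\nabla}^{*_f}\overline{\nabla}$. So the whole problem reduces to rewriting this Bismut--Bochner expression in terms of the pluriclosed data. The plan is to substitute the decomposition $\gamma=\xi\circ J+\eta$ of \eqref{GammaJ}, where $\xi=\phi+\beta\circ J$ is the $2$-form with $(d^B_f)^*\xi=0$ and $\eta=\omega\circ I$ is the symmetric anti-Hermitian tensor with $\divg_f^B\eta=0$ obeying \eqref{etacommute}, expand the quadratic form of the self-adjoint operator $\overline{L}_f$, and identify the three resulting contributions with the three terms on the right-hand side of \eqref{k4}.

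First I would treat the pure $\xi$-term $(\xi\circ J,\overline{L}_f(\xi\circ J))_f$. Using that $\nabla^B=\nabla^+$ is Hermitian ($\nabla^BJ=0$) I can move the factor $J$ through $\overline{\nabla}$ at the cost of torsion corrections governed by $H=-d^c\omega$; since $J$ is an isometry, $|\xi\circ J|=|\xi|$ pointwise, and this term becomes a twisted Bismut--Bochner identity for $2$-forms. Commuting covariant derivatives using \Cref{P3} and the Bismut Bianchi identity \eqref{BianchiBismut}, converting $(d^B_f)^*\xi=0$ into a relation between $d^*_f\xi$ and the $H$-contractions of $\xi$, and inserting the soliton equations \eqref{s}, I expect this piece to produce $-2\|d^*_f\xi\|_f^2-\tfrac16\|d\xi\|_f^2$ together with curvature and $H\ast\xi$ remainders. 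Next, for the pure $\eta$-term, the fact that $\eta$ is symmetric and anti-Hermitian means the pointwise operator $\tfrac12\overline{\Delta}_f+\mathring{R}^+$ contracts the Bismut curvature against an anti-Hermitian symmetric tensor: the trace identity $S_B=\tr_\omega\rho_B$, the Lee-form relation $\theta=-d^*\omega\circ J$ and the soliton equations collapse these contractions to $|\eta|^2\big(-\tfrac12S_B+\mathcal{L}_Vf\big)$ with $V=\tfrac12(\theta^\sharp-\nabla f)$, up to a residual quadratic-in-$\eta$, cubic-in-$H$ term. The identity \eqref{etacommute} is precisely what is needed here to rewrite the otherwise unmanageable $\nabla^B\eta$-contractions as $H\ast\eta$, that is, in terms of $C$ from \eqref{C}.

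Finally, the cross term coupling $\xi$ and $\eta$ together with the leftover $H\ast\xi$ and $H\ast\eta$ contributions assemble through
\begin{align*}
  \|d\xi-C\|_f^2=\|d\xi\|_f^2-2\langle d\xi,C\rangle_f+\|C\|_f^2,
\end{align*}
where $\langle d\xi,C\rangle_f$ is exactly the pairing emerging from $(\xi\circ J,\overline{L}_f(\eta))_f+(\eta,\overline{L}_f(\xi\circ J))_f$ after integration by parts, and $\|C\|_f^2$ is the residual term from the $\eta$-computation (using $C\in\Lambda^{2,1}+\Lambda^{1,2}$). Combining the three pieces and cancelling all remaining curvature terms against one another then yields \eqref{k4}.

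The hard part will be the bookkeeping inside the two Bochner computations: there are many curvature and torsion contractions, and one must verify that after using $dH=0$, the Bismut Bianchi identity \eqref{BianchiBismut}, the soliton equations \eqref{s}, and \eqref{etacommute}, every term that is not one of $\|d^*_f\xi\|_f^2$, $\|d\xi\|_f^2$, $\langle d\xi,C\rangle_f$, $\|C\|_f^2$, or $|\eta|^2(-\tfrac12S_B+\mathcal{L}_Vf)$ cancels identically, and that the numerical coefficients $-2$ and $-\tfrac16$ come out correctly. To keep this tractable I would perform the entire computation in the complex frame of \cite{KKK} Proposition 2.16, where $H$ has only mixed-type components and $\nabla^B$-normal coordinates are available, which eliminates the bulk of the terms at the outset.
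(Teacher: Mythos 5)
First, a point of comparison: the paper does not actually prove \Cref{T2} --- it is explicitly \emph{recalled} from \cite{KKK} --- so there is no internal proof to match your argument against. That said, your starting point is exactly the one the paper's machinery supports: by \Cref{MT1} (equivalently, because the gauge terms in $\overline{N}_f$ vanish on $\ker\overline{\divg}_f$) the second variation is the quadratic form $(\gamma,\overline{L}_f(\gamma))_f$, and by \Cref{Lequivalent} one may equally work with $\gamma\circ J=-\xi+\widetilde{\eta}$. Your treatment of the pure $\xi$-piece is also confirmed essentially verbatim by the paper: \Cref{LDD} and the proposition following it give $(\xi,\overline{L}_f(\xi))_f=-\tfrac16\|d\xi\|_f^2-\|d_f^*\xi\|_f^2-\|d_f^*\xi-(d_f^B)^*\xi\|_f^2$, which becomes $-\tfrac16\|d\xi\|_f^2-2\|d_f^*\xi\|_f^2$ once the constraint $(d_f^B)^*\xi=0$ is imposed, so the coefficients $-2$ and $-\tfrac16$ do come out as you predict, with no leftover remainders on a steady soliton.

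The genuine gap is in the $\eta$-part. The right-hand side of \eqref{k4} contains no first-derivative term in $\eta$ whatsoever --- only the zeroth-order quantity $\int_M|\eta|^2(-\tfrac12 S_B+\mathcal{L}_Vf)e^{-f}dV_g$ --- whereas your Bochner expression begins with $-\tfrac12\|\overline{\nabla}\gamma\|_f^2$, which contains the full gradient term $-\tfrac12\|\overline{\nabla}\widetilde{\eta}\|_f^2$. The conditions you invoke, $\divg_f^B\eta=0$ and \eqref{etacommute}, control only the divergence and an antisymmetrized combination of $\nabla^B\eta$, not $\|\nabla^B\widetilde{\eta}\|^2$ itself; asserting that the contractions "collapse to $|\eta|^2(-\tfrac12 S_B+\mathcal{L}_Vf)$" therefore skips the Bochner--Kodaira-type cancellation that is the actual content of the theorem, namely that the gradient term cancels against the $\mathring{R}^+$-contraction precisely because $\eta=\omega\circ I$ for an integrable, gauge-fixed variation $I$ of the complex structure, with the soliton equation producing the $\mathcal{L}_Vf$ term. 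Likewise, the assembly of the cross terms into exactly $+\tfrac13\langle d\xi,C\rangle_f$ and of the $\eta$-residue into exactly $-\tfrac16\|C\|_f^2$ (which, note, is quadratic in $H$, not cubic as you write --- a sign the bookkeeping has not been checked) is asserted rather than derived. Since the theorem \emph{is} this bookkeeping, what you have is a correct and well-organized plan rather than a proof.
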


\subsection{The case for a fixed complex structure}

In this subsection, we start discussing the $IGSD$. First, we recall that $\gamma\in\ker\overline{\divg}_f$ is an infitesimal generalized solitonic deformation if $\overline{L}_f(\gamma)=0$. 

\begin{lemma} \label{Lequivalent}
    For any two tensor $\gamma$, $\overline{L}_f(\gamma)=0$ if and only if $\overline{L}_f(\gamma\circ J)=0$
\end{lemma}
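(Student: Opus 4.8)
The plan is to show that the operator $\overline{L}_f$ commutes, up to sign conventions, with the endomorphism $\gamma\mapsto\gamma\circ J$ on $2$-tensors, at least in the pluriclosed steady soliton setting; once we know $\overline{L}_f(\gamma\circ J) = \pm\bigl(\overline{L}_f(\gamma)\bigr)\circ J$ or something in that vein, the biconditional is immediate because $\gamma\mapsto\gamma\circ J$ is an isomorphism. The first step is to unwind the definition $\overline{L}_f(\gamma)=\tfrac12\overline{\Delta}_f\gamma+\mathring{R}^+(\gamma)$ using the explicit coordinate expression \eqref{Lbar} for $\overline{\Delta}_f$ and the identity $\langle\mathring{R}^+(\gamma),\gamma\rangle = R^+_{iklj}\gamma_{ij}\gamma_{kl}$, and then to understand how each piece transforms under $\gamma\mapsto \gamma\circ J$, i.e. under $\gamma_{ij}\mapsto \gamma_{ik}J_{kj}$.

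Next I would exploit the fact that on a pluriclosed steady soliton the Bismut connection $\nabla^+ = \nabla^B$ is Hermitian, so $\nabla^B J = 0$. This is the crucial structural input: it means $J$ is parallel for $\nabla^B$, hence the mixed Bismut connection $\overline{\nabla}$ (defined via $\nabla^-$ and $\nabla^+$ in \eqref{MBC}) and its Laplacian $\overline{\Delta}_f$ should intertwine with post-composition by $J$ in the second slot. More precisely, I would check that $\overline{\nabla}_X(\gamma\circ J) = (\overline{\nabla}_X\gamma)\circ J$, using $\nabla^+_X(J\cdot) = J\nabla^+_X(\cdot)$; iterating gives $\overline{\Delta}_f(\gamma\circ J) = (\overline{\Delta}_f\gamma)\circ J$. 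For the curvature term, I would use that $R^+$ is $J$-invariant as a curvature of a Hermitian connection (the Bismut curvature commutes with $J$ on the appropriate indices, equivalently $R^+_{ij\cdot\cdot}$ is of type $(1,1)$ in the last two indices in the sense that $R^+(X,Y,JZ,JW) = R^+(X,Y,Z,W)$), so that $\mathring{R}^+(\gamma\circ J) = (\mathring{R}^+\gamma)\circ J$ as well. Combining, $\overline{L}_f(\gamma\circ J) = \bigl(\overline{L}_f\gamma\bigr)\circ J$, and since $\circ J$ is invertible (indeed $(\gamma\circ J)\circ J = -\gamma$), the two vanishing conditions are equivalent.

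The step I expect to be the main obstacle is verifying carefully that $\mathring{R}^+$ commutes with $\circ J$; the naive statement requires $R^+$ to be $J$-invariant in exactly the two contracted indices that appear in $R^+_{iklj}\gamma_{ij}\gamma_{kl}$, and one must be careful about which index of $\gamma$ gets hit by $J$ and how the torsion terms in $R^+$ (the $H$-quadratic and $\nabla H$ terms from \Cref{P3}) behave. Here I would invoke that $H = -d^c\omega$, so $H(JX,JY,JZ) = -H(X,Y,Z)$ and $H$ restricted to suitable type components has good transformation properties; concretely, one can pass to the complex frame $\{e_\alpha\}$ of \cite{KKK} and observe that $\nabla^B J=0$ forces $R^B$ to have only $(1,1)$-type behavior in the curvature slots, which is precisely what makes $\mathring{R}^+\circ(\,\cdot\circ J) = (\mathring{R}^+\,\cdot\,)\circ J$ hold. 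An alternative, perhaps cleaner route is to use the operator identity already in the paper: by \Cref{Comm6} and \Cref{MT1}, $\overline{L}_f$ restricted to $\mathrm{Im}\,\overline{\divg}_f^*$ equals $\tfrac12\overline{\divg}_f^*\Phi$, and on $\ker\overline{\divg}_f$ it is the genuine second-variation operator; if one can show $\circ J$ preserves both summands of the decomposition $\otimes^2T^*M = \ker\overline{\divg}_f\oplus\mathrm{Im}\,\overline{\divg}_f^*$ and commutes with $\Phi$ and $\overline{\divg}_f^*$, the result follows on each summand separately. Either way the heart of the matter is the same $\nabla^B J=0$ fact, so I would lead with the direct computation and fall back on the decomposition argument only if the torsion bookkeeping becomes unwieldy.
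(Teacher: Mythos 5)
Your argument is correct and is essentially the paper's own proof: the paper likewise rewrites $\overline{L}_f$ so that the second slot of $\gamma$ is only ever acted on through $\nabla^B=\nabla^+$ and the Bismut curvature, and then concludes from $\nabla^BJ=0$ (which also gives $R^+(X,Y)\circ J=J\circ R^+(X,Y)$, settling the curvature term exactly as you describe). The only caveat is that the alignment matters — $\circ J$ acts on the second slot, which is precisely the slot where $\overline{\nabla}$ uses the Hermitian connection $\nabla^+$ rather than $\nabla^-$ — and your computation respects this, so the intertwining $\overline{L}_f(\gamma\circ J)=(\overline{L}_f\gamma)\circ J$ goes through.
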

\begin{proof}
Comparing (\ref{Lbar}) and (\ref{LB}), we get 
\begin{align*}
    \overline{L}_f(\gamma)_{ij}=\frac{1}{2}\overline{\Delta}_f(\gamma)_{ij}+\mathring{R}^B(\gamma)_{ij}=\frac{1}{2}\Delta_f^B\gamma_{ij}+\mathring{R}^B(\gamma)_{ij}-\nabla^B_m\gamma_{kj}H_{mki}-\frac{1}{2}H^2_{ki}\gamma_{kj}.
\end{align*}
Since $\nabla^BJ=0$, we prove the lemma. 
\end{proof}

Following \Cref{Lequivalent}, for any $\gamma\in IGSD$,
\begin{align*}
    \overline{L}_f(\gamma\circ J)=-\Big(\frac{1}{2}\Delta_f^B\xi_{ij}+\mathring{R}^B(\xi)_{ij}-\nabla^B_m\xi_{kj}H_{mki}-\frac{1}{2}H^2_{ki}\xi_{kj}\Big)+\Big(\frac{1}{2}\Delta_f^B\widetilde{\eta}_{ij}+\mathring{R}^B(\widetilde{\eta})_{ij}-\nabla^B_m\widetilde{\eta}_{kj}H_{mki}-\frac{1}{2}H^2_{ki}\widetilde{\eta}_{kj}\Big)=0
\end{align*}
where $\gamma\circ J=-\xi+\widetilde{\eta}$ is written in (\ref{GammaJ}). In the following, we assume that $\widetilde{\eta}=0$ and say that the variations with $\widetilde{\eta}=0$ are deformations with a fixed complex structure.

\begin{lemma}\label{LDD}
Let $(M,g,H,f)$ be a steady pluriclosed soliton. For any two form $\xi$, 
 \begin{align}
     \overline{L}_f(\xi)_{ij}&=\frac{1}{2}(\Delta_{d,f}\xi)_{ij}-\frac{1}{2}H_{ijk}\big((d_f)^*\xi_k-(d^B_f)^*\xi_k\big)\nonumber
     \\&\kern2em+\frac{1}{2}\nabla_i\big((d_f)^*\xi_j-(d^B_f)^*\xi_j\big)+\frac{1}{2}\nabla_j\big((d_f)^*\xi_i-(d^B_f)^*\xi_i\big)-\frac{1}{4}\big((d\xi)_{mkj}H_{mki}+(d\xi)_{mki}H_{mkj} \big) \label{Lfxi}.
 \end{align}
\end{lemma}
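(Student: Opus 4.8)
The identity \eqref{Lfxi} is a Weitzenböck-type formula, so the natural plan is to rewrite both sides purely in terms of the Levi--Civita connection $\nabla$ and compare them monomial by monomial. For the left-hand side I would start from $\overline{L}_f(\xi)_{ij}=\tfrac12\overline{\Delta}_f\xi_{ij}+R^+_{iklj}\xi_{kl}$, substitute the explicit expression \eqref{Lbar} for $\overline{\Delta}_f\xi$, and use \Cref{P3} to expand the Bismut curvature contraction $R^+_{iklj}\xi_{kl}$ into a Riemann contraction $R_{iklj}\xi_{kl}$, a term linear in $\nabla H$, and a term quadratic in $H$. Exploiting the antisymmetry of $\xi$ to collapse symmetrized index pairs, the left-hand side takes the form $\tfrac12\Delta_f\xi_{ij}$ plus a Riemann contraction plus explicit cubic terms of type $H\ast\nabla\xi$ and quadratic terms $\nabla H\ast\xi$ and $H^2\ast\xi$.

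For the right-hand side I would unwind each ingredient in the same language. Writing $(d\xi)_{ijk}=\nabla_i\xi_{jk}+\nabla_j\xi_{ki}+\nabla_k\xi_{ij}$ and $(d_f)^*\xi_j=-\nabla^m\xi_{mj}+\nabla^m f\,\xi_{mj}$, the terms $(d\xi)_{mkj}H_{mki}$ and $(d\xi)_{mki}H_{mkj}$ are again of type $H\ast\nabla\xi$. A direct computation from $\nabla^B=\nabla+\tfrac12 g^{-1}H$ shows $(d_f)^*\xi_j-(d^B_f)^*\xi_j=-\tfrac12 H_{mjk}\xi_{mk}$, so $H_{ijk}\big((d_f)^*\xi_k-(d^B_f)^*\xi_k\big)$ contributes an $H^2\ast\xi$ term and $\nabla_i\big((d_f)^*\xi_j-(d^B_f)^*\xi_j\big)$ contributes a combination of $\nabla H\ast\xi$ and $H\ast\nabla\xi$ terms. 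Finally, the $f$-twisted Bochner formula for $2$-forms writes $\Delta_{d,f}\xi_{ij}$ as $\Delta_f\xi_{ij}$ (matching the leading term of $\overline{\Delta}_f\xi$, so the two cancel against the left-hand side) plus a curvature operator; the Ricci part of that operator combines with the Hessian $\nabla^2 f$ produced by the $f$-twist of $d^*_f$ and, by the soliton equation \eqref{s}, is replaced by $\tfrac14 H^2\ast\xi$, leaving a residual Riemann contraction.

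With both sides in this common normal form, the proof reduces to matching the coefficients of the Riemann contraction and of each $H\ast\nabla\xi$, $\nabla H\ast\xi$, and $H^2\ast\xi$ monomial; I expect this bookkeeping to be the main obstacle. The structural inputs that make the cancellations close are: the soliton equations \eqref{s} --- both $\Rc-\tfrac14 H^2+\nabla^2 f=0$ and $d^*_f H=0$, the latter needed to discard stray $\nabla H$ traces; the closedness $dH=0$, which rewrites $\nabla_m H_{ijk}$ as a cyclic sum; the Bianchi identity \eqref{BianchiBismut} for the Bismut curvature, which absorbs the antisymmetrized $\nabla H$ terms coming from $R^+_{iklj}\xi_{kl}$; and the Ricci identity for commuting covariant derivatives on $\xi$, together with the pair symmetries of $R$ and the antisymmetry of $\xi$, to reconcile the Riemann contractions. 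Carefully tracking which $H\ast\nabla\xi$ terms belong to the $d\xi$-block and which to the $d^*_f\xi$-block is the crux of the computation.
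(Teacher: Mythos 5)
Your strategy is sound and would prove the identity, but it is organized differently from the paper's argument. You expand everything down to the Levi--Civita level: $\overline{\Delta}_f\xi$ via \eqref{Lbar}, the curvature term $R^+_{iklj}\xi_{kl}$ via \Cref{P3}, and $\Delta_{d,f}\xi$ via the ordinary Bakry--Emery Bochner formula, then match monomials using $dH=0$, the soliton equations \eqref{s}, and the Bianchi identities. The paper instead never writes the ordinary Bochner formula: it starts from the Bismut-connection form of $\overline{L}_f$, invokes a previously established Weitzenb\"ock formula for the \emph{Bismut} Hodge Laplacian $\Delta^B_{d,f}$ (cited from an earlier paper), converts $\Delta^B_{d,f}$ to $\Delta_{d,f}$ by a comparison identity valid on steady solitons, and then symmetrizes the residual curvature term via \eqref{BianchiBismut} to produce exactly the $\nabla_i(H_{mkj}\xi_{mk})$-type terms that assemble into $(d_f)^*\xi-(d^B_f)^*\xi=\tfrac12 H_{lm\cdot}\xi_{lm}$ (your formula for this difference agrees with the paper's up to the antisymmetry of $H$). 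The paper's route keeps the number of raw monomials small by factoring the computation through two structured identities; yours is self-contained but carries a heavier bookkeeping load. The one caveat is that for a Weitzenb\"ock identity of this kind the coefficient matching \emph{is} the proof, and you have only laid out the normal form and the inputs without executing it; since the inputs you list are the correct and sufficient ones, the plan closes, but as written it is a verified strategy rather than a completed verification.
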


\begin{proof}
Recall that in \cite{KKK} Lemma 3.5, we get the following
\begin{align}\label{LD}
    \Delta^B_{d,f}\xi_{ij}=\Delta^B_f\xi_{ij}+\mathring{R}^B(\xi)_{ij}-\mathring{R}^B(\xi)_{ji}-H_{lik}\nabla^B_k\xi_{jl}-H_{ljk}\nabla^B_k\xi_{li}.
\end{align}
where $\Delta^B_{d,f}$ denotes the $f$-twisted Bismut Hodge Laplacian,  Thus,
\begin{align*}
     \overline{L}_f(\xi)&=\frac{1}{2}\Delta_f^B\xi_{ij}+\mathring{R}^B(\xi)_{ij}-\nabla^B_m\xi_{kj}H_{mki}-\frac{1}{2}H^2_{ki}\xi_{kj}
     \\&=\frac{1}{2}\Delta^B_{d,f}\xi_{ij}+\frac{1}{2}\big(\mathring{R}^B(\xi)_{ij}+\mathring{R}^B(\xi)_{ji}\big)-\frac{3}{2}\nabla^B_m\xi_{kj}H_{mki}+\frac{1}{2}\nabla^B_m\xi_{ki}H_{mkj}-\frac{1}{2}H^2_{ki}\xi_{kj}
     \\&=\frac{1}{2}\Delta^B_{d,f}\xi_{ij}+\frac{1}{2}\big(\mathring{R}^B(\xi)_{ij}+\mathring{R}^B(\xi)_{ji}\big)-\frac{3}{2}\nabla_m\xi_{kj}H_{mki}+\frac{1}{2}\nabla_m\xi_{ki}H_{mkj}+\frac{1}{4}H^2_{ki}\xi_{kj}-\frac{1}{4}H^2_{kj}\xi_{ki}+H_{mil}H_{mjk}\xi_{kl}
\end{align*}
Next, we compute that on steady generalized Ricci solitons the Bismut Hodge Laplacian and Hodge Laplacian is related by 
\begin{align*}
    (\Delta^B_{d,f}\xi)_{ij}=(\Delta_{d,f}\xi)_{ij}-2H_{lik}\nabla_l\xi_{kj}-2H_{jlk}\nabla_l\xi_{ki}-\frac{1}{2}H^2_{ik}\xi_{kj}+\frac{1}{2}H^2_{jk}\xi_{ki}-2H_{mil}H_{mjk}\xi_{kl}-\frac{1}{2}H_{ijk}H_{lmk}\xi_{lm}.
\end{align*}
Then, we have
\begin{align*}
    \overline{L}_f(\xi)&=\frac{1}{2}\Delta_{d,f}\xi_{ij}+\frac{1}{2}\big(\mathring{R}^B(\xi)_{ij}+\mathring{R}^B(\xi)_{ji}\big)-\frac{1}{2}\nabla_m\xi_{kj}H_{mki}-\frac{1}{2}\nabla_m\xi_{ki}H_{mkj}-\frac{1}{4}H_{ijk}H_{lmk}\xi_{lm}.
\end{align*}
Note that $\mathring{R}(\xi)$ is skew-symmetric, we deduce that 
\begin{align*}
    \mathring{R}^B(\xi)_{ij}+\mathring{R}^B(\xi)_{ji}=\frac{1}{2}\big( \nabla_iH_{klj}+\nabla_jH_{kli} \big)\xi_{kl}
\end{align*}
and the symmetric part of $\overline{L}_f(\xi)$ is 
\begin{align*}
   \frac{1}{2}&\big(\mathring{R}^B(\xi)_{ij}+\mathring{R}^B(\xi)_{ji}\big)-\frac{1}{2}\nabla_m\xi_{kj}H_{mki}-\frac{1}{2}\nabla_m\xi_{ki}H_{mkj}
   \\&=\frac{1}{4}\big( \nabla_iH_{klj}+\nabla_jH_{kli} \big)\xi_{kl}-\frac{1}{2}\nabla_m\xi_{kj}H_{mki}-\frac{1}{2}\nabla_m\xi_{ki}H_{mkj}
   \\&=\frac{1}{4}\big( \nabla_iH_{klj}+\nabla_jH_{kli} \big)\xi_{kl}-\frac{1}{4}(d\xi)_{mkj}H_{mki}-\frac{1}{4}(d\xi)_{mki}H_{mkj}+\frac{1}{4}\nabla_j\xi_{mk}H_{mki}+\frac{1}{4}\nabla_i\xi_{mk}H_{mkj}
   \\&=\frac{1}{4}\Big(  \nabla_i(H_{mkj}\xi_{mk})+ \nabla_j(H_{mki}\xi_{mk})-(d\xi)_{mkj}H_{mki}-(d\xi)_{mki}H_{mkj} \Big).
\end{align*}
Since $(d^B_f)^*\xi_k=(d_f)^*\xi_k-\frac{1}{2}H_{lmk}\xi_{lm}$, we complete the proof.
\end{proof}

Following \Cref{LDD}, we conclude that  
\begin{proposition}
   Let $(M,g,H,J,f)$ be a compact steady pluriclosed soliton. For any two form $\xi$,
   \begin{align*}
        \overline{L}_f(\xi)=0\Longleftrightarrow d\xi=0,\quad d^*_f\xi=0,\quad (d^B_f)^*\xi=0.
   \end{align*}
  Therefore, if $\gamma=\xi\circ J\in IGSD$, then $\gamma\circ J$ is $f-$twisted harmonic, i.e, $\Delta_{d,f}(\gamma\circ J)=0$.
\end{proposition}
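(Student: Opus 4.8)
The plan is to read everything off the formula \eqref{Lfxi} of \Cref{LDD}, which already expresses $\overline{L}_f(\xi)$ in terms of the Hodge data $d\xi$, $d^*_f\xi$ and $(d^B_f)^*\xi$ of $\xi$. Set $\Psi_k\coloneqq (d_f)^*\xi_k-(d^B_f)^*\xi_k$; by the relation used at the end of the proof of \Cref{LDD} one has $\Psi_k=\frac12 H_{lmk}\xi_{lm}$, so that $H_{ijk}\xi_{ij}=2\Psi_k$. The direction $(\Leftarrow)$ is then immediate: if $d\xi=0$, $d^*_f\xi=0$ and $(d^B_f)^*\xi=0$, then $\Psi=0$, so the second and third terms of \eqref{Lfxi} vanish, the fourth vanishes since $d\xi=0$, and $\Delta_{d,f}\xi=dd^*_f\xi+d^*_fd\xi=0$, so the first vanishes; hence $\overline{L}_f(\xi)=0$.

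For $(\Rightarrow)$, suppose $\overline{L}_f(\xi)=0$. In \eqref{Lfxi} the terms $\frac12\Delta_{d,f}\xi$ and $-\frac12 H_{ijk}\Psi_k$ are antisymmetric in $i,j$, while $\frac12(\nabla_i\Psi_j+\nabla_j\Psi_i)$ and $-\frac14\big((d\xi)_{mkj}H_{mki}+(d\xi)_{mki}H_{mkj}\big)$ are symmetric; since the symmetric and antisymmetric parts of a vanishing tensor must each vanish, $\overline{L}_f(\xi)=0$ splits into
\begin{align*}
(\Delta_{d,f}\xi)_{ij}=H_{ijk}\Psi_k,\qquad \nabla_i\Psi_j+\nabla_j\Psi_i=\frac12\big((d\xi)_{mkj}H_{mki}+(d\xi)_{mki}H_{mkj}\big).
\end{align*}
Pairing the first identity with $\xi$ in the $f$-twisted inner product \eqref{6} — the symmetric $2$-tensor on the right of the second identity is $L^2$-orthogonal to the $2$-form $\xi$ — and using $H_{ijk}\xi_{ij}=2\Psi_k$ yields $\|d\xi\|_f^2+\|d^*_f\xi\|_f^2=2\|\Psi\|_f^2$.

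It remains — and this is the heart of the matter — to show $\Psi=0$; granting this, the last identity forces $d\xi=0$ and $d^*_f\xi=0$, and then $(d^B_f)^*\xi=d^*_f\xi-\Psi=0$. To obtain $\Psi=0$ I would work from the second identity in the split: taking its $f$-twisted divergence, integrating by parts against $\Psi$, and feeding in the steady gradient generalized Ricci soliton equations \eqref{s} (so that $\Rc+\nabla^2f=\frac14 H^2$ and $d^*_fH=0$), the closedness $dH=0$, and the Bismut second Bianchi identity \eqref{BianchiBismut}, one is led to a Weitzenböck-type identity whose curvature contribution is governed by the nonnegative tensor $H^2$ and which, combined with the relation $\|d\xi\|_f^2+\|d^*_f\xi\|_f^2=2\|\Psi\|_f^2$ already obtained, forces $\Psi=0$. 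This integration-by-parts computation — tracking the cubic and quadratic cancellations in $H$ and using the soliton hypothesis essentially, as in the shrinking Ricci soliton arguments of \Cref{Comm3}--\Cref{Comm6} — is where the real work lies, and is the step I expect to be the main obstacle.

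Finally, for the concluding assertion: if $\gamma=\xi\circ J\in IGSD$ then $\overline{L}_f(\gamma)=0$, hence $\overline{L}_f(\xi)=0$ by \Cref{Lequivalent} (using $\nabla^B J=0$), so by the above $d\xi=0$ and $d^*_f\xi=0$. Since $\gamma\circ J=(\xi\circ J)\circ J=-\xi$, this gives $\Delta_{d,f}(\gamma\circ J)=-\Delta_{d,f}\xi=0$, i.e. $\gamma\circ J$ is $f$-twisted harmonic.
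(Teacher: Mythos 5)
Your reduction of $\overline{L}_f(\xi)=0$ via \eqref{Lfxi} into a symmetric and an antisymmetric identity is exactly the right move, and your $(\Leftarrow)$ direction and the final deduction $\gamma\circ J=-\xi$ are fine. But as written the proof has a genuine gap: the claim $\Psi=0$ is never proved, only announced as ``the heart of the matter'' together with a speculative plan (take a divergence, integrate by parts, invoke the soliton equations and a Weitzenb\"ock identity) that you yourself flag as the main obstacle. A proof that defers its decisive step to an uncarried-out computation is not a proof.

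The good news is that the obstacle is an artifact of a sign-convention slip, and your own identity already closes the argument once it is removed. In this paper $\Delta^B_f=-(\nabla^B)^{*_f}\nabla^B$ and $\overline{\Delta}_f=-\overline{\nabla}^{*_f}\overline{\nabla}$ are the \emph{nonpositive} rough Laplacians, and the Weitzenb\"ock formula \eqref{LD} then forces $\Delta_{d,f}$ to be the \emph{negative} of $d d^*_f+d^*_f d$ (this is also visible in the paper's displayed identity $\int_M\langle\overline{L}_f(\xi),\xi\rangle e^{-f}dV_g=-\tfrac16\|d\xi\|^2_f-\|d^*_f\xi\|^2_f-\|d^*_f\xi-(d^B_f)^*\xi\|^2_f$, which is manifestly $\le 0$). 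Hence pairing your antisymmetric identity $(\Delta_{d,f}\xi)_{ij}=H_{ijk}\Psi_k$ with $\xi$ in \eqref{6} gives a \emph{nonpositive} left-hand side equal to $\int H_{ijk}\xi_{ij}\Psi_k\,e^{-f}dV_g=2\|\Psi\|^2_f\ge 0$, so all of $d\xi$, $d^*_f\xi$, $\Psi$ vanish at once and there is nothing left to do. This is precisely the paper's one-line proof: it pairs the full $\overline{L}_f(\xi)$ with $\xi$ (the symmetric terms drop out against the $2$-form, as you note) and reads off that the result is a sum of nonpositive squares. So replace ``$\|d\xi\|^2_f+\|d^*_f\xi\|^2_f=2\|\Psi\|^2_f$'' by the correctly signed identity and delete the entire $\Psi=0$ program; with that correction your argument coincides with the paper's.
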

\begin{proof}
    The $f$-twisted $L^2$ inner product of $ \overline{L}_f(\xi)$ and $\xi$ is given by
    \begin{align*}
        \int_M\langle  \overline{L}_f(\xi),\xi \rangle e^{-f}dV_g&=\int_M \big[ \frac{1}{2}\langle \Delta_{d,f}\xi,\xi\rangle-\frac{1}{2}H_{ijk}\xi_{ij}\big((d_f)^*\xi_k-(d^B_f)^*\xi_k\big) \big] e^{-f}dV_g
        \\&=-\frac{1}{6}\| d\xi \|^2_f-\|d^*_f\xi\|^2_f-\|d^*_f\xi-(d^B_f)^*\xi\|^2_f.
    \end{align*}
    Comparing with (\ref{LD}), we complete the proof.
\end{proof}

In the Bismut-flat case, we could further deduce the following.
\begin{corollary}\label{BFIGSD}
    Let $(M,g,H,J)$ be a compact Bismut-flat manifold. For any two form $\xi$,
    \begin{align*}
         \overline{L}_f(\xi)=0\Longleftrightarrow \xi \text{ is $\nabla^{\pm}$-parallel.}
    \end{align*}
  Therefore, if $\gamma=\xi \circ J\in IGSD$ then $\gamma$ is $\nabla$-parallel and $\nabla^\pm$-parallel.
\end{corollary}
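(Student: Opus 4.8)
The plan is to reduce Corollary \ref{BFIGSD} to the preceding proposition together with the Bismut-flat structure equations \eqref{R} from Proposition \ref{P3}. The proposition already tells us that $\overline{L}_f(\xi)=0$ is equivalent to the three conditions $d\xi=0$, $d^*_f\xi=0$, and $(d^B_f)^*\xi=0$; on a Bismut-flat compact manifold we additionally know $\nabla H=0$, so $f$ is forced to be constant (this is standard: a compact steady soliton with parallel torsion and $R^+=0$ has $\nabla^2 f = -\Rc+\tfrac14 H^2 = -\Rc^+ -\tfrac12 d^*H$; using $\Rc^+=0$ and $d^*H = -d^*_g H$ which vanishes by $\nabla H=0$ forces $\nabla^2 f=0$, hence $f$ is constant and $d^*_f=d^*$, $(d^B_f)^*=(d^B)^*$). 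So it suffices to show: for a Bismut-flat compact Hermitian manifold, a $2$-form $\xi$ satisfies $d\xi=0$, $d^*\xi=0$, $(d^B)^*\xi=0$ if and only if $\nabla^+\xi=0$ and $\nabla^-\xi=0$.

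For the backward direction, if $\xi$ is $\nabla^\pm$-parallel this is immediate: parallelism with respect to either Bismut connection implies $\xi$ is closed and coclosed in the twisted sense, and since the torsions are $\pm H$ one checks directly that $(d^B)^*\xi = (\nabla^+)^m \xi_{m\cdot}$ vanishes, and similarly $d^*\xi$ and $d\xi$ are expressible as antisymmetrizations/contractions of $\nabla^\pm\xi$. For the forward direction, the key identity is the Weitzenböck-type formula for the Bismut connection on $2$-forms: one rewrites the operator $\overline{L}_f$ (equivalently the combination appearing in \eqref{Lbar}) in terms of $-(\nabla^+)^*\nabla^+$ plus curvature terms, and in the Bismut-flat case all curvature terms of $\nabla^+$ vanish. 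Concretely, since $\overline{L}_f(\xi)=0$ and the proposition gives the harmonicity conditions, I would integrate $\langle \overline{L}_f(\xi),\xi\rangle$ by parts against $\nabla^+$ to produce $\|\nabla^+\xi\|^2 + (\text{vanishing curvature})=0$, forcing $\nabla^+\xi=0$; the analogous computation with $\nabla^-$ (using $H\to -H$, i.e. the other Bismut connection, whose curvature also vanishes since $Rm^- (X,Y,Z,W)= Rm^+(Z,W,X,Y)$ and $\nabla H=0$) gives $\nabla^-\xi=0$.

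For the final sentence, suppose $\gamma=\xi\circ J\in IGSD$. Then by the proposition $\overline{L}_f(\xi\circ J)=\overline{L}_f(\gamma)=0$ as well—or more directly, by Lemma \ref{Lequivalent}, $\overline{L}_f(\gamma)=0\iff \overline{L}_f(\gamma\circ J)=0$, and since $J^2=-\mathrm{id}$, $\gamma\circ J = -\xi$ up to sign so $\overline{L}_f(\xi)=0$. Applying the equivalence just proven, $\xi$ is $\nabla^\pm$-parallel. Since $\nabla^\pm J = 0$ (the Bismut connections preserve the Hermitian structure) and $\nabla^\pm$ differ from $\nabla$ by torsion terms built from $H$, which is itself $\nabla$-parallel, one concludes $\gamma = \xi\circ J$ is $\nabla^\pm$-parallel; and then $\nabla\gamma = \tfrac12(\nabla^+ + \nabla^-)\gamma = 0$, so $\gamma$ is $\nabla$-parallel as well. (Alternatively, this last conclusion is exactly Proposition \ref{BFG}, items \eqref{a} and \eqref{b}: an essential infinitesimal generalized solitonic deformation on a Bismut-flat manifold satisfies $\overline{\nabla}\gamma=0$, and combined with $\nabla^\pm J=0$ this upgrades to full $\nabla$-parallelism.)

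The main obstacle I anticipate is bookkeeping in the forward direction: producing the clean Weitzenböck identity that expresses $\overline{L}_f(\xi)$, restricted to the Bismut-flat case and using the already-established conditions $d\xi = d^*_f\xi = (d^B_f)^*\xi = 0$, as exactly $-\tfrac12(\nabla^+)^*\nabla^+\xi$ (and symmetrically for $\nabla^-$). This requires carefully combining \eqref{Lbar}, the relation between $\Delta^B_{d,f}$ and the rough Bismut Laplacian, and the vanishing of $Rm^\pm$; the algebra is the same flavor as Lemma \ref{LDD} but must be pushed until the curvature terms visibly cancel rather than merely simplify. Once that identity is in hand, the integration-by-parts argument and the equivalence are routine.
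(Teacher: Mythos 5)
Your reduction to constant $f$ and your backward direction are fine, and the forward conclusion is of course true, but the mechanism you propose for it is misidentified. The rough Laplacian underlying $\overline{L}_f=\tfrac12\overline{\Delta}_f+\mathring{R}^+$ is the \emph{mixed} connection $\overline{\nabla}$ (since $\overline{\Delta}_f=-\overline{\nabla}^{*_f}\overline{\nabla}$), not $(\nabla^+)^{*_f}\nabla^+$. If you insist on rewriting in terms of $\Delta^B_f=-(\nabla^B)^{*_f}\nabla^B$, comparing (\ref{Lbar}) with (\ref{LB}) leaves over the terms $H_{mik}\nabla_m\xi_{kj}$ and $H_{mkj}H_{mli}\xi_{lk}$: these are first- and zeroth-order in $\xi$, are not curvature terms of $\nabla^+$, and do not vanish on a Bismut-flat manifold, so "all curvature terms of $\nabla^+$ vanish" does not close the argument. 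Even the clean identity $\int\langle\overline{L}_f\xi,\xi\rangle\,e^{-f}dV=-\tfrac12\|\overline{\nabla}\xi\|_f^2$ (valid since $\mathring{R}^+=0$) only yields $\nabla\xi=0$ together with the vanishing of the part of $T_{mij}=H_{mik}\xi_{kj}$ that is symmetric in $(i,j)$, whereas $\nabla^{\pm}$-parallelism requires the full contraction $H_{mik}\xi_{kj}=0$. The ingredient your sketch never invokes is the \emph{sign} of the Ricci curvature: the paper integrates $|d\xi|^2$ by parts against the Levi-Civita connection, uses (\ref{R}) and the Bianchi identity to get $\bigl(d^*\xi-(d^B)^*\xi\bigr)^2=2\langle\mathring{R}(\xi),\xi\rangle$, and arrives at $\int\langle\overline{L}_f\xi,\xi\rangle\,dV=-\tfrac12\|\nabla\xi\|^2-\tfrac12\|d^*\xi-(d^B)^*\xi\|^2-\int R_{lk}\xi_{kj}\xi_{lj}\,dV$ with $\Rc=\tfrac14H^2\ge 0$; the forced vanishing of the last term is precisely what gives $H_{lab}\xi_{lj}\equiv 0$ and hence $\nabla^{\pm}\xi=0$. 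Without some substitute for this positivity input, your Weitzenb\"ock plan does not visibly terminate.

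The final step of your argument contains an outright error: $\nabla^-J\neq 0$ in general. Only $\nabla^+=\nabla^B$ is the Hermitian (Bismut) connection; the connection with torsion $-H$ does not preserve $J$ unless $H=0$. So the inference "$\xi$ is $\nabla^{\pm}$-parallel and $\nabla^{\pm}J=0$, hence $\gamma=\xi\circ J$ is $\nabla^{\pm}$-parallel" fails for $\nabla^-$: one finds $\nabla^-\gamma=\xi\circ(\nabla^-J)$, which reduces to a contraction of $H$ with the \emph{second} index of $\gamma$ (equivalently of $\xi$ with $H(\cdot,\cdot,J\cdot)$), and this does not vanish merely because $H_{lab}\xi_{lj}=0$. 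Your fallback via Proposition \ref{BFG} has the same hole: $\overline{\nabla}\gamma=0$ gives the coupled system (\ref{long}), whose right-hand sides involve contractions of $H$ with \emph{both} indices of $\gamma$, and only the first-index contraction is controlled by what you have established for $\xi$. The paper closes this gap with the additional observation (\ref{Prc}), namely $\int R_{lk}\gamma_{kj}\gamma_{lj}=\int R_{lk}\gamma_{jk}\gamma_{jl}=\int R_{lk}\xi_{kj}\xi_{lj}=0$, which uses the $J$-invariance of $\Rc=\tfrac14H^2$ to obtain $H_{lab}\gamma_{lj}=H_{lab}\gamma_{jl}=0$; combined with $\overline{\nabla}\gamma=0$ this yields $\nabla\gamma=0$ and then $\nabla^{\pm}\gamma=0$. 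You need to supply this step (or an equivalent argument) rather than appeal to $\nabla^-J=0$.
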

\begin{proof}
    Using the integration by part, we compute that 
    \begin{align*}
        \int_M |d\xi|^2 dV_g&=\int_M 3|\nabla\xi|^2+6 \nabla_l\xi_{ij}\nabla_i\xi_{jl} dV_g
        \\&=\int_M 3|\nabla\xi|^2-6|d^*\xi|^2+6R_{lk}\xi_{kj}\xi_{lj}-6\langle \mathring{R}(\xi),\xi \rangle dV_g,
    \end{align*}
    so
    \begin{align*}
        \int_M\langle  \overline{L}_f(\xi),\xi \rangle dV_g&=-\frac{1}{6}\| d\xi \|^2-\|d^*\xi\|^2-\|d^*\xi-(d^B)^*\xi\|^2
        \\&=-\frac{1}{2}\|\nabla\xi\|^2-\|d^*\xi-(d^B)^*\xi\|^2-\int_M R_{lk}\xi_{kj}\xi_{lj}+\langle \mathring{R}(\xi),\xi \rangle dV_g
    \end{align*}
In the Bismut-flat case, we apply (\ref{R}) and get
\begin{align*}
    \langle \mathring{R}(\xi),\xi \rangle&=R_{iljk}\xi_{ik}\xi_{lj}
    \\&= \frac{1}{4} (H_{ikm}H_{ljm}-H_{ijm}H_{lkm})\xi_{ik}\xi_{lj}
    \\&=\big(d^*\xi-(d^B)^*\xi\big)^2-\frac{1}{4}H_{ijm}H_{lkm}\xi_{ik}\xi_{lj}.
\end{align*}
On the other hand, the Bianchi identity suggests that 
\begin{align*}
    2\langle \mathring{R}(\xi),\xi \rangle&=-R_{ljik}\xi_{ik}\xi_{lj}
    \\&=\frac{1}{2}H_{lim}H_{jkm}\xi_{ik}\xi_{lj}.
\end{align*}
In conclusion, 
    \begin{align*}
        \big(d^*\xi-(d^B)^*\xi\big)^2=\frac{1}{2}H_{ijm}H_{klm}\xi_{ik}\xi_{jl}=2 \langle \mathring{R}(\xi),\xi \rangle,
    \end{align*}
    and
    \begin{align*}
        \int_M\langle  \overline{L}_f(\xi),\xi \rangle dV_g=-\frac{1}{2}\|\nabla\xi\|^2-\frac{1}{2}\|d^*\xi-(d^B)^*\xi\|^2-\int_M R_{lk}\xi_{kj}\xi_{lj}dV_g.
    \end{align*}
Note that any Bismut-flat manifold has nonnegative Ricci curvature so if $  \overline{L}_f(\xi)=0$, we get $\xi$ is Levi-Civita parallel and $\int_M R_{lk}\xi_{kj}\xi_{lj} dV_g=0$. Moreover,
\begin{align*}
    \int_MR_{lk}\xi_{kj}\xi_{lj} dV_g= \int_M\frac{1}{2}(H_{lab}\xi_{lj})^2 dV_g=0,
\end{align*}
we conclude that $\xi$ is $\nabla^\pm-$parallel. For the other direction, if we assume that $\xi$ is $\nabla^\pm-$parallel, it is clear to see that $ \overline{L}_f(\xi)=0$.

Lastly, we suppose that $\gamma=\xi\circ J\in IGSD$. We note that 
\begin{align}\label{Prc}
    \int_M R_{lk}\xi_{kj}\xi_{lj} dV_g= \int_M R_{lk}\gamma_{kj}\gamma_{lj} dV_g= \int_M R_{lk}\gamma_{jk}\gamma_{jl} dV_g=0.
\end{align}
Thus, 
\begin{align*}
    H_{lab}\gamma_{lj}=H_{lab}\gamma_{jl}=0
\end{align*}
and we deduce that $\gamma$ is $\nabla$-parallel and $\nabla^\pm$-parallel.

\end{proof}

Note that (\ref{Prc}) is true only when $\gamma=0$ in the Hopf surface and Calabi-Eckann surface, we conclude that
\begin{corollary}
    Hopf surface and Calabi-Eckmann surface are rigid in the space of variations fixing the complex structure.
\end{corollary}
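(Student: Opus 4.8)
The plan is to show that any essential infinitesimal generalized solitonic deformation $\gamma$ with $\widetilde{\eta}=0$ — that is, any variation in $IGSD$ fixing the complex structure — must vanish on each of these two manifolds. First I would record that such a $\gamma$ is of the form $\xi\circ J$ for the genuine $2$-form $\xi=-\gamma\circ J$, so \Cref{BFIGSD} applies: $\gamma$ is $\nabla$-parallel and $\nabla^{\pm}$-parallel, and the identity $(\ref{Prc})$ gives $\int_M R_{lk}\gamma_{jk}\gamma_{jl}\,dV_g=0$. Since this integrand equals $\sum_j \Rc\big((\gamma(e_j,\cdot))^{\sharp},(\gamma(e_j,\cdot))^{\sharp}\big)$ and both manifolds carry metrics of nonnegative Ricci curvature, the integrand vanishes pointwise, so $\gamma(X,Y)=0$ whenever $Y$ lies in the image of the Ricci operator.

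For the Calabi--Eckmann manifold $S^3\times S^3$ the Bismut--flat metric is the bi-invariant product metric on $SU(2)\times SU(2)$, a product of positive Einstein metrics, so $\Rc$ is positive definite; then $R_{lk}\gamma_{jk}\gamma_{jl}\ge c\,|\gamma|^2$ with $c>0$, and $(\ref{Prc})$ forces $\gamma\equiv 0$. For the Hopf surface the universal cover is, as recalled in the Hopf surface example above, the flat cylinder $(S^3\times\mathbb{R},\,g_{S^3}\oplus dt^2)$, so $\Rc=2g_{S^3}$ on $TS^3$ and $\Rc=0$ on the Lee direction $\mathbb{R}\,\theta^{\sharp}=\mathbb{R}\,\partial_t$. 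Hence the previous step (using also the middle equality in $(\ref{Prc})$ so as to kill both slots, or alternatively invoking that $\gamma$ is $\nabla$-parallel while $S^3$ carries no parallel $1$-form) forces $\gamma=c\,dt\otimes dt$ for a constant $c$. It remains to exclude this last possibility: $\xi=-\gamma\circ J=-c\,dt\otimes(dt\circ J)$ and $dt\circ J=-(J\partial_t)^{\flat}$, so $\xi$ is a scalar multiple of the simple tensor $dt\otimes(J\partial_t)^{\flat}$, whose two legs are linearly independent because $\partial_t\perp J\partial_t$. A nonzero simple tensor with independent legs is never skew-symmetric, whereas $\xi$ is a $2$-form; therefore $c=0$ and $\gamma=0$.

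The only inputs beyond the formal machinery of \Cref{BFIGSD} are the Ricci tensors of the two model Bismut--flat metrics and, in the Hopf case, the elimination of the parallel tensor $dt\otimes dt$. I expect the Hopf surface to be the delicate case: there $(\ref{Prc})$ alone does not annihilate $\gamma$, and one genuinely needs the structural constraint that $\gamma\circ J$ be a $2$-form — equivalently the normalization $\widetilde{\eta}=0$ — to finish the argument.
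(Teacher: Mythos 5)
Your proof is correct and follows the same route as the paper: apply \Cref{BFIGSD} to a variation $\gamma=\xi\circ J$ fixing the complex structure, and use the vanishing of $\int_M R_{lk}\gamma_{kj}\gamma_{lj}\,dV_g$ from (\ref{Prc}) together with the nonnegativity of the Ricci curvature of these Bismut-flat models. The one place where you go beyond the paper's one-line justification is the Hopf surface, and that extra step is genuinely needed: since $\Rc$ degenerates exactly on the Lee direction $\mathbb{R}\,\partial_t$, (\ref{Prc}) (equivalently, the conditions $H_{lab}\gamma_{lj}=H_{lab}\gamma_{jl}=0$) only pins $\gamma$ down to $c\,dt\otimes dt$, and one must then invoke the structural constraint that $\gamma\circ J=-\xi$ is skew-symmetric --- a nonzero simple tensor $dt\otimes(dt\circ J)$ with orthogonal, hence independent, legs cannot be a $2$-form --- to conclude $c=0$. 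This correctly fills a gap that the paper's proof elides. The only caveat is terminological: you read ``Calabi--Eckmann surface'' as $S^3\times S^3$ (strictly a threefold), where positive-definiteness of the product Ricci tensor makes (\ref{Prc}) immediately conclusive; the paper does not specify the model, but your argument covers both the positive-definite and the degenerate (cylinder-type) situations, so the conclusion stands under either reading.
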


\subsection{The case for varying complex structures}

In this section, we consider the general case. First of all, we decompose $\overline{L}_f(\widetilde{\eta})$ into symmetric part and skew-symmetric part. 
\begin{align}\label{Lfeta}
    \overline{L}_f(\widetilde{\eta})&=\frac{1}{2}\Delta_f^B\widetilde{\eta}_{ij}+\frac{1}{2}\big(\mathring{R}^B(\widetilde{\eta})_{ij}+ \mathring{R}^B(\widetilde{\eta})_{ji}\big)-\frac{1}{2}\big(\nabla^B_m\widetilde{\eta}_{kj}H_{mki}+\nabla^B_m\widetilde{\eta}_{ki}H_{mkj}\big)-\frac{1}{4}\big( H^2_{ki}\widetilde{\eta}_{kj}+H^2_{kj}\widetilde{\eta}_{ki} \big)\nonumber
    \\&\kern2em +\frac{1}{2}\big(\mathring{R}^B(\widetilde{\eta})_{ij}-\mathring{R}^B(\widetilde{\eta})_{ji}\big)-\frac{1}{2}\big(\nabla^B_m\widetilde{\eta}_{kj}H_{mki}-\nabla^B_m\widetilde{\eta}_{ki}H_{mkj}\big)-\frac{1}{4}\big( H^2_{ki}\widetilde{\eta}_{kj}-H^2_{kj}\widetilde{\eta}_{ki} \big).
\end{align}

\begin{proposition}
  Let $(M,g,H,J,f)$ be a compact Bismut--Hermitian--Einstein manifold. Define
  \begin{align*}
      D(\alpha)_{ij}\coloneqq \frac{1}{2}\Delta_f^B\alpha_{ij}+\frac{1}{2}\big(\mathring{R}^B(\alpha)_{ij}+ \mathring{R}^B(\alpha)_{ji}\big)-\frac{1}{2}\big(\nabla^B_m\alpha_{kj}H_{mki}+\nabla^B_m\alpha_{ki}H_{mkj}\big)+H_{kim}H_{kjl}\alpha_{ml}, 
  \end{align*}
  where $\alpha$ is a anti-Hermitian, symmetric two tensor. Then,
  \begin{align*}
      \overline{L}_f(\gamma\circ J)=0\Longleftrightarrow d\xi=C,\quad d^*_f\xi=0, \quad D(\widetilde{\eta})=0,
  \end{align*}
  where $\gamma\circ J=-\xi+\widetilde{\eta}\in\ker\overline{\divg}_f$ is given in (\ref{GammaJ}). Therefore, if $\gamma\in IGSD$, then $d\xi=C$, $d^*_f\xi=0$ and $D(\widetilde{\eta})=0$.
\end{proposition}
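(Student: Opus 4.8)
The strategy is to analyze $\overline{L}_f(\gamma\circ J)=-\overline{L}_f(\xi)+\overline{L}_f(\widetilde\eta)$ through its symmetric and skew-symmetric parts, treating the skew part with \Cref{LDD} and the symmetric part with the decomposition (\ref{Lfeta}). I would first record the reductions. Since $\nabla^BJ=0$ and $J$ is $g$-orthogonal, each term of $\overline{L}_f(\gamma)_{ij}=\tfrac12\Delta^B_f\gamma_{ij}+\mathring{R}^B(\gamma)_{ij}-\nabla^B_m\gamma_{kj}H_{mki}-\tfrac12H^2_{ki}\gamma_{kj}$ (the form used in the proof of \Cref{Lequivalent}) commutes with $\cdot\circ J$, so $\overline{L}_f(\gamma\circ J)=(\overline{L}_f\gamma)\circ J$ and, $\circ J$ being a pointwise isometry, $\int_M\langle\overline{L}_f(\gamma\circ J),\gamma\circ J\rangle e^{-f}dV_g=\int_M\langle\overline{L}_f\gamma,\gamma\rangle e^{-f}dV_g$. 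From the \cite{KKK} decomposition of $\gamma\in\ker\overline{\divg}_f$, the $2$-form $\xi$ satisfies $(d^B_f)^*\xi=0$ and $\widetilde\eta$ is symmetric anti-Hermitian with $\divg^B_f\widetilde\eta=0$ and (\ref{etacommute}); hence the torsion defect $\big((d_f)^*-(d^B_f)^*\big)\xi_k=\tfrac12H_{lmk}\xi_{lm}$ equals $(d^*_f\xi)_k$, and also $\divg_f\widetilde\eta=0$ since $\divg_f-\divg^B_f$ contracts $H$ against the symmetric slots of $\widetilde\eta$.

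Next I would split $\overline{L}_f(\gamma\circ J)$ into symmetric and skew parts. By \Cref{LDD}, the skew part of $\overline{L}_f(\xi)$ is $\tfrac12\Delta_{d,f}\xi-\tfrac12\,\iota_{(d^*_f\xi)^\sharp}H$ and its symmetric part is $\tfrac12\big(\nabla_i(d^*_f\xi)_j+\nabla_j(d^*_f\xi)_i\big)-\tfrac14\big((d\xi)_{mkj}H_{mki}+(d\xi)_{mki}H_{mkj}\big)$, while the symmetric and skew parts of $\overline{L}_f(\widetilde\eta)$ are the two lines of (\ref{Lfeta}). The proof rests on two pointwise identities, valid on a compact Bismut–Hermitian–Einstein pluriclosed soliton (so using $\rho_B=0$, hence $S_B=0$, the soliton equations (\ref{s}), the Bismut Bianchi identity (\ref{BianchiBismut}), and (\ref{etacommute})): first, the algebraic identity
\begin{align*}
\tfrac14\big(H^2_{ki}\widetilde\eta_{kj}+H^2_{kj}\widetilde\eta_{ki}\big)+H_{kim}H_{kjl}\widetilde\eta_{ml}=\tfrac14\big(C_{mkj}H_{mki}+C_{mki}H_{mkj}\big),
\end{align*}
which rewrites the symmetric part of $\overline{L}_f(\widetilde\eta)$ as $D(\widetilde\eta)-\tfrac14\big(C_{mkj}H_{mki}+C_{mki}H_{mkj}\big)$; and second, the first-order identity that the skew part of $\overline{L}_f(\widetilde\eta)$ equals $\tfrac12d^*_fC$. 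Granting these, $\overline{L}_f(\gamma\circ J)=0$ is equivalent to the pair of equations $\Delta_{d,f}\xi-\iota_{(d^*_f\xi)^\sharp}H=d^*_fC$ (skew) and $D(\widetilde\eta)=\tfrac12\big(\nabla_i(d^*_f\xi)_j+\nabla_j(d^*_f\xi)_i\big)-\tfrac14\big((d\xi-C)_{mkj}H_{mki}+(d\xi-C)_{mki}H_{mkj}\big)$ (symmetric).

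For the direction $(\Leftarrow)$, assume $d\xi=C$, $d^*_f\xi=0$, $D(\widetilde\eta)=0$: the symmetric equation reads $0=0$, and the skew equation reads $\Delta_{d,f}\xi=d^*_fC$, which holds because $\Delta_{d,f}\xi=dd^*_f\xi+d^*_fd\xi=d^*_fC$; hence $\overline{L}_f(\xi)=\overline{L}_f(\widetilde\eta)$, i.e. $\overline{L}_f(\gamma\circ J)=0$. For $(\Rightarrow)$, suppose $\overline{L}_f(\gamma\circ J)=0$. By \Cref{MT1} and \Cref{T2}, using $S_B=\tr_\omega\rho_B=0$ and $|\eta|=|\widetilde\eta|$,
\begin{align*}
0=\int_M\langle\overline{L}_f\gamma,\gamma\rangle e^{-f}dV_g=-2\|d^*_f\xi\|_f^2-\tfrac16\|d\xi-C\|_f^2+\int_M|\widetilde\eta|^2\,\mathcal{L}_Vf\,e^{-f}dV_g .
\end{align*}
Using the soliton structure together with $\rho_B=0$ one checks $\mathcal{L}_Vf\equiv0$ (morally: $V=\tfrac12(\theta^\sharp-\nabla f)$ is real holomorphic, and $\rho_B=0$ forces $\mathcal{L}_V\omega=0$, so $V$ is Killing and preserves the minimizer $f$), so the last integral vanishes; hence $d^*_f\xi=0$ and $d\xi=C$, and substituting these into the symmetric equation gives $D(\widetilde\eta)=0$. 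The closing claim then follows at once: if $\gamma\in IGSD$ then $\gamma\in\ker\overline{\divg}_f$ with $\overline{L}_f(\gamma)=0$, hence $\overline{L}_f(\gamma\circ J)=0$ by \Cref{Lequivalent}, so $d\xi=C$, $d^*_f\xi=0$ and $D(\widetilde\eta)=0$.

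The main obstacle is proving identities (i) and (ii). Identity (i) is a purely algebraic Weitzenböck-type relation among $H$, $H^2$ and the anti-Hermitian tensor $\widetilde\eta$; the point is that the $(2,1)+(1,2)$ type of $H=-d^c\omega$, encoded in (\ref{etacommute}) (equivalently $C\in\Lambda^{2,1}+\Lambda^{1,2}$), is exactly what forces the a priori mismatched $H$-contractions to cancel. Identity (ii) is obtained by commuting $\nabla^B$ through the definition (\ref{C}) of $C$, integrating the torsion terms, and absorbing the leftover curvature using $\rho_B=0$ and the soliton equations — this is the step that genuinely uses the Bismut–Hermitian–Einstein hypothesis. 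A further point requiring care is the vanishing $\mathcal{L}_Vf\equiv0$, i.e. that the potential $-\tfrac12S_B+\mathcal{L}_Vf$ appearing in \Cref{T2} is identically zero in this setting.
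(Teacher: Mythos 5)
Your proposal is correct and follows essentially the same route as the paper: extract $d\xi=C$ and $d^*_f\xi=0$ from the vanishing of the integrated second variation (using $S_B=0$ and the vanishing of the potential term on a Bismut--Hermitian--Einstein soliton), then reduce the remaining pointwise expression to $D(\widetilde\eta)$ via the symmetric/skew splitting of $\overline{L}_f(\xi)$ from \Cref{LDD} and of $\overline{L}_f(\widetilde\eta)$ from (\ref{Lfeta}). The two identities you isolate as the "main obstacle" are precisely the computations the paper carries out --- the contraction of $C$ against $H$ producing the $H^2\widetilde\eta$ and $H_{kim}H_{kjl}\widetilde\eta_{ml}$ terms, and the Bismut Bianchi identity (\ref{BianchiBismut}) cancelling the skew-symmetric part --- so nothing essential is missing.
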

\begin{proof}
    On a Bismut--Hermitian--Einstein manifold, the second variation of $\lambda$ suggests that
    \begin{align*}
        \frac{d^2}{dt^2}\lambda(\gamma)=\int_M \big\langle \overline{L}_f(\gamma\circ J),\gamma\circ J \big\rangle e^{-f}dV_g=0\Longleftrightarrow  \|d^*_f\xi\|_f^2=\|d\xi-C\|_f^2=0.
    \end{align*}
Take $d\xi=C$ and $d_f^*\xi=(d_f^B)^*\xi=0$, (\ref{Lfxi}) reduces to
\begin{align*}
     \overline{L}_f(\xi)_{ij}&=\frac{1}{2}(\Delta_{d,f}\xi)_{ij}-\frac{1}{4}\big((d\xi)_{mkj}H_{mki}+(d\xi)_{mki}H_{mkj} \big) 
     \\&=-\frac{1}{2}(d^*_fC)_{ij}-\frac{1}{4}(C_{mkj}H_{mki}+C_{mki}H_{mkj} ) 
     \\&=\frac{1}{2}\big( H_{lik}\nabla_l\widetilde{\eta}_{kj}+\nabla_lH_{ijk}\widetilde{\eta}_{kl}+H_{jlk}\nabla_l\widetilde{\eta}_{ki} \big)-\frac{1}{4}\big( H^2_{li}\widetilde{\eta}_{lj}+ H^2_{lj}\widetilde{\eta}_{li}\big)-H_{kim}H_{kjl}\widetilde{\eta}_{ml}.
\end{align*}
By using the Bianchi identity of Bismut curvature (\ref{BianchiBismut}) and note that $\widetilde{\eta}$ is symmetric, we deduce that the skew symmetric part of $ \overline{L}_f(\widetilde{\eta})$ (\ref{Lfeta}) is 
\begin{align*}
   \frac{1}{2}&\big(\mathring{R}^B(\widetilde{\eta})_{ij}-\mathring{R}^B(\widetilde{\eta})_{ji}\big)-\frac{1}{2}\big(\nabla^B_m\widetilde{\eta}_{kj}H_{mki}-\nabla^B_m\widetilde{\eta}_{ki}H_{mkj}\big)-\frac{1}{4}\big( H^2_{ki}\widetilde{\eta}_{kj}-H^2_{kj}\widetilde{\eta}_{ki} \big)
   \\&=\frac{1}{2}\big( R^B_{kijl}-R^B_{kjil}\big)\widetilde{\eta}_{kl} -\frac{1}{2}\big(\nabla_m\widetilde{\eta}_{kj}H_{mki}-\nabla_m\widetilde{\eta}_{ki}H_{mkj}\big)
    \\&=\frac{1}{2} \nabla^B_l H_{kij}\widetilde{\eta}_{kl}-\frac{1}{2}\big(\nabla_m\widetilde{\eta}_{kj}H_{mki}-\nabla_m\widetilde{\eta}_{ki}H_{mkj}\big)
    \\&=\frac{1}{2} \nabla_l H_{kij}\widetilde{\eta}_{kl}-\frac{1}{2}\big(\nabla_m\widetilde{\eta}_{kj}H_{mki}-\nabla_m\widetilde{\eta}_{ki}H_{mkj}\big).
\end{align*}
Thus, the skew-symmetric part of $ \overline{L}_f(\gamma\circ J)=0$ and the remaining term is 
\begin{align*}
    \overline{L}_f(\gamma\circ J)&=-\overline{L}_f(\xi)+\overline{L}_f(\widetilde{\eta})
    \\&=\frac{1}{2}\Delta_f^B\widetilde{\eta}_{ij}+\frac{1}{2}\big(\mathring{R}^B(\widetilde{\eta})_{ij}+ \mathring{R}^B(\widetilde{\eta})_{ji}\big)-\frac{1}{2}\big(\nabla^B_m\widetilde{\eta}_{kj}H_{mki}+\nabla^B_m\widetilde{\eta}_{ki}H_{mkj}\big)+H_{kim}H_{kjl}\widetilde{\eta}_{ml}
    \\&=D(\widetilde{\eta}).
\end{align*}

\end{proof}

\begin{remark}
 In the Bismut-flat case, we note that if $\gamma\in IGSD$, then
 \begin{align*}
        \overline{\nabla}\gamma=0\Longleftrightarrow\overline{L}_f(\gamma)=0\Longleftrightarrow\overline{L}_f(\gamma\circ J) =0  \Longleftrightarrow \overline{\nabla}(\gamma\circ J)=0.
    \end{align*}
Thus, following \Cref{BFG} we get
\begin{align*}
    \nabla_m\widetilde{\eta}_{ij}&=\frac{1}{2} H_{mik}\xi_{kj}+\frac{1}{2}H_{mjk}\xi_{ki} 
    \\\nabla_{m}\xi_{ij}&=\frac{1}{2}H_{mik}\widetilde{\eta}_{kj}-\frac{1}{2}H_{mjk}\widetilde{\eta}_{ki}.
\end{align*}
Then, it is clear that $d\xi=C$, $d^*\xi=0$ and we note that 
\begin{align} \label{flat}
    \mathring{R}^B(\xi)_{ij}+\mathring{R}^B(\xi)_{ji}&=\frac{1}{2}\big( \nabla_iH_{klj}+\nabla_jH_{kli} \big)\xi_{kl}
    \nonumber\\&=-\frac{1}{2}H_{klj}\nabla_i \xi_{kl}-\frac{1}{2}H_{kli}\nabla_j\xi_{kl}
    \nonumber\\&=-\frac{1}{2}H_{kmi}H_{klj}\widetilde{\eta}_{ml},
\end{align}
where we used the fact that $H_{klj}\xi_{kl}=0$ by the assumption. Following (\ref{flat}), one can further deduce that $D(\widetilde{\eta})=0$.  

.
\end{remark}

\section{Local stability of pluriclosed steady soliton}

In this section, we study the stability of pluriclosed flow with fixed complex structure. More precisely, we consider a compact, steady pluriclosed soliton $(M,g_E,J,f_E)$ with $H_E=-d^c\omega_E$ and the solution of pluriclosed flow $(g_t,b_t)$ satisfy
   \begin{align*}
       \frac{\partial}{\partial t}\omega=-\rho_B^{1,1}, \quad \frac{\partial}{\partial t}\beta=-\rho_B^{2,0}.
   \end{align*}
   where $\omega_t=-g_t\circ J$ and $\beta_t=\sqrt{-1}b^{2,0}$. As we know that the pluriclosed flow is gauge equivalent to the generalized Ricci flow (\Cref{PGG}). Thus, we immediately get the following corollary by \cite{K} Theorem 1.8 and \Cref{MTT}.

\begin{corollary}
    Let $(M,g_E,J,f_E)$ be a compact, steady pluriclosed soliton with $H_E=-d^c\omega_E$. Assume 
    \begin{enumerate}
        \item $\lambda(g_E,H_E)$ is a local maximum or
        \item $(g_E,H_E)$ is linearly stable and all infinitsimal solitonic deformation are integrable.
    \end{enumerate}
     Then, for every neighborhood $\mathcal{U}$ of $(g_E,b_E)$, there exists a neighborhood $\mathcal{V}$ satisfies the following property: 
    
    For any pluriclosed flow $(g_t,b_t)$ starting at $(g_0,b_0)\in\mathcal{V}$, there exists a family of automorphism $\{(\varphi_t,B_t)\}\in \GDiff_{H_E}$ such that $(\varphi_t,B_t)\cdot(g_t,b_t)$ stays in $\mathcal{U}$ for all time and converges to some steady pluriclosed soliton $\mathcal{G}_\infty$.
    
\end{corollary}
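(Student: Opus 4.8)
The plan is to transport everything through the gauge equivalence with the generalized Ricci flow and then quote \cite{K} Theorem 1.8 (under hypothesis (1)) or \Cref{MTT} (under hypothesis (2)). First I would pass to the generalized picture: writing $H_E=-d^c\omega_E=H_0+db_E$ with $H_0$ the background $3$-form of the Courant algebroid, Definition \ref{DDD} shows that the steady pluriclosed soliton condition is exactly (\ref{s}), so $\mathcal{G}_E=\mathcal{G}(g_E,b_E)\in\mathcal{GM}$ is a steady gradient generalized Ricci soliton. Under hypothesis (1), $\lambda$ has a local maximum at $\mathcal{G}_E$, which is the hypothesis of \cite{K} Theorem 1.8; under hypothesis (2), linear stability of $\mathcal{G}_E$ together with integrability of all infinitesimal generalized solitonic deformations is the hypothesis of \Cref{MTT}. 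In either case one obtains: for every neighborhood $\mathcal{U}_0$ of $\mathcal{G}_E$ in $\mathcal{GM}$ there is a neighborhood $\mathcal{V}_0\subset\mathcal{U}_0$ such that any generalized Ricci flow starting in $\mathcal{V}_0$ admits a family $(\tau_t,E_t)\in\GDiff_{H_E}$ whose modified flow stays in $\mathcal{U}_0$ for all time and converges to a steady gradient generalized Ricci soliton $\mathcal{G}_\infty$.

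Next I would use \Cref{PGG}. Given a pluriclosed flow $(g_t,b_t)$ with the complex structure $J$ fixed, after the constant rescaling of time relating the two normalizations it is gauge equivalent to a generalized Ricci flow: there is $(\sigma_t,D_t)\in\GDiff_{H_E}$, generated by the Lee vector field and with $(\sigma_0,D_0)=\mathrm{id}$, such that $(\bar g_t,\bar b_t):=(\sigma_t,D_t)\cdot(g_t,b_t)$ solves the generalized Ricci flow; since $(\sigma_0,D_0)=\mathrm{id}$ it starts at $\mathcal{G}(g_0,b_0)$. Taking $\mathcal{U}_0$ to be the neighborhood of $\mathcal{G}_E$ corresponding to the given $\mathcal{U}$, and setting $\mathcal{V}:=\mathcal{V}_0$, the first paragraph provides $(\tau_t,E_t)$, and I define $(\varphi_t,B_t)\in\GDiff_{H_E}$ to be the composition for which $(\varphi_t,B_t)\cdot(g_t,b_t)=(\tau_t,E_t)\cdot(\bar g_t,\bar b_t)$. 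Then the modified pluriclosed flow $(\varphi_t,B_t)\cdot(g_t,b_t)$ stays in $\mathcal{U}$ for all time and converges to $\mathcal{G}_\infty$.

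Finally I would check that $\mathcal{G}_\infty=\mathcal{G}(g_\infty,b_\infty)$ is a steady pluriclosed soliton, which is the delicate point and the main obstacle. By (\ref{GA}) the modified flow is $(\varphi_t^{*}g_t,\varphi_t^{*}b_t-B_t)$, whose torsion $3$-form equals $\varphi_t^{*}H_t=-d^c_{J_t}\omega_t'$, where $J_t:=\varphi_t^{*}J$ and $\omega_t':=-(\varphi_t^{*}g_t)\circ J_t$, using naturality of $d^c$ under diffeomorphisms and $(\varphi_t,B_t)\in\GDiff_{H_E}$. Since $\varphi_t^{*}g_t\to g_\infty$ and $\varphi_t^{*}H_t\to H_\infty$ in $C^{k}$, the associated Bismut connections converge; as each $J_t$ is parallel for $\nabla^{B}_{\varphi_t^{*}g_t,\varphi_t^{*}H_t}$ with $J_t^2=-\mathrm{Id}$ and uniformly bounded norm, a subsequence converges to a $\nabla^{B}_{g_\infty,H_\infty}$-parallel almost complex structure $J_\infty$ compatible with $g_\infty$, and passing to the limit in $\varphi_t^{*}H_t=-d^c_{J_t}\omega_t'$ gives $H_\infty=-d^c_{J_\infty}\omega_\infty$ with $\omega_\infty=-g_\infty\circ J_\infty$. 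Hence $g_\infty$ is pluriclosed for $J_\infty$ and, being a steady gradient generalized Ricci soliton, it satisfies (\ref{s}); by Definition \ref{DDD}, $(M,g_\infty,H_\infty,J_\infty,f_\infty)$ is a steady pluriclosed soliton. The hard part is this complex-structure limit, namely keeping the pulled-back structures $\varphi_t^{*}J$ under control along the gauge orbit; everything else is bookkeeping with the gauge group and direct application of \cite{K} Theorem 1.8 and \Cref{MTT}.
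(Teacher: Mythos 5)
Your proposal is correct and follows essentially the same route as the paper: the paper obtains this corollary immediately from the gauge equivalence of the pluriclosed flow with the generalized Ricci flow (\Cref{PGG}) combined with \cite{K} Theorem 1.8 under hypothesis (1) and \Cref{MTT} under hypothesis (2), which is exactly the content of your first two paragraphs (including the time rescaling and the composition of the Lee-form gauge with the $-\nabla f$ gauge). Your third paragraph, which extracts a limiting complex structure $J_\infty$ via Bismut-parallelism of $\varphi_t^*J$ and naturality of $d^c$ so that $\mathcal{G}_\infty$ is genuinely a steady \emph{pluriclosed} soliton, supplies a verification the paper leaves implicit; it is sound and a worthwhile addition rather than a deviation.
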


In the following, we investigate some special cases.

\subsection{Bismut-flat case}

First, we consider the Bismut-flat case. In this special case, we can argue that all infinitsimal solitonic deformations are integrable.

\begin{proposition}\label{IS}
    Suppose $(M,g,H,J)$ is a compact, pluriclosed Bismut-flat manifold and $\gamma=\xi\circ J\in IGSD$ is a general variation where $\xi$ is a 2-form. Then, $\gamma$ is integrable. Moreover, $\gamma$ can induce a family of Bismut-flat metric locally. 
\end{proposition}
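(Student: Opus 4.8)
The plan is to exhibit, for $|t|$ small, an explicit curve $(g_t,b_t)$ of generalized metrics through $\mathcal{G}(g,b)$ each of which is Bismut-flat, with $\tfrac{d}{dt}\big|_{t=0}(g_t,b_t)=(h,K)$, where $\gamma=h-K$ is the splitting into symmetric and skew parts. Since a Bismut-flat metric is automatically a steady gradient generalized Ricci soliton with constant potential (by \Cref{P3}, $\Rc^{+}=0$ forces $\Rc-\tfrac14H^{2}=0$ and $d^{*}H=0$, so (\ref{s}) holds with constant $f$; and since $H$ is $\nabla^{+}$-parallel the quantity $R-\tfrac1{12}|H|^{2}$ is constant, so the constant function does minimize $\mathcal{F}$ in (\ref{lam})), such a curve simultaneously proves that $\gamma$ is integrable and that it is realized by Bismut-flat metrics.

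First I would record the structural input. By \Cref{BFIGSD}, since $\gamma=\xi\circ J\in IGSD$ it is both $\nabla$-parallel and $\nabla^{\pm}$-parallel; since a connection on $\otimes^{2}T^*M$ commutes with transposition, $h$ and $K$ are each $\nabla$-parallel and $\nabla^{+}$-parallel. In particular $K$ is harmonic, hence $dK=0$. I would also extract from the proof of \Cref{BFIGSD} — where one shows $H_{lab}\gamma_{lj}=H_{lab}\gamma_{jl}=0$ — the pointwise relations $H_{mik}h_{kj}=0$ and $H_{mik}K_{kj}=0$; equivalently, if $T$ denotes the torsion of $\nabla^{+}$ (so $g(T(X,Y),Z)=H_0(X,Y,Z)$), then $h(T(X,Y),Z)=0$ for all $X,Y,Z$.

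Then I would set
\begin{align*}
    g_t := g + t\,h, \qquad b_t := t\,K.
\end{align*}
Compactness of $M$ and the constancy of $|h|$ make $g_t$ a Riemannian metric for small $t$, and $dK=0$ gives $H_t:=H_0+db_t=H_0$, so the torsion $3$-form is frozen along the family. The crux is that $\nabla^{+}$ is still the Bismut connection of $(g_t,H_0)$: it remains $g_t$-compatible since $\nabla^{+}g_t=\nabla^{+}g+t\nabla^{+}h=0$, and its torsion $3$-form measured with $g_t$ is $g_t(T(X,Y),Z)=g(T(X,Y),Z)+t\,h(T(X,Y),Z)=H_0(X,Y,Z)$; by uniqueness of the metric connection with prescribed totally skew torsion, $\nabla^{+}$ coincides with the Bismut connection of $(g_t,H_0)$, which is therefore flat. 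Hence each $(g_t,b_t)$ is Bismut-flat, and by the first paragraph a steady gradient generalized Ricci soliton; differentiating at $t=0$ recovers $\gamma=h-K$, so $\gamma$ is integrable and $t\mapsto\mathcal{G}(g_t,b_t)$ is the desired local family of Bismut-flat metrics.

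The main obstacle — really the only step beyond bookkeeping — is the implication ``$\gamma\in IGSD$ on a Bismut-flat manifold $\Rightarrow h(T(\cdot,\cdot),\cdot)=0$'', i.e.\ that the annihilation relations coming out of \Cref{BFIGSD} are exactly strong enough that the naive linear deformation $g\mapsto g+th$ leaves the torsion and the Bismut connection unchanged. Once that algebraic fact is in place the verification that the family stays Bismut-flat, and the identification of the constant function as the minimizer $f_t$, are routine; in particular no Kuranishi-type obstruction analysis is needed here, in contrast with the general integrability question addressed earlier in the paper.
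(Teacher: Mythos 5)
Your proposal is correct and follows the same basic route as the paper: take the affine path $g_t=g+t\,h$, $b_t=t\,K$, use \Cref{BFIGSD} to see that $h$ and $K$ are parallel, and conclude that the Bismut connection is unchanged along the path, hence still flat. The one place where you go beyond the paper's write-up is the justification of that last step: the paper checks that the fully lowered Christoffel symbols $\widetilde{\Gamma}^B_{ijk}$ agree with $\Gamma^B_{ijk}$, which by itself does not identify the two connections, since the index must be raised with $\widetilde{g}$ rather than $g$ (at the center of normal coordinates the discrepancy is $-\tfrac{\epsilon}{2}h^{kl}H_{ijk}+O(\epsilon^2)$). You supply exactly the missing algebraic input, namely $H_{mik}h_{kj}=0$ from the proof of \Cref{BFIGSD}, and package the argument cleanly via the uniqueness of the metric connection with prescribed totally skew torsion: $\nabla^{+}$ is $g_t$-compatible because $\nabla^{+}h=0$, and its torsion remains totally skew with $3$-form $H_0$ because $h(T(\cdot,\cdot),\cdot)=0$. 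This is a more rigorous version of the same computation, and your additional observation that Bismut-flatness forces the soliton equations with constant minimizer $f$ correctly ties the family back to integrability in the premoduli-space sense.
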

\begin{proof}
    Write $\gamma=\xi\circ J=h-K$, where $h$ is the symmetric part of $\gamma$ and $-K$ is skew-symmetric part. Define $(\Tilde{g},\Tilde{b})$ by 
    \begin{align*}
        \Tilde{g}=g+\epsilon h, \quad \Tilde{b}=b+\epsilon K.
    \end{align*}
    W.L.O.G, we can assume that $b=0$ and we can pick $\epsilon$ small enough such that $\Tilde{g}$ is a metric. From \Cref{BFIGSD} it follows that $\gamma$ is $\nabla-$parallel and $\nabla^\pm-$parallel. Thus, both $h$ and $K$ are $\nabla-$parallel. Fix an orthonormal frame $\{e_i\}$ with respect to the metric $g$, we compute
    \begin{align*}
        \widetilde{\Gamma}^B_{ijk}&=\widetilde{\Gamma}_{ijk}+\frac{1}{2}\widetilde{H}_{ijk}
        \\&=\Big( \Gamma_{ijk}+\frac{\epsilon}{2}\big( \partial_i(h_{jk})+\partial_j(h_{ik})-\partial_k(h_{ij})  \big)\Big)+\frac{1}{2}\Big( H_{ijk}+\epsilon (dK)_{ijk}  \Big)
        \\&=\Gamma^B_{ijk}.
    \end{align*}
    Thus, $\widetilde{\nabla}^B=\nabla^B$ and $\gamma$ induces a family of Bismut-flat metrics.
\end{proof}

From the second variation formula \Cref{MT1}, it is clear that all the Bismut-flat metrics are linearly stable. In addition, \Cref{IS} shows that all infinitsimal solitonic deformations are integrable in the plurclosed setting. Thus, we get
\begin{corollary}
Suppose $(M,g,H,J)$ is a compact, pluriclosed Bismut-flat manifold. Then, $(M,g,H,J)$ is pluriclosed flow dynamically stable and the convergence rate is exponential.    
\end{corollary}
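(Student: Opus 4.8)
The plan is to assemble the final corollary from pieces that have already been established in the excerpt, so the "proof" is mostly a bookkeeping argument checking that the hypotheses of \Cref{MTT} (via \Cref{PGG}) are satisfied in the Bismut-flat pluriclosed setting. First I would observe that a compact pluriclosed Bismut-flat manifold $(M,g,H,J)$ is in particular a compact steady gradient generalized Ricci soliton: indeed $\Rc^+ = \Rc - \tfrac14 H^2 - \tfrac12 d^*H = 0$ forces $\Rc - \tfrac14 H^2 = 0$ and $d^*H = 0$, so \eqref{s} holds with $f$ constant — in fact this is a generalized Einstein metric. Next I would invoke linear stability: by \Cref{MT1} the second variation of $\lambda$ on $\ker\overline{\divg}_f$ is $\int_M \langle \gamma, \tfrac12\overline{\Delta}_f\gamma + \mathring{R}^+(\gamma)\rangle e^{-f}dV_g = \int_M \langle \gamma, \overline{L}_f(\gamma)\rangle e^{-f}dV_g$, and integration by parts together with \eqref{Lbar} (or the computation already displayed in the proof of \Cref{BFIGSD}) shows this quantity is $\leq 0$ when $\nabla H = 0$ and $\Rc \geq 0$, both of which hold on a Bismut-flat manifold by \Cref{P3}. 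Hence $\mathcal{G}_0 = \mathcal{G}(g,b)$ is linearly stable.

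The remaining hypothesis of \Cref{MTT} is that all infinitesimal generalized solitonic deformations are integrable, and this is exactly the content of \Cref{IS}: any $\gamma \in IGSD$ in the pluriclosed Bismut-flat setting is of the form $\xi\circ J$ with $\xi$ a $\nabla^\pm$-parallel $2$-form (by \Cref{BFIGSD}), and \Cref{IS} shows that $\gamma$ integrates to a genuine one-parameter family of Bismut-flat metrics. Strictly speaking I should note that \Cref{IS} is stated for variations of the form $\gamma = \xi\circ J$ with the complex structure fixed; one should remark that on a Bismut-flat manifold any $\gamma \in IGSD$ can be written $\gamma\circ J = -\xi + \widetilde\eta$ and that both the $\xi$-part and the $\widetilde\eta$-part are $\overline{\nabla}$-parallel (this is the Remark following the varying-complex-structure proposition), so the same frame-computation as in \Cref{IS} produces an integrating family; alternatively, by \Cref{BFG} the full premoduli space is already a manifold whose tangent space is $IGSD$, which is the definition of integrability. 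With both hypotheses verified, \Cref{MTT} applies directly: $\mathcal{G}(g,b)$ is dynamically stable, and since it is a generalized Einstein metric the convergence is exponential without needing a gauge-fixing modification — though to stay within the pluriclosed category one transports the conclusion back through the gauge equivalence of \Cref{PGG}.

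Concretely, the steps in order are: (1) check $(M,g,H,J)$ Bismut-flat $\Rightarrow$ steady gradient generalized Ricci soliton (in fact generalized Einstein, $f$ constant); (2) apply \Cref{MT1} plus the curvature identities \eqref{R} and integration by parts to conclude linear stability; (3) invoke \Cref{IS} (with the Remark extending it to varying complex structures, or \Cref{BFG}) to conclude all infinitesimal generalized solitonic deformations are integrable; (4) apply \Cref{MTT} to get dynamical stability with exponential convergence; (5) translate back to the pluriclosed flow via the gauge equivalence \Cref{PGG}. The only genuinely delicate point — and the one I would spend the most care on — is step (3): making sure that \Cref{IS}, which as stated fixes $J$, really does cover \emph{every} element of $IGSD$ on a Bismut-flat manifold, and not merely those with $\widetilde\eta = 0$. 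The cleanest route is to appeal to \Cref{BFG}\eqref{a}$\Leftrightarrow$\eqref{b}, which characterizes $IGSD$ purely by the parallel condition $\overline{\nabla}\gamma = 0$ with no reference to $J$, and then run the Christoffel-symbol comparison of \Cref{IS} (which only uses that $h$ and $K$ are $\nabla$-parallel and that $H$ is parallel) to see that $(g + \epsilon h,\, \epsilon K)$ has the same Bismut connection and hence stays Bismut-flat; this exhibits the integrating curve and finishes the argument. Everything else is routine once those inputs are in place.
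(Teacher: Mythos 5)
Your proposal follows the same route as the paper: linear stability from \Cref{MT1} together with Bismut-flatness (on a Bismut-flat manifold $\mathring{R}^+\equiv 0$, so the second variation is just $-\tfrac12\|\overline{\nabla}\gamma\|_f^2\leq 0$ --- you do not even need the $\Rc\geq 0$ detour), integrability from \Cref{IS}, and then the dynamical stability machinery of \Cref{MTT} transported to the pluriclosed flow via \Cref{PGG}. Steps (1), (2), (4), (5) are fine and match the paper's (very terse) argument.

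The place where you go beyond the paper --- and where your argument breaks --- is the patch you propose for step (3). You are right to worry that \Cref{IS} only treats $\gamma=\xi\circ J$ with $J$ fixed, but your two suggested fixes do not work as written. First, \Cref{BFG} characterizes $IGSD$ on a Bismut-flat manifold by $\overline{\nabla}\gamma=0$, i.e.\ the \emph{mixed} parallelism \eqref{long}; it does not give $\nabla h=0$ and $\nabla K=0$, and it says nothing about the premoduli space being a manifold. Second, the Christoffel-symbol comparison in \Cref{IS} genuinely requires the full Levi-Civita parallelism: if one only has \eqref{long}, then in normal coordinates the perturbed Bismut connection picks up the correction
\begin{align*}
\tfrac{\epsilon}{2}\Big[(\nabla_i h_{jk}+\nabla_j h_{ik}-\nabla_k h_{ij})+(dK)_{ijk}\Big]
=\tfrac{\epsilon}{2}\Big[-H_{ikl}K_{jl}-H_{jkl}K_{il}-H_{ikl}h_{jl}+H_{ijl}h_{kl}+H_{jkl}h_{il}\Big],
\end{align*}
which does not vanish in general. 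The cancellation in \Cref{IS} happens precisely because \Cref{BFIGSD} upgrades \eqref{long} to $\nabla h=\nabla K=0$ for deformations of the form $\xi\circ J$, and that upgrade is only proved in the fixed-$J$ case. So for deformations with $\widetilde{\eta}\neq 0$ your integrating curve $(g+\epsilon h,\epsilon K)$ need not stay Bismut-flat, and the integrability of those deformations is not established by the argument you give. The paper sidesteps this by working throughout Section 6 with the complex structure fixed; if you want to claim integrability of \emph{all} of $IGSD$ (as the hypothesis of \Cref{MTT} literally demands), you need a separate argument for the $\widetilde{\eta}$-directions, which neither your proposal nor the quoted results supply.
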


\begin{remark}
  In \cite{MJJ}, the authors showed that the pluriclosed flow on Bismut flat metric has long time existence and converges to some Bismut-flat metric. In this paper, we justify this result locally by studying the second variation of $\lambda$.   
\end{remark}

\subsection{General case}

In the general case, we recall the Aeppli cohomology.
\begin{defn} \label{Aeppli}
    Let $(M,J)$ be a complex manifold. Define the \emph{real $(1,1)-$Aeppli cohomology} via
    \begin{align*}
        H^{1,1}_{A,\mathbb{R}}\coloneqq\frac{\Big\{ \ker\sqrt{-1}\partial\overline{\partial}:\Lambda^{1,1}_{\mathbb{R}}\longrightarrow\Lambda^{2,2}_{\mathbb{R}}  \Big\}}{\big\{ \partial \overline{\alpha}+\overline{\partial}\alpha\kern0.3em|\kern0.3em\alpha\in\Lambda^{1,0}   \big\}}.
    \end{align*}
\end{defn}

After fixing the $(1,1)-$Aeppli cohomology, we observe that the second variation is strictly negative so we can prove the following proposition.
\begin{proposition}\label{LMP}
  Let $(M,g_0,b_0,J,f_0)$ be a compact, steady pluriclosed soliton, $H_0=-d^c\omega_0$. There exists a neighborhood $\mathcal{U}$ of $(g_0,b_0)$ such that
  for any $(g,b)\in\mathcal{U}$ satisfying
  \begin{itemize}
      \item $H=-d^c\omega=H_0+db$ and
      \item $[\omega]=[\omega_0]\in H^{1,1}_A$, where $\omega=g\circ J$. 
  \end{itemize} We have
  \begin{align*} 
      \lambda(g,b)\leq \lambda(g_0,b_0), 
  \end{align*}
  and the equality holds if and only if $(g,b)=(g_0,b_0)$. In other words, the steady pluriclosed soliton is rigid if we fix the complex structure and Aeppli class. 
\end{proposition}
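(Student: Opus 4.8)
The plan is to reduce the statement to the second variation formula of Theorem~\ref{T2} together with a local-rigidity argument along the lines of the proof of Theorem~\ref{LM}, specialized to variations that preserve both the pluriclosed condition and the Aeppli class. First I would describe the relevant tangent space: for a family $(g_t,b_t)$ with $H_t=-d^c\omega_t=H_0+db_t$ and $[\omega_t]=[\omega_0]\in H^{1,1}_{A,\mathbb{R}}$, the infinitesimal variation $\phi=\frac{\partial}{\partial t}\big|_{t=0}\omega$ must satisfy $\sqrt{-1}\partial\overline\partial\phi=0$ and $\phi=\partial\overline{\alpha}+\overline{\partial}\alpha$ for some $\alpha\in\Lambda^{1,0}$; in particular $\phi$ is $\partial\overline{\partial}$-exact, so it is $d$-exact and $\|[\phi]\|=0$ in the appropriate cohomology. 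Since the complex structure is fixed, $I=0$, hence $\eta=\omega\circ I=0$ and $\widetilde\eta=0$; writing $\gamma=h-\beta$ and $\gamma\circ J=-\xi+\widetilde\eta=-\xi$ with $\xi=\phi+\beta\circ J$, I then need to check that, after applying the affine generalized slice theorem (Theorem~\ref{affineslice}) to put the variation in $\ker\overline{\divg}_f$, the formula \eqref{k4} applies and reduces to
\begin{align*}
    \frac{d^2}{dt^2}\Big|_{t=0}\lambda(\gamma)=-2\|d^*_f\xi\|_f^2-\frac16\|d\xi-C\|_f^2=-2\|d^*_f\xi\|_f^2-\frac16\|d\xi\|_f^2,
\end{align*}
using $C=0$ since $\widetilde\eta=0$ (by \eqref{C}).

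Next I would show this quadratic form is \emph{strictly} negative on the relevant space of variations, i.e., that $d^*_f\xi=0$ and $d\xi=0$ force $\xi=0$ modulo the gauge/cohomology constraints. The point is that $d\xi=d\phi+d(\beta\circ J)$; the remark before Theorem~\ref{T2} identifies $d\xi=C$ when the pluriclosed structure is preserved, and here $C=0$, so $d\xi=0$ is automatic. The remaining condition $d^*_f\xi=0$ combined with $d\xi=0$ means $\xi$ is $f$-twisted harmonic. But $\xi\circ J^{-1}=-\gamma$ represents, via $\phi$, a class in $H^{1,1}_{A,\mathbb{R}}$ that we have fixed to be zero; an $f$-twisted Hodge-theoretic argument (the $f$-twisted harmonic representative of a vanishing Aeppli class, together with $\divg^B_f$-type constraints from the slice) should force $\phi=0$, and then $\beta$ is constrained so that $\xi=0$ as well. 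This is the step where I expect to spend the most care: making precise the identification between the slice tangent space $\ker\overline{\divg}_f$, the pluriclosed-preserving condition, the Aeppli-class-fixing condition, and an appropriate $f$-twisted Hodge decomposition, so that $d\xi=0$, $d^*_f\xi=0$ genuinely imply $\xi=0$ rather than merely $\xi$ harmonic. In the un-twisted case this is classical (a harmonic form in a trivial Aeppli class vanishes); the $f$-twist requires checking that the relevant twisted cohomology is still computed by twisted harmonic forms and that the pairing with the Aeppli class behaves well, which should follow from the self-adjointness of $d^*_f$ with respect to \eqref{6} and the fact that $f$ is $\Isom(\mathcal{G})$-invariant.

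Having established that the second variation is strictly negative definite transverse to the orbit, I would then run the Taylor-expansion argument exactly as in the proof of Theorem~\ref{LM}: write $\mathcal{G}=(g,b)$ near $\mathcal{G}_0=(g_0,b_0)$, move it into the affine slice by Theorem~\ref{affineslice}, expand
\begin{align*}
    \lambda(\mathcal{G})=\lambda(\mathcal{G}_0)+\tfrac{1}{2}\tfrac{d^2}{dt^2}\big|_{t=0}\lambda(\mathcal{G}_0+t\gamma)+R(\mathcal{G}_0,\gamma),
\end{align*}
with $|R|\le C\|\gamma\|_{C^{2,\alpha}}\|\gamma\|_{H^1}^2$ by the third-order estimate of Lemma~\ref{third}, and use the strict negativity $\frac{d^2}{dt^2}\big|_{t=0}\lambda(\mathcal{G}_0+t\gamma)\le -c\|\gamma\|_{H^1}^2$ (valid now on the Aeppli-fixed, pluriclosed-preserving slice because there are no nonzero essential infinitesimal solitonic deformations in this restricted class) to absorb the remainder after shrinking the neighborhood. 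This yields $\lambda(g,b)\le\lambda(g_0,b_0)$, with equality only when $\gamma=0$, i.e.\ $(g,b)=(g_0,b_0)$ after undoing the gauge transformation — and since a gauge transformation in $\GDiff_{H_0}^e$ that preserves the Aeppli class and fixes the complex structure and lands back in the affine slice must be trivial on $(g_0,b_0)$, the rigidity conclusion follows. The main obstacle, as noted, is the Hodge-theoretic vanishing argument in the middle paragraph; everything else is a direct transcription of the machinery already developed in Sections~3--5.
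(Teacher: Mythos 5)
Your overall architecture (second variation plus Taylor expansion with the third-order remainder estimate) matches the paper's, but the step you yourself flag as ``the main obstacle'' --- showing that $d\xi=0$ and $d^*_f\xi=0$ force $\xi=0$ --- is left genuinely open, and the vague appeal to ``the $f$-twisted harmonic representative of a vanishing Aeppli class'' together with ``$\divg^B_f$-type constraints from the slice'' is not an argument. The resolution in the paper is much more elementary and you already have the ingredient in hand but fail to use it: the Aeppli constraint $[\omega]=[\omega_0]$ gives $\phi=\overline{\partial}\alpha+\partial\overline{\alpha}$, and combining this with $H=-d^c\omega=H_0+db$ one finds that the only variation that matters for $\lambda$ is $\xi=\phi+\beta\circ J=d\alpha$, i.e.\ $\xi$ is \emph{exact}. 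Then $d\xi=0$ is automatic (no need to route this through $C=0$), the second variation is exactly $-2\|d^*_f d\alpha\|_f^2$, and this vanishes if and only if
\begin{align*}
0=\int_M \langle d^*_f d\alpha,\alpha\rangle\, e^{-f}dV_g=\|d\alpha\|_f^2,
\end{align*}
i.e.\ $\xi=0$. An exact $f$-twisted-harmonic form vanishes by a one-line integration by parts; no twisted Aeppli--Hodge theory, no harmonic representative of a cohomology class, and no interaction with the slice is needed. You wrote early on that $\phi$ is $d$-exact but then dropped that fact and treated $\xi$ as a general closed, $f$-coclosed form, which is why you got stuck.

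Two smaller points. First, the detour through the affine slice theorem is not how the paper proceeds and is arguably counterproductive here: the paper works directly along the linear path $\omega_t=\omega_0+t\phi$, $b_t=t\beta$, precisely because the reduction $\xi=d\alpha$ already identifies the variation completely (and $\lambda$ depends only on $(g,H)$); gauge-fixing into $\ker\overline{\divg}_f$ first would destroy the explicit form $\xi=d\alpha$ that makes the strict negativity transparent. Second, your claim that strict negativity holds ``because there are no nonzero essential infinitesimal solitonic deformations in this restricted class'' is exactly the assertion that needs proof, so as written the final Taylor-expansion paragraph is circular until the middle step is repaired as above.
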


\begin{proof}

We first discuss the variation space. W.L.O.G, we may take $b_0=0$ and suppose $b\in \Omega^{2,0+0,2}$. For any $(g,b)$, we consider a linear path $(g_t,b_t)$ connecting $(g_0,b_0)$ to $(g,b)$. Adopt the notation in section 5, we further write  
\begin{align*}
    \omega_t=\omega_0+t\phi, \quad b_t=t\beta,
\end{align*}
where $\omega_t=g_t\circ J$. Assuming that $[\omega]=[\omega_0]\in H^{1,1}_A$, we have $\phi=\overline{\partial}\alpha+\partial \overline{\alpha}$ for some $\alpha\in \Omega^{1,0}$. Note that $H=-d^c\omega=H_0+db$, so 
\begin{align*}
    i\partial\overline{\partial}\alpha-i\overline{\partial}\partial\overline{\alpha}=db.
\end{align*}
Since $\lambda$ depends only on the value of $g$ and $H$, it suffices to consider the variation $\xi=\phi+\beta\circ J=d\alpha$. 

By the Taylor's expansion,
\begin{align*}
    \lambda(g,b)=\lambda(g_0,b_0)+\Big(\frac{d}{dt}\big|_{t=0} \lambda(g_t,b_t) \Big)+\Big(\frac{1}{2}\frac{d^2}{dt^2}\big|_{t=0} \lambda(g_t,b_t) \Big)+R(g_t,b_t),
\end{align*}
where the remainder is given by
\begin{align*}
    R(g,b)=\int_0^1\frac{1}{2}(t-1)^2 \frac{d^3}{dt^3}\big|_{t=0} \lambda(g_t,b_t) dt.
\end{align*}
We note that the first derivative vanishes since $(g_0,b_0)$ is a soliton and the second derivative is given by 
\begin{align*}
    \frac{d^2}{dt^2}\big|_{t=0}\lambda &=
        \int_M\langle  \overline{L}_f(\xi),\xi \rangle e^{-f}dV_g
  =-\frac{1}{6}\| d\xi \|^2_f-2\|d^*_f\xi\|^2_f=-2\|d^*_fd\alpha\|^2_f\leq0.
\end{align*}
We observe that $\frac{d^2}{dt^2}\lambda=0$ if and only if $d\alpha=0$. Thus, the second variation is strictly negative. By shrinking the neighborhood if necessary, we can find a neighborhood such that $\lambda(g,b)\leq\lambda(g_0,b_0)$ and the equality holds only if $(g,b)=(g_0,b_0)$
\end{proof}

To get the dynamical stability, we note that if $\omega_t$ is a solution of the pluriclosed flow then 
\begin{align*}
    [\omega_t]=[\omega_0]-tc_1(M) \in H^{1,1}_A.
\end{align*}

\begin{corollary}\label{Fcor}
     Let $(M,g_E,b_E,J,f_E)$ be a compact, steady pluriclosed soliton, $H_E=-d^c\omega_E$ and the first Chern class $c_1=0\in H^{1,1}_A$. Then, there exists a neighborhood $\mathcal{U}$ of $(g_E,b_E)$ such that if the pluriclosed flow starts at $(g_0,b_0)\in\mathcal{U}$ with $[\omega_E]=[\omega_0]\in H^{1,1}_A$, there exists a family of automorphism $\{(\varphi_t,B_t)\}\in \GDiff_{H_E}$ such that the modified flow $(\varphi_t,B_t)\cdot(g_t,b_t)$ stays in $\mathcal{U}$ for all time and converges exponentially to some steady pluriclosed soliton $(\varphi_\infty,B_\infty)\cdot (g_E,b_E)$.
\end{corollary}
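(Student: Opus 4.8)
<br>

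The plan is to deduce \Cref{Fcor} by combining the rigidity statement \Cref{LMP} with the gauge-equivalence of the pluriclosed flow and the generalized Ricci flow (\Cref{PGG}), essentially running the argument of \Cref{MTT} but restricted to the slice of variations that preserve the complex structure and the Aeppli class. First I would observe that the constraint $c_1 = 0 \in H^{1,1}_A$ together with $[\omega_0] = [\omega_E]$ forces $[\omega_t] = [\omega_0] - t c_1(M) = [\omega_E]$ for all $t$ along the pluriclosed flow, so the flow stays inside the affine family of metrics $\omega_t = \omega_E + (\overline{\partial}\alpha_t + \partial\overline{\alpha}_t)$ to which \Cref{LMP} applies; in particular $\lambda(g_t,b_t) \le \lambda(g_E,b_E)$ whenever the flow stays in the neighborhood $\mathcal{U}$ of \Cref{LMP}, and $\lambda$ is monotone increasing along the flow by \cite{P}. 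Thus on this constrained set $\lambda(g_E,b_E)$ is a genuine local maximum.

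Next I would set up the modified (gauge-fixed) flow exactly as in the proof of \Cref{MTT}: for $t \le 1$ let $\widetilde{\mathcal{G}}_t = \mathcal{G}_t$, and for $t \ge 1$ replace the generalized Ricci flow by its $(-\nabla_{g_t} f_{\mathcal{G}_t}, 0)$-gauge-fixed version, which differs from $\mathcal{G}_t$ by an automorphism $(\psi_t, B_t) \in \GDiff_{H_E}$ with $B_t = \iota_{\nabla f}H_0 - d(\iota_{\nabla f}b_t)$. The point is that since the pluriclosed flow and GRF are gauge equivalent (\Cref{PGG}), one may work with the GRF throughout, and since the additional gauge transformation is by a (Hamiltonian-type) automorphism it does not change the Aeppli class of $\omega_t$. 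Then I would run the \L{}ojasiewicz--Simon estimate: the key inequality $|\lambda(\widetilde{\mathcal{G}}_t) - \lambda(\mathcal{G}_E)|^{1/2} \le C\|\widetilde{\Rc}^{H,f}\|_{L^2}$ should follow from \Cref{OPL} applied within the constrained slice — here one uses that the second variation is \emph{strictly} negative in the direction $\xi = d\alpha$, as \Cref{LMP} shows, so the relevant linearized operator $\overline{L}_f$ is injective on the constrained tangent space and the optimal \L{}ojasiewicz inequality with exponent $\tfrac12$ holds. Combined with the Hamilton interpolation estimate $\|(\partial_t \widetilde{g}, \partial_t \widetilde{b})\|_{C^k} \le C\|(\partial_t\widetilde{g},\partial_t\widetilde{b})\|_{L^2}^{\beta}$, this gives $\int_1^{\overline{T}} \|(\partial_t\widetilde{g},\partial_t\widetilde{b})\|_{C^k}\,dt \le C|\lambda(\mathcal{G}_0) - \lambda(\mathcal{G}_E)|^{\beta/2} < \epsilon/4$ when $\mathcal{V}$ is small, which by the usual continuity/maximality argument shows the modified flow never leaves $\mathcal{U}$, hence exists for all time, converges, and converges exponentially (from $-\frac{d}{dt}|\lambda - \lambda_E| \ge C|\lambda - \lambda_E|$). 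The limit $\mathcal{G}_\infty$ is a critical point of $\lambda$ with $\lambda(\mathcal{G}_\infty) = \lambda(\mathcal{G}_E)$ lying in $\mathcal{U}$ with the same Aeppli class, so by the equality case of \Cref{LMP} it must be isometric to $(g_E, b_E)$, i.e. $\mathcal{G}_\infty = (\varphi_\infty, B_\infty)\cdot(g_E,b_E)$.

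The main obstacle I anticipate is verifying that the \L{}ojasiewicz--Simon inequality and the short-time containment estimates genuinely hold \emph{on the constrained configuration space} — that is, that restricting to metrics with fixed complex structure and fixed Aeppli class is compatible with all the analytic machinery (elliptic estimates, the slice construction, the premoduli decomposition) developed in Sections 3--4 for the unconstrained problem. One needs to check that the constrained variations $\xi = d\alpha$ are preserved (up to gauge) by the gauge-fixed flow and that the linearized operator restricted to this subspace is still elliptic with the injectivity needed for \Cref{OPL}; since \Cref{LMP} already establishes the strict negativity $\frac{d^2}{dt^2}\lambda = -2\|d_f^* d\alpha\|_f^2$, the remaining work is to package this into the \L{}ojasiewicz framework, which should go through because the constraint is linear (a fixed cohomology class) and the relevant functional $\lambda$ is real-analytic along it. A secondary technical point is ensuring the automorphisms $(\varphi_t, B_t)$ produced by the gauge fixing indeed lie in $\GDiff_{H_E}$ and respect the pluriclosed structure, but this is exactly as in \Cref{MTT} and \Cref{PGG}.
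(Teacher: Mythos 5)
Your proposal is correct and follows essentially the same route as the paper: use the fact that $c_1=0$ forces the Aeppli class to be preserved along the flow, so \Cref{LMP} gives $\lambda(g_t,b_t)\leq\lambda(g_E,b_E)$, then run the gauge-fixed flow and \L{}ojasiewicz--Simon argument of \Cref{MTT} (the paper simply cites the proof of \cite{K} Theorem~1.8 here), and finally identify the limit via the equality case of \Cref{LMP}. The only difference is one of presentation: the paper transfers the $\lambda$-bound to the modified flow by diffeomorphism invariance of $\lambda$ rather than arguing that the gauge transformations preserve the Aeppli class, which sidesteps the technical point you flag, but both versions rest on the same two ingredients.
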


\begin{proof}
  Suppose the first Chern class vanishes, the solution of pluriclosed flow $(g_t,b_t)$ starting at $(g_0,b_0)$ satisfies 
\begin{align*}
    \lambda(g_t,b_t)\leq \lambda(g_E,b_E)
\end{align*}  
by \Cref{LMP}. Since $\lambda$ is diffeomophism invariance, $\lambda\big( (\varphi_t,B_t)\cdot (g_t,b_t) \big)=\lambda(g_t,b_t)$ for all automorphism $(\varphi_t,B_t)$. Then, the proof of \cite{K} Theorem 1.8 follows and the flow $(\varphi_t,B_t)\cdot (g_t,b_t) $ exists for all time and converges to some pluriclosed steady soliton $(\varphi_\infty,B_\infty)\cdot (g_\infty,b_\infty) $. Since $\lambda(g_\infty,b_\infty)=\lambda(g_E,b_E)$ and $[\omega_\infty]=[\omega_E]$, it follows that $(g_\infty,b_\infty)=(g_E,b_E)$. 
\end{proof}

\begin{remark}
  Due to \Cref{Autconv}, we see that the $(-\nabla_{g_t}f_{\mathcal{G}_t},0)$-gauge fixed generalized Ricci flow exists for long time and converges. Thus, the automorphism we picked in \Cref{Fcor} can be
  \begin{align*}
      \varphi_t=\text{the flow generated by $X_t$ and }  B_t=-d\theta_t-i_{X_t}H_0+d(i_{X_t}b_t),
  \end{align*}
  where $X_t=\theta_t^\sharp-\nabla f_t$.
\end{remark}

\begin{remark}
    In the Bismut Hermitian--Einstein case, the first Chern class $c_1=0$.
\end{remark}

\bibliographystyle{plain}
\bibliography{Reference}

\begin{thebibliography}{10}

\bibitem{Ache2012OnTU}
Antonio~G. Ache.
\newblock On the uniqueness of asymptotic limits of the ricci flow.
\newblock {\em arXiv: Differential Geometry}, 2012.

\bibitem{B}
Arthur~L. Besse.
\newblock {\em Einstein manifolds}, volume~10 of {\em Ergebnisse der Mathematik und ihrer Grenzgebiete (3) [Results in Mathematics and Related Areas (3)]}.
\newblock Springer-Verlag, Berlin, 1987.

\bibitem{Cao1985}
Huai-Dong Cao.
\newblock Deformation of {K}ähler matrics to kähler-eisenstein metrics on compact kähler manifolds.
\newblock {\em Inventiones mathematicae}, 81:359--372, 1985.

\bibitem{C}
Huai-Dong Cao and Meng Zhu.
\newblock On second variation of {P}erelman's {R}icci shrinker entropy.
\newblock {\em Math. Ann.}, 353(3):747--763, 2012.

\bibitem{C2}
Huai-Dong Cao and Meng Zhu.
\newblock Linear stability of compact shrinking ricci solitons, 2023.

\bibitem{Aubin}
Pascale Cherrier.
\newblock Equations de monge-amp{\`e}re sur les vari{\'e}t{\'e}s hermitiennes compactes.
\newblock {\em Bulletin Des Sciences Mathematiques}, 111:343--385, 1987.

\bibitem{MR0267604}
David~G. Ebin.
\newblock The manifold of {R}iemannian metrics.
\newblock In {\em Global {A}nalysis ({P}roc. {S}ympos. {P}ure {M}ath., {V}ol. {XV}, {B}erkeley, {C}alif., 1968)}, pages 11--40. Amer. Math. Soc., Providence, R.I., 1970.

\bibitem{MG}
Mario Garcia-Fernandez.
\newblock Ricci flow, killing spinors, and t-duality in generalized geometry.
\newblock {\em Advances in Mathematics}, 350, 11 2016.

\bibitem{MJJ}
Mario Garcia-Fernandez, Joshua Jordan, and Jeffrey Streets.
\newblock Non-kähler calabi-yau geometry and pluriclosed flow.
\newblock {\em Journal de Mathématiques Pures et Appliquées}, 177:329--367, 2023.

\bibitem{GRF}
Mario Garcia-Fernandez and Jeffrey Streets.
\newblock {\em Generalized {R}icci flow}, volume~76 of {\em University Lecture Series}.
\newblock American Mathematical Society, Providence, RI, [2021] \copyright 2021.

\bibitem{Gursky2011RigidityAS}
Matthew~J. Gursky and Jeff Viaclovsky.
\newblock Rigidity and stability of einstein metrics for quadratic curvature functionals.
\newblock {\em arXiv: Differential Geometry}, 2011.

\bibitem{10.4310/jdg/1214436922}
Richard~S. Hamilton.
\newblock {Three-manifolds with positive Ricci curvature}.
\newblock {\em Journal of Differential Geometry}, 17(2):255 -- 306, 1982.

\bibitem{Has2}
Robert Haslhofer.
\newblock Perelman's lambda-functional and the stability of {R}icci-flat metrics.
\newblock {\em Calc. Var. Partial Differential Equations}, 45(3-4):481--504, 2012.

\bibitem{Has1}
Robert Haslhofer and Reto Müller.
\newblock Dynamical stability and instability of ricci-flat metrics.
\newblock {\em Mathematische Annalen}, 360(1-2):547–553, May 2014.

\bibitem{hitchin}
Nigel Hitchin.
\newblock Lectures on generalized geometry, 2010.

\bibitem{String}
S~Ivanov and G~Papadopoulos.
\newblock Vanishing theorems and string backgrounds.
\newblock {\em Classical and Quantum Gravity}, 18(6):1089, mar 2001.

\bibitem{Ko}
N.~Koiso.
\newblock Einstein metrics and complex structures.
\newblock {\em Invent. Math.}, 73(1):71--106, 1983.

\bibitem{Kr}
Klaus Kr{\"o}ncke.
\newblock {\em Stability of Einstein Manifolds}.
\newblock doctoral thesis, Universit{\"a}t Potsdam, 2014.

\bibitem{Kr2}
Klaus Kr\"{o}ncke.
\newblock Stability and instability of {R}icci solitons.
\newblock {\em Calc. Var. Partial Differential Equations}, 53(1-2):265--287, 2015.

\bibitem{KK}
Kuan-Hui Lee.
\newblock Stability and moduli space of generalized ricci solitons, 2023.

\bibitem{K}
Kuan-Hui Lee.
\newblock The stability of generalized ricci solitons.
\newblock {\em The Journal of Geometric Analysis}, 33(9):273, June 2023.

\bibitem{KKK}
Kuan-Hui Lee.
\newblock The stability of non-k\"ahler calabi-yau metrics, 2024.

\bibitem{C3}
Mansour Mehrmohamadi and Asadollah Razavi.
\newblock Commutator formulas for gradient ricci shrinker and their application to linear stability, 2021.

\bibitem{P}
T.~Oliynyk, V.~Suneeta, and E.~Woolgar.
\newblock A gradient flow for worldsheet nonlinear sigma models.
\newblock {\em Nuclear Physics B}, 739(3):441–458, Apr 2006.

\bibitem{Pe}
Grisha Perelman.
\newblock {The Entropy formula for the Ricci flow and its geometric applications}.
\newblock {\em arxiv}, 7 2006.

\bibitem{polchinski_1998}
Joseph Polchinski.
\newblock {\em String Theory}, volume~1 of {\em Cambridge Monographs on Mathematical Physics}.
\newblock Cambridge University Press, 1998.

\bibitem{Rubio_2019}
Roberto Rubio and Carl Tipler.
\newblock The lie group of automorphisms of a courant algebroid and the moduli space of generalized metrics.
\newblock {\em Revista Matemática Iberoamericana}, 36(2):485–536, Dec 2019.

\bibitem{Ses}
Natasa Sesum.
\newblock Linear and dynamical stability of {R}icci-flat metrics.
\newblock {\em Duke Math. J.}, 133(1):1--26, 2006.

\bibitem{J1}
Jeffrey Streets.
\newblock Regularity and expanding entropy for connection {R}icci flow.
\newblock {\em J. Geom. Phys.}, 58(7):900--912, 2008.

\bibitem{J6}
Jeffrey Streets.
\newblock Pluriclosed flow, born-infeld geometry, and rigidity results for generalized kähler manifolds.
\newblock {\em Communications in Partial Differential Equations}, 41(2):318--374, 2016.

\bibitem{STREETS2017506}
Jeffrey Streets.
\newblock Generalized geometry, t-duality, and renormalization group flow.
\newblock {\em Journal of Geometry and Physics}, 114:506--522, 2017.

\bibitem{J5}
Jeffrey Streets.
\newblock Classification of solitons for pluriclosed flow on complex surfaces.
\newblock {\em Mathematische Annalen}, 375, 12 2019.

\bibitem{J7}
Jeffrey Streets.
\newblock {\em Pluriclosed Flow and the Geometrization of Complex Surfaces}, pages 471--510.
\newblock Springer International Publishing, Cham, 2020.

\bibitem{J3}
Jeffrey Streets and Gang Tian.
\newblock {A Parabolic Flow of Pluriclosed Metrics}.
\newblock {\em International Mathematics Research Notices}, 2010(16):3101--3133, 01 2010.

\bibitem{J8}
Jeffrey Streets and Gang Tian.
\newblock {Regularity results for pluriclosed flow}.
\newblock {\em Geometry \& Topology}, 17(4):2389 -- 2429, 2013.

\bibitem{J4}
Jeffrey Streets and Yury Ustinovskiy.
\newblock Classification of generalized kähler-ricci solitons on complex surfaces.
\newblock {\em Communications on Pure and Applied Mathematics}, 74(9):1896--1914, 2021.

\bibitem{Yau}
Shing-Tung Yau.
\newblock On the {R}icci curvature of a compact {K}ähler manifold and the complex {M}onge-{A}mpere equation.
\newblock {\em Communications on Pure and Applied Mathematics}, 31:339--411, 1978.

\end{thebibliography}
\nocite{*}

\end{document}